\documentclass[12pt]{article}
\usepackage{amsmath, amssymb,amsfonts,bbm,verbatim,multirow,color,xcolor}
\usepackage{epic,eepic,psfrag,epsfig}
\usepackage[sf,bf,SF,footnotesize]{subfigure}
\usepackage{graphicx}
\usepackage{amsthm,breakcites}
\usepackage{amscd}
\usepackage{epsfig}
\usepackage{fullpage}

\usepackage{natbib}[round]
\setcitestyle{aysep={,}}

\usepackage{setspace}
\usepackage{enumerate}
\usepackage{array}
\usepackage[small]{caption}
\RequirePackage[colorlinks,citecolor=blue,urlcolor=blue,breaklinks]{hyperref}

\newtheorem{thm}{Theorem}

\newtheorem{lemma}[thm]{Lemma}

\newtheorem{prop}[thm]{Proposition}
\newtheorem{cor}[thm]{Corollary}
\newtheorem{eg}[thm]{Example}

\newcommand{\red}[1]{\textcolor{red}{#1}}
\newcommand{\blue}[1]{\textcolor{blue}{#1}}

\newcommand{\tb}[1]{\textcolor{blue}{(TB: #1)}}


\DeclareMathOperator*{\Var}{Var}
\DeclareMathOperator*{\Cov}{Cov}
\def\hat{\widehat}

\allowdisplaybreaks

\newcommand{\bbP}{\mathbb{P}}
\newcommand{\bbS}{\mathbb{S}}
\newcommand{\bbT}{\mathbb{T}}
\newcommand{\bbN}{\mathbb{N}}
\newcommand{\bbR}{\mathbb{R}}
\newcommand{\bbE}{\mathbb{E}}
\newcommand{\bS}{\mathbf{S}}
\newcommand{\bi}{\boldsymbol{i}}

\title{Efficient estimation with incomplete data via generalised ANOVA decompositions}
\date{}
\author{Thomas B. Berrett \\ Department of Statistics, University of Warwick}

\begin{document}

\maketitle

\begin{abstract}
We study the semiparametric efficient estimation of a class of linear functionals in settings where a complete multivariate dataset is supplemented by additional datasets recording subsets of the variables of interest. These datasets are allowed to have a general, in particular non-monotonic, structure. Our main contribution is to characterise the asymptotic minimal mean squared error for these problems and to introduce an estimator whose risk approximately matches this lower bound. We show that the efficient rescaled variance can be expressed as the minimal value of a quadratic optimisation problem over a function space, thus establishing a fundamental link between these estimation problems and the theory of generalised ANOVA decompositions. Our estimation procedure uses iterated nonparametric regression to mimic an approximate influence function derived through gradient descent. We prove that this estimator is approximately normally distributed, provide an estimator of its variance and thus develop confidence intervals of asymptotically minimal width. Finally we present extensions of our theory demonstrating that the framework can be adapted to include various types of sampling bias and non-linear functionals.

\medskip
\textbf{Keywords}: Semiparametric efficient estimation; Semi-supervised learning; Data fusion; Missing data; Generalised ANOVA.
\end{abstract}

\section{Introduction}

Large modern datasets are increasingly often compiled from smaller datasets recording different variables or collected under different experimental conditions. This feature of modern data science is reflected in the vast range of statistical and machine learning literature tackling issues of incompleteness, data heterogeneity and sampling bias, including topics such as semi-supervised learning~\citep{chakrabortty2018efficient,kim2024semi}, data fusion~\citep{li2023efficient,qiu2023efficient}, missing data~\citep{robins2017minimax} and transfer learning~\citep{cai2021transfer}. Areas of application are diverse and cover genomics~\citep{angelopoulos2023prediction}, climate science~\citep{lucas2015bivariate} and causal inference~\citep{bareinboim2016causal}. Particularly when clean data is scarce, the information provided by additional data is invaluable. 

Relevant literature commonly seeks to derive optimal estimators given various structures of datasets, with statistical efficiency~\citep[in the sense of,~e.g.][]{van2000asymptotic} being one of the most widespread notions of optimality. The simplest structures are those in which a complete dataset is supplemented by an additional dataset recording a subset of the variables of interest. These structures arise in semi-supervised settings, where we aim to solve a supervised learning problem with the aid of additional unlabelled data \citep{chakrabortty2018efficient,cannings2020local,kim2024semi}. They also arise in missing data settings, where we aim to carry out a regression analysis when covariates are fully observed and response variables may be missing \citep{robins2017minimax}. In causal inference we may combine a large dataset with unmeasured confounding variables with a smaller dataset recording information on these confounders \citep{yang2019combining}. 
It is shown in many of these settings that estimators properly incorporating incomplete data have smaller risk than those based solely on the complete data and can sometimes be proved to attain efficiency lower bounds.

Often one would like to combine more than two datasets, though the development of optimal estimators in these settings is typically more complex. In recent examples of work on this problem, \cite{li2023efficient} and \cite{qiu2023efficient} study efficiency in settings in which a complete dataset is supplemented by multiple incomplete datasets having a nested structure, while allowing for distributional shifts between datasets. A key example giving rise to these structures is that of a longitudinal study, where multiple response variables are recorded over time on the same data subjects who may drop out of the study at various points. In such cases data is available on subsets $S_1,\ldots,S_m$ of all variables, with $S_1 \subseteq S_2 \subseteq \ldots \subseteq S_m$. This monotonic structure enables the derivation of explicit expressions for efficient influence functions and thus the construction of efficient estimators.

However, non-monotonic structures are common in practice and require new techniques. Healthcare datasets such as that collected by the Alzheimer's Disease Neuroimaging Initiative (ADNI) \citep{mueller2005alzheimer} and MIMIC-III \citep{johnson2016mimic} often exhibit non-monotonic missingness structures in settings where patients may undergo several diagnostic procedures. Certain readings may be refused due to their expense or invasive nature and others may be discarded as inaccurate, leading to complex datasets where different groups of patients provide valuable and complementary information on targets of inference. Data of this type are studied in literature on \emph{block-wise missingness} \citep{xue2021integrating}, where work on efficient estimation has been developed in parametric models. Multi-phase sampling designs in causal inference can also lead to the combination of multiple datasets with non-monotonic structures \citep{clayton1998analysis,yang2019combining}. General missing data settings are non-monotonic, though the analysis of such data and the modelling of missingness mechanisms is significantly more challenging when this is the case~\citep{robins1997non,tsiatis2006semiparametric,sun2018inverse}. In such settings it is therefore useful to consider different types of assumptions controlling sampling biases.

\subsection{Formal setting and contributions}
\label{Sec:Contributions}

We will suppose that we have access to a complete\footnote{Throughout, when we refer to the `complete' data we mean this fully-observed dataset of size $n$, not the entire dataset of size $n+\sum_{S \in \bbS} n_S$.} $d$-dimensional dataset $X_1,\ldots,X_n \overset{\text{i.i.d.}}{\sim} f$
for some density function $f$ on $\bbR^d$ with $d \geq 2$. In addition, writing $[d]=\{1,\ldots,d\}$ and $2^{[d]}$ for the power set of $[d]$, for a collection $\bbS \subseteq 2^{[d]} \setminus \{[d]\}$ we will suppose that we have access to independent incomplete data $(X_{S,i} : S \in \bbS, i\in[n_S])$. Here $X_{S,i}$ takes values in $(\mathbb{R} \times \{\texttt{NA}\})^d$ such that the $j$th component of $X_{S,i}$ is $\texttt{NA}$ if and only if $j \in S^c= [d] \setminus S$. We may assume, without loss of generality, that the samples sizes $n_\bbS=(n_S:S\in\bbS)$ are positive. We will sometimes use the notation $\bbS^+ = \bbS \cup \{[d]\}$, $n_{[d]} = n$ and $X_{[d],i}=X_i$ so that our data can be summarised as $(X_{S,i} : S \in \bbS^+, i \in [n_S])$. 

The distribution of the incomplete data is assumed to be linked to the target density $f$ through a sequence of positive functions $(r_S : S \in \bbS)$ representing distributional shifts. Letting $f_S$ be the density of $(X_1)_S = (X_{1,j} : j \in S)$, that is the marginal density of $f$ associated to those variables in $S$, the density of $(X_{S,i})_S = (X_{S,i,j} : j \in S)$ is proportional to $r_Sf_S$ for each $S \in \bbS$. More formally, we assume that
\begin{equation}
\label{Eq:IncompleteData}
    (X_{S,1})_S,\ldots,(X_{S,n_S})_S \overset{\text{i.i.d.}}{\sim} \bar{r}_S f_S \quad \text{and} \quad X_{S,i,j} = \texttt{NA} \text{ almost surely for } j \in S^c,
\end{equation}
where we write $\bar{r}_S=r_S/\int fr_S$ for the normalised version of $r_S$. Taking all $r_S \equiv 1$ we recover the Missing Completely At Random (MCAR) case, but the additional flexibility allows us to specify the $r_S$ to account for distributional shifts; see the discussion of related work below.

Our aim is to use this data to estimate linear functionals of the form 
\begin{equation}
\label{Eq:Functional}
    \theta(f)= \int_{\mathbb{R}^d} a(x)f(x) \,dx = \bbE\{a(X)\},
\end{equation}
where $a : \mathbb{R}^d \rightarrow \mathbb{R}$ is a known measurable function and we let $X \sim f$. This canonical statistical problem includes the estimation of moments, probabilities of events and prediction risks. When $\int a^2 f < \infty$ and we only have access to the complete cases in the data, it is known that the natural estimator $\hat{\theta}^\mathrm{CC} = n^{-1} \sum_{i=1}^n a(X_i)$ is optimal in the local asymptotic minimax sense. The goal of this work is to use the incomplete data to outperform $\hat{\theta}^\mathrm{CC}$, in particular to characterise the minimal asymptotic squared risk for the estimation of $\theta(f)$ and develop estimators that attain this optimal risk and thus are statistically efficient. Our main contributions are summarised below.
\begin{itemize}
    \item In Section~\ref{Sec:LowerBound} we give a local asymptotic minimax lower bound (Theorem~\ref{Thm:ShiftedLowerBound}) for the estimation of $\theta(f)$ given the data $(X_{S,i} : S \in \bbS^+, i \in [n_S])$ introduced above, in the case that the shift functions $(r_S:S \in \bbS)$ are known. This bound can be interpreted as the minimal expected squared error in approximating the function $a(x)$ by sums $\sum_{S \in \mathbb{S}} \alpha_S(x_S)$ of functions of subsets of our variables and thus provides a fundamental link between this estimation problem and the theory of generalised ANOVA decompositions. We introduce and discuss such decompositions in Section~\ref{Sec:ANOVADecompositions}, state our lower bound in Section~\ref{Sec:ANOVALowerBound} and give examples in Section~\ref{Sec:ANOVAExamples}. 
    \item In Section~\ref{Sec:UpperBound} we develop an estimator whose rescaled risk, under natural regularity conditions, is close in a nonasymptotic sense to the lower bound in Theorem~\ref{Thm:ShiftedLowerBound}.  In Section~\ref{Sec:UpperBoundApprox} we take a gradient-descent approach to approximate the generalised ANOVA decomposition of $a(\cdot)$, resulting in an approximate influence function for our estimation problem. In Section~\ref{Sec:UpperBoundData} we introduce a data-driven estimator of $\theta(f)$ based on iterated nonparametric regression and bound its risk. In Section~\ref{Sec:UpperBoundConfidence} we prove the approximate normality of our estimator and show how its variance can be estimated to give confidence intervals for $\theta(f)$. Numerical results are given in Section~\ref{Sec:Simulations}.
    \item In Section~\ref{Sec:Extensions} we present extensions of our basic framework to incorporate more complex sampling biases. In Section~\ref{Sec:MAR} we work under a Missing At Random assumption, assuming that our data is first generated from the target distribution and is then subject to random missingness that may depend on certain covariates. In Section~\ref{Sec:UnknownShifts} we broaden our data model in a different direction, allowing the distributional shift functions $r_S$ to be unknown elements of a parametric model.
\end{itemize}
Additionally, in Appendix~\ref{Sec:DirectEstimator} we study an alternative estimator for the MCAR case that does not require cross-fitting, unlike the estimator in Section~\ref{Sec:UpperBound}. Appendix~\ref{Sec:MICE} presents additional numerical results, Appendix~\ref{Sec:NonLinear} discusses extensions to non-linear and two-sample functionals, and proofs and an auxiliary lemma are given in Appendix~\ref{Sec:Proofs}. Taken together, our results introduce efficient estimators for general structures of incomplete data and allow us to compare the effects of different types of sampling biases on estimation accuracy.

\subsection{Related work}


Our model is closely related to models for semi-supervised learning, particularly when $\bbS$ is a singleton. There is a vast literature on this topic so here we highlight references that focus on the estimation of mean functionals and statistical efficiency. 
The most common setting is that one has access to a dataset of i.i.d.~copies of a pair $(X,Y)$, where $Y$ is thought of as a label and $X$ as a feature vector, as well as an unlabelled dataset recording $X$ values only. In this setting, \cite{zhang2019semi} construct estimators of $\mathbb{E}(Y)$ that have smaller variance than the natural estimator based only on the labelled data, and also construct estimators that are shown to be efficient under regularity conditions. \cite{yang2019combining} and \cite{cannings2022correlation} consider the estimation of more general functionals, constructing estimators that improve upon the basic estimator and which can be shown to be efficient. \cite{kim2024semi} use $U$-statistics to estimate functionals of the form $\mathbb{E}\{a(Y_1,\ldots,Y_r)\}$, where $Y_1,\ldots,Y_r$ are independent copies of the label $Y$, and provide local asymptotic minimax lower bounds to prove that their estimators are efficient. Beyond mean functionals, we note that there is also work on efficient parameter estimation in semiparametric models in semi-supervised settings \citep{chakrabortty2018efficient,azriel2022semi}. While the work above focuses on the common setting where $\mathbb{S}$ is a singleton, \cite{yang2019combining} and \cite{cannings2022correlation} also consider cases where $|\mathbb{S}|>1$, though their estimators are typically not efficient.

Models similar to semi-supervised settings arise in causal inference and missing data literature, where we may have a regression model with a partially-observed response $Y$ and fully-observed covariates $X$ and aim to estimate a mean response $\mathbb{E}(Y)$ or average treatment effect. 
Assuming that $\pi(x)=\mathbb{P}(Y \text{ observed } | X=x)^{-1}$ is a H\"older smooth function of $x$ bounded away from zero and infinity, \cite{robins2017minimax} develop a theory of double robustness and efficiency for the estimation of $\mathbb{E}(Y)$. Work such as \cite{kallus2020role} and \cite{zhang2023double} weakens the assumption that $\pi$ is bounded to allow for the regime that the unlabelled dataset is much larger than the labelled dataset.

Statistical efficiency is a key concern in data fusion and missing data models. \cite{li2023efficient} study settings where the joint target distribution $P$ is decomposed into conditional distributions $P_{X_1}P_{X_2|X_1}\ldots P_{X_d|(X_1,\ldots,X_{d-1})}$ and it is assumed that additional datasets contain accurate information on some of these conditional distributions. In settings such as longitudinal studies with a monotonic missingness structure such that $\bbS \subseteq \{[1],[2],\ldots,[d-1]\}$, efficient influence functions are explicitly derived. Using similar decompositions of $P$ and distributional shift conditions, \cite{qiu2023efficient} develop multiple-robustness theory for the efficient estimation of functionals of the form $\mathbb{E}\{a(X)\}$. 
In the causal inference literature it is often of interest to combine the results of a randomised controlled trial with observational data in order to improve the efficiency of estimators; see \cite{bareinboim2016causal} for an introduction to the topic and \cite{colnet2024causal,lin2024data} for recent survey articles.

Tackling a non-monotic setting, \cite{bickel1991efficient} give efficient estimators of mean functionals of bivariate densities whose marginal densities are known, corresponding in our notation to the $d=2$ setting with $\bbS=\{\{1\},\{2\}\}$ in the limit that $n_{\{1\}}=n_{\{2\}}=\infty$. Our interest is in finite sample sizes and general structures, though the consideration of $L_2$ projections is similarly crucial. Their estimators work by binning continuous data, while our methods are based on iterated nonparametric regression with kernels. Shortly after the first version of this work appeared on \texttt{arXiv}, the relevant work~\cite{graham2024towards} appeared. This was similarly motivated by the desire to extend existing work, such as~\cite{li2023efficient}, to settings with non-monotonic structure. The authors study general data fusion problems where conditional distributions in source populations align with those in the target population and provide tools that can be used to characterise sets of all influence functions. When our settings overlap, their results can be used to verify that the efficient influence functions we derive are indeed the efficient influence functions. 
Our work complements this unifying theory by providing, in the settings we consider, more explicit derivations of efficient influence functions and the construction of efficient estimators. While our main interest here is in nonparametric models, 
we also mention recent work on block-wise missing data \citep{xue2021integrating,song2024semi,li2024adaptive} that fits parametric models given data with non-monotonic missingness structures.

Our work also contributes to the literature on distributional shifts and the use of density ratios. In our basic model we assume that the distributional shifts $(r_S: S\in\bbS)$ are known, as in previous literature. While primarily concerned with MCAR data, prediction-powered inference extends to settings with known distributional shifts \citep{angelopoulos2023prediction}. In a transfer learning setting with \emph{covariate shift}, \cite{ma2023optimally} construct optimal nonparametric regression estimators using knowledge of the density ratio between the target and source covariate distributions. \cite{tibshirani2019conformal} studies conformal prediction under covariate shift, similarly using knowledge of a density ratio. \cite{thams2023statistical} considers a range of hypothesis testing problems where data is drawn from a source distribution being a known shift of the target distribution, including off-policy testing for bandit problems, model selection and the detection of heterogeneity in causal discovery. Here it is common that shifts only affect a subset of the variables, whose distributions are well understood from previous data. Parametric families of shifts, such as those in Section~\ref{Sec:UnknownShifts}, are studied in statistical work such as~\cite{efron1996using,qin1998inferences,qiu2023efficient}. Density ratios themselves can be estimated directly; see~\cite{sugiyama2012density} for an introduction to this topic.

Our results show that the efficient influence functions for the problems we consider are found by minimising quadratic objective functions. In limiting cases with incomplete datasets of infinite size this minimisation problem coincides with the problem of finding \emph{generalised ANOVA decompositions} \citep[e.g.][]{stone1994use}. Such decompositions extend classical ANOVA decompositions \citep{hoeffding1948class,efron1981jackknife} beyond the setting in which the components of $X$ are independent. Work such as \citet{stone1994use} and~\citet{huang1998projection} has explored the statistical use of generalised ANOVA decompositions in problems such as nonparametric regression, where the aim is to approximate an unknown regression function by sums of simpler components. Such decompositions are also used in sensitivity analysis~\citep{chastaing2012generalized,rahman2014generalized}, where the aim is to identify subsets of variables that make a negligible contribution to an outcome variable. Classical ANOVA decompositions have also recently been used as the basis for modelling assumptions in missing data problems~\citep{sell2023nonparametric}. \cite{hooker2007generalized}, \cite{li2012general} and~\cite{rahman2014generalized} further study theoretical properties of generalised ANOVA decompositions and introduce numerical schemes for their computation.

\subsection{Notation}

Here we collect basic, commonly-used notation. We write $\bbN$ for the set of natural numbers and $\bbN_0=\bbN \cup \{0\}$. For $d \in \mathbb{N}$ write $[d]=\{1,\ldots,d\}$ and write $2^{[d]}$ for the collection of all subsets of $[d]$. Given $S \in 2^{[d]}$ and $E \subseteq \bbR$ we write $E^S = \prod_{j \in S} E$ for the marginal space of $E^d$ corresponding to those variables in $S$. For $u \in \mathbb{R}^d$ we write $\|u\|_\infty = \max_{j \in [d]} |u_j|$ and $\|u\|_1 = \sum_{j=1}^d |u_j|$, and for a function $g:\mathbb{R}^d \rightarrow \mathbb{R}$ we write $\|g\|_\infty = \sup_{x \in \mathbb{R}^d} |g(x)|$. For $x \in (\bbR \cup \{\texttt{NA}\})^d$ and $S \in 2^{[d]}$ we write $x_S \equiv (x)_S = (x_j : j \in S)$ for those entries of $x$ corresponding to the variables in $S$. For $n \in \mathbb{N}$ and $m \in [n]$ we write $(n)_m=n(n-1)\ldots(n-m+1)$ for the falling factorial and $\mathcal{I}_m=\{(i_1,\ldots,i_{m}) \in [n]^m : i_1,\ldots,i_m \text{ are distinct}\}$, so that $|\mathcal{I}_m|=(n)_m$. For $a,b \in \mathbb{R}$ we write $a \wedge b = \min(a,b)$ and $a \vee b = \max(a,b)$. For distributions $P$ and $Q$ with densities $f$ and $g$ we write $d_\mathrm{TV}(P,Q)=\frac{1}{2} \int |f-g|$ for the Total Variation distance.

\section{Problem formulation and main result}

Recall from Section~\ref{Sec:Contributions} that for some density $f$ on $\bbR^d$ we suppose that we have access to a complete dataset $X_1,\ldots,X_n \overset{\text{i.i.d.}}{\sim} f$ as well as independent incomplete data $(X_{S,i} : S \in \bbS, i\in[n_S])$ whose distribution depends on known distributional shifts $(r_S : S \in \bbS)$ and is given in~\eqref{Eq:IncompleteData}. We aim to estimate the functionals defined in~\eqref{Eq:Functional} for a known function $a : \bbR^d \rightarrow \bbR$.

Before stating our main result we gather our assumptions. We will suppose that $\|a\|_\infty < \infty$ and that there exist constants $0<c \leq C < \infty$ such that $c \leq \bar{r}_S(x_S) \leq C$ for all $x_S \in \bbR^S$ and $S \in \bbS$. For fixed parameters $\beta_1,\beta_2,\beta_3,L_1,L_2,L_3>0$ we assume the following.
\begin{enumerate}[align=left]
    \item[(A1)($\beta_1,L_1$)] For all $x,x' \in \bbR^d$ with $\|x-x'\|_\infty \leq 1$ we have
    \begin{equation}
    \label{Eq:A1}
        |a(x)-a(x')| \leq L_1 \|x-x'\|_\infty^{\beta_1}.
    \end{equation}
    \item[(A2)($\beta_2,L_2$)] For all $S \in \bbS$ and $x_S,x_S' \in \bbR^S$ with $\|x_{S}-x_{S}'\|_\infty \leq 1$ we have
    \begin{equation}
    \label{Eq:A2}
        d_\mathrm{TV}( \mathcal{L}(X_{S^c} | X_S=x_S), \mathcal{L}(X_{S^c} | X_S=x_S') ) \leq L_2 \|x_S-x_{S'}\|_\infty^{\beta_2}.
    \end{equation}
    \item[(A3)($\beta_3,L_3$)] For all $S \in \bbS$ and $x_S,x_S' \in \bbR^S$ with $\|x_{S}-x_{S}'\|_\infty \leq 1$ we have
    \begin{equation}
    \label{Eq:A3}
        |\bar{r}_S(x_S) - \bar{r}_S(x_S')| \leq L_3 \|x_S - x_S'\|_\infty^{\beta_3}.
    \end{equation}
\end{enumerate}
We assume that $n_S /n \rightarrow \lambda_S \in (0,\infty)$ as $n \rightarrow \infty$ for each $S \in \bbS$ and make the technical assumption that $|n\lambda_S /n_S - 1| \leq 1/n$ for all $n \in \bbN$.

\begin{thm}
\label{Thm:MainResult}
Suppose that the conditions of the previous paragraph hold.
\begin{itemize}
    \item[(i)] There exists an estimator $\hat{\theta} \equiv \hat{\theta}_n$ such that
\[
    \limsup_{n \rightarrow \infty} \bbE_f [n \{\hat{\theta} - \theta(f)\}^2 ] \leq \inf_{\alpha_\bbS \in H_\bbS} \mathcal{L}(\alpha_\bbS),
\]
where $\mathcal{L}(\cdot)$ is defined in~\eqref{Eq:Objective}.
    \item[(ii)] For any measurable estimator sequence $(\theta_n)$ we have
    \[
        \sup_{I \subset L^2} \liminf_{n \rightarrow \infty} \max_{h \in I} \mathbb{E}_{f_{n^{-1/2} h}} \bigl[n \bigl\{ \theta_n - \theta(f_{n^{-1/2} h}) \}^2 \bigr] \geq \inf_{\alpha_\bbS \in H_\bbS} \mathcal{L}(\alpha_\bbS),
    \]
    where the supremum is taken over all finite subsets $I$ of square-integrable functions $h$ on $\bbR^d$ and where the local perturbations $f_{h}$ of $f$ are defined in Section~\ref{Sec:ANOVALowerBound}.
\end{itemize}
\end{thm}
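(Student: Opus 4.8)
The plan is to prove the two halves separately, showing that the achievable rescaled risk of a concrete estimator and the local asymptotic minimax lower bound both equal the value of the quadratic program $\inf_{\alpha_\bbS\in H_\bbS}\mathcal{L}(\alpha_\bbS)$; matching the two then yields efficiency. The conceptual engine behind both directions is the identification of the efficient influence function with the residual of a weighted $L^2$ projection of $a$ onto the closed subspace of additive functions $\bigl\{\sum_{S\in\bbS}\alpha_S(x_S)\bigr\}$, so that the minimal approximation error is precisely $\mathcal{L}$.

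For part (ii) I would work within the standard H\'ajek--Le Cam framework, to which the $\sup_I\liminf\max_{h\in I}$ formulation is tailored. First I would introduce the one-dimensional perturbations $f_{n^{-1/2}h}$ along bounded, centred score directions $h\in L^2(f)$ and verify local asymptotic normality of the joint experiment formed by the complete sample together with the shifted incomplete samples $(X_{S,i})$; here the information operator is assembled from the marginal scores weighted by the limiting ratios $\lambda_S$ and the normalised shifts $\bar r_S$. Since $\theta$ is pathwise differentiable with derivative $\int a\,h\,f$, the local asymptotic minimax theorem reduces the lower bound to the squared length of the efficient influence function, i.e.\ the variance of the projection of $a$ onto the model tangent space. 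The substantive step is to show that this tangent space is exactly the orthogonal complement, in the appropriate weighted inner product, of the additive subspace appearing in $H_\bbS$, so that the efficient variance coincides with $\inf_{\alpha_\bbS}\mathcal{L}(\alpha_\bbS)$; this is the content of Theorem~\ref{Thm:ShiftedLowerBound}, from which (ii) follows. A minor technical point is that the supremum over \emph{finite} direction sets $I$ recovers the full infimum, which I would handle by a density argument showing the projection is approximated arbitrarily well by finitely many scores.

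For part (i) I would use the estimator constructed in Section~\ref{Sec:UpperBound}. The idea is that, although the minimiser of $\mathcal{L}$ has no closed form in the non-monotone case, running $T$ steps of gradient descent on the quadratic $\mathcal{L}$ produces an approximate influence function that is a linear combination of \emph{iterated conditional expectations} of $a$ onto the marginals $x_S$. I would estimate each such conditional expectation by kernel regression on auxiliary folds, combine them with cross-fitting, and form a one-step corrected estimator of the form $\hat\theta = \frac1n\sum_i a(X_i) - (\text{weighted corrections over }S\in\bbS)$. The rescaled mean squared error then splits into (a) an oracle term equal to $\mathcal{L}$ evaluated at the $T$-step iterate, which converges to $\inf\mathcal{L}$ geometrically in $T$; (b) a nonparametric estimation error governed by the H\"older exponents in (A1)--(A3) and the boundedness of $\bar r_S$; and (c) second-order remainder terms that are products of nuisance errors and vanish by the Neyman orthogonality built into the influence function. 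Taking $T=T_n\to\infty$ slowly enough balances the truncation bias against the accumulated estimation error.

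I expect the main obstacle to lie in part (i), specifically in controlling the gradient-descent approximation in the non-monotone setting. Establishing \emph{geometric} convergence of the iterate to the true projection requires a quantitative spectral-gap (coercivity) bound on the composition of the marginal projection operators associated with $\bbS$, a quantity that measures the geometric ``overlap'' between the subspaces $\{\alpha_S(x_S)\}$ and can degrade for unfavourable structures. Coupled with this is the need to prevent the $T_n$ iterated nonparametric regressions from compounding their errors; ensuring that the product of the truncation bias and the statistical error remains negligible, so that the nonasymptotic bound collapses to $\inf\mathcal{L}$, is the crux of the argument and the place where the regularity assumptions and the choice of $T_n$ must be delicately coordinated.
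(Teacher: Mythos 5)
Your plan follows essentially the same route as the paper in both parts: part (ii) is the LAN/local-asymptotic-minimax argument of Theorem~\ref{Thm:ShiftedLowerBound} (the paper applies Theorem~3.11.5 of van der Vaart and Wellner to a one-dimensional submodel in the least-favourable direction $\alpha^*$), and part (i) is exactly the gradient-descent-plus-cross-fitting construction of Section~\ref{Sec:UpperBound}, assembled from Propositions~\ref{Prop:OptimisationShifted}, \ref{Prop:CrossFitReduction} and~\ref{Prop:InductionProof} into Theorem~\ref{Thm:UpperBoundShifted}.

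The one place you leave a real hole is the point you yourself flag as the crux, and it is worth seeing why it is not actually an obstacle here. The geometric convergence of the gradient iterates does \emph{not} require any spectral-gap or ``overlap'' condition on the subspaces $H_S$: in the setting of Theorem~\ref{Thm:MainResult} every $\lambda_S \in (0,\infty)$ and $\bar{r}_S \leq C$, so the penalty terms alone give $Q(\epsilon_\bbS) \geq \sum_{S} \int \epsilon_S^2 f_S/(\lambda_S \bar{r}_S) \geq (1+C\lambda_\mathrm{max})^{-1} \|\epsilon_\bbS\|^2$, i.e.\ strong convexity with the explicit condition number $\kappa = |\bbS|(1+C\lambda_\mathrm{max})$; this is~\eqref{Eq:StrongConvexity} in the proof of Proposition~\ref{Prop:OptimisationShifted}. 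The overlap geometry of the subspaces only enters in the limiting case $\lambda_S=\infty$, which is excluded from the theorem and which the paper treats separately via the completeness condition~\eqref{Eq:CompletenessCondition}. Two smaller corrections. First, the remainder in part (i) is not controlled by ``second-order products of nuisance errors vanishing by Neyman orthogonality'': the correction term is \emph{exactly} unbiased for any fixed nuisance, because the complete-data and incomplete-data averages of $\hat{\alpha}_{S}$ cancel in expectation, and under cross-fitting $\hat{\theta}-\theta^{*,(M)}$ has conditional mean zero given the estimation fold; consequently Proposition~\ref{Prop:CrossFitReduction} needs only $L^2$-consistency of the iterated regressions, with the cross term handled by Cauchy--Schwarz rather than by any orthogonality-based rate condition. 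Second, in part (ii) no density argument over finitely many scores is needed: the supremum in Theorem~\ref{Thm:MainResult}(ii) is over finite subsets of $L^2$, hence dominates the supremum over finite subsets of the one-dimensional family $\{t\alpha^* : t \in \bbR\}$, which is all that Theorem~\ref{Thm:ShiftedLowerBound} uses, so the bound transfers by monotonicity of suprema.
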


The estimator $\hat{\theta}$ in the above statement is introduced in Section~\ref{Sec:UpperBoundData}. The first part of Theorem~\ref{Thm:MainResult} follows from the finite-sample result Theorem~\ref{Thm:UpperBoundShifted}, but we give an asymptotic statement here for brevity. The second part of Theorem~\ref{Thm:MainResult} is a local asymptotic minimax lower bound following from Theorem~\ref{Thm:ShiftedLowerBound}. The combination of both parts of Theorem~\ref{Thm:MainResult} shows that $\hat{\theta}$ is an efficient estimator of $\theta$ and that asymptotic minimal risks in this problem are characterised by the penalised generalised ANOVA decompositions introduced in Section~\ref{Sec:ANOVADecompositions}.

The requirement that $a$ be bounded is made to simplify our statement but, as with many of our other assumptions, it can be weakened; this is discussed in the sequel, particularly in the discussion after the statement of Theorem~\ref{Thm:UpperBoundShifted}. The assumptions that $\lambda_S \in (0,\infty)$ and $c \leq r_S(x_S) \leq C$ mean that (local) sample sizes across the different datasets are comparable and are commonly made in the literature~\citep[e.g.][Assumption~3.1]{robins2017minimax}. Our assumptions (A1)$(\beta_1,L_1)$, (A2)$(\beta_2,L_2)$ and (A3)$(\beta_3,L_3)$ concern the smoothness of $a$, of the $\bar{r}_S$ and of the relevant conditional distributions of $X$, and are used to control the smoothness of shifted conditional expectations of $a$. With this in mind, the first and third assumptions are natural. The second condition is made so that we can control, for example, the difference in conditional expectations $|\bbE\{a(X) | X_S=x_S\} - \bbE\{a(X) | X_S=x_S'\}|$. Such assumptions have been made previously in other contexts; see, for example, \cite{neykov2021minimax}. 

We conclude this section with examples of problems covered by our setting.
\begin{eg}
Consider a study where we wish to estimate the mean response $\bbE(Y)$ given data on $X=(Z,U,Y)$. Full information is available for those subjects in the study, but the covariates in $U$ are difficult to collect and unavailable in an additional observational dataset recording $(Z,\texttt{NA},Y)$ \citep[e.g.][]{yang2019combining}. We also have access to unlabelled data on all covariates $(Z,U,\texttt{NA})$, as in semi-supervised problems. Here the minimal variance is given by
\[
    \inf_{\alpha_1,\alpha_2} \biggl[ \mathrm{Var}\bigl( Y - \alpha_1(Z,Y) - \alpha_2(Z,U) \bigr) + \bbE\biggl\{ \frac{\alpha_1(Z,Y)^2}{\lambda_1 \bar{r}_1(Z)} \biggr\} + \bbE\biggl\{ \frac{\alpha_2(Z,U)^2}{\lambda_2 \bar{r}_2(Z,U)} \biggr\} \biggr]
\]
where $\bar{r}_1,\bar{r}_2$ allow for covariate shift between the main study and the observational data.
\end{eg}
\begin{eg}
Two diagnostic tests are available to measure the severity of a health condition~\citep[e.g.][]{xue2021integrating} and a predictive algorithm $m(Z_1,Z_2)$ has been built for a response $Y$ from the two measurements. We want to estimate the squared prediction error based on data from individuals who underwent one or both of the tests, so that we have data on $(Z_1,Z_2,Y),(Z_1,\texttt{NA},Y)$ and $(\texttt{NA},Z_2,Y)$. The minimal variance here is given by
\[
    \inf_{\alpha_1,\alpha_2} \biggl[ \mathrm{Var}\bigl( \{Y-m(Z_1,Z_2)\}^2 - \alpha_1(Z_1,Y) - \alpha_2(Z_2,Y) \bigr) + \bbE\biggl\{ \frac{\alpha_1(Z_1,Y)^2}{\lambda_1 \bar{r}_1(Z_1,Y)} \biggr\} + \bbE\biggl\{ \frac{\alpha_2(Z_2,Y)^2}{\lambda_2 \bar{r}_2(Z_2,Y)} \biggr\} \biggr],
\]
where $\bar{r}_1,\bar{r}_2$ allow for changes in distributions between patients having both tests and those refusing or being advised against one.
\end{eg}

In missing data literature it is sometimes preferred to work with a fixed total sample size and data that is randomly missing. Our results adapt to such settings. We give a corollary of Theorem~\ref{Thm:MainResult} in the MCAR case below and extensions to MAR cases in Section~\ref{Sec:MAR}. Suppose that there is a collection of i.i.d., potentially unobserved data $(Y_1,\Omega_1),\ldots,(Y_N,\Omega_N)$ where $Y_1 \sim f$ and $\Omega_1$ takes values in $\{0,1\}^d$ with $\bbP(\Omega_j =1\, \forall j) >0$. We observe $Y_i \circ \Omega_i$ where
\[
    (y \circ \omega)_j = \left\{ \begin{array}{ll} y_j & \text{if } \omega_j =1 \\ \texttt{NA} & \text{otherwise} \end{array} \right. .
\]
For $S \in 2^{[d]}$ let $\mathbbm{1}_S \in \{0,1\}^d$ be the indicator vector such that $(\mathbbm{1}_S)_j=\mathbbm{1}_{\{j \in S\}}$, write $p_S = \mathbb{P}(\Omega = \mathbbm{1}_S)$ and let $\bbS=\{S \subset [d] : p_S >0\}$. Assuming that data is MCAR, in the sense that $Y \perp \!\!\! \perp \Omega$, then the conditional density of $Y_S | \{\Omega = \mathbbm{1}_S \}$ is given by $f_S$. Writing $N_S = \sum_{i=1}^N \mathbbm{1}_{\{\Omega_i=\mathbbm{1}_S\}}$, on the event that the $N_\bbS=(N_S : S \in \bbS)$ are not too small, we can apply the estimator constructed in Section~\ref{Sec:UpperBound} with $r_S \equiv 1$ and $n_S = N_S$ for each $S \in \bbS$. The following result justifies the efficiency of this procedure in this missing data model.
\begin{cor}
\label{Cor:MCAR}
Suppose that the conditions of Theorem~\ref{Thm:MainResult} hold. Let $\mathcal{L}(\cdot)$ be defined as in~\eqref{Eq:Objective} with $\lambda_S = p_S/p_{[d]}$ and $r_S \equiv 1$ for each $S \in \bbS$. Then there exists an estimator $\hat{\Theta}$ such that
\begin{equation}
\label{Eq:MCARUpperBound}
    \limsup_{n \rightarrow \infty} \bbE_f [N \{\hat{\Theta} - \theta(f)\}^2 ] \leq \frac{1}{p_{[d]}} \inf_{\alpha_\bbS \in H_\bbS} \mathcal{L}(\alpha_\bbS).
\end{equation}
\end{cor}
A corresponding lower bound is proved using similar arguments to those used to prove Theorem~\ref{Thm:ShiftedLowerBound}. Alternatively, we can specialise Theorem~\ref{Thm:MARLowerBound} below, where we consider MAR data. In the example below we apply Corollary~\ref{Cor:MCAR} to a common model for missing data.

\begin{eg}
Using the notation introduced above, suppose that each variable is missing independently, so that for some $p_1,\ldots,p_d \in (0,1]$ and for any $S \subseteq [d]$ we may write $p_S=\bbP(\Omega=\mathbbm{1}_S) = \{\prod_{j \in S} p_j\}\{\prod_{j \notin S} (1-p_j)\}$. Then the minimal asymptotic risk on the right-hand side of~\eqref{Eq:MCARUpperBound} specialises to
\[
    \inf_{\alpha_\bbS \in H_\bbS} \biggl\{ \frac{1}{p_{[d]}} \mathrm{Var} \biggl( a(X) - \sum_{S \subset [d]} \alpha_S(X_S) \biggr) + \sum_{S \subset [d]} \frac{1}{p_S} \mathrm{Var} \bigl( \alpha_S(X_S) \bigr) \biggr\}.
\]
\end{eg}

\section{Lower bound}
\label{Sec:LowerBound}

\subsection{Generalised ANOVA decompositions}
\label{Sec:ANOVADecompositions}

Generalised ANOVA decompositions~\citep[e.g.][]{stone1994use} are used to model and approximate complex functions $a(X)$ of dependent variables, extending work~\citep{hoeffding1948class,efron1981jackknife} on decompositions for functions of independent variables.
In order to formally discuss such decompositions we require some notation. For $S \subset [d]$ write
\[
    L^2 = \biggl\{ \alpha : \mathbb{R}^d \rightarrow \mathbb{R} : \int \alpha^2 f < \infty  \biggr\} \quad \text{and} \quad H_S = \biggl\{ \alpha_S \in L^2 : \alpha_S(x) \equiv \alpha_S(x_S), \int \alpha_S f = 0 \biggr\}
\]
for the set of all square-integrable functions with respect to the probability density function $f$ and the linear subspace thereof consisting of all mean-zero functions that do not depend on those $x_j$ for which $j \not\in S$. When equipped with the inner product $\langle \alpha, \beta \rangle_{L^2} = \int \alpha \beta f$ it is well known that $L^2$ is a Hilbert space. Write $H_\bbS = \prod_{S \in \bbS} H_S$ for the product of these subspaces. Given a function $a \in L^2$, we may then aim to minimise $\mathrm{Var}(a(X) - \sum_{S \in \bbS} \alpha_S(X_S) )$ over $\alpha_\bbS=(\alpha_S : S \in \bbS) \in H_\bbS$. 
Under natural orthogonality conditions on $\alpha_\bbS$ and regularity conditions on $f$, it can be shown \citep[e.g.][Theorem~3.1]{stone1994use} that there is a unique minimiser $\alpha_\bbS^*$, and we call  $\int a f + \sum_{S \in \bbS} \alpha_S^*$ the generalised ANOVA decomposition of $a$.

In our setting, a modification of this optimisation problem allows us to characterise optimal estimators of $\theta(f)=\int af$. Recalling that $\lambda_S = \lim_{n \rightarrow \infty} n_S/n$, define the objective function
\begin{equation}
\label{Eq:Objective}
    \mathcal{L}(\alpha_\bbS) = \Var\biggl( a(X) - \sum_{S \in \bbS} \alpha_S(X_S) \biggr) + \sum_{S \in \bbS} \bbE \biggl[ \frac{\alpha_S(X_S)^2}{\lambda_S \bar{r}_S(X_S)} \biggr],
\end{equation}
which can be considered as a naturally penalised version of the objective function in the previous paragraph. Our first result shows that $\mathcal{L}$ has a unique minimiser whenever these penalties are non-zero and the $\bar{r}_S$ are bounded. The existence and uniqueness of a minimiser is crucial to our proof of the local asymptotic minimax lower bound in Theorem~\ref{Thm:ShiftedLowerBound}, and is convenient in the upper bound arguments in Section~\ref{Sec:UpperBound}.
\begin{prop}
\label{Prop:UniqueMinimiser}
There exists a unique $\alpha_\bbS^* \in H_\bbS$ such that $\mathcal{L}(\alpha_\bbS^*) = \inf_{\alpha_\bbS \in H_\bbS} \mathcal{L}(\alpha_\bbS)$.
\end{prop}


We conclude this section by giving an explicit minimiser in the case that our variables are independent. While this would be rare in practice, it provides intuition and a link to classical ANOVA decompositions. We can express the optimal $\alpha_\bbS$ in terms of suitably-centred conditional expectations of $a$ that are mutually orthogonal. To this end, for $T \subseteq [d]$ write $a_T(x_T) = \mathbb{E}\{a(X) |X_T=x_T\}$. There do not appear to be simple generalisations to nonconstant $r_\bbS$, even when they have product form, due to the fact that the penalty terms in the objective function do not have orthogonal decompositions that align with the orthogonal decomposition of the main term.

\begin{prop}
\label{Prop:ProductCase}
Suppose that $X$ has a product distribution and that $r_S \equiv 1$ for all $S \in \bbS$. For $T \subseteq [d]$ inductively define $\tilde{a}_\emptyset = \theta$ and $\tilde{a}_T = a_T - \sum_{U \subset T} \tilde{a}_U$. Then $\mathcal{L}(\alpha_\bbS)$ is minimised by taking
\begin{equation}
\label{Eq:ProductCase}
    \alpha_S = \sum_{\emptyset \neq T \subseteq S} \frac{\lambda_S}{1+\sum_{S' \in \bbS : T \subseteq S'} \lambda_{S'}} \tilde{a}_T
\end{equation}
for each $S \in \bbS$.
\end{prop}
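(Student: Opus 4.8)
The plan is to exploit the convexity of $\mathcal{L}$ together with the uniqueness granted by Proposition~\ref{Prop:UniqueMinimiser}: since $\mathcal{L}$ is a convex quadratic functional on $H_\bbS$, any stationary point is automatically a global minimiser, and this minimiser is unique under the present hypotheses (note $\bar r_S \equiv 1$ is bounded and $\lambda_S \in (0,\infty)$). It therefore suffices to write down the first-order condition and verify that the proposed $\alpha_\bbS$ satisfies it. Since $\bar r_S \equiv 1$, the objective reads $\mathcal{L}(\alpha_\bbS) = \bbE[(a - \theta - \sum_{S\in\bbS}\alpha_S)^2] + \sum_{S\in\bbS} \lambda_S^{-1}\bbE[\alpha_S^2]$. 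Differentiating along a direction $\beta_S \in H_S$ (perturbing only the $S$-component) and using that $\bbE[g\beta_S] = \bbE[\bbE(g\mid X_S)\beta_S]$ for mean-zero $X_S$-measurable $\beta_S$, the stationarity condition becomes, for every $S \in \bbS$,
\[
\bbE\Big[a - \theta - \sum_{S'\in\bbS}\alpha_{S'} \,\Big|\, X_S\Big] = \lambda_S^{-1}\alpha_S, \tag{$\star$}
\]
where the additive-constant ambiguity is removed by noting that both sides have mean zero.

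The crux, and the step I expect to be the main obstacle, is a clean description of how conditional expectations act on the ANOVA components $\tilde a_T$ under the product distribution. I would first establish the key property that for every $\emptyset \ne T \subseteq [d]$ and every $S \subseteq [d]$,
\[
\bbE[\tilde a_T(X_T)\mid X_S] = \tilde a_T(X_T)\,\mathbbm{1}_{\{T \subseteq S\}};
\]
equivalently, $\tilde a_T$ integrates to zero over any single coordinate $X_j$ with $j \in T$. I would prove this by induction on $|T|$: from the definition $\tilde a_T = a_T - \sum_{U \subset T}\tilde a_U$ and the tower property $\bbE[a_T \mid X_{T \setminus\{j\}}] = a_{T\setminus\{j\}}$, one splits the sum over $U \subset T$ according to whether $j \in U$, applies the inductive hypothesis to the terms with $j \in U$ (which vanish) and leaves the $X_{T\setminus\{j\}}$-measurable terms unchanged; the right-hand side then collapses to $a_{T\setminus\{j\}} - \sum_{U \subseteq T\setminus\{j\}}\tilde a_U = 0$ by the definition of $\tilde a_{T\setminus\{j\}}$. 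The base case $|T|=1$ is immediate since $\bbE[\tilde a_{\{j\}}] = \bbE[a_{\{j\}}] - \theta = 0$. For general $S$, if $T \not\subseteq S$ one picks $j \in T\setminus S$ and integrates out $X_j$ (using the product structure) to obtain $0$, while if $T \subseteq S$ then $\tilde a_T$ is $X_S$-measurable.

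With this property in hand the verification of $(\star)$ is a direct computation. Writing $c_T = (1 + \sum_{S' \in \bbS:\,T\subseteq S'}\lambda_{S'})^{-1}$, the proposed value is $\alpha_S = \lambda_S \sum_{\emptyset \ne T \subseteq S} c_T\,\tilde a_T$, which lies in $H_S$ since each $\tilde a_T$ with $\emptyset \neq T \subseteq S$ is $X_S$-measurable with mean zero. Using $a - \theta = \sum_{\emptyset\ne T}\tilde a_T$ and the key property gives $\bbE[a - \theta \mid X_S] = \sum_{\emptyset\ne T\subseteq S}\tilde a_T$ and $\bbE[\alpha_{S'}\mid X_S] = \lambda_{S'}\sum_{\emptyset\ne T\subseteq S\cap S'} c_T\,\tilde a_T$. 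Summing the latter over $S'\in\bbS$ and exchanging the order of summation, the coefficient of $\tilde a_T$ (for $\emptyset\ne T\subseteq S$) becomes $c_T\sum_{S':T\subseteq S'}\lambda_{S'} = c_T(c_T^{-1}-1) = 1 - c_T$, by the definition of $c_T$. Hence the left-hand side of $(\star)$ equals $\sum_{\emptyset\ne T\subseteq S}\tilde a_T - \sum_{\emptyset\ne T\subseteq S}(1-c_T)\tilde a_T = \sum_{\emptyset\ne T\subseteq S} c_T\,\tilde a_T$, which is exactly $\lambda_S^{-1}\alpha_S$. Thus $(\star)$ holds for every $S \in \bbS$, and the uniqueness from Proposition~\ref{Prop:UniqueMinimiser} identifies the proposed $\alpha_\bbS$ as the minimiser.
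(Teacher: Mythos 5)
Your proposal is correct and follows essentially the same route as the paper's proof: both reduce the problem to the first-order stationarity condition (your $(\star)$ is exactly the paper's condition $(1+1/\lambda_S)\alpha_S + \sum_{S'\neq S}\bbE(\alpha_{S'}\mid X_S)=a_S-\theta$, justified by convexity and the uniqueness from Proposition~\ref{Prop:UniqueMinimiser}), both establish the key ANOVA property $\bbE(\tilde a_T\mid X_S)=\tilde a_T\mathbbm{1}_{\{T\subseteq S\}}$ by induction under the product structure, and both conclude by the same exchange-of-summation computation in which the coefficient of $\tilde a_T$ collapses via $c_T\sum_{S':T\subseteq S'}\lambda_{S'}=1-c_T$. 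The only cosmetic difference is that your induction integrates out one coordinate at a time and then uses the tower property, whereas the paper conditions directly on a general $U\subset T$ and reduces to $X_{U\cap U'}$ by independence.
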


\subsection{Local asymptotic minimax lower bound}
\label{Sec:ANOVALowerBound}

Our next result establishes a link between the minimisation of $\mathcal{L}$ and our estimation problem. Write $k(u)=1/2 + (1+e^{-4u})^{-1}$ and for $h \in L^2$ let $f_{h}$ be the perturbation of $f$ such that $f_{h}(x) \propto k(h(x))f(x)$ and $f_h$ is a density function. This is a convenient choice of $k$ since it is smooth, bounded away from zero and infinity and satisfies $k(0)=k'(0)=1$ and $k''(0)=0$.
\begin{thm}
\label{Thm:ShiftedLowerBound}
Writing $\alpha_\bbS^*$ for the minimiser of~\eqref{Eq:Objective} and $\alpha^* = a - \theta -\sum_S \alpha_S^*$, for any estimator sequence $(\theta_n)$ we have
    \[
        \sup_{I \subset \bbR} \liminf_{n \rightarrow \infty} \max_{t \in I} \,\, \mathbb{E}_{f_{n^{-1/2}t\alpha^*}} \bigl[ n \bigl\{ \theta_n - \theta(f_{n^{-1/2}t\alpha^*}) \}^2 \bigr] \geq \inf_{\alpha_\bbS \in H_\bbS} \mathcal{L}(\alpha_\bbS),
    \]
    where the supremum is taken over all finite subsets $I$ of $\bbR$.
\end{thm}
When combined with our later upper bounds this result shows that, under suitable regularity conditions, the minimal asymptotic variance is given by $n^{-1}\inf_{\alpha_\bbS \in H_\bbS} \mathcal{L}(\alpha_\bbS)$. As well as establishing our lower bound, the result shows that $\alpha^*$ is the direction in which we should perturb $f$ for a least-favourable submodel and thus informs the construction of optimal estimators to follow in Section~\ref{Sec:UpperBound}. In the language of efficiency theory, this means that $\alpha^*$ is the efficient influence function for the parameter $\theta(P)$ in our model. 

In order to provide some intuition for this efficient influence function we give formal calculations for perturbations in a general direction, restricting attention to bounded perturbations for simplicity. For a bounded function $h$ satisfying $\int hf =0$ write $P_{n,h}$ for the distribution of the data when $f$ is replaced by $f(1+n^{-1/2}h)$ and when $n$ is sufficiently large that this is non-negative. Writing $h_S(x) \equiv h_S(x_S)=\bbE\{h(X)|X_S=x_S\}$, the density of $(X_{S,1})_S$ under $P_{n,h}$ is proportional to $r_S f_S(1+n^{-1/2}h_S)$ and
we therefore see that
\begin{align*}
    \log \frac{dP_{n,h}}{dP_{n,0}} &= \sum_{i=1}^n \log \bigl(1+n^{-1/2}h(X_i) \bigr) + \sum_{S \in \bbS} \sum_{j=1}^{n_S} \log \biggl( \frac{1+n^{-1/2}h_S(X_{S,j})}{1+n^{-1/2}  \bbE\{h_S(X_{S,1})\}} \biggr) \\
    & = n^{-1/2} \sum_{i=1}^n h(X_i) - \frac{1}{2} \Var\{h(X_1)\} + n^{-1/2} \sum_{S \in \bbS} \sum_{j=1}^{n_S} \bigl[ h_S(X_{S,j}) - \bbE\{ h_S(X_{S,j})\} \bigr] \\
    & \hspace{50pt} - \frac{1}{2} \sum_{S \in \bbS} \lambda_S \Var \{h_S(X_{S,j})\} + o_p(n^{-1/2})
\end{align*}
as $n \rightarrow \infty$, under $P_{n,0}$. Thus, writing $\lambda_\bbS = (\lambda_S : S \in \bbS)$, if we define the inner product
\begin{equation}
\label{Eq:InnerProduct}
     \langle h,h' \rangle_{\lambda_\bbS} = \mathbb{E}\{ h(X)h'(X)\} + \sum_{S \in \bbS} \lambda_S \Cov \bigl( h_S(X_{S,1}), h_S'(X_{S,1}) \bigr),   
\end{equation}
we have that $\log \frac{dP_{n,h}}{dP_{n,0}} \overset{\mathrm{d}}{\rightarrow} \|h\|_{\lambda_\bbS} Z - (1/2) \|h\|_{\lambda_\bbS}^2$ for a standard Gaussian variable $Z$, so that our sequence of experiments in locally asymptotically normal. On the other hand, we have $\theta(f(1+n^{-1/2}h)) - \theta(f) = n^{-1/2} \int a h f$. According to local asymptotic minimax theory, we should therefore choose $h$ to maximise $\int ahf$ under the constraint $\|h\|_{\lambda_\bbS} \leq 1$. This is done by finding the adjoint of the map $\dot{\kappa} : h' \mapsto \int a f h'$, i.e.~by choosing $h$ proportional to the function $\dot{\kappa}^*$ that satisfies $\langle \dot{\kappa}^* , h' \rangle_{\lambda_\bbS} = \dot{\kappa}(h')$ for all $h'$. We show in the proof of Theorem~\ref{Thm:ShiftedLowerBound} that the stationarity conditions for minimising $\mathcal{L}(\cdot)$ ensure that $\alpha^*$ indeed satisfies this property.

\subsection{Examples and special cases}
\label{Sec:ANOVAExamples}

We conclude this section with illustrative examples and special cases to aid the interpretation of the limiting variance $\inf_{\alpha_\bbS \in H_\bbS} \mathcal{L}(\alpha_\bbS)$. For the simplest structures, where $\bbS=\{S\}$ is a singleton, elementary calculations show that the optimisation problem can be solved by taking
\begin{equation}
\label{Eq:SemiSupervised}
    \alpha_S(x_S) = \frac{\lambda_S \bar{r}_S(x_S)}{1+\lambda_S \bar{r}_S(x_S)}\bbE\bigl\{ a(X) - \theta' | X_S=x_S \bigr\},
\end{equation}
for $\theta'$ such that $\bbE\{\alpha_S(X_S)\}=0$. Our next result extends these calculations to general monotonic structures of $\bbS$. 
\begin{prop}
\label{Prop:ShiftedMonotoneCase}
Suppose that $\bbS \subseteq \{[1],[2],\ldots,[d-1]\}$. Using the shorthand $Y_j=(X_1,\ldots,X_j)$ and $y_j=(x_1,\ldots,x_j)$ we inductively define the functions $R_j(y_j) = \lambda_{[j]}\bar{r}_{[j]}(y_j)$, $\mu_{d-1}(y_{d-1}) \equiv 1$, $\tilde{a}_{d-1}(y_{d-1}) = a_{[d-1]}(y_{d-1})-\theta$, $\nu_j^{j-1}(y_j) \equiv 1$,
\begin{align*}
    \mu_j(y_j)&= \bbE \biggl\{ \frac{\mu_{j+1}(Y_{j+1})}{1+R_{j+1}(Y_{j+1})\mu_{j+1}(Y_{j+1})} \biggm|Y_j = y_j \biggr\},\\
    \tilde{a}_j(y_j) &= \frac{1}{\mu_j} \bbE \biggl\{ \frac{\mu_{j+1}(Y_{j+1}) \tilde{a}_{j+1}(Y_{j+1})}{1+ R_{j+1}(Y_{j+1})\mu_{j+1}(Y_{j+1})} \biggm| Y_j=y_j \biggr\} \quad \text{and} \\
     \nu_j^k(y_j) &= \bbE \biggl\{ \frac{\nu_{j+1}^k(Y_{j+1})}{1 + R_{j+1}(Y_{j+1}) \mu_{j+1}(Y_{j+1})} \biggm| Y_j = y_j \biggr\}
\end{align*}
for $j \in [d-1]$ and $k\geq j$. For notational convenience we write $\nu_j^k = 0$ for $k<j-1$, write $\nu_0^k \equiv 0$ for all $k$ and write $\tilde{a}_0 \equiv 0$. Then $\mathcal{L}(\alpha_\bbS)$ is minimised by taking
\begin{equation}
\label{Eq:ShiftedMonotoneMinimiser}
    \alpha_{[j]} = \frac{R_j}{1/\mu_j + R_j} \sum_{k=1}^j \biggl( \prod_{m=k}^{j-1} \frac{1}{1+R_m \mu_m} \biggr) \biggl\{ \tilde{a}_k - \tilde{a}_{k-1} - \sum_{\ell=1}^{d-1} \theta_\ell \biggl( \frac{\nu_k^{\ell-1}}{\mu_k} - \frac{
\nu_{k-1}^{\ell-1}}{\mu_{k-1}} \biggr) \biggr\},
\end{equation}
for each $j \in [d-1]$ such that $[j] \in \bbS$, where $\theta_1,\ldots,\theta_{d-1} \in \bbR$ are constants such that $\theta_j=0$ if $[j] \not\in \bbS$ and such that each $\alpha_{[j]}$ has mean zero.
\end{prop}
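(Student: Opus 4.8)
The plan is to characterise the unique minimiser through its first-order stationarity conditions and then verify that the explicit formula~\eqref{Eq:ShiftedMonotoneMinimiser} solves them. Since each $\alpha_S$ has mean zero, $\Var(a - \sum_{S \in \bbS}\alpha_S) = \bbE(g^2)$ with $g = a - \theta - \sum_{[k] \in \bbS}\alpha_{[k]}$, so $\mathcal{L}$ is a convex quadratic functional on $H_\bbS$ which, by Proposition~\ref{Prop:UniqueMinimiser}, has a unique minimiser. Convexity makes first-order stationarity necessary and sufficient, so I would perturb $\alpha_{[j]}$ by $t\eta$ with $\eta \in H_{[j]}$, obtaining the directional derivative $2\bbE[\{\alpha_{[j]}/(\lambda_{[j]}\bar r_{[j]}) - g\}\eta]$. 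Its vanishing for all mean-zero $\eta$ that are functions of $Y_j$ forces $\bbE\{\alpha_{[j]}/(\lambda_{[j]}\bar r_{[j]}) - g \mid Y_j\}$ to be orthogonal to all mean-zero functions of $Y_j$, hence almost surely constant; recalling $R_j = \lambda_{[j]}\bar r_{[j]}$ this gives the normal equations
\[
    \frac{\alpha_{[j]}}{R_j} = \bbE\{g \mid Y_j\} + \kappa_{[j]} \qquad \text{for each } [j] \in \bbS,
\]
where the $\kappa_{[j]} \in \bbR$ are Lagrange multipliers enforcing $\int \alpha_{[j]} f = 0$. Since $\lambda_{[j]} = 0$ whenever $[j] \notin \bbS$, we read $R_j = 0$ at such levels, consistent with $\alpha_{[j]} = 0$ and $\theta_j = 0$, which lets the recursions run over all $j \in [d-1]$.

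The second step exploits the nested structure to reduce this coupled system to a backward sweep along the chain $Y_1 \subseteq \cdots \subseteq Y_{d-1}$. For $k \le j$ the function $\alpha_{[k]}(Y_k)$ is $Y_j$-measurable so $\bbE\{\alpha_{[k]} \mid Y_j\} = \alpha_{[k]}$, while for $k > j$ it contributes a genuine conditional expectation; combined with the tower property $\bbE\{g \mid Y_j\} = \bbE\{\bbE(g \mid Y_{j+1}) \mid Y_j\}$, the equations can be solved by eliminating $\alpha_{[k]}$ from the largest index downwards. At level $d-1$ all relevant $\alpha_{[k]}$ are $Y_{d-1}$-measurable and the normal equation solves explicitly for $\alpha_{[d-1]}$ with prefactor $R_{d-1}/(1+R_{d-1})$; substituting this back introduces conditional expectations of the factor $(1+R_{d-1})^{-1}$, and iterating generates precisely $\mu_j$, $\tilde a_j$ and $\nu_j^k$. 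I would make this identification rigorous by backward induction on $j$: $\mu_j = \bbE\{\mu_{j+1}/(1+R_{j+1}\mu_{j+1}) \mid Y_j\}$ is the scalar message accumulated from the eliminated levels, $\tilde a_j$ is the renormalised source descending from $\tilde a_{d-1} = a_{[d-1]} - \theta$ (with $\bbE\{a \mid Y_j\} = \bbE\{a_{[d-1]} \mid Y_j\}$ by the tower property), and $\nu_j^k$ records how a unit multiplier $\kappa_{[k]}$ propagates downward, the factors $(1+R_m\mu_m)^{-1}$ in the product $\prod_{m=k}^{j-1}(1+R_m\mu_m)^{-1}$ being the resolvent weights generated at each elimination step.

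With these identities in hand, the third step verifies that the candidate~\eqref{Eq:ShiftedMonotoneMinimiser} satisfies the normal equations. The key is to prove, again by backward induction with base case $j = d-1$, a single formula expressing $\bbE\{g \mid Y_j\}$ in terms of $\tilde a_j$, $\mu_j$ and the $\nu_j^{\ell-1}$, so that multiplying $\bbE\{g\mid Y_j\} + \kappa_{[j]}$ by $R_j$ collapses to the stated expression for $\alpha_{[j]}$; each inductive step is an application of the tower property and the defining recursions for $\mu$, $\tilde a$ and $\nu$. One checks en route that each $\alpha_{[j]}$ lies in $H_{[j]}$, being a function of $Y_j$ by construction. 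The multipliers are pinned down last: the general solution of the normal equations, as $(\theta_\ell : [\ell] \in \bbS)$ vary, is the family~\eqref{Eq:ShiftedMonotoneMinimiser}, and the minimiser is its unique mean-zero member, whose existence and uniqueness are guaranteed by Proposition~\ref{Prop:UniqueMinimiser}; the prescription $\theta_\ell = 0$ for $[\ell] \notin \bbS$ simply records the absence of a constraint there. This identifies~\eqref{Eq:ShiftedMonotoneMinimiser} with the minimiser and completes the argument.

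The main obstacle I anticipate is the bookkeeping of the backward induction. One must simultaneously track the two kinds of dependence — the $Y_j$-measurable lower-index terms pulled out of the conditional expectations and the genuine conditional expectations of higher-index terms — and confirm that the iterated conditional expectations defining $\mu_j$, $\tilde a_j$ and $\nu_j^k$ telescope exactly into the closed form, with the constants $\theta_\ell$ threaded correctly through the $\nu_j^{\ell-1}/\mu_j$ differences. Choosing a sufficiently strong inductive hypothesis, one that carries $\bbE\{g \mid Y_j\}$ together with the partial sums defining $\alpha_{[j]}$ and the multiplier contributions in a single expression, will be essential to keep the induction self-contained and the algebra manageable.
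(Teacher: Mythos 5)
Your proposal is correct and follows essentially the same route as the paper's proof: both characterise the minimiser via the first-order stationarity (normal) equations guaranteed to determine it uniquely by Proposition~\ref{Prop:UniqueMinimiser}, then exploit the nested structure through a backward induction based on the tower property—in which $\mu_j$, $\tilde a_j$ and $\nu_j^k$ arise exactly as the eliminated-level "messages," source terms and multiplier propagators you describe—before telescoping the resulting recursion into the closed form~\eqref{Eq:ShiftedMonotoneMinimiser}, with the constants $\theta_\ell$ fixed by the mean-zero constraints. The paper organises this as two inductions (a backward one establishing $\alpha_{[j]}^* = \frac{R_j}{1+R_j\mu_j}\{\mu_j(\tilde a_j - \alpha_{[1]}^* - \cdots - \alpha_{[j-1]}^*) - \sum_{\ell \ge j}\theta_\ell \nu_j^{\ell-1}\}$, then a forward one to telescope), which is precisely the bookkeeping split you anticipate.
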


While the optimal $\alpha_\bbS$ given in Proposition~\ref{Prop:ShiftedMonotoneCase} is rather complex, it can be written as a sum of finitely many terms, each of which can be expressed through a finite number of conditional expectations. This is due to the monotonic structure of $\bbS$, which simplifies the optimisation problem given by~\eqref{Eq:Objective} and allows for a natural sequential approach. To gain some intuition for this minimiser and the quantities involved in its definition we specialise Proposition~\ref{Prop:ShiftedMonotoneCase} to the MCAR setting in the following corollary. We also give a simple direct proof.
\begin{cor}
\label{Prop:MonotoneCase}
Suppose that $\bbS \subseteq \{[1],[2],\ldots,[d-1]\}$ and that $r_S \equiv 1$ for all $S \in \bbS$. Then $\mathcal{L}(\alpha_\bbS)$ is minimised by taking
\[
    \alpha_{[j]} = \sum_{k=1}^j  \frac{\lambda_{[j]}}{1+\sum_{\ell=k}^{d-1} \lambda_{[\ell]}} (a_{[k]} - a_{[k-1]})
\]
for each $j$ such that $[j] \in \bbS$.
\end{cor}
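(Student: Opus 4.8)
The plan is to exploit that, in the MCAR case, $\mathcal{L}$ is a convex quadratic functional with a unique minimiser by Proposition~\ref{Prop:UniqueMinimiser}; it therefore suffices to check that the proposed candidate lies in $H_\bbS$ and is a stationary point of $\mathcal{L}$. Writing $\theta=\bbE\{a(X)\}$ and using that each $\alpha_S$ is mean-zero, so that $\Var(a-\sum_S\alpha_S)=\bbE[(a-\theta-\sum_{S\in\bbS}\alpha_S)^2]$, I would differentiate
\[
    \mathcal{L}(\alpha_\bbS)=\bbE\Bigl[\bigl(a-\theta-\textstyle\sum_{S\in\bbS}\alpha_S\bigr)^2\Bigr]+\sum_{S\in\bbS}\lambda_S^{-1}\bbE[\alpha_S^2]
\]
in a direction $\beta_{[j]}\in H_{[j]}$ for $[j]\in\bbS$. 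Setting the derivative to zero and using that $\beta_{[j]}$ ranges over all mean-zero functions of $X_{[j]}$ (so that the residual, being itself mean-zero, must vanish) yields the stationarity characterisation
\[
    \lambda_{[j]}\,\bbE\Bigl[a-\theta-\textstyle\sum_{S\in\bbS}\alpha_S\Bigm|X_{[j]}\Bigr]=\alpha_{[j]},\qquad [j]\in\bbS.
\]
That the proposed $\alpha_{[j]}$ lies in $H_{[j]}$ is immediate, since each increment $\Delta_k:=a_{[k]}-a_{[k-1]}$ for $k\le j$ is a mean-zero function of $X_{[k]}$ and hence of $X_{[j]}$.

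The structural heart of the argument is a martingale-increment property for the $\Delta_k$ relative to the nested filtration generated by $X_{[1]}\subseteq X_{[2]}\subseteq\cdots$. Setting $a_{[0]}=\theta$ and $a_{[d]}=a$, the tower property applied to $a_{[k]}=\bbE\{a(X)\mid X_{[k]}\}$ gives $\bbE[a_{[k]}\mid X_{[j]}]=a_{[k\wedge j]}$, from which
\[
    \bbE[\Delta_k\mid X_{[j]}]=\Delta_k\,\mathbbm{1}\{k\le j\}.
\]
In the regime of interest $j\le d-1$, this gives in particular $\bbE[\Delta_d\mid X_{[j]}]=0$; moreover $a-\theta=\sum_{k=1}^d\Delta_k$ by telescoping.

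To verify the stationarity condition I would set $c_k=(1+\sum_{\ell=k}^{d-1}\lambda_{[\ell]})^{-1}$, with the convention $\lambda_{[\ell]}=0$ when $[\ell]\notin\bbS$, so that the candidate reads $\alpha_{[j]}=\lambda_{[j]}\sum_{k=1}^j c_k\Delta_k$. Interchanging the order of summation and using the identity $c_k\sum_{m=k}^{d-1}\lambda_{[m]}=1-c_k$,
\[
    \sum_{S\in\bbS}\alpha_S=\sum_{k=1}^{d-1}c_k\Delta_k\sum_{m=k}^{d-1}\lambda_{[m]}=\sum_{k=1}^{d-1}(1-c_k)\Delta_k,
\]
so that $a-\theta-\sum_S\alpha_S=\Delta_d+\sum_{k=1}^{d-1}c_k\Delta_k$. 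Taking the conditional expectation given $X_{[j]}$ and applying the increment property collapses this to $\sum_{k=1}^j c_k\Delta_k$; multiplying by $\lambda_{[j]}$ recovers $\alpha_{[j]}$ exactly, establishing the stationarity condition.

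The computation is largely bookkeeping once the increment property is in hand; the one genuinely load-bearing step is the change of summation order together with the identity $c_k\sum_{m=k}^{d-1}\lambda_{[m]}=1-c_k$, which is precisely what makes the penalty weights telescope into the clean coefficients of the stated minimiser. I would also be careful to apply the convention $\lambda_{[\ell]}=0$ for $[\ell]\notin\bbS$ consistently in both the definition of $c_k$ and the sum $\sum_{S\in\bbS}\alpha_S$, since the two must be matched for the cancellation to go through.
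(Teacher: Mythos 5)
Your proof is correct and follows essentially the same route as the paper's own direct argument: the paper likewise verifies the stationarity conditions~\eqref{Eq:Stationarity2} for the claimed minimiser, using the same tower-property identity $\bbE(a_{[k]}-a_{[k-1]}\mid X_{[j]})=(a_{[k]}-a_{[k-1]})\mathbbm{1}_{\{k\leq j\}}$ and the same interchange of summation that collapses the coefficients via $\bigl(1+\sum_{\ell=k}^{d-1}\lambda_{[\ell]}\bigr)^{-1}\bigl(1+\sum_{j'=k}^{d-1}\lambda_{[j']}\bigr)=1$. The only difference is organisational: the paper first obtains the corollary by specialising the general shifted result (Proposition~\ref{Prop:ShiftedMonotoneCase}) and then records this direct stationarity check as an alternative, whereas you go straight to the direct check.
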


When we move beyond monotonic structures, it is not possible to give explicit minimisers in such generality. However, the following example exhibits a non-monotonic setting in which our results give simple expressions for optimal variances and influence functions. 
\begin{eg}
Write $X=(Y,Z)$ where $Y$ is a binary response taking values in $\{0,1\}$ and $Z$ is a covariate taking values in $\bbR^p$, with $p=d-1$, and write $\eta(z)=\bbP(Y=1|Z=z)$. Suppose that $\bbS=\{\{1\},[d]\setminus \{1\}\}$ so that we have incomplete data on $Y$ only and incomplete data on $Z$ only and write $r_Y,r_Z$ for the associated distributional shifts. Consider the problem of estimating $\theta=\bbE(Y)$. The restriction to binary responses means that the only mean-zero functions $\beta$ of $Y$ are of the form $\beta(Y)=\mu(Y-\theta)$. We therefore choose $\alpha$ and $\mu$ to minimise
\begin{align*}
    &\bbE \biggl[\{Y-\theta-\alpha(Z)-\mu(Y-\theta)\}^2 + \frac{\alpha(Z)^2}{\lambda_Z \bar{r}_Z(Z)} + \frac{\mu^2}{\lambda_Y \bar{r}_Y(Y)}(Y-\theta)^2 \biggr] \\
    & = \bbE \biggl[ \biggl\{ (1-\mu)^2 + \! \frac{\mu^2}{\lambda_Y \bar{r}_Y(Y)} \biggr\} (Y-\theta)^2   + \biggl\{ 1 \!+\! \frac{1}{\lambda_Z \bar{r}_Z(Z)} \biggr\} \alpha(Z)^2 - 2(1-\mu) \{\eta(Z)-\theta\} \alpha(Z) \biggr].
\end{align*}
For each fixed $\mu \in \bbR$, as for~\eqref{Eq:SemiSupervised} it can be seen that this is minimised by taking $\alpha=(1-\mu)\frac{\lambda \bar{r}_Z}{1+\lambda_Z \bar{r}_Z}(\eta-\bar{\theta})$, 
where $\bar{\theta} \in \bbR$ is chosen such that $\alpha$ has mean zero. It therefore follows that in this example
\begin{align*}
    \inf_{\alpha_\bbS} \mathcal{L}(\alpha_\bbS) &= \inf_{\mu \in \bbR}  \bbE\biggl[ \biggl\{ (1-\mu)^2 + \frac{\mu^2}{\lambda_Y \bar{r}_Y(Y)} \biggr\} (Y-\theta)^2 - (1-\mu)^2 \frac{\lambda_Z \bar{r}_Z(Z)}{1+\lambda_Z \bar{r}_Z(Z)} \{\eta(Z) - \bar{\theta}\}^2 \biggr] \\
    &=\biggl[ \frac{1}{ \theta(1-\theta) - \bbE\bigl[ \frac{\lambda_Z \bar{r}_Z(Z)}{1+\lambda_Z \bar{r}_Z(Z)}\{\eta(Z) - \bar{\theta}\}^2 \bigr]  } + \frac{\lambda_Y}{\bbE\{ \frac{(Y-\theta)^2}{\bar{r}_Y(Y)} \}} \biggr]^{-1}.
\end{align*}
When $r_Z, r_Y \equiv 1$, this final expression simplifies to the harmonic mean of $\bbE \{ \mathrm{Var}(Y|Z) \} + (1+\lambda_Z)^{-1} \mathrm{Var}(\eta(Z))$ and $\lambda_Y^{-1} \mathrm{Var}(Y)$.
\end{eg}

\section{Upper bound}
\label{Sec:UpperBound}

In this section we construct estimators whose variance is approximately equal to the minimal asymptotic variance $n^{-1} \inf_{\alpha_\bbS \in H_\bbS} \mathcal{L}(\alpha_\bbS)$. We do this by finding an approximate influence function for our problem (in Section~\ref{Sec:UpperBoundApprox}), which will depend on unknown properties of $f$, and then approximating this in a data-driven fashion using cross fitting (in Section~\ref{Sec:UpperBoundData}). Finally, in Section~\ref{Sec:UpperBoundConfidence}, we will use this development to introduce confidence intervals for $\theta$. Our lower bounds in the previous section were naturally asymptotic, but the results in this section are valid with finite sample sizes. We present a simulation study in Section~\ref{Sec:Simulations}.

\subsection{Construction of an approximate influence function}
\label{Sec:UpperBoundApprox}

As shown by Theorem~\ref{Thm:ShiftedLowerBound}, the efficient influence function for our problem is given by $\alpha^*=a-\theta-\sum_{S \in \bbS} \alpha_S^*$, where $\alpha_\bbS^*$ is the minimiser of $\mathcal{L}(\cdot)$. This has no general explicit form, but we will see that it can be approximated by repeated use of a simple technique. Choosing a step size $\eta \in (0,1]$ and initialising at $\alpha_\bbS^{(0)} = 0$, we iteratively define $\alpha_\bbS^{(m)}$  by
\[
    \alpha_S^{(m+1)}(x_S) = (1-\eta) \alpha_S^{(m)}(x_S) + \eta \frac{\lambda_S \bar{r}_S(x_S)}{1+\lambda_S \bar{r}_S(x_S)} \bbE \biggl\{ a(X) - \sum_{S' \in \bbS \setminus \{S\}} \alpha_{S'}^{(m)}(X_{S'}) - \theta' \biggm| X_S = x_S \biggr\},
\]
where $\theta'$ is chosen to centre the random variable $\alpha_S^{(m+1)}(X_S)$. In view of~\eqref{Eq:SemiSupervised}, when $\eta=1$ this can be seen as applying the naive method, which treats all incomplete datasets separately, to the residuals of the current approximation to $a(\cdot)$. Our next result is proved by showing that this iterative scheme can be thought of as gradient descent in a suitable linear space. This allows us to use techniques from convex optimisation to bound $\mathcal{L}(\alpha_\bbS^{(M)}) - \mathcal{L}(\alpha_\bbS^*)$.
\begin{prop}
\label{Prop:OptimisationShifted}
Write $\lambda_\mathrm{max}=\max_{S \in \bbS} \lambda_S$ and recall that we assume that $\|\bar{r}_S\|_\infty \leq C$ for all $S \in \bbS$. Choosing $\eta = |\bbS|^{-1}$ and writing $\kappa = |\bbS|(1+C \lambda_\mathrm{max})$ we have
\[
    \mathcal{L}(\alpha_\bbS^{(M)}) - \mathcal{L}(\alpha_\bbS^*) \leq \kappa (1-1/\kappa)^{M} \mathrm{Var} \, a(X) 
\]
for all $M \in \bbN_0$. Moreover, $\mathcal{L}(\alpha_\bbS^{(M+1)}) \leq \mathcal{L}(\alpha_\bbS^{(M)})$ for all $M \in \bbN_0$.
\end{prop}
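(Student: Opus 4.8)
The plan is to recognise $\mathcal{L}$ as a strongly convex quadratic functional on the Hilbert space $H_\bbS = \prod_{S \in \bbS} H_S$ and to show that the sequence $(\alpha_\bbS^{(M)})_{M \geq 0}$ is exactly the gradient descent trajectory for $\mathcal{L}$ started at $0$, so that the bound and the monotonicity both follow from standard guarantees for gradient descent on smooth strongly convex functions. I would equip $H_\bbS$ with the weighted inner product $\langle \alpha_\bbS, \beta_\bbS \rangle_* = \sum_{S \in \bbS} \bbE[\alpha_S \beta_S / w_S]$, where $w_S = \lambda_S \bar{r}_S/(1+\lambda_S \bar{r}_S)$, so that $\|\alpha_\bbS\|_*^2 = \sum_S \bbE[\alpha_S^2] + \sum_S \bbE[\alpha_S^2/(\lambda_S \bar{r}_S)]$. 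Writing $T_S \gamma = w_S(\bbE[\gamma \mid X_S] - c_\gamma)$ for the operator taking a conditional expectation given $X_S$, multiplying by $w_S$ and recentring to have mean zero, a direct computation of the G\^ateaux derivative of $\mathcal{L}$ shows that the $S$-component of the gradient is $(\nabla_* \mathcal{L})_S = 2[\alpha_S - T_S(a - \sum_{S' \neq S}\alpha_{S'})]$. Hence the gradient descent update with step $1/(2|\bbS|)$ takes the clean form
\[
    \alpha_S \mapsto (1-\eta)\alpha_S + \eta\, T_S\Bigl(a - \sum_{S' \neq S} \alpha_{S'}\Bigr), \qquad \eta = |\bbS|^{-1},
\]
which is precisely the recursion implicit in the definition of $\bar{a}_\bS^{(m)}$ and $\alpha_S^{(M)}$.

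The crux, and what I expect to be the main obstacle, is verifying that iterating this affine update from $\alpha_\bbS^{(0)} = 0$ reproduces the explicit formula~\eqref{Eq:ApproxInfluence}, alternating signs and binomial-tail weights included. Writing the update as $\alpha_\bbS \mapsto [(1-\eta)I - \eta K]\alpha_\bbS + \eta\tau(a)$, where $(\tau(a))_S = T_S a$ and $(K\alpha_\bbS)_S = \sum_{S' \neq S} T_S \alpha_{S'}$, the iterate from zero is $\alpha_\bbS^{(M)} = \eta \sum_{j=0}^{M-1}[(1-\eta)I - \eta K]^j \tau(a)$. The constraint $S' \neq S$ built into $K$ is exactly what produces admissible paths in $\bbS^{(m)}$, so that the $S$-component of $K^{m-1}\tau(a)$ equals $\sum_{\bS \in \bbS^{(m)} : S_m = S} \bar{a}_\bS^{(m)}$, with the centring constants $\bar{\theta}_\bS^{(m)}$ absorbed automatically by $T_S$. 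Extracting the coefficient of $K^{m-1}$ then reduces the claim to the identity
\[
    \eta^m \sum_{j=m-1}^{M-1} \binom{j}{m-1}(1-\eta)^{j-m+1} = \bbP\{\mathrm{Bin}(M,\eta) \geq m\} = b_{M,\eta}(m),
\]
which I would prove by reading the left-hand side as the probability that the $m$-th success in a sequence of $\mathrm{Bernoulli}(\eta)$ trials occurs on or before trial $M$ (a negative-binomial/binomial duality). Together with the sign $(-1)^{m-1}$ arising from the $-\eta K$ term, this matches~\eqref{Eq:ApproxInfluence} exactly.

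It then remains to control the spectrum of the Hessian $A$ of $\mathcal{L}$ relative to $\langle\cdot,\cdot\rangle_*$, that is to find $\mu, L$ with $\mu\|\alpha_\bbS\|_*^2 \leq \langle \alpha_\bbS, A\alpha_\bbS\rangle_* \leq L\|\alpha_\bbS\|_*^2$. Since the quadratic part of $\mathcal{L}$ is $\Var(\sum_S \alpha_S) + \sum_S \bbE[\alpha_S^2/(\lambda_S\bar{r}_S)]$, setting $q = \sum_S \bbE[\alpha_S^2]$ and $p = \sum_S \bbE[\alpha_S^2/(\lambda_S\bar{r}_S)]$ gives $\|\alpha_\bbS\|_*^2 = q + p$ and $\langle \alpha_\bbS, A\alpha_\bbS\rangle_* = 2\Var(\sum_S\alpha_S) + 2p$. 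The upper bound uses $\Var(\sum_S\alpha_S) \leq |\bbS| q$ to give $L = 2|\bbS|$; the lower bound uses $\bar{r}_S \leq C$ and $\lambda_S \leq \lambda_\mathrm{max}$, which force $p \geq (1+C\lambda_\mathrm{max})^{-1}(q+p)$ and hence $\mu = 2(1+C\lambda_\mathrm{max})^{-1}$. The condition number is therefore $L/\mu = |\bbS|(1+C\lambda_\mathrm{max}) = \kappa$, and the step $1/(2|\bbS|) = 1/L$ is consistent with the update parameter $\eta = |\bbS|^{-1}$.

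Finally, since $\alpha_\bbS^{(M)}$ is the gradient descent iterate with step $1/L$ on the $\mu$-strongly convex and $L$-smooth quadratic $\mathcal{L}$, whose unique minimiser $\alpha_\bbS^*$ exists by Proposition~\ref{Prop:UniqueMinimiser}, the descent lemma gives both $\mathcal{L}(\alpha_\bbS^{(M+1)}) \leq \mathcal{L}(\alpha_\bbS^{(M)})$ and the geometric contraction of the optimality gap by the factor $1 - \mu/L = 1 - 1/\kappa$ per step. Combining this with $\mathcal{L}(\alpha_\bbS^{(0)}) = \mathcal{L}(0) = \Var\{a(X)\}$ and $\mathcal{L}(\alpha_\bbS^*) \geq 0$, together with $\kappa \geq 1$, yields
\[
    \mathcal{L}(\alpha_\bbS^{(M)}) - \mathcal{L}(\alpha_\bbS^*) \leq (1-1/\kappa)^M\{\mathcal{L}(\alpha_\bbS^{(0)}) - \mathcal{L}(\alpha_\bbS^*)\} \leq \kappa(1-1/\kappa)^M\Var\{a(X)\},
\]
as required; the same bound also follows, with the factor $\kappa$ appearing explicitly, from converting the cruder iterate-norm contraction $\|\alpha_\bbS^{(M)} - \alpha_\bbS^*\|_*^2 \leq (1-1/\kappa)^{2M}\|\alpha_\bbS^*\|_*^2$ through the constants $\mu$ and $L$.
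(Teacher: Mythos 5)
Your proposal is correct, and its core is the same as the paper's argument: you use exactly the paper's weighted inner product (your $\langle \cdot,\cdot\rangle_*$ is the paper's $\langle\cdot,\cdot\rangle$), the same gradient, the identification of $\alpha_\bbS^{(M)}$ with gradient descent from zero with step $\eta/2=1/(2|\bbS|)$, and the same constants giving $\kappa=|\bbS|(1+C\lambda_\mathrm{max})$. Two execution differences are worth recording. First, the paper verifies the gradient-descent identification by a one-step induction, resting on the Pascal-type recursion $(1-\eta)b_{M,\eta}(m)+\eta b_{M,\eta}(m-1)=b_{M+1,\eta}(m)$ in~\eqref{Eq:GradientDescent}; you instead unroll the affine recursion into the Neumann sum $\eta\sum_{j=0}^{M-1}[(1-\eta)I-\eta K]^j\tau(a)$ and match coefficients via the negative-binomial identity $\eta^m\sum_{j=m-1}^{M-1}\binom{j}{m-1}(1-\eta)^{j-m+1}=b_{M,\eta}(m)$. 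These are equivalent facts about binomial tails, and your path-counting interpretation of $K^{m-1}\tau(a)$ correctly reproduces the sums over $\bbS^{(m)}$ in~\eqref{Eq:ApproxInfluence}, so both verifications are sound. Second, and more substantively, you close the argument with a function-value (Polyak--{\L}ojasiewicz) contraction, $\mathcal{L}(\alpha_\bbS^{(M)})-\mathcal{L}(\alpha_\bbS^*)\leq(1-1/\kappa)^M\{\mathcal{L}(0)-\mathcal{L}(\alpha_\bbS^*)\}\leq(1-1/\kappa)^M\Var a(X)$, which is self-contained within the convex-optimisation framework and in fact sharper by a factor of $\kappa$ than the stated bound. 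The paper instead contracts the iterates in norm and then converts back to objective values, which requires the bound $Q(\alpha_\bbS^*)\leq\Var a(X)$ of~\eqref{Eq:LocalConvexity}; that step relies on the adjoint identity~\eqref{Eq:AdjointShifted} imported from the lower-bound proof, a cross-reference your route avoids entirely. Both routes also deliver the monotonicity claim, yours via the descent lemma and the paper's via~\eqref{Eq:QuadraticExpansion} and~\eqref{Eq:SmoothQuadratic}.
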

In the proof of this result we use the smoothness and strong convexity of our problem to show that the value of $\mathcal{L}(\cdot)$ decreases exponentially quickly with a rate governed by the condition number $\kappa$. The strong convexity of the problem follows from the inclusion of the penalty terms in the objective function, which leads to the dependence of $\kappa$ on $C \lambda_\mathrm{max}$. 

\subsection{Data-driven estimator}
\label{Sec:UpperBoundData}

We now turn to the construction of data-driven approximations of $\alpha_\bbS^{(M)}$. It will be convenient to define the approximate oracle estimator
\begin{equation}
\label{Eq:ApproxOracle}
    \theta^{*,(M)} = \frac{1}{n} \sum_{i=1}^n \biggl\{a(X_i) - \sum_{S \in \bbS} \alpha_S^{(M)}(X_i) \biggr\} + \sum_{S \in \bbS} \frac{1}{n_S} \sum_{j=1}^{n_S} \frac{\alpha_S^{(M)}(X_{S,j})}{\bar{r}_S(X_{S,j})}
\end{equation}
that we will aim to mimic with our data. By construction, each $\alpha_S^{(M)}$ has mean zero under $f$ and, since the density of $(X_{S,1})_S$ is given by $\bar{r}_S f_S$, we see that $\theta^{*,(M)}$ is unbiased for $\theta$. We can show that the variance of $\theta^{*,(M)}$ is approximately minimal using Proposition~\ref{Prop:OptimisationShifted} and arguing that $\inf_{\alpha_\bbS \in H_\bbS} \mathcal{L}(\alpha_\bbS)$ is insensitive to small changes in $\lambda_\bbS$, as we do in the proof of Proposition~\ref{Prop:CrossFitReduction} below. The latter is a technical but simple point, arising only because we have $n_S/n \rightarrow \lambda_S$ as $n \rightarrow \infty$ rather than equality.

We will approximate $\theta^{*,(M)}$ using cross fitting. Write $\mathcal{D}_1 = \{X_1,\ldots,X_{\lceil n/2 \rceil}\}$ for the first half of the complete data and $\mathcal{D}_2 = \{X_{\lceil n/2 \rceil + 1},\ldots,X_n\}$ for the second half. Let $\hat{r}_{S,(\ell)} = |\mathcal{D}_{\ell}|^{-1} \sum_{x \in \mathcal{D}_{\ell}} r_S(x)$ and let $\hat{\alpha}_{S,(\ell)}^{(M)}(\cdot)$ be a function that is calculated using the $\mathcal{D}_\ell$ data only, for $\ell=1,2$, to be constructed below. Define 
\[
    \hat{\theta}_{(\ell)}^{(M)} = \frac{1}{|\mathcal{D}_{3-\ell}|}\sum_{x \in \mathcal{D}_{3-\ell}}\biggl\{ a(x) - \sum_{S \in \bbS} \hat{\alpha}_{S,(\ell)}^{(M)}(x) \biggr\} + \sum_{S \in \bbS} \frac{1}{n_S} \sum_{j=1}^{n_S} \frac{\hat{\alpha}_{S,(\ell)}^{(M)}(X_{S,j}) \hat{r}_{S,(3-\ell)}}{ r_S(X_{S,j})}
\]
and $\hat{\theta}=(|\mathcal{D}_2| \hat{\theta}_{(1)}^{(M)} + |\mathcal{D}_1| \hat{\theta}_{(2)}^{(M)})/n$. The next result shows that $\hat{\theta}$ approximates $\theta^{*,(M)}$ well whenever the $\hat{\alpha}_{S,(\ell)}^{(M)}$ are good estimators of the $\alpha_S^{(M)}$ and establishes basic properties of $\theta^{*,(M)}$.
\begin{prop}
\label{Prop:CrossFitReduction}
Let $C>0$ be such that $\|\bar{r}_S\|_\infty \leq C$ for all $S \in \bbS$. We have that
\[
    \bbE \bigl\{ ( \hat{\theta} - \theta^{*,(M)} )^2 \bigr\} \leq \frac{8C|\bbS|}{n} \biggl[ \max_{\ell=1,2} \sum_{S \in \bbS} \bbE \biggl\{ \int 
\frac{n+n_S \bar{r}_S}{n_S \bar{r}_S} (\hat{\alpha}_{S,(\ell)}^{(M)} - \alpha_S^{(M)})^2 f_S \biggr\} + \mathrm{Var}(\theta^{*,(M)}) \biggr].
\]
Moreover,
\[
    \mathrm{Var}(\theta^{*,(M)}) \leq \frac{1}{n} \biggl( 1 + \max_{S \in \bbS} \biggl| \frac{n \lambda_S}{n_S} - 1 \biggr| \biggr) \mathcal{L}(\alpha_\bbS^{(M)})
\]
and $\theta^{*,(M)}$ is an unbiased estimator of $\theta$.
\end{prop}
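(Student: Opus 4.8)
The plan is to establish the three claims in increasing order of difficulty, dispatching unbiasedness and the variance bound by direct computation before turning to the cross-fitting bound, which is the crux.

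For unbiasedness, recall that each $\alpha_S^{(M)}$ is a linear combination of the functions $\bar{a}_{\bS}^{(m)}$, all of which are centred under $f$, so $\bbE\{\alpha_S^{(M)}(X_i)\}=0$ and the complete-data average in $\theta^{*,(M)}$ has expectation $\theta$. Since $(X_{S,1})_S \sim \bar{r}_S f_S$, the change of measure $\bbE\{\alpha_S^{(M)}(X_{S,1})/\bar{r}_S(X_{S,1})\} = \int \alpha_S^{(M)} f_S = 0$ kills each incomplete-data average, giving $\bbE(\theta^{*,(M)})=\theta$. For the variance, the complete data and the incomplete datasets are mutually independent with i.i.d.\ entries, so $\Var(\theta^{*,(M)})$ is a sum of within-dataset variances: the complete-data part is $n^{-1}\Var(a(X)-\sum_S\alpha_S^{(M)}(X))$, matching the first term of $\mathcal{L}$, while the same change of measure gives $\Var(\alpha_S^{(M)}(X_{S,1})/\bar{r}_S(X_{S,1})) = \bbE_f\{\alpha_S^{(M)}(X_S)^2/\bar{r}_S(X_S)\}$, contributing $n_S^{-1}\bbE_f\{\alpha_S^{(M)2}/\bar{r}_S\}$. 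Writing $n_S^{-1}=(n\lambda_S)^{-1}(n\lambda_S/n_S)$ and bounding $n\lambda_S/n_S \leq 1+\max_S|n\lambda_S/n_S-1|$ (using the same factor trivially on the complete-data part) produces exactly $\Var(\theta^{*,(M)}) \leq n^{-1}(1+\max_S|n\lambda_S/n_S-1|)\mathcal{L}(\alpha_\bbS^{(M)})$.

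For the cross-fitting bound, writing $N_\ell=|\mathcal{D}_\ell|$, I would first verify the identity $\theta^{*,(M)} = (N_2/n)\theta^{*,(M)}_{(1)} + (N_1/n)\theta^{*,(M)}_{(2)}$, where $\theta^{*,(M)}_{(\ell)}$ is the oracle analogue of $\hat{\theta}_{(\ell)}^{(M)}$ using $\mathcal{D}_{3-\ell}$ and the true $\alpha_S^{(M)}$, $\bar{r}_S$. By convexity of $t\mapsto t^2$ this reduces the task to bounding $\bbE\{(\hat{\theta}_{(\ell)}^{(M)} - \theta^{*,(M)}_{(\ell)})^2\}$ for each $\ell$, explaining the $\max_{\ell=1,2}$ in the statement. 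Setting $\delta_{S,(\ell)}=\hat{\alpha}_{S,(\ell)}^{(M)}-\alpha_S^{(M)}$, $\rho_S=\int f r_S$ (so $1/\bar{r}_S=\rho_S/r_S$) and $e_{S,(\ell)}=\hat{r}_{S,(\ell)}-\rho_S$, I would expand the per-fold difference into a complete-data error $-N_{3-\ell}^{-1}\sum_{x\in\mathcal{D}_{3-\ell}}\sum_S\delta_{S,(\ell)}(x)$ and an incomplete-data error assembled from the expansion $\hat{\alpha}_{S,(\ell)}\hat{r}_{S,(3-\ell)} - \alpha_S^{(M)}\rho_S = \alpha_S^{(M)}e_{S,(3-\ell)} + \delta_{S,(\ell)}\rho_S + \delta_{S,(\ell)}e_{S,(3-\ell)}$.

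The main obstacle is the first-order cancellation underpinning cross-fitting, and the careful bookkeeping of the independence structure it relies on. Conditioning on the fitting fold $\mathcal{D}_\ell$ (which fixes $\delta_{S,(\ell)}$), the complete-data error has conditional mean $-\sum_S\int\delta_{S,(\ell)} f_S$ while the $\delta_{S,(\ell)}\rho_S/r_S$ piece has conditional mean $+\sum_S\int\delta_{S,(\ell)} f_S$ by the change of measure, so these cancel and the $\delta$-linear part survives only through its variance. Because $\hat{\alpha}_{S,(\ell)}$ and $\hat{r}_{S,(3-\ell)}$ are built from disjoint folds, they are independent of each other and of the incomplete data, so the $\alpha_S^{(M)}e_{S,(3-\ell)}/r_S$ piece is itself conditionally centred (as $\int\alpha_S^{(M)}f_S=0$) and the remaining products factorise. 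I would then bound the surviving second moments: the $\delta$-variance splits into a complete-data part $\lesssim (|\bbS|/N_{3-\ell})\sum_S\int\delta_{S,(\ell)}^2 f_S$ and an incomplete-data part $\lesssim \sum_S n_S^{-1}\int(\delta_{S,(\ell)}^2/\bar{r}_S) f_S$, which combine into the weighted norm $\int\{(n+n_S\bar{r}_S)/(n_S\bar{r}_S)\}\delta_{S,(\ell)}^2 f_S$; the $e$-driven terms are controlled using $\Var(e_{S,(3-\ell)}) \leq C\rho_S^2/N_{3-\ell}$ together with $\bbE_f\{\alpha_S^{(M)2}/\bar{r}_S\} \leq n_S\Var(\theta^{*,(M)})$, yielding the $\Var(\theta^{*,(M)})$ contribution; and the product-of-errors term $\sum_S(e_{S,(3-\ell)}/\rho_S)\int\delta_{S,(\ell)}f_S$ is higher order and absorbed via Cauchy--Schwarz. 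Collecting these with care over the constants gives the factor $8C|\bbS|$; the delicate part throughout is the conditioning and independence accounting that guarantees all first-order terms vanish, leaving only the second-order variance and product contributions.
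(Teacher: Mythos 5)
Your proposal is correct and follows essentially the same route as the paper's proof: the same unbiasedness and variance computations, and for the cross-fitting bound the same decomposition of $\hat{\theta}-\theta^{*,(M)}$ into a $\delta$-linear part whose conditional mean cancels between the complete-data and reweighted incomplete-data averages given the fitting fold, an $\hat{r}$-error part controlled via $\mathrm{Var}(\hat{r}_{S,(3-\ell)})\lesssim C/n$ and $n_S^{-1}\int(\alpha_S^{(M)})^2\bar{r}_S^{-1}f_S\leq\mathrm{Var}(\theta^{*,(M)})$, and a higher-order product term absorbed into the weighted norm. The only differences are organisational (you split by fold via convexity and expand $\hat{\alpha}\hat{r}-\alpha\rho$ into three terms, whereas the paper works with per-$(S,\ell)$ remainders and keeps $\hat{\alpha}e$ together, a grouping that makes the constant $8C|\bbS|$ drop out slightly more directly), which does not affect the substance of the argument.
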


It now remains to construct the $\hat{\alpha}_{\bbS,(\ell)}^{(M)}$ as estimators of $\alpha_\bbS^{(M)}$. We follow the iterative definition of $\alpha_\bbS^{(M)}$, replacing conditional expectations with nonparametric regression and employing further sample splitting to ease the analysis. Write $\mathcal{D}_{\ell,m}$ for the $m$th piece of the $\ell$th half, for $m=1,2,\ldots,M$ and $\ell=1,2$. To lessen the notational complexity we focus on the $\ell=1$ half of the data and suppress the dependence on $\ell$ in the following notation. Write $K(z)=2^{-d} \mathbbm{1}_{\{\|z\|_\infty \leq 1\}}$ for the uniform kernel. Given $S \in \bbS$ and a bandwidth $h \in (0,1)$ write $K^S(u) \equiv K^S(u_S) = \int_{\mathbb{R}^{S^c}} K(u_S,u_{S^c}') \,du_{S^c}'$ and $K_h^S(u) \equiv K_h^S(u_S) =h^{-|S|}K^S(u_S/h)$. Take $\hat{r}_S^{(m)} = |\mathcal{D}_{1,m}|^{-1} \sum_{x \in \mathcal{D}_{1,m}} r_S(x)$ and $\hat{f}_S^{(m)}(\cdot) = |\mathcal{D}_{1,m}|^{-1} \sum_{x \in \mathcal{D}_{1,m}} K_h^S(\cdot - x)$. Initialising at $\hat{\alpha}_{\bbS,(1)}^{(0)} \equiv 0$, we then define $\hat{\alpha}_{\bbS,(1)}^{(m)}$ for $m=1,\ldots,M$ via
\begin{align*}
    & \hat{\alpha}_{S,(1)}^{(m)}(x_S) = \eta \frac{\lambda_S r_S(x_S)}{\hat{r}_S^{(m)}+\lambda_S r_S(x_S)} \biggl[ |\mathcal{D}_{1,m}|^{-1} \sum_{y \in \mathcal{D}_{1,m}} \biggl\{a(y) - \sum_{S' \neq S} \hat{\alpha}_{S',(1)}^{(m-1)}(y) \biggr\} \mathbbm{1}_{B_{T}}(y) \frac{K_h^S(x_S - y_S)}{\hat{f}_S^{(m)}(x_S)} \\
    &  \hspace{50pt} -  \frac{\sum_{y \in \mathcal{D}_{1,m}} \frac{\lambda_S r_S(y)}{\hat{r}_S^{(m)}+\lambda_S r_S(y)} \{a(y) - \sum_{S' \neq S} \hat{\alpha}_{S',(1)}^{(m-1)}(y) \} \mathbbm{1}_{B_T}(y)}{\sum_{y \in \mathcal{D}_{1,m}} \frac{\lambda_S r_S(y)}{\hat{r}_S^{(m)}+\lambda_S r_S(y)}}  \biggr] + (1-\eta) \hat{\alpha}_{S,(1)}^{(m-1)}(x_S)
\end{align*}
when $\|x_S\|_\infty \leq T$ and take $\hat{\alpha}_{(\bS,S)}^{(m+1)}(x_S) = 0$ otherwise. Here we use the convention that $0/0=0$ to deal with the event that $\hat{f}_S^{(m+1)}(x_S)=0$ and take $B_T=\{x: \|x\|_\infty \leq T\}$ for some $T\geq 1$ to be chosen later. Note that, when $r_S \equiv 1$, the centring term in the iteration can be omitted, as the final estimator $\hat{\theta}$ is then invariant to constant additions to the $\hat{\alpha}_{S,(1)}^{(M)}$.
Our next result controls the error of the estimators $\hat{\alpha}_{\bbS,(1)}^{(M)}$.
\begin{prop}
\label{Prop:InductionProof}
Let $\beta_1,\beta_2,\beta_3 \in (0,1]$ and $L_1,L_2,L_3 \in (0, \infty)$, and suppose that $a$, the distribution of $X$ and $(r_S : S \in \bbS)$ satisfy (A1)($\beta_1,L_1$), (A2)($\beta_2,L_2$) and (A3)($\beta_3,L_3$), respectively, and write $\beta_\wedge = \beta_1 \wedge \beta_2 \wedge \beta_3$. Suppose further that $c \leq \bar{r}_S(x_S) \leq C$ for all $x_S \in \bbR^S$ and $S \in \bbS$. Then for any $M \in \bbN$ and  $S \in \bbS$ we have
\[
    \bbE \biggl\{ \int (\hat{\alpha}_{S,(1)}^{(M)} - \alpha_S^{(M)})^2 f_S \biggr\} \leq A^{M} B  \biggl\{ \frac{MT^d}{nh^d} + h^{2\beta_\wedge} + \mathbb{P}(\|X\|_\infty \geq T) \biggr\},
\]
where $A= 28(C/c+2^d)$ and $B = 16\max(\|a\|_\infty^2, 1) \max \{(C/c)^2, (L_1+L_2+L_3)^2,8^d\}$.
\end{prop}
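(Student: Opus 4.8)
The plan is to argue by induction on the iteration depth $m$, tracking uniformly over all index sequences $\bS \in \bbS^{(m)}$ both a sup-norm bound on $\hat{a}_\bS^{(m)}$ and $\bar{a}_\bS^{(m)}$ and the mean integrated squared error $e_m(\bS) = \bbE\{\int (\hat{a}_\bS^{(m)} - \bar{a}_\bS^{(m)})^2 f_{S_m}\}$, where $S_m$ denotes the final coordinate set of $\bS$. The sup-norm bound comes essentially for free from the construction: the multiplier $\lambda_S r_S/(\hat{r}_S^{(m+1)} + \lambda_S r_S)$ lies in $[0,1]$ and the recentred weighted average cannot exceed twice the sup-norm of its input, so $\|\hat{a}_\bS^{(m)}\|_\infty \leq 2^m \|a\|_\infty$ (and likewise for $\bar{a}$); this boundedness is what feeds the variance terms below. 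The essential structural fact is that $\hat{a}_\bS^{(m)}$ is built only from $\mathcal{D}_{1,m'}$ with $m' \leq m$ and is therefore independent of the fresh sample $\mathcal{D}_{1,m+1}$ used at the next step, so that, conditionally on the earlier data, $\hat{a}_{(\bS,S)}^{(m+1)} = \hat{T}_S^{(m+1)}[\hat{a}_\bS^{(m)}]$ is an ordinary Nadaraya--Watson-type kernel regression applied to a \emph{fixed} input function, where I write $\hat{T}_S^{(m+1)}$ for the empirical shifted-conditional-expectation operator implicit in the construction and $T_S^{(m+1)}$ for its population analogue.

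For the recursive step I would decompose, using the linearity of $\hat{T}_S^{(m+1)}$ in its argument,
\[
\hat{a}_{(\bS,S)}^{(m+1)} - \bar{a}_{(\bS,S)}^{(m+1)} = \hat{T}_S^{(m+1)}\bigl[\hat{a}_\bS^{(m)} - \bar{a}_\bS^{(m)}\bigr] + \bigl\{ \hat{T}_S^{(m+1)}[\bar{a}_\bS^{(m)}] - T_S^{(m+1)}[\bar{a}_\bS^{(m)}] \bigr\},
\]
noting $T_S^{(m+1)}[\bar{a}_\bS^{(m)}] = \bar{a}_{(\bS,S)}^{(m+1)}$. The first term propagates the input error while the second is a genuine estimation error for a fixed, bounded, smooth regression function. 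After squaring, taking expectations, and applying $(u+v)^2 \leq 2u^2 + 2v^2$, the task reduces to bounding these two pieces in $L^2(f_S)$ separately. For the estimation term I would run a standard bias--variance--truncation analysis: the squared bias is $O(h^{2\beta_\wedge})$ once the target $T_S^{(m+1)}[\bar{a}_\bS^{(m)}]$ is known to be H\"older-$\beta_\wedge$ smooth with a controlled constant, the integrated variance is $O\{\frac{1}{n}(T/h)^d\}$ since the pointwise variance $\sim (n h^{|S|})^{-1}$ integrates against $f_S$ over $\{\|x_S\|_\infty \leq T\}$ to give $\lesssim T^{|S|}/(nh^{|S|}) \leq (T/h)^d/n$, and restricting $\hat{a}_{(\bS,S)}^{(m+1)}$ to $B_T$ produces the tail term $\mathbb{P}(\|X\|_\infty \geq T)$. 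Establishing the required uniform smoothness is itself a short induction and is where (A1)--(A3) enter: each step multiplies by the H\"older function $\bar{r}_S/(1+\lambda_S\bar{r}_S)$ (via (A3)), applies a conditional expectation whose modulus of continuity is governed by the total-variation bound (A2), and starts from the H\"older function $a$ (via (A1)); since the uniform kernel is of order zero and each $\beta_i \in (0,1]$, this yields exactly the exponent $\beta_\wedge$. Care is needed with the random normalisers $\hat{f}_S^{(m+1)}$ and $\hat{r}_S^{(m+1)}$ in the denominators, where the convention $0/0=0$ and the truncation to $B_T$ keep everything bounded.

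The propagation term is the heart of the argument and the step I expect to be hardest: I must show that $\hat{T}_S^{(m+1)}$, applied to the error $e = \hat{a}_\bS^{(m)} - \bar{a}_\bS^{(m)}$ (a function of $x_{S_m}$), satisfies $\int (\hat{T}_S^{(m+1)}[e])^2 f_S \leq (\text{const}) \int e^2 f_{S_m}$ uniformly over the realisation of $\mathcal{D}_{1,m+1}$. The population conditional expectation is a contraction in $L^2$ by Jensen, but the empirical kernel version can expand, and bounding its operator norm requires controlling the weights $K_h^S(x_S - y_S)/\hat{f}_S^{(m+1)}(x_S)$ together with the change of marginal from $f_{S_m}$ to $f_S$, again using the uniform kernel structure, $c \leq \bar{r}_S \leq C$, and the truncation. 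This bounded-but-nontrivial operator norm, combined with the recursion coefficients, is precisely what produces the per-iteration factor $A = 28(C/c + 2^d)$, giving $e_{m+1} \leq A\, e_m + (\text{step error})$.

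Finally, to pass from the $\bar{a}_\bS^{(m)}$ to $\alpha_S^{(M)}$ and $\hat{\alpha}_{S,(1)}^{(M)}$, I would write the difference as the linear combination with binomial-tail weights $b_{M,\eta}(m) \in [0,1]$, apply the triangle inequality in $L^2(f_S)$, and bound the number of sequences by $|\bbS^{(m)}| \leq |\bbS|(|\bbS|-1)^{m-1}$. Unrolling the recursion and geometrically summing the step errors then yields the stated bound $A^M B\{\frac{M}{n}(T/h)^d + h^{2\beta_\wedge} + \mathbb{P}(\|X\|_\infty \geq T)\}$, with $B = 16\max(\|a\|_\infty^2,1)\max\{(C/c)^2,(L_1+L_2+L_3)^2,8^d\}$ collecting the sup-norm factor from the variance, the smoothness-constant factor $(L_1+L_2+L_3)^2$ from the bias, and the dimensional constants $8^d$ and $(C/c)^2$ from the operator-norm and variance analyses.
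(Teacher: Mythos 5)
Your overall route coincides with the paper's: induction over the iteration depth using the sample-splitting independence of $\mathcal{D}_{1,m+1}$ from the earlier blocks, a decomposition of each step's error into a propagation term (the empirical operator applied to the inherited error) and an estimation term (empirical minus population operator applied to the fixed function $\bar{a}_\bS^{(m)}$), a bias--variance--truncation analysis of the estimation term resting on a H\"older-smoothness induction that uses (A1)--(A3) exactly as you describe (this is Lemma~\ref{Lemma:Smoothness} in the paper), and a final combination over the $b_{M,\eta}$-weighted sums with the counting bound on $|\bbS^{(m)}|$. The paper merely refines your two-term split into seven remainder terms so as to handle separately the random normaliser $\hat{r}_S^{(m+1)}$ and the sample-average centering constants, which your affine operator $\hat{T}_S^{(m+1)}$ absorbs; that is bookkeeping rather than substance.

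The one step that fails as you state it is the propagation bound: no constant exists for which $\int (\hat{T}_S^{(m+1)}[e])^2 f_S \leq \mathrm{const} \int e^2 f_{S_m}$ holds \emph{uniformly over the realisation} of $\mathcal{D}_{1,m+1}$. The Nadaraya--Watson weights sum to one, so $\hat{T}_S^{(m+1)}$ is (up to the factor from centering) a sup-norm contraction, but its output depends on $e$ only through the values $e(y)$ for $y \in \mathcal{D}_{1,m+1}$: taking $e$ to be a tall spike supported on a set of tiny $f_{S_m}$-measure makes $\int e^2 f_{S_m}$ arbitrarily small, yet on the positive-probability event that a fresh sample point lands in the spike the left-hand side is of the order of the squared spike height times the kernel mass near that point, so the realisation-wise operator norm is infinite and the recursion cannot be closed this way. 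The repair is exactly what your own independence observation supplies and what the paper does: bound the propagation term after taking conditional expectation over $\mathcal{D}_{1,m+1}$ given the earlier blocks, so that sums over fresh points become integrals against $f$ \citep[this is the role of Lemma~4.1(i) of][in the paper's bound for its term $R_2$]{gyorfi2006distribution}, and run the recursion on $e_m = \bbE\{\int (\hat{a}_\bS^{(m)}-\bar{a}_\bS^{(m)})^2 f_{S_m}\}$ rather than on realisation-wise norms. Relatedly, integrating the pointwise variance $\asymp (nh^{|S|} f_{S,h})^{-1}$ without any lower bound on the density requires the covering argument that produces the $(8T)^{|S|}$ and $2^{|S|}$ factors (the source of $8^d$ in $B$); your heuristic $T^{|S|}/(nh^{|S|})$ implicitly assumes $f_S/f_{S,h}$ is bounded, which is not assumed. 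Finally, a cosmetic point: the paper's per-iteration factor is $7(C/c+2^d)$, and the remaining factor of $4$ in $A=28(C/c+2^d)$ comes from the Cauchy--Schwarz step over the $b_{M,\eta}$-weighted sums, not from the operator-norm analysis.
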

The proof of this result is based on a careful analysis of iterated nonparametric regression. This relies on the assumptions (A1)$(\beta_1,L_1)$, (A2)$(\beta_2,L_2)$ and (A3)$(\beta_3,L_3)$ made in~\eqref{Eq:A1},~\eqref{Eq:A2} and~\eqref{Eq:A3}, respectively. Lemma~\ref{Lemma:Smoothness} and~\eqref{Eq:ApproxInfluence} in the proof of Proposition~\ref{Prop:InductionProof} in Section~\ref{Sec:UpperBoundProofs} show that these assumptions imply the smoothness of the $\alpha_\bbS^{(m)}$.

The combination of Propositions~\ref{Prop:OptimisationShifted},~\ref{Prop:CrossFitReduction} and~\ref{Prop:InductionProof} yields our main result on the risk of $\hat{\theta}$.
\begin{thm}
\label{Thm:UpperBoundShifted}
Suppose that the conditions of Propositions~\ref{Prop:OptimisationShifted} and~\ref{Prop:InductionProof} hold and suppose further that $|n\lambda_S /n_S - 1| \leq 1/n$ for all $S \in \bbS$. When $h \in [T(M/n)^{1/d},1]$ we have
\begin{align*}
    &n\bbE\{ (\hat{\theta}-\theta)^2 \} - \mathcal{L}(\alpha_\bbS^*) \leq A^M D  \biggl\{ \frac{MT^d}{nh^d}+ h^{2\beta_\wedge} + \mathbb{P}(\|X\|_\infty \geq T) \biggr\}^{1/2} \\
    & \hspace{250pt}+4 \kappa  \{\mathrm{Var} \,a(X)\}\exp(-M/\kappa).
\end{align*}
where $D = 200 |\bbS|^2 (C/c)(1+\lambda_\mathrm{min}^{-1}) B$ and $\lambda_\mathrm{min} = \min_{S \in \bbS} \lambda_S$. In particular, if all problem parameters are held fixed then
\[
    \bbE\{ (\hat{\theta}-\theta)^2 \} = \frac{1}{n} \mathcal{L}(\alpha_\bbS^*) + o(1/n)
\]
whenever $h,T,M$ satisfy $A^{2M}\{M T^d/(nh^d) + h^{2\beta_\wedge} + \mathbb{P}(\|X\|_\infty \geq T)\} \rightarrow 0$ and $T,M \rightarrow \infty$.
\end{thm}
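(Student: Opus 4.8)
The plan is to split the error of $\hat\theta$ into an oracle part and a cross-fitting part and then combine the three Propositions of this section. I would start from the decomposition $\hat\theta - \theta = (\hat\theta - \theta^{*,(M)}) + (\theta^{*,(M)} - \theta)$ and, writing $a_n = \bbE\{(\hat\theta - \theta^{*,(M)})^2\}^{1/2}$ and $b_n = \bbE\{(\theta^{*,(M)} - \theta)^2\}^{1/2}$, bound $n\bbE\{(\hat\theta-\theta)^2\} \leq n a_n^2 + 2n a_n b_n + n b_n^2$. Since Proposition~\ref{Prop:CrossFitReduction} tells us $\theta^{*,(M)}$ is unbiased for $\theta$, we have $b_n^2 = \mathrm{Var}(\theta^{*,(M)})$, so $n b_n^2$ will supply the leading term $\mathcal{L}(\alpha_\bbS^*)$ while $n a_n^2$ and the cross term $2 n a_n b_n$ are of smaller order.

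For the variance term I would use the second display of Proposition~\ref{Prop:CrossFitReduction}, $n b_n^2 \leq (1 + \max_{S}|n\lambda_S/n_S - 1|)\,\mathcal{L}(\alpha_\bbS^{(M)})$, which under the hypothesis $|n\lambda_S/n_S - 1|\leq 1/n$ is at most $(1+1/n)\mathcal{L}(\alpha_\bbS^{(M)})$. Feeding in the gradient-descent guarantee of Proposition~\ref{Prop:OptimisationShifted}, namely $\mathcal{L}(\alpha_\bbS^{(M)}) - \mathcal{L}(\alpha_\bbS^*) \leq \kappa(1-1/\kappa)^M \mathrm{Var}\,a(X) \leq \kappa e^{-M/\kappa}\mathrm{Var}\,a(X)$, yields $n b_n^2 - \mathcal{L}(\alpha_\bbS^*) \leq n^{-1}\mathcal{L}(\alpha_\bbS^*) + (1+1/n)\kappa e^{-M/\kappa}\mathrm{Var}\,a(X)$. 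The monotonicity part of Proposition~\ref{Prop:OptimisationShifted} gives $\mathcal{L}(\alpha_\bbS^{(M)}) \leq \mathcal{L}(0) = \mathrm{Var}\,a(X)$, so the first term is $O(1/n)$ and the second is, up to the factor $4$ in the statement, the exponential term.

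For the cross-fitting term I would combine the first display of Proposition~\ref{Prop:CrossFitReduction} with the regression-error bound of Proposition~\ref{Prop:InductionProof}. The weight $\tfrac{n + n_S\bar r_S}{n_S\bar r_S} = \tfrac{n}{n_S\bar r_S} + 1$ is bounded by a constant multiple of $c^{-1}(1+\lambda_{\min}^{-1})$, using $n/n_S \leq (1+1/n)/\lambda_S$ and $\bar r_S \geq c$; summing the $|\bbS|$ terms and inserting $\bbE\{\int(\hat\alpha_{S,(\ell)}^{(M)}-\alpha_S^{(M)})^2 f_S\} \leq A^M B\{MT^d/(nh^d) + h^{2\beta_\wedge} + \mathbb{P}(\|X\|_\infty \geq T)\}$ then gives, after absorbing the $O(1/n)$ variance contribution inside the bracket, a bound of the form $n a_n^2 \leq (\text{const})\, D A^M\{MT^d/(nh^d)+h^{2\beta_\wedge}+\mathbb{P}(\|X\|_\infty\geq T)\}$ with $D = 200|\bbS|^2(C/c)(1+\lambda_{\min}^{-1})B$.

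Finally I would assemble the pieces and collect constants. The cross term is treated by Cauchy--Schwarz, $2 n a_n b_n \leq 2(n a_n^2)^{1/2}(n b_n^2)^{1/2}$ with $n b_n^2 \leq 2\,\mathrm{Var}\,a(X)$, which produces a factor $A^{M/2}\{\cdots\}^{1/2}$ and is thus responsible for the square-root form $A^M D\{\cdots\}^{1/2}$ appearing in the statement. Here the constraint $h \in [T(M/n)^{1/d},1]$ is used in an essential way: it forces $MT^d/(nh^d)\leq 1$, so the error brace is uniformly bounded, whence the linear term $n a_n^2$ and the cross term can both be dominated by a single multiple of $A^M\{\cdots\}^{1/2}$ and merged into $A^M D\{\cdots\}^{1/2}$. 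The main effort is therefore bookkeeping rather than new estimation, and the only genuinely delicate point is the cross term, which both creates the square root and needs the bandwidth constraint to keep the brace controlled. The concluding ``in particular'' claim is then immediate: writing the first error term as $(A^{2M}\{\cdots\})^{1/2}$, the stated conditions send it to zero while $M,T\to\infty$ drive the exponential term and $\mathbb{P}(\|X\|_\infty\geq T)$ to zero, so $n\bbE\{(\hat\theta-\theta)^2\}\to\mathcal{L}(\alpha_\bbS^*)$ and dividing by $n$ gives $\bbE\{(\hat\theta-\theta)^2\} = n^{-1}\mathcal{L}(\alpha_\bbS^*) + o(1/n)$.
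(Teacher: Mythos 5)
Your proposal is correct and follows essentially the same route as the paper's proof: the same decomposition through the oracle $\theta^{*,(M)}$, Cauchy--Schwarz on the cross term (which indeed creates the square root), Propositions~\ref{Prop:CrossFitReduction}, \ref{Prop:InductionProof} and~\ref{Prop:OptimisationShifted} playing exactly the roles you assign them, and the bandwidth constraint $h \geq T(M/n)^{1/d}$ used precisely to keep the error brace bounded so the linear and cross terms merge into a single multiple of $A^M D\{\cdots\}^{1/2}$. The only cosmetic difference is that the paper splits $\mathrm{Var}^{1/2}(\theta^{*,(M)})$ as $\mathcal{L}^{1/2}(\alpha_\bbS^*)$ plus an excess and completes a square, while you bound $n\,\mathrm{Var}(\theta^{*,(M)}) \leq 2\,\mathrm{Var}\,a(X)$ directly in the cross term; both lead to the same constants up to bookkeeping.
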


If there exists $\rho>0$ such that $\mathbb{E}(\|X\|_\infty^\rho) <\infty$ then we may choose, for example, $h=1/T=n^{-1/(4d)}$ and $M=\sqrt{ \log n}$ to satisfy the final condition in Theorem~\ref{Thm:UpperBoundShifted}. However, this moment condition can be weakened by choosing $M$ to diverge more slowly. Indeed, for any fixed distribution of $X$ we have $\bbP(\|X\|_\infty \geq n^{1/(4d)}) \rightarrow 0$ as $n \rightarrow \infty$ and we can choose $M$ to diverge sufficiently slowly that $A^{2M} \bbP(\|X\|_\infty \geq n^{1/(4d)}) \rightarrow 0$. It would also be possible to weaken our smoothness assumptions to require less than H\"older smoothness, though this would require changes to the proof of Proposition~\ref{Prop:InductionProof}.

The proof of Proposition~\ref{Prop:InductionProof} reveals that our assumptions on $a,f$ and $(r_S : S \in \bbS)$ only need to hold for $x \in B_T$. We could weaken them to allow $C,c,L_1,L_2,L_3,\|a\|_\infty$ to diverge with $T$, provided we can control the error bound in Theorem~\ref{Thm:UpperBoundShifted}. For example, if we have that $\sup_{x \in \bbR^d} \frac{a(x)}{\|x\|_\infty \vee 1} \leq 1$ and all other constants remain bounded, we can make choices of $h,T,M$ such that the error converges to zero if we have $\mathbb{E}(\|X\|_\infty^\rho) < \infty$ for some $\rho>4$.

\subsection{Confidence intervals}
\label{Sec:UpperBoundConfidence}

In this section we show how the development in Sections~\ref{Sec:UpperBoundApprox} and~\ref{Sec:UpperBoundData} can be used to give confidence intervals for $\theta$ which, given the results of Section~\ref{Sec:LowerBound}, will be of asymptotically minimal width. It follows from Proposition~\ref{Prop:CrossFitReduction} and the fact that $\theta^{*,(M)}$, defined in~\eqref{Eq:ApproxOracle}, is an average of independent random variables with finite variance that our estimator $\hat{\theta}$ is approximately normally distributed. To construct confidence intervals we therefore need only an estimator of $n\mathrm{Var}(\theta^{*,(M)})$ which, by Proposition~\ref{Prop:CrossFitReduction}, approaches the optimal variance $\inf_{\alpha_\bbS \in H_\bbS} \mathcal{L}(\alpha_\bbS)$ for large $M$ and $n$. This can be achieved by using similar techniques to those used in Section~\ref{Sec:UpperBoundData}. Indeed, for $\ell=1,2$ and $S \in \bbS$ define
\[
    \hat{V}^{(\ell)} = \frac{1}{|\mathcal{D}_{3-\ell}|} \sum_{x \in \mathcal{D}_{3-\ell}} \biggl\{ a(x) - \sum_{S \in \bbS} \hat{\alpha}_{S,(\ell)}^{(M)}(x) \biggr\}^2 - (\hat{\theta})^2 \quad \text{and} \quad \hat{V}_S^{(\ell)} = \frac{n}{|\mathcal{D}_{3-\ell}|} \sum_{x \in \mathcal{D}_{3-\ell}} \frac{\hat{r}_{S,(\ell)} \hat{\alpha}_{S,(\ell)}^{(M)}(x)^2}{n_S r_S(x)},
\]
where we suppress the dependence of these estimators on $M$ and $h$ for notational convenience. Our final estimator of the variance is then
\[
    \hat{V} = \frac{1}{2} \biggl\{ \hat{V}^{(1)} + \hat{V}^{(2)} + \sum_{S \in \bbS} (\hat{V}_S^{(1)} + \hat{V}_S^{(2)}) \biggr\} \vee 0
\]
and, writing $z_{\alpha/2}$ for the upper $1-\alpha/2$ quantile of the standard normal distribution, we define the interval $C_\alpha = [\hat{\theta}-n^{-1/2}\hat{V}^{1/2}z_{\alpha/2}, \hat{\theta}+n^{-1/2}\hat{V}^{1/2}z_{\alpha/2}]$. We now prove that this is an asymptotically-valid $100(1-\alpha)\%$ confidence interval for $\theta$. Writing $d_\mathrm{K}$ for the Kolmogorov distance we have the following result.
\begin{prop}
\label{Prop:Confidence}
Suppose that the conditions of Theorem~\ref{Thm:UpperBoundShifted} hold and that $\inf_{\alpha_\bbS \in H_\bbS} \mathcal{L}(\alpha_\bbS) \geq V_0 >0$. Then we have that
\[
    d_\mathrm{K}\biggl( \frac{n^{1/2}(\hat{\theta}-\theta)}{\hat{V}^{1/2}} \mathbbm{1}_{\{\hat{V}>0\}}, Z \biggr) \leq \frac{7 A^{M/2} D}{V_0} \biggl\{ \frac{M T^d}{nh^d} + h^{2\beta_\wedge} + \mathbb{P}(\|X\|_\infty \geq T) \biggr\}^{1/4}.
\]
In particular, $\mathbb{P}(C_\alpha \ni \theta) \rightarrow 1-\alpha$ uniformly over $\alpha \in (0,1)$ whenever this right-hand side converges to zero.
\end{prop}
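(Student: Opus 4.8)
The plan is to transfer a normal approximation from the tractable oracle statistic $\theta^{*,(M)}$ of~\eqref{Eq:ApproxOracle} to $\hat\theta$, to show separately that $\hat V$ is consistent for $\sigma^2 := n\Var(\theta^{*,(M)})$, and then to combine these through a self-normalisation argument with explicit Kolmogorov-distance control. Write $\mathcal{E} = MT^d/(nh^d) + h^{2\beta_\wedge} + \bbP(\|X\|_\infty \geq T)$ for the error quantity appearing in the statement, in Theorem~\ref{Thm:UpperBoundShifted} and in Proposition~\ref{Prop:InductionProof}. First I would record that the normalisation is bounded away from zero. Since $\theta^{*,(M)}$ is an average of independent mean-zero contributions from the complete and incomplete datasets, a direct variance computation gives $\sigma^2 = \Var(a(X) - \sum_{S \in \bbS}\alpha_S^{(M)}(X)) + \sum_{S \in \bbS}(n/n_S)\bbE[\alpha_S^{(M)}(X_S)^2/\bar r_S(X_S)]$, which coincides with $\mathcal{L}(\alpha_\bbS^{(M)})$ up to the factors $n\lambda_S/n_S = 1 + O(1/n)$. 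As $\mathcal{L}(\alpha_\bbS^{(M)}) \geq \inf_{\alpha_\bbS \in H_\bbS}\mathcal{L}(\alpha_\bbS) \geq V_0$, this yields $\sigma^2 \geq V_0(1 - o(1))$.

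For the oracle approximation, $\theta^{*,(M)}$ is a sum of independent bounded summands, so the Berry--Esseen theorem for non-identically-distributed variables gives $d_\mathrm{K}(n^{1/2}(\theta^{*,(M)} - \theta)/\sigma, Z) \lesssim n^{-1/2}$, up to a factor depending on $M$ and the problem parameters; since $\mathcal{E} \geq M/n$ this term is dominated by $\mathcal{E}^{1/4}$ and is absorbed into the stated bound. To pass to $\hat\theta$ I set $U_n = n^{1/2}(\hat\theta - \theta)/\sigma$ and $U_n^* = n^{1/2}(\theta^{*,(M)} - \theta)/\sigma$; Proposition~\ref{Prop:CrossFitReduction}, together with the bound on $\bbE\{\int(\hat\alpha_{S,(\ell)}^{(M)} - \alpha_S^{(M)})^2 f_S\}$ from Proposition~\ref{Prop:InductionProof} and $\Var(\theta^{*,(M)}) = O(1/n)$, gives $\bbE\{(U_n - U_n^*)^2\} = n\bbE\{(\hat\theta - \theta^{*,(M)})^2\}/\sigma^2 \lesssim A^M D\mathcal{E}/V_0$. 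I would then apply the elementary coupling inequality, valid for any $\epsilon > 0$, that $d_\mathrm{K}(U_n, Z) \leq d_\mathrm{K}(U_n^*, Z) + \bbP(|U_n - U_n^*| > \epsilon) + \sup_t \bbP(|Z - t| \leq \epsilon)$, bound the middle term by $\bbE|U_n - U_n^*|/\epsilon \leq \|U_n - U_n^*\|_{L^2}/\epsilon$ and the last by $\epsilon$, and optimise over $\epsilon$ to obtain $d_\mathrm{K}(U_n, Z) \lesssim (A^M D\mathcal{E}/V_0)^{1/4}$, which lies within the stated bound (assuming, as we may, that $\mathcal{E} \leq 1$, $D \geq 1$ and $V_0 \leq 1$).

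It remains to replace $\sigma$ by $\hat V^{1/2}$. The estimator $\hat V$ is the plug-in version of $\sigma^2$ in which $\alpha_S^{(M)}$ is replaced by $\hat\alpha_{S,(\ell)}^{(M)}$, population means by empirical means over the held-out fold, and $\bar r_S$, $n/n_S$ by their estimates. Writing $\hat\alpha^2 - \alpha^2 = (\hat\alpha - \alpha)(\hat\alpha + \alpha)$ and using the boundedness of the truncated iterated-regression estimators together with Cauchy--Schwarz, Proposition~\ref{Prop:InductionProof} and concentration of the empirical averages, I would show $\bbE|\hat V - \sigma^2| \lesssim A^{M} B^{1/2}\mathcal{E}^{1/2}$ up to lower-order sampling terms. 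Setting $T_n = n^{1/2}(\hat\theta - \theta)\hat V^{-1/2}\mathbbm{1}_{\{\hat V > 0\}}$, on $\{\hat V \geq \sigma^2/2\}$ one has $|T_n - U_n| = |U_n||1 - \sigma/\hat V^{1/2}| \leq |U_n|\sqrt 2|\hat V - \sigma^2|/\sigma^2$, while the complementary event, which contains $\{\hat V = 0\}$, has probability at most $2\bbE|\hat V - \sigma^2|/V_0$ by Markov. Using $\|U_n\|_{L^2} = O(1)$, which holds by Theorem~\ref{Thm:UpperBoundShifted} and $\sigma^2 \geq V_0$ once we assume the target bound is at most $1$ (otherwise the claim is trivial since $d_\mathrm{K} \leq 1$), together with the coupling inequality of the previous paragraph and a final optimisation over the threshold, I obtain $d_\mathrm{K}(T_n, Z) \lesssim d_\mathrm{K}(U_n, Z) + (\bbE|\hat V - \sigma^2|/V_0)^{1/2}$, which reproduces the rate $A^{M/2}D\mathcal{E}^{1/4}/V_0$ after inserting the variance bound. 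The assertion on $\bbP(C_\alpha \ni \theta)$ then follows, since $d_\mathrm{K}(T_n, Z) \to 0$ forces $\bbP(-z_{\alpha/2} \leq T_n \leq z_{\alpha/2}) \to 1 - \alpha$ uniformly in $\alpha$.

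The main obstacle is the self-normalisation step: Proposition~\ref{Prop:InductionProof} only controls $\hat V - \sigma^2$ in an averaged sense, so the ratio $\sigma/\hat V^{1/2}$ can be arbitrarily large on the low-probability event where $\hat V$ is close to zero, and the indicator $\mathbbm{1}_{\{\hat V > 0\}}$ must be handled explicitly. Controlling this requires both a probability bound on $\{\hat V < \sigma^2/2\}$ and a quantitative bound on $\bbE|\hat V - \sigma^2|$ exploiting the boundedness of the truncated estimators, after which the powers of $A^M$, $\mathcal{E}$ and $V_0$ must be balanced against the Gaussian anti-concentration term; this bookkeeping is routine but delicate.
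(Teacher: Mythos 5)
Your overall architecture coincides with the paper's proof: lower-bounding $\sigma^2=n\Var(\theta^{*,(M)})$ by essentially $V_0$ via $\mathcal{L}(\alpha_\bbS^{(M)})\ge\mathcal{L}(\alpha_\bbS^*)$, a Berry--Esseen bound for the oracle statistic using the uniform bound $\|\alpha_S^{(M)}\|_\infty\le 2\|a\|_\infty 3^M$ (absorbed since $3^M\le A^{M/2}$ and $\mathcal{E}\ge M/n$), transfer to $\hat\theta$ via Propositions~\ref{Prop:CrossFitReduction} and~\ref{Prop:InductionProof} (your $L^2$-plus-coupling optimisation gives a $1/4$ power where the paper's Chebyshev argument gives $1/3$; both fit inside the claimed bound), and consistency of $\hat{V}$ by comparison with oracle variance estimators, exactly as in the paper. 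The genuine problem is your self-normalisation step. Writing $\delta=\bbE|\hat{V}-\sigma^2|/\sigma^2$ and $W=\sqrt{2}\,|\hat{V}-\sigma^2|/\sigma^2$, you bound $|T_n-U_n|\le|U_n|\,W$ on $\{\hat{V}\ge\sigma^2/2\}$ and feed this into the additive coupling inequality. But $|U_n|$ and $W$ are dependent, and the only inputs available are $\bbE(U_n^2)=O(1)$ and $\bbE(W)\lesssim\delta$; neither the paper nor your proposal establishes any $L^2$ control of $\hat{V}-\sigma^2$. Under these moment constraints alone the additive route cannot produce the $\delta^{1/2}$ term you claim: splitting $\bbP(|U_n|W>\epsilon)\le\bbP(|U_n|>K)+\bbP(W>\epsilon/K)$ and optimising over $K$ and $\epsilon$ gives at best a $\delta^{2/5}$ contribution (upgrading the tail of $U_n$ through $d_\mathrm{K}(U_n,Z)$ only improves this to $\delta^{1/2}\log^{1/4}(1/\delta)$), and a two-point joint law for $(|U_n|,W)$ consistent with these moments shows the splitting bound is sharp. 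Since $\delta\lesssim A^{M/2}D\mathcal{E}^{1/2}/V_0$, your argument as written yields a rate of order $\mathcal{E}^{1/5}$ (or $\mathcal{E}^{1/4}$ with spurious logarithmic factors), weaker than the stated $\mathcal{E}^{1/4}$ bound; only the qualitative coverage statement at the end survives.

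The missing idea is to exploit the multiplicative, monotone structure of the normalisation rather than treating it as an additive perturbation. Since $T_n=U_n\cdot\sigma/\hat{V}^{1/2}$ with a positive random scale factor, for $x\ge 0$ one has the event inclusion $\{T_n\le x\}\cap\{0<\hat{V}\le\sigma^2(1+\epsilon)^2\}\subseteq\{U_n\le x(1+\epsilon)\}$, and symmetrically for the other tail. Combining this with Gaussian anti-concentration in the rescaled form $\sup_{x}|\Phi(x(1+\epsilon))-\Phi(x)|\lesssim\epsilon$, Markov's inequality $\bbP(|\hat{V}-\sigma^2|\ge\epsilon\sigma^2)\le\delta/\epsilon$, and a separate $\bbP(\hat{V}\le 0)$ term, then optimising $\epsilon=\delta^{1/2}$, gives $d_\mathrm{K}(T_n,Z)\le d_\mathrm{K}(U_n,Z)+C\delta^{1/2}+\bbP(\hat{V}\le 0)$ with no product of dependent random variables ever appearing. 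This quantile-rescaling comparison is precisely what the paper's chain of inequalities preceding~\eqref{Eq:KolmTerm1} does, with thresholds $(1+\epsilon)^2$ for the variance ratio and $\tau$ for the estimation error; with that replacement, the rest of your steps go through and recover the claimed bound $7A^{M/2}D\mathcal{E}^{1/4}/V_0$.
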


We see that the validity of these confidence intervals does not require large $M$. However, the proof of this result reveals that $\hat{V}$ is approximately equal to $n \mathrm{Var}(\theta^{*,(M)})$, which only converges to its minimal value as $M \rightarrow \infty$. Since the width of $C_\alpha$ is given by $2(\hat{V}/n)^{1/2} z_{\alpha/2}$, a larger value of $M$ results in asymptotically shorter confidence intervals. This result relies on the assumption that $\inf_{\alpha_\bbS \in H_\bbS} \mathcal{L}(\alpha_\bbS) \geq V_0 >0$, which is natural given that we must estimate and divide by $\inf_{\alpha_\bbS \in H_\bbS} \mathcal{L}(\alpha_\bbS)$ to standardise our estimator. The proof follows from Theorem~\ref{Thm:UpperBoundShifted} and a Berry--Esseen bound together with an analysis of the estimator $\hat{V}$.

\subsection{Numerical examples}
\label{Sec:Simulations}

We conclude this section with a brief simulation study\footnote{Code to reproduce the experiments is available on the author's website \texttt{thomasberrett.github.io}.} to demonstrate the feasibility and potential gains in accuracy of the proposed methodology. In each of three settings we suppose that our complete data are i.i.d.~copies of some $(X_1,X_2,Y)$ taking values in $[0,1]^2 \times \bbR$, that we have incomplete data of the form $(X_1,\texttt{NA},\texttt{NA})$ and $(\texttt{NA},X_2,\texttt{NA})$ under MCAR, and that we wish to estimate the expected response $\theta=\bbE(Y)$. We will compare the performance of the complete-case estimator $n^{-1} \sum_{i=1}^n Y_i$ to two estimators constructed to mimic the $M=1,2,3,4$ approximate oracle estimators~\eqref{Eq:ApproxOracle}, where one is the cross-fitting estimator constructed above and the second is a more direct approach similar to that introduced in Section~\ref{Sec:DirectEstimator} in the appendix. The estimators using $M=1$ are of a relatively standard type studied in semi-supervised settings~\citep[e.g.][]{yang2019combining,cannings2022correlation}, where the complete-case estimator is corrected by its conditional expectations, while the $M=2,3,4$ estimators exploit dependence between $X_1$ and $X_2$ to further reduce the asymptotic risk. For both approaches a bandwidth parameter $h=0.1$ is used with a Gaussian kernel in the \texttt{R} function \texttt{ksmooth}, and we choose step size $\eta=1$. As an additional baseline method we also compare to taking the empirical average of $Y$ over a multiply-imputed dataset, where Multivariate Imputation by Chained Equations (MICE) \citep{van2011mice} with default options is used for the imputation.

The data are generated as follows:
\begin{itemize}
    \item[(i)] For $\rho \in \{0,0.1,0.2,\ldots, 1\}$ let $(Z_1,Z_2)$ have standard normal marginal distributions with correlation $-\rho$ and set $(X_1,X_2) = (\Phi(Z_1), \Phi(Z_2))$, where $\Phi$ is the standard normal distribution function. Then let $Y|X \sim N(X_1-X_2,0.3^2)$.
    \item[(ii)] Let $(X_1,X_2)$ have $\mathrm{Unif}[0,1]$ marginal distributions and dependence structure given by the Clayton copula with parameter 3. For $\mu \in \{0,2,4,\ldots,20\}$ let $Y|X$ have a Bernoulli distribution with success probability $\{1+e^{-\mu(X_1+X_2-1)}\}^{-1}$.
    \item[(iii)] Generate $(X_1,X_2)$ as in Setting~(i). Then let $Y|X \sim N(-5X_1X_2, 0.3^2)$.
\end{itemize}
The complete data sample sizes are given by $n=200,600$ and the ratio of the incomplete samples sizes to $n$ is fixed at $
\lambda_1=\lambda_2=10$. Each point in the plots is the result of averaging over $10000$ repetitions of the experiment.

The results of these simulations are given in Figure~\ref{Fig:Plots}. The plots show the rescaled mean squared error against $\rho$ or $\mu$, as appropriate, with error bars showing three standard deviations. There is almost uniform improvement over the complete-case estimator, with the only exceptions being when $\mu=0,2$ in setting (ii) where the covariates $(X_1,X_2)$ are independent of or very weakly dependent on $Y$. In all cases we see that the direct estimators outperform the cross-fitting estimators, though this is less pronounced when $n=600$ as compared to $n=200$. Increasing $M$ tends to decrease the error of the estimators, as predicted by the asymptotic theory, though there are cases where the cross-fit estimators with a smaller value of $M$ perform better as samples are split into fewer pieces. Consequently, the cross-fit $M=3,4$ estimators are only shown in the $n=600$ results. While the plug-in estimator with MICE performs well in Setting (i), it is often outperformed in the other settings, with error beyond the range of the axes in some cases, mostly due to non-negligible bias; see Appendix~\ref{Sec:MICE} for further simulations on this point.

\begin{figure}
    \includegraphics[width=\textwidth]{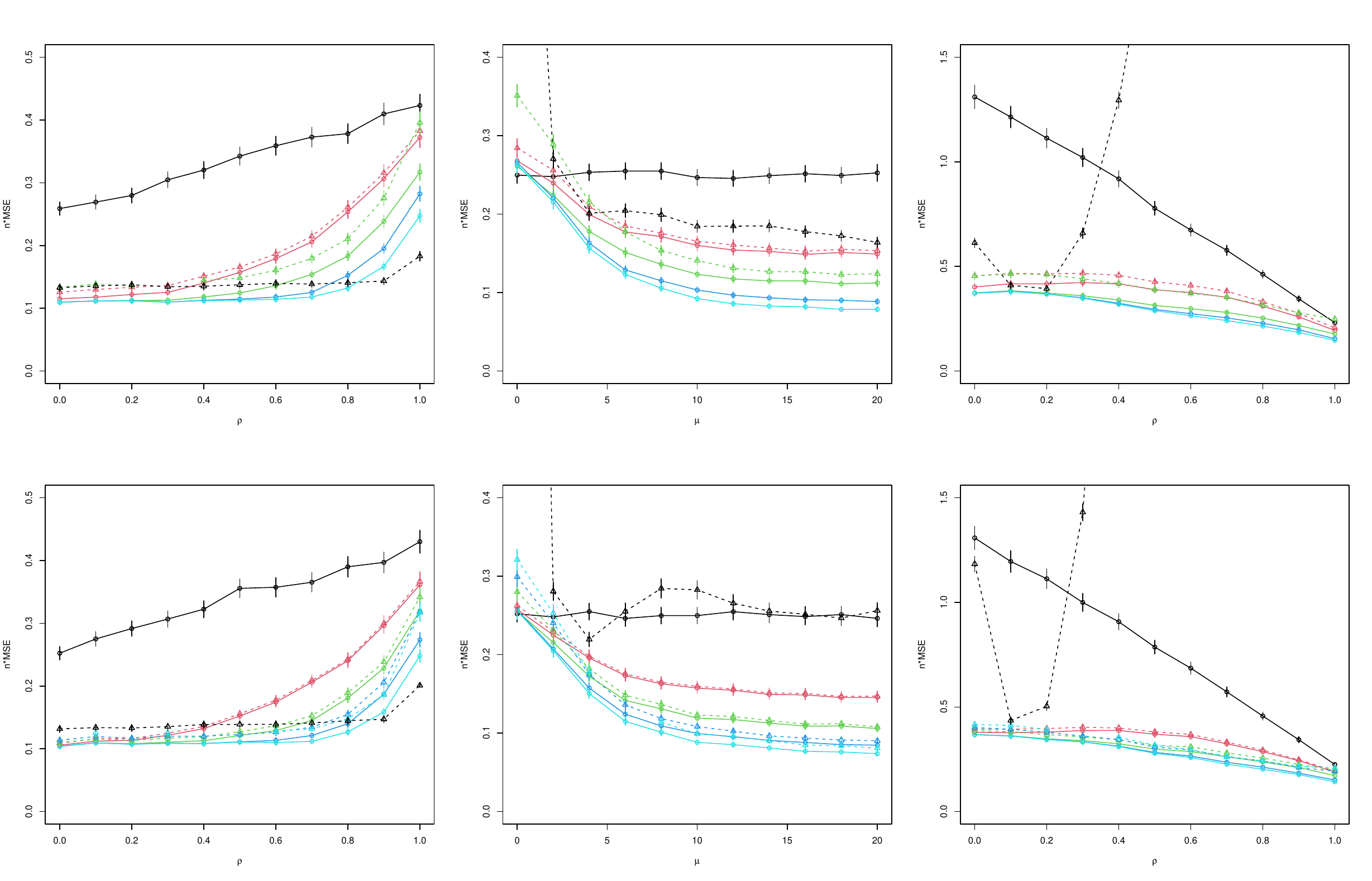}
    \caption{Estimated rescaled mean squared errors. Settings (i)--(iii) are presented from left to right, while the top row shows results for $n=200$ and the bottom row for $n=600$.  The solid black line shows results for the complete-case estimator and the dotted black line for MICE. Coloured lines show results for our proposed estimator, with $M=1,2,3,4$ corresponding to red, green, blue and cyan, respectively, solid lines corresponding to the direct approach without sample splitting and dotted lines corresponding to the cross-fit estimators. Error bars show three standard deviations.}
        \label{Fig:Plots}
\end{figure}

\section{Sampling-bias extensions}
\label{Sec:Extensions}

\subsection{Missing At Random}
\label{Sec:MAR}

It is known that when missingness structures are non-monotonic the development of models for mechanisms is challenging and that standard Missing At Random (MAR) assumptions are difficult to work with \citep{robins1997non}. In this section we work under a type of MAR assumption, sometimes referred to as Covariate Dependent Missingness \citep{li2013little}, where, conditionally on a set of fully-observed variables, the missingness is independent of the remaining variables.

We suppose that $X \sim f$, that $\Omega$ encodes the missingness with $\bbS = \{S \subseteq [d] : p_S >0\}$ being the collection of possible patterns and that we observe $N$ independent and identically distributed copies of $X \circ \Omega$. Here we assume that there exists $S_0 \subseteq [d]$ such that $S_0 \subseteq S$ for all $S \in \bbS$ and that we may write $\bbP(\Omega = \mathbbm{1}_S | X=x) = M_S(x_{S_0})$, so that the missingness mechanism $M_\bbS = (M_S : S \in \bbS)$ depends only on those variables in $S_0$, which are always observed.  We will see that our previous framework naturally extends to this MAR setting. Importantly, we do not need $M_\bbS$ to be known, only that it can be estimated sufficiently well. 

In order to state our results we write $H_S^{S_0}$ for the subspace of $H_S$ consisting of those elements $\alpha_S$ such that $\bbE\{\alpha_S(X_S) | X_{S_0}\}=0$ almost surely, we write $H_\bbS^{S_0} = \prod_{S \in \bbS} H_S^{S_0}$ and we define the new objective function
\[
    \mathcal{L}(\alpha_\bbS,M_\bbS) = \bbE \biggl[ \frac{1}{M_{[d]}(X_{S_0})} \biggl\{ a(X) - a_{S_0}(X_{S_0}) - \sum_{S \in \bbS} \alpha_S(X_S) \biggr\}^2 + \sum_{S \in \bbS} \frac{\alpha_S(X_S)^2}{M_S(X_{S_0})}  \biggr].
\]
Minimising this function over $\alpha_\bbS \in H_\bbS^{S_0}$ can be thought of as fitting generalised ANOVA decompositions separately for each value of $X_{S_0}$. The following result establishes a local asymptotic minimax lower bound in this MAR setting. This is proved very similarly to Theorem~\ref{Thm:ShiftedLowerBound}, with the main differences being the change in the direction of the worst-case perturbations and the fact that we now have random sample sizes within each missingness pattern. Although the missingness mechanism $M_\bbS$ may be unknown, it need not be perturbed to achieve a tight lower bound, and it suffices to consider perturbations of $f$ as in Section~\ref{Sec:ANOVALowerBound}.
\begin{thm}
\label{Thm:MARLowerBound}
Suppose that we observe $N$ i.i.d.~copies of $X \circ \Omega$, as described above, and that $\inf_{x_{S_0}} \min_{S \in \bbS_+} M_S(x_{S_0})>0$. For any measurable estimator sequence $(\theta_N)$ we have
    \[
        \sup_{I \subset L^2} \liminf_{N \rightarrow \infty} \max_{h \in I} \mathbb{E}_{f_{N^{-1/2} h}} \bigl[N \bigl\{ \theta_N - \theta(f_{N^{-1/2} h}) \}^2 \bigr] \geq \mathrm{Var} \bigl( a_{S_0}(X_{S_0}) \bigr) + \inf_{\alpha_\bbS \in H_\bbS^{S_0}} \mathcal{L}(\alpha_\bbS,M_\bbS),
    \]
where the supremum is taken over all finite subsets $I$ of square-integrable functions $h$ on $\bbR^d$. 
\end{thm}

In the remainder of this section we outline the construction of efficient estimators and prove a result under high-level assumptions similar to those that appear in the literature on double robustness. The form of our lower bound and the proof of Theorem~\ref{Thm:MARLowerBound} suggest that an appropriate oracle estimator is given by
\[
    \theta^* = \frac{1}{N} \sum_{i=1}^N \biggl[ a_{S_0}(X_i) + \frac{\mathbbm{1}_{\{\Omega_i=\mathbbm{1}_{[d]}\}}}{M_{[d]}(X_i)} \biggl\{ a(X_i) - a_{S_0}(X_i) - \sum_{S \in \bbS} \alpha_S^*(X_i) \biggr\} + \sum_{S \in \bbS} \frac{\mathbbm{1}_{\{\Omega_i=\mathbbm{1}_{S}\}}}{M_{S}(X_i)} \alpha_S^*(X_i) \biggr],
\]
which is unbiased and whose variance is equal to the lower bound. Suppose that we have access to independent data $\mathcal{D}$ with which we construct estimators $\hat{a}_{S_0}, \hat{M}_\bbS, \hat{\alpha}_\bbS$ of the nuisance functions in $\theta^*$. Using standard cross-fitting approaches we see that we do not need to make this assumption, as these quantities can be estimated on held-out halves of the data without losing efficiency, but for simplicity we do not make this explicit here. Both $a_{S_0}$ and $M_\bbS$ can be estimated using standard techniques, e.g. kernel nonparametric regression. One way to estimate $\alpha_\bbS$ would be to partition the sample space of $X_{S_0}$ into small bins and use the approach of Section~\ref{Sec:UpperBound} within each bin, though we omit the full and explicit construction of such an estimator. Here we simply assume that, if $X$ is independent of $\mathcal{D}$ and we write
\begin{align*}
    a_N &= \max \biggl\{ \bbE \bigl[ \bigl\{ \hat{a}_{S_0}(X_{S_0}) - a_{S_0}(X_{S_0}) \bigr\}^2 \bigr], \max_{S \in \bbS} \bbE \bigl[ \bigl\{ \hat{\alpha}_{S}(X_{S}) - \alpha_{S}^*(X_{S}) \bigr\}^2 \bigr] \biggr\} \\
    b_N &= \max_{S \in \bbS_+} \bbE \biggl[ \biggl\{ \frac{M_S(X_{S_0})}{\hat{M}_S(X_{S_0})} - 1 \biggr\}^2 \biggr],
\end{align*}
then we have $\max(a_N, b_N, N a_N b_N) \rightarrow 0$ as $N \rightarrow \infty$. Given such estimators we consider
\[
    \hat{\theta}= \frac{1}{N} \sum_{i=1}^N \biggl[ \hat{a}_{S_0}(X_i) + \frac{\mathbbm{1}_{\{\Omega_i=\mathbbm{1}_{[d]}\}}}{\hat{M}_{[d]}(X_i)} \biggl\{ a(X_i) - \hat{a}_{S_0}(X_i) - \sum_{S \in \bbS} \hat{\alpha}_S(X_i) \biggr\} + \sum_{S \in \bbS} \frac{\mathbbm{1}_{\{\Omega_i=\mathbbm{1}_{S}\}}}{\hat{M}_{S}(X_i)} \hat{\alpha}_S(X_i) \biggr].
\]
\begin{prop}
\label{Prop:MARDoubleRobustness}
Under the conditions of the previous paragraph and the assumptions that $\|a\|_\infty<\infty$ and $\inf_{x_{S_0}} \min_{S \in \bbS_+} M_S(x_{S_0})>0$, we have that
\[
    \hat{\theta} - \theta^* = O_p\biggl( \frac{a_N^{1/2}+b_N^{1/2}}{N^{1/2}} + (a_Nb_N)^{1/2} \biggr) = o_p(N^{-1/2})
\]
as $N \rightarrow \infty$.
\end{prop}
This result implies that such an estimator $\hat{\theta}$ has an asymptotic normal distribution whose variance is equal to the right-hand side of the lower bound in Theorem~\ref{Thm:MARLowerBound}, and is proved by controlling $\bbE|\hat{\theta}-\theta^*|$. By slightly strengthening the conditions we could similarly control the second moment of the difference in estimators and thus prove that the mean squared risk of $\hat{\theta}$ is asymptotically equivalent to the mean squared risk of $\theta^*$.

\subsection{Unknown shift functions}
\label{Sec:UnknownShifts}

Here we show how the distributional assumptions of our basic model can be relaxed in a different direction to that in Section~\ref{Sec:MAR}. As previously, we assume that we have access to complete data $X_1,\ldots,X_n \overset{\mathrm{i.i.d.}}{\sim} f$ from the target distribution and that we have incomplete data generated according to~\eqref{Eq:IncompleteData}. However, rather than assuming that $r_\bbS$ is known, we assume that 
\[
    r_S(x_S) = \exp \bigl( w_S^T \Psi_S(x_S) \bigr)
\]
for some unknown parameter vector $w_S \in \mathbb{R}^{k_S}$, some $k_S \in \bbN$ and some known functions $\Psi_S(x_S) = (\psi_{S,1}(x_S),\ldots,\psi_{S,k_S}(x_S))$. This is a well-studied parametric model for distributional shifts~\citep[e.g.][]{efron1996using,qin1998inferences,qiu2023efficient}. For simplicity, we will assume that the functions $\psi_{S,k}$ are all bounded, so that $\|\bar{r}_S\|_\infty < \infty$ as previously assumed. So that the model is identifiable, we assume that there are no $S \in \bbS$ and $w_S \in \bbR^{p_S}$ such that $w_S^T \Psi_S(\cdot)$ is a constant function. 

In order to state a lower bound we define the subspace $H_\bbS^\perp = \prod_{S \in \bbS} H_S^\perp$ of $H_\bbS$ given by
\[
    H_S^\perp = \bigl\{ \alpha_S \in H_S : \bbE\{ \alpha_S(X_S) \Psi_S(X_S)\} = 0 \bigr\}.
\]
Our next result shows that minimising our basic objective function $\mathcal{L}(\cdot)$, defined in~\eqref{Eq:Objective}, over $H_\bbS^\perp$ instead of $H_\bbS$ gives a lower bound in this setting. While in previous lower bounds it was sufficient to perturb $f$, here we also perturb the unknown parameter vectors $w_\bbS=(w_S : S \in \bbS)$, so we write $P_{n,t,v_\bbS}$ for the distribution of our data when the target distribution is $f_{n^{-1/2}t\alpha_\perp}$ and our distributional shifts are given by $\exp((w_S + n^{-1/2} v_S)^T \Psi_S(x_S))$ for $S \in \bbS$. We see here that, unlike the missingness mechanisms of the previous section, the parameters $w_\bbS$ generally cannot be estimated without inflating the variance in estimating $\theta$.
\begin{thm}
\label{Thm:UnknownShiftedLowerBound}
There exists a unique minimiser $\alpha_\bbS^\perp$ of $\mathcal{L}(\cdot)$ over $\alpha_\bbS \in H_\bbS^\perp$. Morever, writing $\alpha^\perp = a - \theta -\sum_S \alpha_S^\perp$ and $k_\bbS = \sum_{S \in \bbS}k_S$, for any estimator sequence $(\theta_n)$ we have
    \[
        \sup_{\substack{I \subset \bbR \\ J \subset \bbR^{k_\bbS} }} \liminf_{n \rightarrow \infty} \max_{\substack{t \in I \\ v_\bbS \in J}} \,\, \mathbb{E}_{P_{n,t,v_\bbS}} \bigl[ n \bigl\{ \theta_n - \theta(f_{n^{-1/2}t\alpha^\perp}) \}^2 \bigr] \geq \inf_{\alpha_\bbS \in H_\bbS^\perp} \mathcal{L}(\alpha_\bbS),
    \]
    where the supremum is taken over all finite subsets $I$ of $\bbR$ and finite subsets $J$ of $\bbR^{k_\bbS}$.
\end{thm}

In the remainder of this section we outline the construction of efficient estimators as in the previous section. Here, in light of the proof of Theorem~\ref{Thm:UnknownShiftedLowerBound} we aim to mimic the oracle estimator
\[
    \theta^* = \frac{1}{n} \sum_{i=1}^n \biggl\{ a(X_i) - \sum_{S \in \bbS} \alpha_S^\perp(X_i) \biggr\} + \sum_{S \in \bbS} \frac{1}{n_S} \sum_{j=1}^{n_S} \frac{\alpha_S^\perp(X_{S,j})}{\bar{r}_S(X_{S,j})},
\]
where we write $\bar{r}_S (x_S) = \exp(w_S^0 + w_S^T\Psi_S(x_S))$ and $w_S^0$ normalises the shift function. Suppose that we have access to independent data $\mathcal{D}$ with which we construct estimators $(\hat{w}_\bbS^0,\hat{w}_\bbS), \hat{\alpha}_\bbS^\perp$ of the nuisance parameters in $\theta^*$. Again, with cross-fitting one can achieve the same results without held-out data. The $(w_\bbS^0,w_\bbS)$ can be estimated using standard density ratio estimation techniques~\citep[see, e.g.][]{sugiyama2012density}. The $\alpha_\bbS^\perp$ can be estimated consistently using the gradient descent approach of Section~\ref{Sec:UpperBound}, modified to include projections onto $H_\bbS^\perp$ rather than just the spaces of mean-zero functions. We introduce the high-level assumptions that, if $X$ is independent of $\mathcal{D}$ and we write $\hat{r}_S(x_S)=\exp(\hat{w}_S^0 + \hat{w}_S^T \Psi_S(x_S))$,
\begin{align*}
    a_n &= \max_{S \in \bbS} \bbE \biggl[ \frac{1}{1 \wedge \bar{r}_S(X_S)} \bigl\{ \hat{\alpha}_{S}(X_{S}) - \alpha_{S}^\perp(X_{S}) \bigr\}^2 \biggr]  \\
    b_n &= \max_{S \in \bbS} \max \biggl\{  \bbE \biggl[ \frac{1 \vee \alpha_S^\perp(X_S)^2}{1 \wedge \bar{r}_S(X_S)} \biggl\{ \frac{\bar{r}_S(X_S)}{\hat{r}_S(X_S)} - 1 \biggr\}^2,  \\
    & \hspace{85pt} \biggl| \bbE \biggl[ \alpha_S^\perp(X_S) \biggl\{ \frac{\bar{r}_S(X_S)}{\hat{r}_S(X_S)} - 1 - w_S^0 + \hat{w}_S^0 - (w_S-\hat{w}_S)^T \Psi_S(X_S) \biggr\}  \biggr] \biggr| \biggr\},
\end{align*}
then we have $\max(a_n, b_n, na_n b_n, nb_n^2) \rightarrow 0$ as $n \rightarrow \infty$. Under appropriate regularity conditions, if the estimators of $(w_\bbS^0,w_\bbS)$ are $\sqrt{n}$-consistent then it suffices to have consistent estimators of $\alpha_\bbS^\perp$. We can now introduce the final estimator
\[
    \hat{\theta}= \frac{1}{n} \sum_{i=1}^n \biggl\{ a(X_i) - \sum_{S \in \bbS} \hat{\alpha}_S(X_i) \biggr\} + \sum_{S \in \bbS} \frac{1}{n_S} \sum_{j=1}^{n_S} \frac{\hat{\alpha}_S(X_{S,j})}{\hat{r}_S(X_{S,j})}.
\]
\begin{prop}
\label{Prop:UnknownDoubleRobustness}
Under the conditions of the previous paragraph we have that
\[
    \hat{\theta} - \theta^* = O_p\biggl( \frac{a_n^{1/2}+b_n^{1/2}}{n^{1/2}} + (a_nb_n)^{1/2} + b_n \biggr) = o_p(n^{-1/2})
\]
as $n \rightarrow \infty$.
\end{prop}

\section*{Acknowledgements}

The author was supported by Engineering and Physical Sciences Research Council New Investigator Award EP/W016117/1 and European Research Council Starting Grant 101163546.

{
\bibliographystyle{custom}
\bibliography{bib}
}

\appendix

\section{Estimation without sample splitting}
\label{Sec:DirectEstimator}

In this section we study a more direct estimator of $\theta(f)$ that does not rely on splitting our samples, designed for the MCAR case that $r_S \equiv 1$ for all $S \in \bbS$.  The analysis of such an estimator is technically more challenging and, as a result, we assume that our complete data takes values in $[0,1]^d$ and make further assumptions on $f$. While our work here concerns a more restricted setting, the explicit form of the estimator may be of independent interest. 

We will assume that there exist $\beta_1,\beta_2,L_1,L_2,c_0,C_0>0$ such that the following conditions hold.
\begin{enumerate}[align=left]
    \item[(B1)($\beta_1,L_1$)] For all $x,x' \in [0,1]^d$ we have
    \begin{equation}
    \label{Eq:aSmoothness}
    |a(x)-a(x')| \leq L_1 \|x-x'\|_\infty^{\beta_1}.
\end{equation}
    \item[(B2)($\beta_2,L_2$)] For all $x,x' \in [0,1]^d$ we have
\begin{equation}
\label{Eq:fSmoothness}
    |f(x)-f(x')| \leq L_2 \|x-x'\|_\infty^{\beta_2}.
\end{equation}
    \item[(B3)($c_0,C_0$)] For all $x \in [0,1]^d$ we have
\[
    c_0 \leq f(x) \leq C_0.
\]
\end{enumerate}
Assumption (B1)($\beta_1,L_1)$ is almost identical to the earlier (A1)($\beta_1,L_1)$, with the only difference being that it only requires smoothness on $[0,1]^d$ rather than all of $\bbR^d$. It follows from a simple calculation, similar to that in~\eqref{Eq:SmoothnessToTV}, that (B2)($\beta_2,L_2$) and (B3)($c_0,C_0$) together imply (A2)($\beta_2,L_2/c_0)$. We make these stronger assumptions to facilitate the analysis.

We now turn to the construction of our estimator. Let $K:\mathbb{R}^d \rightarrow [0,\infty)$ be a kernel function satisfying $\int K=1$, $K(u)=0$ when $\|u\|_\infty >1/2$, and $K(\xi_1u_1,\ldots,\xi_du_d)=K(u_1,\ldots,u_d)$ for any $\xi_1,\ldots,\xi_d \in \{-1,1\}$ and $u \in \mathbb{R}^d$. Given $S \in \bbS$ and a bandwidth $h \in (0,1)$ write $K^S(u)= \int_{\mathbb{R}^{S^c}} K(u_S,u_{S^c}') \,du_{S^c}'$ and $K_h^S(u)=h^{-|S|}K^S(u/h)$, as in the previous section. For $x \in \bbR^d$, $h \in (0,1)$ and $S \in \bbS$ we define the marginal density estimator
\[
    \hat{f}_{S,h}(x) \equiv \hat{f}_S(x) = \frac{1}{n+n_S} \biggl\{\sum_{i=1}^{n_S} K_h^S(X_{S,i} - x) + \sum_{i=1}^n K_h^S(X_i - x) \biggr\}.
\]
Given $\bS \in \bbS^{(m)}$ we write $v_\bS = \prod_{j=1}^m (1+n/n_{S_j})^{-1}$ and define the random function $\hat{k}_h^\bS$ on $(\bbR^d)^{m+1}$ by
\[
    \hat{k}_{h}^{\bS}(x_1,\ldots,x_{m+1}) = a(x_1)\biggl\{ \frac{K_h^{S_1}(x_2-x_1)}{\hat{f}_{S_1}(x_1)} - 1 \biggr\} \ldots \biggl\{ \frac{K_h^{S_m}(x_{m+1}-x_m)}{\hat{f}_{S_m}(x_m)} - 1 \biggr\}.
\]
Given a choice of truncation level $M \in \mathbb{N}_0$ and bandwidth $h \in (0,1)$ we take step size $\eta = |\bbS|^{-1}$ as in Section~\ref{Sec:UpperBound} and define our estimator to be
\begin{align*}
    \check{\theta}_{h}^{M} = \sum_{m=0}^{M} (-1)^m \sum_{\bS \in \bbS^{(m)}} v_{(S_1,\ldots,S_{m-1})} b_{M,\eta}(m)  \frac{1}{(n)_{m+1}} \sum_{\bi \in \mathcal{I}_{m+1}}\hat{k}_{h}^{\bS}(X_{i_1},\ldots,X_{i_{m+1}}),
\end{align*}
where we interpret the $m=0$ term as $\hat{\theta}^\mathrm{CC}$. The following is our main result of this section.
\begin{thm}
\label{Thm:UpperBound}
Let $\beta_1,\beta_2 \in (0,1]$ and $L_1,L_2,c_0,C_0>0$ and suppose that $a$ and $f$ satisfy (B1)($\beta_1,L_1$), (B2)($\beta_2,L_2$) and (B3)($c_0,C_0$). Suppose further that $\max_{S \in \bbS} \lambda_S < \infty$ and that $\max_{S \in \bbS} \|\bar{r}_S\|_\infty < \infty$. Then there exists a constant $C \equiv C(d,\beta_1,\beta_2,L_1,L_2,c_0,C_0,\|K\|_\infty)$ such that
\[
    \bbE_f [ n \{\check{\theta}_{h}^{M} - \theta(f)\}^2 ] \rightarrow \inf_{\alpha_\bbS \in H_\bbS} \mathcal{L}(\alpha_\bbS)
\]
whenever $h$ and $M$ are chosen to satisfy $M \rightarrow \infty$, $hC^M \rightarrow 0$ and $(C/h^d)^{2M+2}/n \rightarrow 0$.
\end{thm}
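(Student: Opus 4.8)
The plan is to decompose $n\bbE_f[(\check\theta_h^M-\theta)^2]=n\{\bbE_f\check\theta_h^M-\theta\}^2+n\mathrm{Var}_f(\check\theta_h^M)$ and to show that the squared bias is negligible while the variance converges to $\mathcal{L}(\alpha_\bbS^{(M)})/n$, after which Propositions~\ref{Prop:OptimisationShifted} and~\ref{Prop:CrossFitReduction} carry the bound to the target $\inf_{\alpha_\bbS\in H_\bbS}\mathcal{L}(\alpha_\bbS)$. Throughout I would use (B1)--(B3) to replace convolutions $K_h^S*(gf_S)$ by $gf_S$ up to $O(h^{\beta_1\wedge\beta_2})$ errors and to keep the denominators $\hat f_S$ bounded away from zero, the latter being where the assumption $f\ge c_0$ on $[0,1]^d$ is essential. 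The fundamental difficulty, relative to the cross-fitted estimator of Section~\ref{Sec:UpperBound}, is that the same complete observations appear both inside the denominators $\hat f_S$ and as the evaluation points of the $U$-statistic; the distinct-index sum over $\mathcal{I}_{m+1}$ keeps the leading term clean, but this self-dependence is the source of the remainders that the rate conditions must absorb.

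For the bias I would replace each random denominator $\hat f_{S_j}$ by its population version $f_{S_j}$ and compute the means of the resulting fixed-kernel $U$-statistics. Integrating from the innermost argument outwards, each factor $K_h^{S_j}(x_{j+1}-x_j)/f_{S_j}(x_j)-1$ integrates against $f$ to a smoothing error of size $O(h^{\beta_1\wedge\beta_2})$, so that the $m=0$ term is exactly $\theta$ and every $m\ge1$ term has small mean. The condition $hC^M\to0$ is what controls the accumulation of these errors, together with the error from replacing $\hat f_{S_j}$ by $f_{S_j}$, across the $M$ levels, and I would use it to conclude that the squared bias contributes negligibly to $n\bbE_f[(\check\theta_h^M-\theta)^2]$.

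The variance is the heart of the argument. I would organise it through a Hoeffding decomposition of $\check\theta_h^M$ over the complete data, combined with a first-order Taylor expansion $1/\hat f_{S_j}=1/f_{S_j}-(\hat f_{S_j}-f_{S_j})/f_{S_j}^2+\cdots$ of each denominator. The crucial observation is that the incomplete data enters $\check\theta_h^M$ only through $\hat f_S=\frac{1}{n+n_S}\{\sum_i K_h^S(X_{S,i}-\cdot)+\sum_i K_h^S(X_i-\cdot)\}$, and that the linear part of the expansion reproduces exactly the two halves of the oracle $\theta^{*,(M)}$ in~\eqref{Eq:ApproxOracle}: the fixed-kernel projection yields the complete-data average of $a-\sum_S\alpha_S^{(M)}$, while the $(\hat f_{S_j}-f_{S_j})$ corrections yield the incomplete-data averages $\frac{1}{n_S}\sum_j\alpha_S^{(M)}(X_{S,j})$, with the factors $n_S/(n+n_S)\to\lambda_S/(1+\lambda_S)$ and $n/(n+n_S)\to1/(1+\lambda_S)$ combining with the weights $v_\bS$ to produce precisely the coefficients $\lambda_S/(1+\lambda_S)$ that define $\alpha_S^{(M)}$ in the MCAR case through~\eqref{Eq:ApproxInfluence}. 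I would verify this matching at $m=1$ by a direct conditional-expectation calculation and then propagate it to general $m$ by induction on the level, using the smoothing approximation of conditional expectations and the recursive definition of the $\bar a_\bS^{(m)}$. Having identified the first-order projection with $\theta^{*,(M)}$, Proposition~\ref{Prop:CrossFitReduction} gives $n\mathrm{Var}(\theta^{*,(M)})\le(1+o(1))\mathcal{L}(\alpha_\bbS^{(M)})$.

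The step I expect to be the main obstacle is showing that all remaining contributions to $\mathrm{Var}(\check\theta_h^M)$ are $o(1/n)$. Two families arise: the higher-order (degenerate) Hoeffding components of order $r\ge2$, and the higher-order Taylor terms that are quadratic or worse in $\hat f_{S_j}-f_{S_j}$. Both are delicate because $\hat k_h^\bS$ is a product of up to $m$ factors each of sup-norm $O(1/h^d)$, so that $\|\hat k_h^\bS\|_\infty$ and $\|\hat k_h^\bS\|_{L^2}$ grow like $(C/h^d)^m$; combined with the fluctuations $\hat f_{S_j}-f_{S_j}$, which contribute variance of order $h^{-d}/n$ per level, and with the diagonal self-terms $K_h^S(0)\sim h^{-d}$ created by evaluating $\hat f_S$ at its own sample points, the worst such contribution to $n\mathrm{Var}(\check\theta_h^M)$ is of order $(C/h^d)^{2M+2}/n$. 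Bounding these terms requires careful variance estimates for degenerate $U$-statistics with $n$-dependent kernels, together with bookkeeping over the combinatorially many cross-terms coupling complete and incomplete observations across the $M$ levels; the hypothesis $(C/h^d)^{2M+2}/n\to0$ is precisely what renders them negligible. Assembling the three steps gives $n\bbE_f[(\check\theta_h^M-\theta)^2]\to\mathcal{L}(\alpha_\bbS^{(M)})$, and since $M\to\infty$, Proposition~\ref{Prop:OptimisationShifted} yields $\mathcal{L}(\alpha_\bbS^{(M)})\to\inf_{\alpha_\bbS\in H_\bbS}\mathcal{L}(\alpha_\bbS)$; the insensitivity of this infimum to the discrepancy between $\lambda_S$ and $n_S/n$, as exploited in Proposition~\ref{Prop:CrossFitReduction}, completes the argument.
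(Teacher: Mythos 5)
Your proposal follows essentially the same route as the paper's proof: you linearise the data-dependent-kernel $U$-statistics onto the oracle $\theta^{*,(M)}$ (the paper does this in two stages, via the smoothed oracle $\theta_{h}^{*,M}$ in Proposition~\ref{Prop:SmoothedUpperBound} and then the smoothness induction of Proposition~\ref{Prop:FiniteMUpperBound}), control the remainders through moment bounds on the density-estimator fluctuations and degenerate $U$-statistic variance estimates under exactly the stated rate conditions, and conclude via Proposition~\ref{Prop:CrossFitReduction} and Proposition~\ref{Prop:OptimisationShifted}. Your bias/variance organisation and Taylor-plus-Hoeffding language are cosmetic variants of the paper's successive-replacement and explicit degeneracy arguments, so the two arguments coincide in substance.
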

This result is a consequence of the finite-sample results Propositions~\ref{Prop:SmoothedUpperBound} and~\ref{Prop:FiniteMUpperBound} below and Proposition~\ref{Prop:OptimisationShifted} and the second part of Proposition~\ref{Prop:CrossFitReduction} above. The requirement on the bandwidth $h$ is stronger than for our sample-splitting estimator in Section~\ref{Sec:UpperBound}, but it is sufficient to prove that our direct estimator achieves the minimal asymptotic variance.

Theorem~\ref{Thm:UpperBound} is proved by showing that our estimator can be approximated successively by two oracle estimators. The second of these is $\theta^{*,(M)}$, defined previously in~\eqref{Eq:ApproxOracle}, but the first is new. In order to introduce this new oracle estimator we must introduce some population-level quantities. We will write $f_{S,h}(x) \equiv f_{S,h}(x_S) = \bbE \{\hat{f}_{S,h}(x)\} = (K_h^{S} \ast f)(x)$ for a smoothed version of the marginal density $f_S$. Given $\bS \in \bbS^{(m)}$ we define an oracle version of $\hat{k}_h^\bS$ by
\[
    k_h^{\bS}(x_1,\ldots,x_{m+1}) = a(x_1)\biggl\{ \frac{K_h^{S_1}(x_2-x_1)}{f_{S_1,h}(x_1)} - 1 \biggr\} \ldots \biggl\{ \frac{K_h^{S_m}(x_{m+1}-x_m)}{f_{S_m,h}(x_m)} - 1 \biggr\}
\]
and further write $\bar{k}_h^{\bS}(x) = \mathbb{E}\{ k_h^{\bS}(X_1,\ldots,X_m,x)\}$. We may now define the oracle statistic
\[
    \theta_{h}^{*,M}= \hat{\theta}^\mathrm{CC} + \sum_{m=1}^{M} (-1)^m \sum_{\bS \in \bbS^{(m)}} v_\bS b_{M,\eta}(m) \biggl\{ \frac{1}{n} \sum_{i=1}^n \bar{k}_h^{\bS}(X_i) - \frac{1}{n_{S_m}} \sum_{i=1}^{n_{S_m}} \bar{k}_h^{\bS}(X_{S_m,i}) \biggr\}.
\]
Our next result shows that this indeed provides an approximation to $\check{\theta}_{h}^{M}$.
\begin{prop}
\label{Prop:SmoothedUpperBound}
Suppose that there exists $c_0>0$ such that $f(x) \geq c_0$ for all $x \in [0,1]^d$ and suppose that
\begin{equation}
\label{Eq:nlowerbound}
    n \geq  36 (M+1)^2 \biggl(\frac{2^{d+2} \|K\|_\infty}{c_0h^d}\biggr)^2 \log\biggl( \frac{(M+1) 2^{d+2} \|K\|_\infty}{c_0h^d} \biggr).
\end{equation}
Then there exists a universal constant $C>0$ such that
\[
    \mathbb{E}\{(\check{\theta}_{h}^{M} -\theta_{h}^{*,M})^2\} \leq \frac{\|a\|_\infty^2}{\min_{S \in \mathbb{S}^+} n_S^2} \biggl( \frac{C 2^d \|K\|_\infty}{c_0h^d} \biggr)^{2M+2}.
\]
\end{prop}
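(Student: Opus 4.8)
The goal is to bound the squared difference between the data-driven statistic $\check{\theta}_h^M$ and its oracle version $\theta_h^{*,M}$, where the only difference between the two arises from replacing the estimated marginal densities $\hat{f}_{S,h}$ by their population smoothed counterparts $f_{S,h}$ inside the kernels $\hat{k}_h^\bS$ versus $k_h^\bS$, together with the fact that $\check{\theta}_h^M$ is a genuine $U$-statistic over distinct indices while $\theta_h^{*,M}$ is written as a sum of simple averages built from the conditional-expectation functions $\bar{k}_h^\bS$.

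The plan is to work term-by-term in $m$ and to control the error of each $U$-statistic with kernel $\hat{k}_h^\bS$ relative to $k_h^\bS$. The first main step is a uniform deviation bound for the density estimators: under the lower bound $f \geq c_0$ and the sample-size condition~\eqref{Eq:nlowerbound}, I would show that with high probability $\hat{f}_{S,h}(x) \geq c_0 h^{-d}\cdot(\text{const})$ uniformly, or more precisely that $\inf_x \hat{f}_{S,h}(x)$ stays bounded away from zero at the scale $c_0$, so that the ratios $K_h^{S}/\hat{f}_{S,h}$ appearing in $\hat{k}_h^\bS$ are well defined and bounded by a quantity of order $\|K\|_\infty/(c_0 h^d)$. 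The logarithmic factor and the $(M+1)^2$ in~\eqref{Eq:nlowerbound} are precisely what one needs to make a union bound over the $M+1$ levels and a covering/VC argument work, and the exponent $2M+2$ in the conclusion reflects that each of the (up to $M$) kernel ratios contributes a factor of order $\|K\|_\infty/(c_0 h^d)$, squared because we are bounding a second moment.

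The second step is to write $\hat{k}_h^\bS - k_h^\bS$ as a telescoping sum over the $m$ factors, replacing one $\hat{f}_{S_j,h}$ by $f_{S_j,h}$ at a time; each summand is a product in which one factor is the difference $K_h^{S_j}(\hat{f}_{S_j,h}^{-1} - f_{S_j,h}^{-1})$, bounded using $|\hat{f}^{-1} - f^{-1}| \leq |\hat{f}-f|/(f \hat{f})$ and the uniform lower bounds from step one, while the remaining $m-1$ factors are each bounded by $O(\|K\|_\infty/(c_0 h^d))$. Taking expectations and using the high-probability event from step one (and controlling the low-probability complement crudely by $\|a\|_\infty$ and the trivial bound on the kernels), the dominant contribution is of order $\|a\|_\infty^2 (C\|K\|_\infty/(c_0 h^d))^{2M+2}$ divided by the effective sample size. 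The division by $\min_{S \in \bbS^+} n_S^2$ comes from the variance of the $U$-statistic averages: the leading $U$-statistic term $\frac{1}{(n)_{m+1}}\sum \hat{k}_h^\bS$ concentrates around its conditional expectation $\bar{k}_h^\bS$ with fluctuations of order $n^{-1/2}$, and matching the $U$-statistic projection to the simple-average form of $\theta_h^{*,M}$ produces the $n_S^{-2}$ scaling.

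The main obstacle, as I see it, is organising the $U$-statistic decomposition cleanly so that the plug-in error (density estimation) and the projection error (replacing the $U$-statistic by its Hájek/conditional-expectation projection $\bar{k}_h^\bS$) are disentangled and each shown to be of the stated order; the combinatorial bookkeeping over $\bS \in \bbS^{(m)}$, the falling factorials $(n)_{m+1}$, and the factor $v_\bS$ is delicate, and one must be careful that the exponentially growing constant $(C\|K\|_\infty/(c_0 h^d))^{2M+2}$ is the genuine bottleneck rather than an artifact of loose bounding. I expect the cleanest route is to prove a single lemma bounding $\mathbb{E}\{(\text{$U$-statistic of }\hat{k}_h^\bS - \text{average of }\bar{k}_h^\bS)^2\}$ for a fixed $\bS$, uniformly in $m \leq M$, and then sum the resulting bounds against the weights $b_{M,\eta}(m)$, using $\sum_m |\bbS^{(m)}| b_{M,\eta}(m) \leq |\bbS|^M$-type estimates to absorb everything into the constant $C$.
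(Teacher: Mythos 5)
There is a genuine gap, and it is conceptual rather than technical. Your plan treats the density-estimation error as pure noise: in step two you bound the telescoping terms via $|\hat{f}^{-1}-f^{-1}|\leq |\hat{f}-f|/(f\hat{f})$ in absolute value and take expectations. Since $\hat{f}_{S,h}-f_{S,h}$ is a centred average with pointwise standard deviation of order $\{\|K\|_\infty/((n+n_S)h^{d})\}^{1/2}$, this route can never give a squared-error bound better than order $1/(nh^d)$ for the plug-in discrepancy, whereas the proposition asserts order $1/\min_S n_S^2$ (up to the exponential constants). The gap matters: the whole point of the proposition is that $\check{\theta}_h^M-\theta_h^{*,M}$ is $o_p(n^{-1/2})$, i.e.\ negligible against the oracle's own fluctuations, and a bound of order $(nh^d)^{-1/2}$ in root-mean-square is not. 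Worse, the approach cannot be repaired by sharper bounding, because the linear-in-$(\hat{f}-f)$ part of the plug-in error is \emph{not} small at the $n^{-1/2}$ scale — it is precisely where the incomplete data enter $\check{\theta}_h^M$ (they appear only through $\hat{f}_{S,h}$), and it is what generates the terms $-n_{S_m}^{-1}\sum_i \bar{k}_h^{\bS}(X_{S_m,i})$ in the oracle $\theta_h^{*,M}$, as well as the change of weight from $v_{(S_1,\ldots,S_{m-1})}$ in $\check{\theta}_h^M$ to $v_\bS$ in $\theta_h^{*,M}$. If you bound this contribution instead of extracting it, the incomplete-data averages in the oracle are left unmatched and the difference $\check{\theta}_h^M-\theta_h^{*,M}$ is genuinely of order $n^{-1/2}$, so the claimed inequality would be false for your decomposition. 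Your closing remark that "matching the $U$-statistic projection \ldots produces the $n_S^{-2}$ scaling" is also internally inconsistent: fluctuations of order $n^{-1/2}$ square to $n^{-1}$, not $n^{-2}$; the $n^{-2}$ can only arise after all first-order terms cancel exactly.

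The correct architecture, which the paper follows, is an exact expansion rather than an absolute-value bound. Writing $\epsilon_i^{S}=\hat{f}_{S,h}(X_i)/f_{S,h}(X_i)-1$, one first replaces each factor $(1+\epsilon)^{-1}$ by $(1-\epsilon)$ with error quadratic in $\epsilon$ (controlled by Rosenthal-type moment bounds and a Hoeffding bound for the event that $\hat{f}_{S,h}$ is small — this is where~\eqref{Eq:nlowerbound} enters; no uniform covering argument is needed since everything is evaluated at data points). One then expands the product, discards all terms with two or more $\epsilon$ factors (again quadratic, hence $O(n^{-2})$ in squared error), and for the remaining terms that are linear in $\epsilon$ one splits $\epsilon_{i_\ell}^{S_\ell}$ into its complete-data, incomplete-data and diagonal parts. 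The linear terms at levels $\ell<m$ are \emph{degenerate} $U$-statistics (this uses $\bbE\{k_h^{\bS}(x_1,\ldots,x_m,X)\}=0$) and so contribute $O(n^{-2})$; the linear terms at level $\ell=m$ are not negligible, but their H\'ajek/two-sample linearisations are exactly the complete- and incomplete-data averages of $\bar{k}_h^{\bS}$ appearing in $\theta_h^{*,M}$, with exactly the weight $n_{S_m}/(n+n_{S_m})$ that converts $v_{(S_1,\ldots,S_{m-1})}$ into $v_\bS$. Only after this identification do all residuals sit at the $n^{-2}$ scale, and the final summation over $\bS$ and $m$ against $b_{M,\eta}(m)$ proceeds as you suggest.
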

This result is proved by studying the stochastic error of our density estimators and using techniques from the theory of $U$-statistics. This does not require any smoothness conditions on $a$ or $f$, though we do assume that $f$ is bounded below by a positive constant. The functions $k_h^{\bS}$ introduced above are the kernels of degenerate $U$-statistics considered in this proof. The degeneracy is seen by noting that $\mathbb{E}\{ k_h^{\bS}(x_1,\ldots,x_m,X)\} = 0$ for any $x_1,\ldots,x_m \in \bbR^d$, which will mean that these $U$-statistics can be approximated by sample means.

We next study the behaviour of $\theta_{h}^{*,M}$ for small values of $h$, which will naturally require smoothness assumptions on $a$ and $f$; we use those introduced in~\eqref{Eq:aSmoothness} and~\eqref{Eq:fSmoothness} above. We will thus see that we can approximate $\theta_{h}^{*,M}$ by $\theta^{*,(M)}$ defined in~\eqref{Eq:ApproxOracle} and taking the form
\[
    \theta^{*,(M)} = \hat{\theta}^\mathrm{CC} + \sum_{m=1}^{M} (-1)^m \sum_{\bS \in \bbS^{(m)}} b_{M,\eta}(m) \biggl\{ \frac{1}{n} \sum_{i=1}^n \bar{a}_{\bS}^{(m)}(X_i) - \frac{1}{n_{S_m}} \sum_{i=1}^{n_{S_m}} \bar{a}_{\bS}^{(m)}(X_{S_m,i}) \biggr\}.
\]
when $r_S \equiv 1$ for all $S \in \bbS$.
\begin{prop}
\label{Prop:FiniteMUpperBound}
Let $\beta_1,\beta_2 \in (0,1]$ and $L_1,L_2,c_0,C_0>0$ and suppose that $a$ and $f$ satisfy (B1)($\beta_1,L_1$), (B2)($\beta_2,L_2$) and (B3)($c_0,C_0$), suppose that~\eqref{Eq:nlowerbound} holds and that $|n\lambda_S /n_S - 1| \leq 1/n$ for all $S \in \bbS$. Then
\[
    \mathbb{E}\{( \theta_{h}^{*,M} - \theta^{*,(M)})^2\} \leq \frac{72 M^4}{ \min_{S \in \bbS^+} n_S}  \biggl(\frac{2^{d+1/2}C_0}{c_0} \biggr)^{2M} \{C_0^2 L_1^2 h^{2\beta_1}+(L_2^2 h^{2\beta_2} + C_0 d h)\|a\|_\infty^2 + n^{-2}\}.
\]
\end{prop}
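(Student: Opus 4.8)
The plan is to exploit the fact that $\theta_{h}^{*,M}$ and $\theta^{*,(M)}$ share the same leading term $\hat\theta^\mathrm{CC}$ and the same weights $b_{M,\eta}(m)$, so that their difference is a weighted sum of increments built from the single family of functions $g_\bS^{(m)} := v_\bS \bar{k}_h^{\bS} - \bar{a}_{\bS}^{(m)}$, each a function of $x_{S_m}$ alone:
\[
    \theta_{h}^{*,M}-\theta^{*,(M)} = \sum_{m=1}^{M}(-1)^m\sum_{\bS\in\bbS^{(m)}}b_{M,\eta}(m)\biggl\{\frac1n\sum_{i=1}^n g_\bS^{(m)}(X_i) - \frac{1}{n_{S_m}}\sum_{i=1}^{n_{S_m}} g_\bS^{(m)}(X_{S_m,i})\biggr\}.
\]
In the MCAR case the $S_m$-marginal of $X_{S_m,1}$ equals $f_{S_m}$, and one checks, exactly as for the degeneracy property noted after Proposition~\ref{Prop:SmoothedUpperBound}, that both $\bar{a}_{\bS}^{(m)}$ and $\bar{k}_h^{\bS}$ integrate to zero against $f_{S_m}$; hence each inner bracket is a centred average of independent observations, with the complete data and each incomplete dataset mutually independent. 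First I would bound the second moment by Cauchy--Schwarz over the finite index set, using variance bounds of the form $(\tfrac1n+\tfrac1{n_{S_m}})\int (g_\bS^{(m)})^2 f_{S_m}$, to reduce the problem to controlling $\tfrac{2N}{\min_{S\in\bbS^+}n_S}\sum_{m=1}^M\sum_{\bS\in\bbS^{(m)}} b_{M,\eta}(m)^2\,\|g_\bS^{(m)}\|_{L^2(f_{S_m})}^2$, where $N=\sum_{m=1}^M|\bbS^{(m)}|$.

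The core of the argument is a bound on $\|g_\bS^{(m)}\|_{L^2(f_{S_m})}$ obtained by running two parallel population recursions. Writing $v_\bS=\prod_{j=1}^m(1+n/n_{S_j})^{-1}$, I would distribute one factor $(1+n/n_{S_j})^{-1}$ to each step and, using the technical assumption $|n\lambda_S/n_S-1|\le 1/n$, compare it with the factor $\lambda_{S_j}/(1+\lambda_{S_j})$ in the definition of $\bar{a}_{\bS}^{(m)}$; the per-step discrepancy is $O(n^{-2})$ and produces the $n^{-2}$ term. The key one-step lemma compares the operator $\phi\mapsto\bbE\{\phi(X)[K_h^S(\cdot-X)/f_{S,h}(X)]\}$, which generates $\bar{k}_h^{\bS}$, with $\phi\mapsto\bbE\{\phi(X)\mid X_S=\cdot_S\}$, which generates $\bar{a}_{\bS}^{(m)}$. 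Integrating out the $S^c$-coordinates rewrites the former as a kernel-weighted integral of the conditional mean against $f_S/(K_h^S\ast f_S)$, and its error decomposes into (i) the H\"older smoothness of the conditional mean (via (B1) and, through (A2), (B2)--(B3); cf.\ Lemma~\ref{Lemma:Smoothness}), contributing the $C_0 L_1 h^{\beta_1}$ piece; (ii) the departure of $f_S/(K_h^S\ast f_S)$ from $1$, controlled by (B2) and giving $L_2 h^{\beta_2}\|a\|_\infty$; and (iii) the boundary/normalisation defect of $K_h^S$ on $[0,1]^S$, which has magnitude $O(1)$ only on a boundary layer of $f_{S_m}$-measure $O(dh)$ and is therefore handled directly in $L^2$ to give the $C_0 d h\,\|a\|_\infty^2$ term. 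Because $K_h^S(\cdot-X)/f_{S,h}(X)$ is bounded by a multiple of $2^d C_0/c_0$, each operator has $L^2(f)$-operator norm at most a multiple of $2^d C_0/c_0$, which is the source of the geometric factor $(2^{d+1/2}C_0/c_0)^{2M}$; propagating the three one-step errors through the at most $M$ levels, with the centring constants $\bar\theta$ matched in the same induction, accumulates at most $M$ copies of each.

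Finally I would substitute this bound into the reduced second-moment estimate. The branching $|\bbS^{(m)}|\le|\bbS|^m$ is absorbed by the weights $b_{M,\eta}(m)$ with $\eta=|\bbS|^{-1}$, the polynomial-in-$m$ factors from the telescoping sum and the number of terms collect into the $M^4$ prefactor, and the operator-norm powers collect into $(2^{d+1/2}C_0/c_0)^{2M}$, yielding the stated bound after dividing by $\min_{S\in\bbS^+}n_S$. The main obstacle is precisely this inductive $L^2$ control of $g_\bS^{(m)}$: the one-step bias must be pushed through $m$ iterations without blow-up, which demands uniform control of $f_S/f_{S,h}$ and of the H\"older constants of the iterated conditional means at every level, as well as a clean separation of the polynomial-in-$M$ contributions from the exponential-in-$M$ operator-norm factor so that the final constants match. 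The remaining ingredients---the mean-zero and independence bookkeeping, the elementary variance bounds, and the $O(n^{-2})$ comparison of $v_\bS$ with the $\lambda$-weights---are routine once the one-step lemma is in place.
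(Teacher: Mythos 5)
Your proposal follows essentially the same route as the paper's proof: the same reduction of the difference to centred averages of the functions $v_\bS \bar{k}_h^{\bS} - \bar{a}_{\bS}^{(m)}$, the same comparison of $v_\bS$ with the $\lambda$-weights via the technical assumption (though note the per-step weight discrepancy is $O(1/n)$, not $O(n^{-2})$; the $n^{-2}$ term arises after squaring the accumulated $O(m/n)$ error, which still fits the stated bound), and the same inductive one-step $L^2$ comparison of the kernel-smoothing operator with the conditional-expectation operator, with exactly the three error sources (H\"older bias, density-smoothing bias, and a boundary layer of measure $O(dmh)$) and the geometric operator-norm factor $(2^d C_0/c_0)^m$. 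This is the paper's argument in all essentials.
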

The proof of this result relies on the approximation $v_\bS \bar{k}_h^{\bS}(x) \approx \bar{a}_\bS^{(m)}(x)$ that we will formalise in~\eqref{Eq:kSSmoothness} under our smoothness assumptions.

\section{Additional simulation results for MICE}
\label{Sec:MICE}

In this section we present further simulation results to investigate the poor performance of the imputation-based estimator in Section~\ref{Sec:Simulations}. We consider the same data-generating mechanisms as previously, but restrict attention to the $n=200$ settings. Here we break down the mean squared error into squared bias and variance. The best performance is in Setting~(i), where there is no bias and variance is roughly constant apart from a small increase when the covariates are singular. There is a bias in Setting (ii) that grows with $\mu$. In the extreme case that $\mu=0$ the response variable is independent of the covariates and we see a large variance. In Setting~(iii), the dominant contribution is from the bias; we omit the point from $\rho=1$ for the readability of the plot.

\begin{figure}
     \begin{subfigure}
         \centering
         \includegraphics[width=0.32\textwidth]{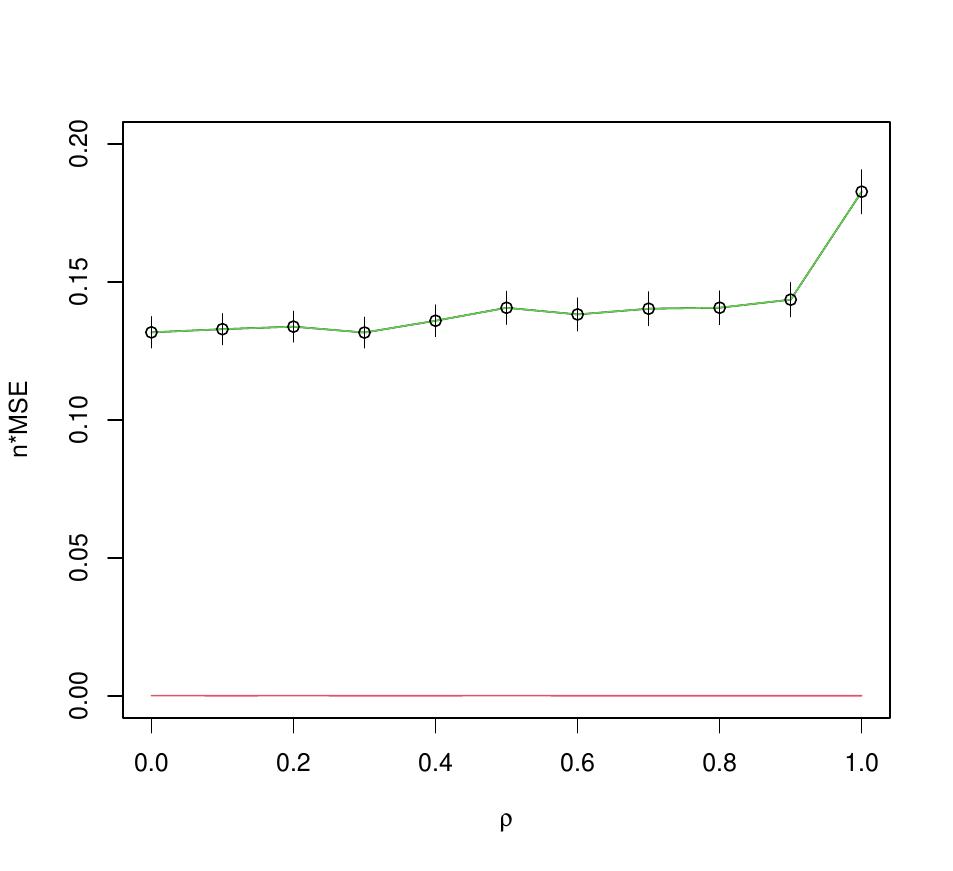}
     \end{subfigure}
     \hfill
     \begin{subfigure}
         \centering
         \includegraphics[width=0.32\textwidth]{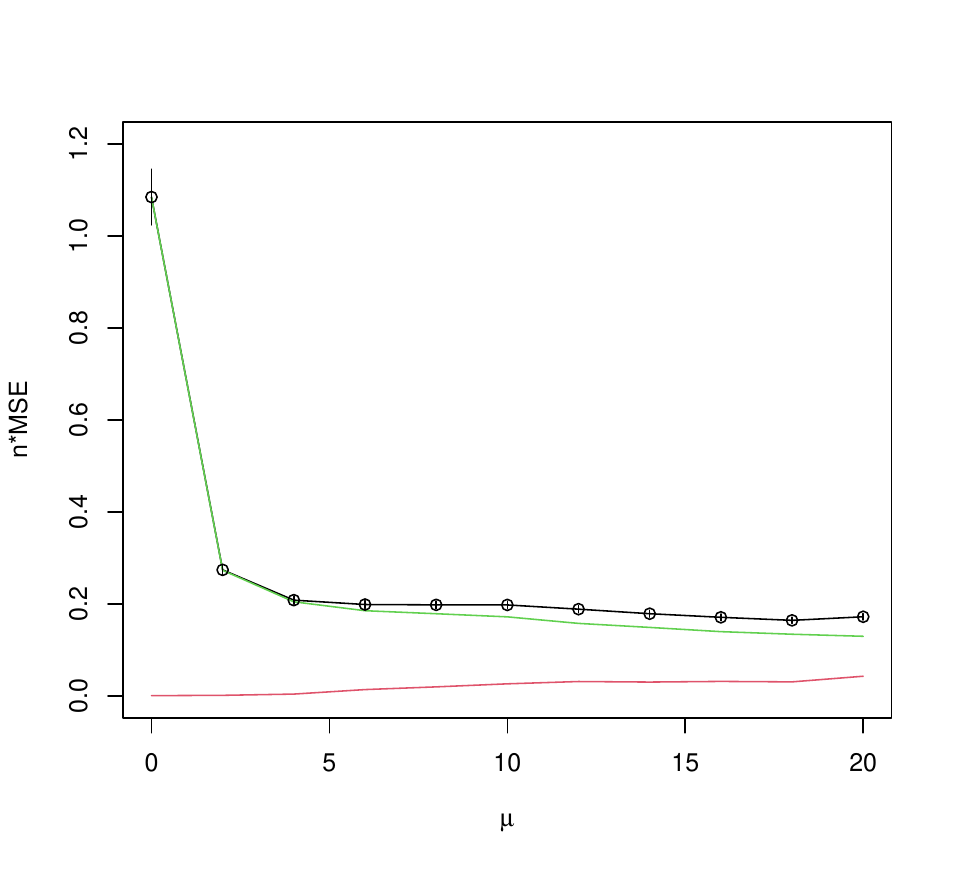}
     \end{subfigure}
     \hfill
     \begin{subfigure}
         \centering
         \includegraphics[width=0.32\textwidth]{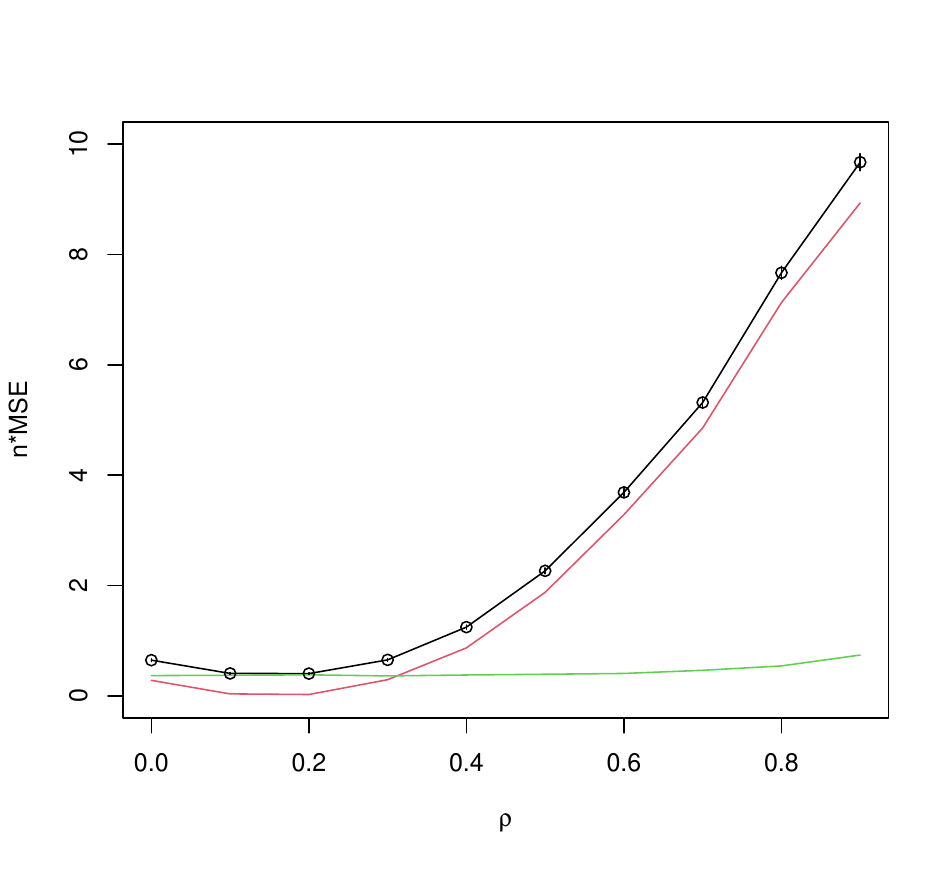}
     \end{subfigure}
        \caption{Settings (i)--(iii) are shown from left to right. Black lines show total rescaled mean squared error, while red lines show the contribution of the squared bias and green lines show that of the variance. Error bars show three standard deviations.}
        \label{Fig:Gaussian}
\end{figure}

\section{Non-linear functionals}
\label{Sec:NonLinear}

In this section we demonstrate that our framework extends to cover a much larger range of statistical tasks. We present three classes of examples, in each case providing an expression for the local asymptotic minimax lower bound and a sketch of the argument used to derive it. These are given in terms of generalised ANOVA decompositions of appropriate influence functions, which in these cases depend on unknown quantities. Following similar cross-fitting procedures to those used in Sections~\ref{Sec:UpperBound} and~\ref{Sec:MAR} it would be possible to construct efficient estimators; for the sake of brevity we focus on the lower bounds in this section.

\subsection{Smooth functionals}

Suppose that we are given data generated according to the basic model described in the introduction and wish to estimate functionals of the form
\[
    \theta(f) = \int \psi(x,f(x)) \,dx
\]
where $\psi$ is a smooth, known function. This includes the quadratic functional $\int f^2$ and the Shannon entropy $\int f \log(1/f)$, as well as other entropies, and has been studied as a class of estimands in \cite{laurent1996efficient}, for example. As the model for the data is unchanged, the same arguments as in Section~\ref{Sec:LowerBound} allow us to demonstrate the local asymptotic normality of our sequence of experiments with respect to the inner product derived in~\eqref{Eq:InnerProduct}. Now however, writing $\dot{\psi}(x,u)=(\partial \psi /\partial u)(x,u)$, we must find the adjoint of the linear map $\dot{\kappa} : h \mapsto \int f \dot{\psi} h$. It follows from our previous work, replacing $a$ by $\dot{\psi}$, that we will have an efficiency bound given by
\begin{equation}
\label{Eq:ANOVAOfInfluence}
    \inf_{\alpha_\bbS \in H_\bbS} \biggl\{ \mathrm{Var} \biggl( \dot{\psi} \bigl( X,f(X) \bigr) - \sum_{S \in \bbS} \alpha_S(X_S) \biggr) + \sum_{S \in \bbS} \bbE \biggl[ \frac{\alpha_S(X_S)^2}{\lambda_S \bar{r}_S(X_S)} \biggr] \biggr\}, 
\end{equation}
generalising Theorem~\ref{Thm:ShiftedLowerBound} beyond the case that $\psi(x,u)=a(x)u$.

\subsection{$U$-statistic targets}
\label{Sec:UStat}

Another commonly-studied class of estimands we can treat with our framework is that of the expected values of $U$-statistics. For $K \in \bbN$ and a (known) function $a : (\bbR^d)^K \rightarrow \bbR$, consider
\[
    \theta(f) = \bbE\{a(X_1,\ldots,X_K)\} = \int \ldots \int a(x_1,\ldots,x_K) f(x_1)\ldots f(x_K) \,dx_1 \,\ldots \,dx_K.
\]
This includes variances $\bbE\{\|X_1-X_2\|^2/2\}=\mathrm{tr}\, \mathrm{Cov}(X)$, goodness-of-fit statistics such as the kernel maximum mean discrepancy and measures of dependence such as Kendall's $\tau$, and has been previously studied in the semi-supervised learning setting by~\cite{cannings2022correlation} and~\cite{kim2024semi}. Writing $a_\ell(x)=\bbE\{a(X_1,\ldots,X_K) | X_\ell=x\}$, it is straightforward to see that for such functionals we have $n^{1/2}\{ \theta(f_{n^{-1/2}h}) - \theta(f)\} \rightarrow \sum_{\ell=1}^K \int f a_\ell h$. It therefore follows that the efficiency bound here is given by
\[
    \inf_{\alpha_\bbS \in H_\bbS} \biggl\{ \mathrm{Var} \biggl( \sum_{\ell=1}^K a_\ell(X) - \sum_{S \in \bbS} \alpha_S(X_S) \biggr) + \sum_{S \in \bbS} \bbE \biggl[ \frac{\alpha_S(X_S)^2}{\lambda_S \bar{r}_S(X_S)} \biggr] \biggr\},
\]
when this is positive. Similarly to~\eqref{Eq:ANOVAOfInfluence} in the previous section, we have the generalised ANOVA decomposition of the usual efficient influence function.

\subsection{Two-sample functionals}

In statistical analyses involving the comparison of distinct populations, such as two-sample testing and the estimation of treatment effects, it is common to estimate functionals of several distributions. Suppose that, as well as the data in our basic model, we have access to independent data with a similar structure on a second distribution of interest. For a density function $g$ on $\bbR^{d'}$ for some $d' \in \bbN$, a collection of subsets $\bbT \subseteq 2^{[d']}$, a collection of known distributional shifts $(s_T : T \in \bbT)$ and sample sizes $m$ and $(m_T : T \in \bbT)$, suppose that we observe 
\[
    Y_1,\ldots,Y_m \overset{\mathrm{i.i.d.}}{\sim} g  \quad \text{and} \quad Y_{T,1},\ldots,Y_{T,n_T} \overset{\mathrm{i.i.d.}}{\sim} \bar{s}_T g_T \text{ for } T \in \mathbb{T},
\]
where $g_T$ is the marginal density of $g$ on $\bbR^T$ and $\bar{s}_T(y_T) = s_T(y_T)/\int s_T g$ is a normalised shift function. Simple functionals of interest have the form $\theta(f,g)=\bbE\{a(X,Y)\}$ for a known function $a$. Taking $a(x,y)=\mathbbm{1}_{\{x_1\leq y_1\}}$, for example, gives the Mann--Whitney $U$ statistic for a difference in location between the first components of $X$ and $Y$. 

We can establish local asymptotic minimax theory by perturbing of $f$ and $g$ to densities proportional to $k(n^{-1/2} h(x))f(x)$ and $k(n^{-1/2} \ell(x))g(x)$, respectively, where we recall the definition of $k$ from Section~\ref{Sec:ANOVALowerBound} and where $h,\ell \in L^2$ have mean zero. Since the datasets are independent, local asymptotic normality follows from earlier calculations with inner product
\begin{align*}
         \langle (h,\ell),(h',\ell') \rangle_{\lambda_\bbS,\mu_\bbT} &=  \mathbb{E}\{ h(X)h'(X)\} + \sum_{S \in \bbS} \lambda_S \Cov \bigl( h_S(X_{S,1}), h_S'(X_{S,1}) \bigr) \\
         & \hspace{50pt} + \tau \bbE\{ \ell(Y) \ell'(Y)\} + \tau \sum_{T \in \bbT} \mu_S \Cov \bigl( \ell_T(Y_{T,1}), \ell_T'(Y_{T,1}) \bigr) \\
         &= \langle h,h' \rangle_{\lambda_\bbS} + \tau \langle \ell,\ell' \rangle_{\mu_\bbT},
\end{align*}
where we write $\mu_T = \lim_{n \rightarrow \infty} m_T/m$ and assume that $m/n \rightarrow \tau \in (0,\infty)$. As in Section~\ref{Sec:UStat} we see that the derivative of $\theta$ with respect to these perturbations is given by $\dot{\kappa} : (h,\ell) \mapsto \int a_X hf + \int a_Y \ell g$, where $a_X(x) = \bbE\{a(x,Y)\}$ and $a_Y(y)=\bbE\{a(X,y)\}$. Using the work of Section~\ref{Sec:LowerBound} we may therefore calculate a lower bound by writing
\begin{align*}
    \sup &\biggl\{ \dot{\kappa}(h,\ell) : \|(h,\ell)\|_{\lambda_\bbS,\mu_\bbT} \leq 1 \biggr\} = \sup \biggl\{ \int a_X hf + \int a_Y \ell g : \|h\|_{\lambda_\bbS}^2 + \tau \|\ell\|_{\mu_\bbT}^2 \leq 1\biggr\} \\
    & = \biggl[ \sup \biggl\{ \biggl( \int a_X hf \biggr)^2 : \|h\|_{\lambda_\bbS} \leq 1  \biggr\} + \frac{1}{\tau} \sup \biggl\{ \biggl(\int a_Y \ell g \biggr)^2 : \|\ell\|_{\mu_\bbT} \leq 1  \biggr\} \biggr]^{1/2} \\
    & = \inf_{\alpha_\bbS,\beta_\bbT} \biggl\{ \mathrm{Var} \biggl( a_X(X) - \sum_{S \in \bbS} \alpha_S(X_S) \biggr) + \frac{1}{\tau} \mathrm{Var} \biggl( a_Y(Y) - \sum_{T \in \bbT} \beta_T(Y_T) \biggr) \\
    & \hspace{150pt} + \sum_{S \in \bbS} \bbE \biggl[ \frac{\alpha_S(X_S)^2}{\lambda_S \bar{r}_S(X_S)} \biggr] + \sum_{T \in \bbT} \bbE \biggl[ \frac{\beta_T(Y_T)^2}{\tau \mu_T \bar{s}_T(Y_T)} \biggr] \biggr\}^{1/2}.
\end{align*}
This shows that the minimal variance in this problem is found by performing two separate generalised ANOVA decompositions.

\section{Proofs of main results}
\label{Sec:Proofs}

\subsection{Proofs for Section~\ref{Sec:LowerBound}}

\begin{proof}[Proof of Proposition~\ref{Prop:UniqueMinimiser}]
To simplify notation, we will sometimes write, for example, $\alpha_S$ for the random variable $\alpha_S(X_S)$ when this is clear from context. Since $\mathcal{L}(0) = \mathrm{Var} \{a(X)\} < \infty$ and we aim to minimise $\mathcal{L}$, we may restrict attention to $\alpha_\bbS \in H_\bbS$ such that $\alpha_S/\bar{r}_S^{1/2} \in L^2$ for all $S \in \bbS$. 

We first prove the existence of a minimiser. By elementary quadratic expansions we see that for $\mu \in [0,1]$ and $\alpha_\bbS, \beta_\bbS \in H_\bbS$ we have
\begin{align}
\label{Eq:ConvexExpansion}
    &\mathcal{L}(\mu \alpha_\bbS + (1-\mu) \beta_\bbS) \nonumber \\
    & = \Var\biggl( \mu \biggl\{ a(X) - \sum_{S \in \bbS} \alpha_S(X_S) \biggr\} + (1-\mu) \biggl\{ a(X) - \sum_{S \in \bbS} \beta_S(X_S) \biggr\} \biggr) \nonumber \\
    & \hspace{250pt} + \sum_{S \in \bbS} \int \frac{f_S}{\lambda_S \bar{r}_S} \{\mu \alpha_S + (1-\mu) \beta_S\}^2 \nonumber \\
    & = \mu \mathcal{L}(\alpha_\bbS) + (1-\mu) \mathcal{L}(\beta_\bbS) \nonumber \\
    & \hspace{50pt} + \mu(1-\mu) \biggl\{ 2\mathrm{Cov} \biggl( a - \sum_{S \in \bbS} \alpha_S ,a - \sum_{S \in \bbS} \beta_S \biggr) + 2 \sum_{S \in \bbS} \int \frac{f_S \alpha_S \beta_S}{\lambda_S \bar{r}_S} - \mathcal{L}(\alpha_\bbS) - \mathcal{L}(\beta_\bbS)\biggr\} \nonumber \\
    & = \mu \mathcal{L}(\alpha_\bbS) + (1-\mu) \mathcal{L}(\beta_\bbS) \nonumber \\
    & \hspace{50pt} - \mu(1-\mu) \biggl\{\mathrm{Var} \biggl(\sum_{S \in \bbS} \alpha_S - \sum_{S \in \bbS} \beta_S \biggr) + \sum_{S \in \bbS} \int \frac{f_S}{\lambda_S \bar{r}_S}(\alpha_S - \beta_S)^2 \biggr\} \\
    & \leq \mu \mathcal{L}(\alpha_\bbS) + (1-\mu) \mathcal{L}(\beta_\bbS) \nonumber
\end{align}
so that $\mathcal{L}$ is convex. Now let $(\alpha_\bbS^{(t)} : t \in \bbN)$ be a sequence of elements of $H_\bbS$ such that $\mathcal{L}(\alpha_\bbS^{(t)}) \rightarrow \inf_{\alpha_\bbS \in H_\bbS} \mathcal{L}(\alpha_\bbS)$ as $t \rightarrow \infty$. By the convexity of $\mathcal{L}$ we have that $\mathcal{L}(\alpha_\bbS^{(s)}/2 + \alpha_\bbS^{(t)}/2) \leq (1/2) \{ \mathcal{L}(\alpha_\bbS^{(s)}) +\mathcal{L}(\alpha_\bbS^{(t)})\} \rightarrow \inf_{\alpha_\bbS \in H_\bbS} \mathcal{L}(\alpha_\bbS)$ as $s,t \rightarrow \infty$, and hence that $\mathcal{L}(\alpha_\bbS^{(t)}) - \mathcal{L}(\alpha_\bbS^{(s)}/2 + \alpha_\bbS^{(t)}/2) \rightarrow 0$ as $s,t \rightarrow \infty$. However, we see by~\eqref{Eq:ConvexExpansion} that
\begin{align*}
    \mathcal{L}(\alpha_\bbS^{(t)}) - \mathcal{L}\biggl( \frac{\alpha_\bbS^{(s)} + \alpha_\bbS^{(t)}}{2} \biggr) &- \frac{1}{2} \mathcal{L}(\alpha_\bbS^{(t)}) + \frac{1}{2} \mathcal{L}(\alpha_\bbS^{(s)}) \\
    &= \frac{1}{4} \Var\biggl(  \sum_{S \in \bbS}(\alpha_S^{(s)} - \alpha_S^{(t)}) \biggr) + \frac{1}{4} \sum_{S \in \bbS} \int \frac{f_S}{\lambda_S \bar{r}_S} (\alpha_S^{(t)} - \alpha_S^{(s)})^2.
\end{align*}
As we know that the left-hand side of this equation converges to zero as $s,t \rightarrow \infty$, and we know that $\lambda_S < \infty$ for all $S \in \bbS$, it must be that $\int f_S (\alpha_S^{(t)} - \alpha_S^{(s)})^2/\bar{r}_S \rightarrow 0$ for each $S \in \bbS$. Since $\|\bar{r}_S\|_\infty<\infty$ we also know that $\int f_S (\alpha_S^{(t)} - \alpha_S^{(s)})^2 \rightarrow 0$ for each $S \in \bbS$. Thus, $\alpha_S^{(t)}/\bar{r}_S^{1/2}$ and $\alpha_S^{(t)}$ define Cauchy sequences in $H_S$ for each $S \in \bbS$. By the completeness of these spaces there exists $\alpha_\bbS^*$ such that $\alpha_S^{(t)} \rightarrow \alpha_S^*$ and $\alpha_S^{(t)}/\bar{r}_S^{1/2} \rightarrow \alpha_S^*/\bar{r}_S^{1/2}$ as $t \rightarrow \infty$ for each $S \in \bbS$, where the boundedness of $\bar{r}_S$ is used to see that the same $\alpha_S^*$ appears in both limits. Now by the continuity of $\mathcal{L}$ it is clear that $\mathcal{L}(\alpha_\bbS^*) = \lim_{t \rightarrow \infty} \mathcal{L}(\alpha_\bbS^{(t)})$, so that $\alpha_\bbS^*$ is a minimiser of $\mathcal{L}$.

We now justify the uniqueness of $\alpha_\bbS^*$. Suppose that $\beta_\bbS^*$ is another minimiser. Then by~\eqref{Eq:ConvexExpansion} we have
\[
    \mathcal{L}(\alpha_\bbS^*) \leq \mathcal{L}\Bigl( \frac{\alpha_\bbS^* + \beta_\bbS^*}{2} \Bigr) \leq \mathcal{L}(\alpha_\bbS^*) - \frac{1}{4} \sum_{S \in \bbS} \int \frac{f_S}{\lambda_S \bar{r}_S}(\alpha_S^* - \beta_S^*)^2,
\]
which implies that $\alpha_\bbS^* = \beta_\bbS^*$.
\end{proof}

\begin{proof}[Proof of Proposition~\ref{Prop:ProductCase}]
We start by giving a useful characterisation of minimisers of $\mathcal{L}(\alpha_\bbS)$. Specialising~\eqref{Eq:Stationarity0} and~\eqref{Eq:Stationarity} to the $r_S \equiv 1$ case we see that $\alpha_\bbS$ is a minimiser of $\mathcal{L}$ if and only if
\begin{equation}
\label{Eq:Stationarity2}
    \bbE \biggl\{ a(X) -\theta - \sum_{S' \in \bbS \setminus \{S\}} \alpha_{S'}(X_{S'}) \biggm| X_S \biggr\} =(1+1/\lambda_S)\alpha_S(X_S)
\end{equation}
almost surely, for each $S \in \bbS$. We prove our result by checking these stationarity conditions at our claimed minimiser defined in~\eqref{Eq:ProductCase}. Throughout the proof we use the shorthand $a_T$ and $\tilde{a}_T$ for $a_T(X_T)$ and $\tilde{a}_T(X_T)$, respectively, when it does not cause confusion.

We begin by proving that $\mathbb{E}(\tilde{a}_T | X_U) = 0$ for all $U \subset T$. This is related to standard ANOVA decompositions, and is well known, but we include a short inductive proof for completeness. In the base cases $T=\{j\}$ we simply have $\tilde{a}_T = a_{\{j\}} - \theta$ and clearly $\bbE(\tilde{a}_T | X_\emptyset ) = \bbE(\tilde{a}_T) = 0$. For the induction step, using the independence of the entries of $X$ and the induction hypothesis, we have for any $T \subseteq [d]$ and $U \subset T$ that
\begin{align*}
    \bbE(\tilde{a}_T | X_U) = \bbE \biggl( a_T - \sum_{U' \subset T} \tilde{a}_{U'} \biggm| X_U \biggr) &= a_U - \sum_{U' \subset T} \bbE(\tilde{a}_{U'} | X_{U \cap U'}) \\
    & = a_U - \sum_{U' \subseteq U} \tilde{a}_{U'} = \tilde{a}_U - \tilde{a}_U = 0,
\end{align*}
as claimed.

We now proceed to check that the $\alpha_\bbS$ defined in~\eqref{Eq:ProductCase} satisfies~\eqref{Eq:Stationarity2}. For each $S \in \bbS$ we have
\begin{align*}
    &(1+1/\lambda_S) \alpha_S + \sum_{S' \neq S} \bbE(\alpha_{S'} | X_S) \\
    &= \frac{1}{\lambda_S} \sum_{\emptyset \neq T \subseteq S} \frac{\lambda_S}{1+\sum_{S' \in \bbS : T \subseteq S'} \lambda_{S'}} \tilde{a}_T + \sum_{S' \in \bbS} \sum_{\emptyset \neq T \subseteq S'} \frac{\lambda_{S'}}{1+\sum_{S'' \in \bbS : T \subseteq S''}\lambda_{S''}} \bbE(\tilde{a}_T | X_S) \\
    &= \frac{1}{\lambda_S} \sum_{\emptyset \neq T \subseteq S} \frac{\lambda_S}{1+\sum_{S' \in \bbS : T \subseteq S'} \lambda_{S'}} \tilde{a}_T + \sum_{S' \in \bbS} \sum_{\emptyset \neq T \subseteq S \cap S'} \frac{\lambda_{S'}}{1+\sum_{S'' \in \bbS : T \subseteq S''}\lambda_{S''}} \tilde{a}_T \\
    &= \sum_{\emptyset \neq T \subseteq S} \frac{1}{1+\sum_{S' \in \bbS : T \subseteq S'} \lambda_{S'}} \tilde{a}_T + \sum_{\emptyset \neq T \subseteq S } \frac{\sum_{S' \in \bbS : T \subseteq S'} \lambda_{S'}}{1+\sum_{S'' \in \bbS : T \subseteq S''}\lambda_{S''}} \tilde{a}_T \\
    & = \sum_{\emptyset \neq T \subseteq S} \tilde{a}_T = a_S - \theta,
\end{align*}
as required.
\end{proof}

\begin{proof}[Proof of Theorem~\ref{Thm:ShiftedLowerBound}]
To prove the result we will appeal to Theorem~3.11.5 of~\cite{van1996weak}. Our one-dimensional submodel is indexed by $t \in \bbR$; we will use the shorthand $f_t$ for $f_{t \alpha^*}$ and write $P_{n,t}$ for the distribution of our data when the density for the complete sample is $f_{n^{-1/2}t}$. Our inner product $\langle \cdot, \cdot \rangle_{\lambda_\bbS}$ on $\bbR$ will be defined by
\[
    \langle t_1,t_2 \rangle_{\lambda_\bbS} = t_1t_2 \biggl[ \bbE\{ \alpha^*(X)^2 \} + \sum_{S \in \bbS} \lambda_S \mathrm{Var}\bigl( (\alpha^*)_S(X_{S,1}) \bigr) \biggr],
\]
where, given $\alpha \in L^2$, we write $(\alpha)_S(x_S) = \bbE\{\alpha(X) | X_S=x_S\}$.

We will require control on the behaviour of $f_t$ for small $t$. Define the normalising constant $c(t) = \{ \int k(t\alpha^*(x)) f(x) \,dx \}^{-1}$ so that $f_t(x)=c(t) k(t\alpha^*(x))f(x)$ is our perturbed density function. Since $k(0)=k'(0)=1, k''(0)=0$ and $\max(\|k-1\|_\infty,\|k'\|_\infty, \|k'''\|_\infty/8) \leq 1$ we have for any $u \in \bbR$ that
\begin{align}
\label{Eq:kexpansion1}
    \bigl|   k(u) - 1 \bigr| 
     \leq \min \{ 1, |u| \} \quad \quad \text{and} \quad \quad |k(u)  - 1 - u | \leq 2 |u| \min \{ 1, u^2 \}.
\end{align}
In particular, since $\alpha^* \in L^2$ and has mean zero, it follows from the dominated convergence theorem that
\begin{equation}
\label{Eq:kexpansion2}
    1/c(t) - 1 =  \int k( t \alpha^*(x)) f(x) \,dx - 1  =  \int \{ k( t \alpha^*(x)) -1 - t \alpha^*(x) \}f(x) \,dx = o(t^2)
\end{equation}
as $t \rightarrow 0$ and hence that $c(t)=1+o(t^2)$ as $t \rightarrow 0$. Our next calculations will allow us to control the shifted marginal distributions of $f_t$ on $\bbR^S$ for $S \in \bbS$. Using~\eqref{Eq:kexpansion1},~\eqref{Eq:kexpansion2} and the assumption that $\max_{S \in \bbS} \|\bar{r}_S\|_\infty <\infty$ we have that
\begin{align}
\label{Eq:kexpansion3}
    \biggl| \frac{\int f_t r_S}{\int f r_S} &- 1 - t \bbE\{ (\alpha^*)_S(X_{S,1}) \} \biggr| = \biggl| \int f(x) \bar{r}_S(x_S) \bigl\{ c(t) k(t \alpha^*(x)) - 1 - t \alpha^*(x) \bigr\} \,dx \biggr| \nonumber \\
    & \leq \biggl| \int f(x) \bar{r}_S(x_S) \bigl\{k(t \alpha^*(x)) - 1 - t \alpha^*(x) \bigr\} \,dx \biggr| + o(t^2) = o(t^2)
\end{align}
as $t \rightarrow 0$. Using~\eqref{Eq:kexpansion1} and writing $x=(x_S,x_{S^c})$ and $(f_t)_S$ for the marginal density of $f$ on $\bbR^S$, for any $t \in \bbR$ and $x_S \in \bbR^S$ we have that
\begin{align}
\label{Eq:MarginalExpansion1}
    \biggl| \frac{(f_t)_S(x_S)}{c(t) f_S(x_S)} - 1 \biggr| &= \biggl| \frac{\int k(t\alpha^*(x)) f(x) \,dx_{S^c}}{f_S(x_S)} - 1 \biggr| \nonumber \\
    &= \frac{| \int \{k(t\alpha^*(x)) - 1\} f(x) \,dx_{S^c} |}{f_S(x_S)} \leq \bbE \bigl[ \min\{1 , |t \alpha^*(X)|\} \bigm| X_S = x_S \bigr].
\end{align}
Similarly, for any $t \in \bbR$ and $x_S \in \bbR^S$ we have that
\begin{align}
\label{Eq:MarginalExpansion2}
    \biggl| \frac{(f_t)_S(x_S)}{c(t) f_S(x_S)} - 1 - t (\alpha^*)_S(x_S) \biggr| &= \frac{| \int \{k(t\alpha^*(x)) - 1 - t \alpha^*(x)\} f(x) \,dx_{S^c} |}{f_S(x_S)} \nonumber \\
    & \leq 2 |t| \bbE \bigl[ |\alpha^*(X)| \min\{1 , t^2 \alpha^*(X)^2 \} \bigm| X_S = x_S \bigr].
\end{align}

Equipped with the preceding bounds we will now establish the local asymptotic normality of our sequence of experiments. In the following calculation all arguments of logarithms are bounded away from $0$ and $\infty$ by the fact that $k$ takes values in $[1/2,3/2]$. We can therefore see that for any fixed $t \in \bbR$ we have
\begin{align}
\label{Eq:LAMExpansion}
    &\biggl| \log \frac{dP_{n,t}}{dP_{n,0}} - \frac{t}{n^{1/2}} \sum_{i=1}^n \alpha^*(X_i) - \frac{t}{n^{1/2}} \sum_{S \in \bbS} \sum_{i=1}^{n_S} \bigl[ (\alpha^*)_S(X_{S,i}) - \bbE \{(\alpha^*)_S(X_{S,1})\} \bigr] + \frac{1}{2} \|t\|_{\lambda_\bbS}^2 \biggr| \nonumber \\
    & = \biggl| \sum_{i=1}^n \biggl\{ \log \frac{f_{n^{-1/2}t}(X_i)}{f(X_i)} - \frac{t}{n^{1/2}} \alpha^*(X_i) \biggr\} + \frac{t^2}{2} \bbE\{ \alpha^*(X)^2\} + \frac{t^2}{2} \sum_{S \in \bbS} \lambda_S \mathrm{Var} \bigl( (\alpha^*)_S(X_{S,1}) \bigr) \nonumber \\
    & \hspace{30pt} + \sum_{S \in \bbS} \sum_{i=1}^{n_S} \biggl\{ \log \biggl( \frac{(\int f r_S) (f_{n^{-1/2}t})_S(X_{S,i})}{(\int f_{n^{-1/2}t} r_S) f_S(X_{S,i})} \biggr) - \frac{t}{n^{1/2}} \bigl[ (\alpha^*)_S(X_{S,i}) - \bbE \{(\alpha^*)_S(X_{S,1})\} \bigr] \biggr\} \biggr| \nonumber \\
    & \leq \sum_{S \in \bbS} n_S \biggl| \log \frac{\int f_{n^{-1/2}t} r_S}{\int f r_S} - \frac{t}{n^{1/2}}\bbE \{(\alpha^*)_S(X_{S,1})\} + \frac{t^2 \lambda_S}{2n_S} \bbE^2 \{(\alpha^*)_S(X_{S,1})\}  \biggr| \nonumber \\
    & \hspace{50pt} + \biggl( n + \sum_{S \in \bbS} n_S \biggr) \bigl| \log c(n^{-1/2}t) \bigr| + \frac{t^2}{2}\biggl| \frac{1}{n} \sum_{i=1}^n \alpha^*(X_i)^2 - \bbE\{ \alpha^*(X)^2\} \biggr|\nonumber  \\
    & \hspace{50pt} + \frac{t^2}{2} \sum_{S \in \bbS} \biggl| \frac{1}{n} \sum_{i=1}^{n_S} (\alpha^*)_S(X_{S,i})^2 - \lambda_S\bbE\{ (\alpha^*)_S(X_{S,1})^2 \} \biggr| \nonumber \\
    & \hspace{50pt} + \sum_{i=1}^n \biggl| \log k(n^{-1/2}t \alpha^*(X_i)) - \frac{t}{n^{1/2}} \alpha^*(X_i) + \frac{t^2}{2n} \alpha^*(X_i)^2 \biggr| \nonumber \\
    & \hspace{50pt} + \sum_{S \in \bbS} \sum_{i=1}^{n_S} \biggl| \log \biggl( \frac{(f_{n^{-1/2}t})_S(X_{S,i})}{c(n^{-1/2}t) f_S(X_{S,i})} \biggr) - \frac{t}{n^{1/2}} (\alpha^*)_S(X_{S,i}) + \frac{t^2}{2n} (\alpha^*)_S(X_{S,i})^2 \biggr| \nonumber \\
    & \lesssim o_p(1) + \frac{t^2}{n} \sum_{i=1}^n \alpha^*(X_i)^2 \min \bigl\{1, |n^{-1/2}t \alpha^*(X_i)| \bigr\} \nonumber \\
    & \hspace{130pt} + \frac{t^2}{n} \sum_{S \in \bbS} \sum_{i=1}^{n_S} \bbE \bigl[ \alpha^*(X)^2 \min\{1,|n^{-1/2}t\alpha^*(X)|\} \bigm| X_S = X_{S,i} \bigr]
\end{align}
as $n \rightarrow \infty$, under $P_{n,0}$, where the final bound follows from~\eqref{Eq:kexpansion1},~\eqref{Eq:kexpansion2},~\eqref{Eq:kexpansion3},~\eqref{Eq:MarginalExpansion1} and~\eqref{Eq:MarginalExpansion2}. Now, since
\[
    \bbE \bigl[ \alpha^*(X)^2 \min \bigl\{1, |n^{-1/2}t \alpha^*(X)| \bigr\} \bigr] = o(1)
\]
as $n \rightarrow \infty$ and since $\max_{S \in \bbS} \|\bar{r}_S\|_\infty < \infty$, we have by Markov's inequality that the final two terms in~\eqref{Eq:LAMExpansion} are $o_p(1)$ as $n \rightarrow \infty$, under $P_{n,0}$. Writing $Z \sim N(0,1)$ we therefore have that 
\[
    \log \frac{dP_{n,t}}{dP_{n,0}} \overset{\mathrm{d}}{\rightarrow} \|t\|_{\lambda_\bbS} Z - \frac{1}{2} \|t\|_{\lambda_\bbS}^2
\]
as $n \rightarrow \infty$, under $P_{n,0}$, for each $t \in \bbR$. This justifies our claim that our sequence of experiments is locally asymptotically normal.

Having establish local asymptotic normality, we now prove that the sequence of parameters $\theta(f_{n^{-1/2}t})$ is regular. Indeed, for any $t \in \bbR$ we have by~\eqref{Eq:kexpansion1},~\eqref{Eq:kexpansion2} and the dominated convergence theorem that
\begin{align*}
    \bigl| n^{1/2}&\{ \theta(f_{n^{-1/2}t}) - \theta(f) \} - t \mathbb{E}\{a(X) \alpha^*(X)\} \bigr| \\
    & = \biggl| n^{1/2} \int a(x)\{ c(n^{-1/2}t) k(n^{-1/2}t \alpha^*(x)) - 1 - n^{-1/2}t \alpha^*(x) \}f(x) \,dx  \biggr| \\
    & \leq n^{1/2} \int |a(x)\{ k(n^{-1/2}t \alpha^*(x)) - 1 - n^{-1/2}t \alpha^*(x) \}|f(x) \,dx \\
    & \hspace{100pt} + (3/2) n^{1/2}|c(n^{-1/2}t)-1| \bbE^{1/2}\{a(X)^2\} \\
    & \leq 2 |t| \int |a(x) \alpha^*(x)| \min\{1, t^2 \alpha^*(x)^2/n\} f(x) \,dx + 2 n^{1/2}|c(n^{-1/2}t)-1| \bbE^{1/2}\{a(X)^2\} \rightarrow 0
\end{align*}
as $n \rightarrow \infty$. Using the notation of~\citet[][Chapter~3]{van1996weak}, we now see that our sequence of parameters is regular with norming operator $r_n=n^{1/2}$ and $\dot{\kappa}(t) = t \mathbb{E}\{a(X) \alpha^*(X)\}$. We think of $\dot{\kappa}$ as a linear map $\dot{\kappa}: \bbR_{\lambda_\bbS} \rightarrow \mathbb{R}_\mathrm{E}$, where we write $\bbR_{\lambda_\bbS}$ for $\bbR$ equipped with the inner product $\langle \cdot, \cdot \rangle_{\lambda_\bbS}$ and $\bbR_\mathrm{E}$ for $\bbR$ equipped with the standard Euclidean inner product. 

We must now find the adjoint operator $\dot{\kappa}^* : \bbR_\mathrm{E} \rightarrow \bbR_{\lambda_\bbS}$ of $\dot{\kappa}$. For $S \in \bbS$ and $\epsilon_S \in H_S$ we will write, in a slight abuse of notation, $\alpha_\bbS^* + \epsilon_S$ for the element of $H_\bbS$ that is equal to $\alpha_\bbS^*$ except in the $S$th coordinate, where we replace $\alpha_S^*$ by $\alpha_S^* + \epsilon_S$. Whenever $\eta \in \bbR$ and  $\epsilon_S \in H_S$ satisfies $\int f_S \epsilon_S^2 /\bar{r}_S < \infty$ we have that
\begin{align}
\label{Eq:Stationarity0}
    0 & \leq \mathcal{L}(\alpha_\bbS^* + \eta \epsilon_S) - \mathcal{L}(\alpha_\bbS^*) \nonumber \\
    & = \mathrm{Var} \bigl( \alpha^*(X) - \eta \epsilon_S(X_S) \bigr) - \mathrm{Var} \bigl( \alpha^*(X) \bigr) + \int \frac{f_S}{\lambda_S \bar{r}_S} \bigl(\alpha_S^* + \eta \epsilon_S \bigr)^2 - \int \frac{f_S}{\lambda_S \bar{r}_S} (\alpha_S^*)^2 \nonumber \\
    & = - 2\eta \, \mathrm{Cov} \bigl( \alpha^*(X), \epsilon_S(X_S) \bigr) + 2\eta \int \frac{f_S}{\lambda_S \bar{r}_S} \alpha_S^* \epsilon_S + \eta^2 \biggl\{ \mathrm{Var}(\epsilon_S(X_S)) + \int \frac{f
    _S  \epsilon_S^2}{\lambda_S \bar{r}_S} \biggr\} \nonumber \\
    & = - 2 \eta \int f_S \epsilon_S \biggl\{ (\alpha^*)_S - \frac{\alpha_S^*}{\lambda_S \bar{r}_S} \biggr\} + O(\eta^2)
\end{align}
as $\eta \rightarrow 0$. After suitable centring, we may therefore conclude that
\begin{equation}
\label{Eq:Stationarity}
    (\alpha^*)_S(X_S) = \bbE \bigl\{ \alpha^*(X) \bigm| X_S \bigr\} = \frac{\alpha_S^*(X_S)}{\lambda_S \bar{r}_S(X_S)} - \int \frac{\alpha_S^* f_S}{\lambda_S \bar{r}_S}
\end{equation}
almost surely. Thus, for any $t_1,t_2 \in \bbR$ we have
\begin{align}
\label{Eq:AdjointShifted}
    \langle t_1, t_2 \rangle_{\lambda_\bbS} &= t_1t_2\biggl[ \bbE\{ \alpha^*(X)^2 \} + \sum_{S \in \bbS} \lambda_S \mathrm{Var}\bigl( (\alpha^*)_S(X_{S,1}) \bigr) \biggr] \nonumber \\
    & = t_1t_2\biggl[ \bbE\{ \alpha^*(X)^2 \} + \sum_{S \in \bbS} \lambda_S \bbE\biggl\{ \frac{\alpha_S^*(X_{S,1})^2}{\lambda_S^2 \bar{r}_S(X_{S,1})^2} \biggr\} \biggr] \nonumber \\
    & = t_1t_2\biggl[ \bbE\{ \alpha^*(X)^2 \} + \sum_{S \in \bbS} \lambda_S \bbE \biggl\{ \frac{\alpha_S^*(X_{S,1})}{\lambda_S \bar{r}_S(X_{S,1})} (\alpha^*)_S(X_{S,1}) \biggr\} \biggr] \nonumber \\
    & = t_1t_2\biggl[ \bbE\{ \alpha^*(X)^2 \} + \sum_{S \in \bbS} \bbE \bigl\{ \alpha_S^*(X_{S})(\alpha^*)_S(X_{S}) \bigr\} \biggr] \nonumber \\
    & = t_1t_2 \bbE \biggl[ \alpha^*(X) \biggl\{ \alpha^*(X) + \sum_{S \in \bbS} \alpha_S^*(X_S) \biggr\} \biggr] = t_1t_2 \bbE \{ \alpha^*(X) a(X) \} = t_1 \cdot \dot{\kappa}(t_2),
\end{align}
and we see that the adjoint operator $\dot{\kappa}^* : \bbR_\mathrm{E} \rightarrow \bbR_{\lambda_\bbS}$ is simply the identity map $\dot{\kappa}^*(t_1)=t_1$.

Writing $\sigma^2 = \|\dot{\kappa}^* 1\|_{\lambda_\bbS}^2 = \bbE\{\alpha^*(X)a(X)\}$ and letting $G\sim N(0,\sigma^2)$, we may therefore apply Theorem 3.11.5 of~\cite{van1996weak},~\eqref{Eq:Stationarity} and the calculations in~\eqref{Eq:AdjointShifted} to deduce that for any estimator sequence $(\theta_n)$ we have
\begin{align*}
    \sup_{I} \liminf_{n \rightarrow \infty} \max_{t \in I} n\mathbb{E}_{f_{n^{-1/2}t}} \bigl[ \bigl\{ \theta_n - \theta(f_{n^{-1/2}t}) \}^2 \bigr] &\geq \bbE\{\alpha^*(X)a(X)\} \\
    & = \bbE\{ \alpha^*(X)^2 \} + \sum_{S \in \bbS} \frac{1}{\lambda_S} \int \frac{f_S}{\bar{r}_S} (\alpha_S^*)^2 = \mathcal{L}(\alpha_\bbS^*),
\end{align*}
where the supremum is taken over all finite subsets $I$ of $\bbR$, as required.
\end{proof}

\begin{proof}[Proof of Proposition~\ref{Prop:ShiftedMonotoneCase}]
By Proposition~\ref{Prop:UniqueMinimiser} we know that there exists a unique minimiser $\alpha_\bbS^*$ and by~\eqref{Eq:Stationarity0} we know that it must satisfy the stationarity conditions~\eqref{Eq:Stationarity}. We prove our result by showing that $\alpha_\bbS^*$ must take the form given in~\eqref{Eq:ShiftedMonotoneMinimiser}. Throughout this proof we use the shorthand $\alpha_j^0 = \alpha_{[j]}^*$ for $j \in [d-1]$ such that $[j] \in \bbS$ and $\alpha_j \equiv 0$ otherwise and identify a function $g_j$ of $y_j$ with the random variable $g_j(Y_j)$ for notational convenience. We begin the proof by arguing inductively that there exist constants $\theta_1,\ldots,\theta_{d-1} \in \bbR$ such that $\theta_j=0$ if $[j] \not\in \bbS$ and
\begin{equation}
\label{Eq:InductionHypothesis}
    \alpha_j^0 = \frac{R_j}{1+R_j \mu_j} \biggl\{ \mu_j(\tilde{a}_j - \alpha_1^0 - \ldots - \alpha_{j-1}^0) - \sum_{\ell=j}^{d-1} \theta_\ell \nu_j^{\ell-1} \biggr\}
\end{equation}
for each $j \in [d-1]$. The arguments we give here are for the case that $\bbS = \{[1],\ldots,[d-1]\}$ but they can be straightforwardly extended to cases where $\bbS \subset \{[1],\ldots,[d-1]\}$. We establish the base case $j=d-1$ by noting that by~\eqref{Eq:Stationarity} there must exist a constant $\theta_{d-1} \in \bbR$ such that
\[
    \theta_{d-1} + \frac{\alpha_{d-1}^0}{R_{d-1}} = \bbE \bigl( a - \theta - \alpha_1^0 - \ldots - \alpha_{d-1}^0 \bigm| Y_{d-1} \bigr) = \tilde{a}_{d-1} - \alpha_1^0 - \ldots - \alpha_{d-1}^0.
\]
It is now immediate that~\eqref{Eq:InductionHypothesis} holds with $j=d-1$ on recalling that $\mu_{d-1} \equiv 1$ and that $\nu_{d-1}^{d-2}\equiv 1$. Assuming that the induction hypothesis~\eqref{Eq:InductionHypothesis} holds at $j+1$ and using the stationarity condition~\eqref{Eq:Stationarity} and the tower law of expectation we see that there exists $\theta_j \in \bbR$ such that
\begin{align*}
    \theta_j + \frac{\alpha_j^0}{R_j} &= \bbE \biggl( \frac{\alpha_{j+1}^0}{R_{j+1}} \biggm| Y_j \biggr) \\
    &= \bbE \biggl[ \frac{1}{1+R_{j+1}\mu_{j+1}} \biggl\{ \mu_{j+1}(\tilde{a}_{j+1} - \alpha_1^0 - \ldots - \alpha_j^0) - \sum_{\ell=j+1}^{d-1} \theta_\ell \nu_{j+1}^{\ell-1} \biggr\} \biggm| Y_j \biggr] \\
    &= \mu_j( \tilde{a}_j - \alpha_1^0 - \ldots - \alpha_j^0) - \sum_{\ell=j+1}^{d-1} \theta_\ell \nu_j^{\ell-1}
\end{align*}
and it immediately follows that~\eqref{Eq:InductionHypothesis} holds at $j$.

We now use the fact that~\eqref{Eq:InductionHypothesis} holds for all $j \in [d-1]$ to prove by induction that $\alpha_\bbS^*$ satisfies~\eqref{Eq:ShiftedMonotoneMinimiser}. By~\eqref{Eq:InductionHypothesis} with $j=1$ we know that
\[
    \alpha_1^0 = \frac{R_1}{1/\mu_1 + R_1} \biggl( \tilde{a}_1 - \sum_{\ell=1}^{d-1} \theta_\ell \frac{\nu_1^{\ell-1}}{\mu_1} \biggr),
\]
which coincides with~\eqref{Eq:ShiftedMonotoneMinimiser} at $j=1$, establishing the base case. Using~\eqref{Eq:ShiftedMonotoneMinimiser} as an induction hypothesis we have by~\eqref{Eq:InductionHypothesis} that
\begin{align*}
    &\alpha_j^0 = \frac{R_j}{1+R_j \mu_j} \biggl\{ \mu_j(\tilde{a}_j - \alpha_1^0 - \ldots - \alpha_{j-1}^0) - \sum_{\ell=j}^{d-1} \theta_\ell \nu_j^{\ell-1} \biggr\} \\
    &= \frac{R_j \mu_j}{1+R_j \mu_j} \biggl[ \tilde{a}_j - \sum_{i=1}^{j-1} \frac{R_i \mu_i}{1 + R_i\mu_i} \sum_{k=1}^i \biggl( \prod_{m=k}^{i-1} \frac{1}{1+R_m \mu_m} \biggr) \biggl\{ \tilde{a}_k - \tilde{a}_{k-1} - \sum_{\ell=1}^{d-1} \theta_\ell \biggl( \frac{\nu_k^{\ell-1}}{\mu_k} - \frac{
\nu_{k-1}^{\ell-1}}{\mu_{k-1}} \biggr) \biggr\} \\
    & \hspace{395pt} - \sum_{\ell=j}^{d-1} \theta_\ell \frac{\nu_j^{\ell-1}}{\mu_j} \biggr] \\
    & = \frac{R_j \mu_j}{1+R_j \mu_j} \biggl[ \tilde{a}_j - \sum_{k=1}^{j-1} \biggl\{ \tilde{a}_k - \tilde{a}_{k-1} - \sum_{\ell=1}^{d-1} \theta_\ell \biggl( \frac{\nu_k^{\ell-1}}{\mu_k} - \frac{
\nu_{k-1}^{\ell-1}}{\mu_{k-1}} \biggr) \biggr\} \sum_{i=k}^{j-1} \frac{R_i \mu_i}{1 + R_i \mu_i} \biggl( \prod_{m=k}^{i-1} \frac{1}{1+R_m \mu_m} \biggr)\\
    & \hspace{395pt} - \sum_{\ell=j}^{d-1} \theta_\ell \frac{\nu_j^{\ell-1}}{\mu_j} \biggr] \\
    & = \frac{R_j \mu_j}{1+R_j \mu_j} \biggl[ \tilde{a}_j - \sum_{k=1}^{j-1} \biggl\{ \tilde{a}_k - \tilde{a}_{k-1} - \sum_{\ell=1}^{d-1} \theta_\ell \biggl( \frac{\nu_k^{\ell-1}}{\mu_k} - \frac{
\nu_{k-1}^{\ell-1}}{\mu_{k-1}} \biggr) \biggr\} \biggl( 1 - \prod_{m=k}^{j-1} \frac{1}{1+R_m \mu_m} \biggr)\\
    & \hspace{395pt} - \sum_{\ell=j}^{d-1} \theta_\ell \frac{\nu_j^{\ell-1}}{\mu_j} \biggr] \\
    &= \frac{R_j \mu_j}{1+R_j \mu_j} \biggl[ \tilde{a}_j - \tilde{a}_{j-1} + \sum_{\ell=1}^{d-1} \theta_\ell \frac{\nu_{j-1}^{\ell-1}}{\mu_{j-1}} - \sum_{\ell=j}^{d-1} \theta_\ell \frac{\nu_j^{\ell-1}}{\mu_j} \\
    & \hspace{100pt} - \sum_{k=1}^{j-1} \biggl\{ \tilde{a}_k - \tilde{a}_{k-1} - \sum_{\ell=1}^{d-1} \theta_\ell \biggl( \frac{\nu_k^{\ell-1}}{\mu_k} - \frac{
\nu_{k-1}^{\ell-1}}{\mu_{k-1}} \biggr) \biggr\} \biggl( \prod_{m=k}^{j-1} \frac{1}{1+R_m \mu_m} \biggr) \\
    & = \frac{R_j \mu_j}{1+R_j \mu_j} \sum_{k=1}^{j} \biggl( \prod_{m=k}^{j-1} \frac{1}{1+R_m \mu_m} \biggr) \biggl\{ \tilde{a}_k - \tilde{a}_{k-1} - \sum_{\ell=1}^{d-1} \theta_\ell \biggl( \frac{\nu_k^{\ell-1}}{\mu_k} - \frac{
\nu_{k-1}^{\ell-1}}{\mu_{k-1}} \biggr) \biggr\},
\end{align*}
as required.
\end{proof}

\begin{proof}[Proof of Corollary~\ref{Prop:MonotoneCase}]
We begin by simplifying the $\mu_j,\nu_j^k$ and $\tilde{a}_j$ in this MCAR setting. Since $R_j = \lambda_{[j]}$ is constant for all $j \in [d-1]$ and $\mu_{d-1}$ is constant by definition, we can see that all the $\mu_j$ are constants. For notational convenience, introduce $\lambda_{[j]}$ for all $j \in [d-1]$ by defining $\lambda_{[j]}=0$ if $[j] \notin \bbS$. It is now straightforward to see inductively that
\[
    \mu_j = \frac{1}{1 + \sum_{\ell=j+1}^{d-1} \lambda_{[\ell]}}
\]
for all $j \in [d-1]$. Similarly, we see that for any $k \geq j$ we have
\[
    \nu_j^k = \prod_{m=j}^k \nu_m^m = \prod_{m=j}^k \frac{1+ \sum_{\ell=m+2}^{d-1} \lambda_{[\ell]}}{1+ \sum_{\ell=m+1}^{d-1} \lambda_{[\ell]}} = \frac{1+ \sum_{\ell=k+2}^{d-1} \lambda_{[\ell]}}{1+ \sum_{\ell=j+1}^{d-1} \lambda_{[\ell]}}
\]
which, in particular, is constant. We also have that $\nu_j^k/\mu_j = 1 + \sum_{\ell=k+2}^{d-1} \lambda_{[\ell]}$ does not depend on $j$ except through the fact that $\nu_j^k/\mu_j=0$ if $k \leq j-2$. Finally, we have
\[
    \tilde{a}_j(X_1,\ldots,X_j) = \bbE\{ a(X) | X_1,\ldots,X_j\} - \theta = a_{[j]}(X_1,\ldots,X_j) - \theta.
\]
It now follows from~\eqref{Eq:ShiftedMonotoneMinimiser} that the minimising choice of $\alpha_{[j]}$ is of the form
\begin{align*}
    \alpha_{[j]} &= \frac{\lambda_{[j]}}{1 + \sum_{\ell=j}^{d-1} \lambda_{[\ell]}} \sum_{k=1}^j \biggl( \prod_{m=k}^{j-1} \frac{1+ \sum_{\ell=m+1}^{d-1} \lambda_{[\ell]}}{1+ \sum_{\ell=m}^{d-1} \lambda_{[\ell]}} \biggr) ( a_{[k]} - a_{[k-1]} +\theta_{k-1}) \\
    &= \sum_{k=1}^j  \frac{\lambda_{[j]} }{1+ \sum_{\ell=k}^{d-1} \lambda_{[\ell]}} ( a_{[k]} - a_{[k-1]} +\theta_{k-1}).
\end{align*}
We choose $\theta_\ell=0$ for all $\ell \in [d-1]$ in order that each $\alpha_{[j]}$ has mean zero. This completes the proof.

In fact, we can also give a simple direct proof of the corollary, without using Proposition~\ref{Prop:ShiftedMonotoneCase}. As in the proof of Proposition~\ref{Prop:ProductCase}, we check that the choice of $\alpha_\bbS$ given in the statement satisfies the stationarity conditions~\eqref{Eq:Stationarity2}. Indeed, for $j \in [d-1]$ such that $[j] \in \bbS$ we have
\begin{align*}
    (1+1/\lambda_{[j]}) &\alpha_{[j]} + \sum_{S \in \bbS \setminus \{[j]\}} \mathbb{E}( \alpha_S | X_{[j]}) \\
    & = \sum_{k=1}^j \frac{1}{1+\sum_{\ell=k}^{d-1} \lambda_{[\ell]}} (a_{[k]} - a_{[k-1]}) + \sum_{j'=1}^{d-1} \sum_{k=1}^{j'} \frac{\lambda_{[j']}}{1+\sum_{\ell=k}^{d-1} \lambda_{[\ell]}} \mathbb{E}(a_{[k]} - a_{[k-1]} | X_{[j]}) \\
    & = \sum_{k=1}^j \frac{1}{1+\sum_{\ell=k}^{d-1} \lambda_{[\ell]}} (a_{[k]} - a_{[k-1]}) + \sum_{j'=1}^{d-1} \sum_{k=1}^{j'} \frac{\lambda_{[j']}}{1+\sum_{\ell=k}^{d-1} \lambda_{[\ell]}} (a_{[k]} - a_{[k-1]} ) \mathbbm{1}_{\{ k \leq j \}} \\
    & = \sum_{k=1}^j \frac{1}{1+\sum_{\ell=k}^{d-1} \lambda_{[\ell]}} \biggl( 1 + \sum_{j'=k}^{d-1} \lambda_{[j']} \biggr) (a_{[k]} - a_{[k-1]}) = \sum_{k=1}^j (a_{[k]} - a_{[k-1]}) = a_{[j]}-\theta,
\end{align*}
almost surely, as required.
\end{proof}

\subsection{Proofs for Section~\ref{Sec:UpperBound}}
\label{Sec:UpperBoundProofs}

\begin{proof}[Proof of Proposition~\ref{Prop:OptimisationShifted}]
In this proof it will be helpful to equip the set $H_\bbS = \prod_{S \in \bbS} H_S$ with the alternative inner product
\[
    \langle \alpha_\bbS, \beta_\bbS \rangle = \sum_{S \in \bbS} \mathbb{E}\biggl\{ \frac{1+\lambda_S \bar{r}_S(X_S)}{\lambda_S \bar{r}_S(X_S)} \alpha_S(X_S) \beta_S(X_S) \biggr\}.
\]
where we write $\alpha_\bbS=(\alpha_S : S \in \bbS)$ here and throughout the proof. For each $\alpha_\bbS \in H_\bbS$ we define $\nabla \mathcal{L}(\alpha_\bbS) \in H_\bbS$ by
\begin{align*}
    \frac{1}{2}(\nabla \mathcal{L}(\alpha_\bbS))_S(x_S) &= \alpha_S(x_S) - \frac{\lambda_S \bar{r}_S(x_S)}{1+\lambda_S \bar{r}_S(x_S)}\{a_S(x_S) - \bar{\theta}_S^{(1)}\} \\
    & \hspace{65pt}+ \frac{\lambda_S \bar{r}_S(x_S)}{1+\lambda_S \bar{r}_S(x_S)} \sum_{S' \neq S} \biggl[ \bbE\{\alpha_{S'}(X_{S'}) | X_S = x_S\} - \frac{ \int \frac{\lambda_S \bar{r}_S}{1+\lambda_S \bar{r}_S} \alpha_{S'} f}{\int \frac{\lambda_S \bar{r}_S}{1+\lambda_S \bar{r}_S} f} \biggr].
\end{align*}
Note in particular that $\nabla \mathcal{L}(\alpha_\bbS^*)=0$ by the stationarity condition~\eqref{Eq:Stationarity} and the fact that $\int \alpha_S^* f=0$. For $\alpha_\bbS \in H_\bbS$ we further define $Q(\alpha_\bbS) = \mathrm{Var}( \sum_{S \in \bbS} \alpha_S(X_S) ) + \sum_S \int \frac{\alpha_S^2 f_S}{\lambda_S \bar{r}_S}$. With these definitions, for any $\alpha_\bbS,\epsilon_\bbS \in H_\bbS$ we may write
\begin{align}
\label{Eq:QuadraticExpansion}
    \mathcal{L}(\alpha_\bbS + \epsilon_\bbS) - \mathcal{L}&(\alpha_\bbS) = -2 \mathrm{Cov}\biggl( a(X) - \sum_{S \in \bbS} \alpha_S(X_S), \sum_{S \in \bbS} \epsilon_S(X_S) \biggr) + 2 \sum_{S \in \bbS} \int \frac{\alpha_S \epsilon_S f_S}{\lambda_S \bar{r}_S}  + Q(\epsilon_\bbS) \nonumber \\
    &= 2 \sum_{S \in \bbS} \bbE \biggl[ \epsilon_S(X_S) \biggl\{ \frac{\alpha_S(X_S)}{\lambda_S \bar{r}_S(X_S)} - \bbE\biggl( a(X) - \sum_{S' \in \bbS} \alpha_{S'}(X_{S'}) \biggm| X_S \biggr) \biggr\} \biggr] +  Q(\epsilon_\bbS) \nonumber \\
    & = \langle \nabla \mathcal{L}(\alpha_\bbS), \epsilon_\bbS \rangle +  Q(\epsilon_\bbS),
\end{align}
which justifies our notation and thinking of $\nabla \mathcal{L}$ as the gradient of $\mathcal{L}$. Comparing with the definition of $\alpha_\bbS^{(M)}$ in Section~\ref{Sec:UpperBoundApprox}, we see that we have the recurrence relation $\alpha_\bbS^{(M+1)} = \alpha_\bbS^{(M)} - (\eta/2) \nabla \mathcal{L}(\alpha_\bbS^{(M)})$, so that we may regard this iterative scheme as gradient descent.

To analyse the performance of this first-order optimisation algorithm we will use the smoothness and strong convexity of our problem. To this end, we now derive some useful properties of $\nabla \mathcal{L}$ and $Q$. For any $\alpha_\bbS, \epsilon_\bbS \in H_\bbS$ we have by~\eqref{Eq:QuadraticExpansion} that
\begin{align}
\label{Eq:GradientDiff1}
    \langle \nabla \mathcal{L}(\alpha_\bbS+\epsilon_\bbS) - \nabla \mathcal{L}(\alpha_\bbS), \epsilon_\bbS \rangle &= - \bigl[ \bigl\{ \mathcal{L}(\alpha_\bbS + \epsilon_\bbS - \epsilon_\bbS) - \mathcal{L}(\alpha_\bbS + \epsilon_\bbS) \bigr\} -Q(-\epsilon_\bbS) \bigr] \nonumber \\
    & \hspace{50pt} - \bigl[ \bigl\{ \mathcal{L}(\alpha_\bbS + \epsilon_\bbS) - \mathcal{L}(\alpha_\bbS) \bigr\} -Q(\epsilon_\bbS) \bigr] \nonumber \\
    & = 2Q(\epsilon_\bbS).
\end{align}
From the definition of $\nabla \mathcal{L}$ we also see, with a slight abuse of notation, that
\begin{align}
\label{Eq:GradientDiff2}
    \frac{1}{4}& \bigl\| \bigl\{ \nabla \mathcal{L}(\alpha_\bbS+\epsilon_\bbS) - \nabla \mathcal{L}(\alpha_\bbS) \bigr\} \bigr\|^2 \nonumber \\
    & = \sum_{S \in \bbS} \int \frac{1+\lambda_S \bar{r}_S}{\lambda_S \bar{r}_S} f_S \biggl\{ \epsilon_S + \frac{\lambda_S \bar{r}_S}{1+\lambda_S \bar{r}_S} \sum_{S' \neq S} \biggl( \mathbb{E}(\epsilon_{S'} | X_S) - \frac{ \int \frac{\lambda_S \bar{r}_S}{1+\lambda_S \bar{r}_S} \epsilon_{S'} f}{\int \frac{\lambda_S \bar{r}_S}{1+\lambda_S \bar{r}_S} f} \biggr) \biggr\}^2 \nonumber \\
    & = \sum_{S \in \bbS} \int f \biggl[ \frac{1+\lambda_S \bar{r}_S}{\lambda_S \bar{r}_S} \epsilon_S^2 + 2 \sum_{S' \neq S} \epsilon_S \epsilon_{S'} + \frac{\lambda_S \bar{r}_S}{1+\lambda_S \bar{r}_S} \biggl\{ \sum_{S' \neq S} \biggl( \mathbb{E}(\epsilon_{S'} | X_S) - \frac{ \int \frac{\lambda_S \bar{r}_S}{1+\lambda_S \bar{r}_S} \epsilon_{S'} f}{\int \frac{\lambda_S \bar{r}_S}{1+\lambda_S \bar{r}_S} f} \biggr) \biggr\}^2 \biggr] \nonumber \\
    & \leq \sum_{S \in \bbS} \int f \biggl[ \frac{1+\lambda_S \bar{r}_S}{\lambda_S \bar{r}_S} \epsilon_S^2 + 2 \sum_{S' \neq S} \epsilon_S \epsilon_{S'} + \frac{\lambda_S \bar{r}_S}{1+\lambda_S \bar{r}_S} \biggl\{ \sum_{S' \neq S} \mathbb{E}(\epsilon_{S'} | X_S) \biggr\}^2 \biggr] \nonumber \\
    & \leq \sum_{S \in \bbS} \int f \biggl\{ \frac{1+\lambda_S \bar{r}_S}{\lambda_S \bar{r}_S} \epsilon_S^2 + 2 \sum_{S' \neq S} \epsilon_S \epsilon_{S'} + \biggl( \sum_{S' \neq S} \epsilon_{S'} \biggr)^2 \biggr\} \nonumber \\
    & = \sum_{S \in \bbS} \biggl\{ \int \frac{f_S}{\lambda_S \bar{r}_S} \epsilon_S^2 + \mathrm{Var} \biggl( \sum_{S' \in \bbS} \epsilon_{S'} \biggr) \biggr\} \leq |\bbS| Q(\epsilon_\bbS),
\end{align}
where the first inequality follows from the fact that variances are bounded by second moments and the second inequality follows from Jensen's inequality. Using~\eqref{Eq:GradientDiff1},~\eqref{Eq:GradientDiff2} and the Cauchy--Schwarz inequality we have for any $\epsilon_\bbS \neq 0$ that
\begin{equation}
\label{Eq:SmoothQuadratic}
    Q(\epsilon_\bbS) = Q(\epsilon_\bbS)^{-1} \frac{1}{4}  \langle \nabla \mathcal{L}(\alpha_\bbS+\epsilon_\bbS) - \nabla \mathcal{L}(\alpha_\bbS), \epsilon_\bbS \rangle^2 \leq Q(\epsilon_\bbS)^{-1} \{|\bbS| Q(\epsilon_\bbS)\} \|\epsilon_\bbS\|^2 = |\bbS| \| \epsilon_\bbS\|^2,
\end{equation}
and this clearly extends to the $\epsilon_\bbS=0$ case to show that our optimisation problem is $|\bbS|$-smooth. 
We complement this with strong convexity by writing
\begin{equation}
\label{Eq:StrongConvexity}
    Q(\epsilon_\bbS) \geq \sum_{S \in \bbS} \int \frac{\epsilon_S^2 f_S}{\lambda_S \bar{r}_S}  \geq \frac{1}{1+C\lambda_\mathrm{max}} \sum_{S \in \bbS} \int \frac{1+\lambda_S \bar{r}_S}{\lambda_S \bar{r}_S} f_S \epsilon_S^2  = \frac{|\bbS|}{\kappa} \|\epsilon_\bbS\|^2,
\end{equation}
where we take condition number $\kappa=|\bbS|(1+C\lambda_\mathrm{max})$. Further, we can understand the convexity of the problem around the minimiser by writing
\begin{align}
\label{Eq:LocalConvexity}
    Q(\alpha_\bbS^*) &= \mathrm{Var} \biggl( \sum_{S \in \bbS} \alpha_S^*(X_S) \biggr) + \sum_{S \in \bbS} \frac{(\alpha_S^*)^2 f_S}{\lambda_S \bar{r}_S} = \mathrm{Var} \bigl( a(X) - \alpha^*(X) \bigr) - \mathrm{Var} \bigl( \alpha^*(X) \bigr) + \mathcal{L}(\alpha_\bbS^*) \nonumber \\
    &= \mathrm{Var} \, a(X) - 2 \bbE\{\alpha^*(X) a(X) \bigr\} + \mathcal{L}(\alpha_\bbS^*) = \mathrm{Var} \,a(X) - \mathcal{L}(\alpha_\bbS^*) \leq \mathrm{Var} \, a(X),
\end{align}
where we use~\eqref{Eq:AdjointShifted} for the final equality.

Having seen that our optimisation problem is smooth and strongly convex we are now in a position to conclude our proof. Choosing step size $\eta=|\bbS|^{-1}$ and using~\eqref{Eq:GradientDescent},~\eqref{Eq:GradientDiff1},~\eqref{Eq:GradientDiff2},~\eqref{Eq:StrongConvexity} and the fact that $\nabla \mathcal{L}(\alpha_\bbS^*)=0$ we have for any $M \geq 0$ that
\begin{align*}
    \|\alpha_\bbS^{(M+1)}&-\alpha_\bbS^* \|^2 = \|\alpha_\bbS^{(M)}-\alpha_\bbS^* \|^2 - \eta \langle \alpha_\bbS^{(M)}-\alpha_\bbS^*, \nabla \mathcal{L}(\alpha_\bbS^{(M)}) - \nabla \mathcal{L}(\alpha_\bbS^*) \rangle \nonumber \\
    & \hspace{250pt} + \frac{\eta^2}{4} \|\nabla \mathcal{L}(\alpha_\bbS^{(M)}) - \nabla \mathcal{L}(\alpha_\bbS^*) \|^2 \nonumber \\
    & = \|\alpha_\bbS^{(M)}-\alpha_\bbS^* \|^2 - 2\eta Q(\alpha_\bbS^{(M)}-\alpha_\bbS^*) + \frac{\eta^2}{4} \|\nabla \mathcal{L}(\alpha_\bbS^{(M)}) - \nabla \mathcal{L}(\alpha_\bbS^*) \|^2 \nonumber \\
    & \leq \|\alpha_\bbS^{(M)}-\alpha_\bbS^* \|^2 - 2\eta Q(\alpha_\bbS^{(M)}-\alpha_\bbS^*) + \eta^2 |\bbS| Q(\alpha_\bbS^{(M)}-\alpha_\bbS^*) \nonumber \\
    & = \|\alpha_\bbS^{(M)}-\alpha_\bbS^* \|^2 - |\bbS|^{-1}  Q(\alpha_\bbS^{(M)}-\alpha_\bbS^*) \nonumber \\
    & \leq (1-1/\kappa)\|\alpha_\bbS^{(M)}-\alpha_\bbS^* \|^2 \leq \ldots \leq (1-1/\kappa)^{M+1} \|\alpha_\bbS^{(0)}-\alpha_\bbS^* \|^2 = (1-1/\kappa)^{M+1} \|\alpha_\bbS^* \|^2.
\end{align*}
It therefore follows from~\eqref{Eq:QuadraticExpansion},~\eqref{Eq:GradientDiff2},~\eqref{Eq:SmoothQuadratic},~\eqref{Eq:StrongConvexity} and~\eqref{Eq:LocalConvexity} that
\begin{align*}
    \mathcal{L}(\alpha_\bbS^{(M)}) &- \mathcal{L}(\alpha_\bbS^*) = Q(\alpha_\bbS^{(M)} - \alpha_\bbS^*) \leq |\bbS| \|\alpha_\bbS^{(M)} - \alpha_\bbS^*\|^2  \leq |\bbS| (1-1/\kappa)^M \|\alpha_\bbS^* \|^2 \\
    & \leq \kappa (1-1/\kappa)^M Q(\alpha_\bbS^*) \leq \kappa (1-1/\kappa)^M \mathrm{Var} \,a(X),
\end{align*}
as required.

The final claim in our statement is a simple consequence of~\eqref{Eq:QuadraticExpansion} and~\eqref{Eq:SmoothQuadratic}. Indeed, we may write
\begin{align*}
    \mathcal{L}&(\alpha_\bbS^{(M+1)}) - \mathcal{L}(\alpha_\bbS^{(M)}) = \langle \nabla \mathcal{L}(\alpha_\bbS^{(M)}), \alpha_\bbS^{(M+1)} - \alpha_\bbS^{(M)} \rangle + Q(\alpha_\bbS^{(M+1)} - \alpha_\bbS^{(M)}) \\
    &= -(2/\eta) \| \alpha_\bbS^{(M+1)} - \alpha_\bbS^{(M)}\|^2 + Q(\alpha_\bbS^{(M+1)} - \alpha_\bbS^{(M)}) \leq (-2/\eta+|\bbS|) \|\alpha_\bbS^{(M+1)} - \alpha_\bbS^{(M)}\|^2 \\
    & = - |\bbS| \|\alpha_\bbS^{(M+1)} - \alpha_\bbS^{(M)}\|^2 \leq 0,
\end{align*}
as claimed.

\end{proof}

\begin{proof}[Proof of Proposition~\ref{Prop:CrossFitReduction}]
We start by proving the first statement. For $S \in \bbS$ and $\ell=1,2$ define
\begin{align*}
    R_{S,\ell} = \frac{1}{n}\sum_{x \in \mathcal{D}_{3-\ell}} \{ \alpha_S^{(M)}(x) &- \hat{\alpha}_{S,(\ell)}^{(M)}(x) \} \\
    &- \frac{|\mathcal{D}_{3-\ell}|}{n \cdot n_S} \sum_{j=1}^{n_S} \frac{1}{\bar{r}_S(X_{S,j})} \biggl[\alpha_S^{(M)}(X_{S,j}) -  \frac{\hat{r}_{S,(3-\ell)}}{\bbE\{r_S(X_S)\}} \hat{\alpha}_{S,(\ell)}^{(M)}(X_{S,j}) \biggr],
\end{align*}
so that we may write $\hat{\theta}-\theta^{*,(M)} = \sum_{S \in \bbS} (R_{S,1} + R_{S,2})$. Our goal now is to show that each of these remainder terms is negligible, by bounding $\mathbb{E}(R_{S,\ell}^2)$ for each $S \in \bbS$ and $\ell=1,2$. The $\ell=2$ terms are bounded very similarly to the $\ell=1$ terms, so we focus on the latter for notational ease. We start with the simple bound
\begin{align}
\label{Eq:CrossFitRem1}
    \bbE\biggl[ \biggl(\frac{\hat{r}_{S,(2)}}{\bbE \{r_S(X_S)\}} - 1 \biggr)^2 \biggr] = \bbE \biggl[ \biggl\{ \frac{1}{|\mathcal{D}_2|} \sum_{x \in \mathcal{D}_2} \bar{r}_S(x) - 1 \biggr\}^2 \biggr] = \frac{1}{|\mathcal{D}_2|} \mathrm{Var} \, \bar{r}_S(X) \leq \frac{C}{|\mathcal{D}_2|}.
\end{align}
Now, by conditioning on $\mathcal{D}_1$ and using the fact that $\int \alpha_S^{(M)}f_S =0$ we have that
\begin{align}
\label{Eq:CrossFitRem2}
     \bbE \biggl[ \biggl\{ \frac{1}{n_S} \sum_{j=1}^{n_S} & \frac{\hat{\alpha}_{S,(1)}^{(M)}(X_{S,j})}{\bar{r}_S(X_{S,j})}  \biggr\}^2 \biggr] = \bbE \biggl[  \biggl( \int \hat{\alpha}_{S,(1)}^{(M)} f_S \biggr)^2 + \frac{1}{n_S} \mathrm{Var} \biggl( \frac{\hat{\alpha}_{S,(1)}^{(M)}(X_{S,1})}{\bar{r}_S(X_{S,1})} \biggm| \mathcal{D}_1 \biggr) \biggr] \nonumber \\
     & \leq  \bbE \biggl[ \biggl\{ \int (\hat{\alpha}_{S,(1)}^{(M)} - \alpha_S^{(M)}) f_S \biggr\}^2 + \frac{1}{n_S} \int \frac{(\hat{\alpha}_{S,(1)}^{(M)})^2}{\bar{r}_S} f_S \biggr] \nonumber \\
     & \leq \bbE \biggl\{ \int \biggl( 1 + \frac{2}{n_S \bar{r}_S} \biggr) (\hat{\alpha}_{S,(1)}^{(M)} - \alpha_S^{(M)})^2 f_S \biggr\} + \frac{2}{n_S} \int \frac{(\alpha_S^{(M)})^2}{\bar{r}_S} f_S.
\end{align}
The calculations in~\eqref{Eq:CrossFitRem1} and~\eqref{Eq:CrossFitRem2} will allow us to control the error arising from using the estimator $\hat{r}_{S,(2)}$ in the second term of $R_{S,1}$. The other portion of the error is bounded as follows. Conditioning on $\mathcal{D}_1$ we have
\begin{align}
\label{Eq:CrossFitRem3}
    &\bbE\biggl[ \biggl\{\frac{1}{n}\sum_{x \in \mathcal{D}_{2}} \{ \hat{\alpha}_{S,(1)}^{(M)}(x_S) - \bar{\alpha}_S^{(M)}(x_S) \} -  \frac{|\mathcal{D}_2|}{n \cdot n_S} \sum_{j=1}^{n_S} \frac{\hat{\alpha}_{S,(1)}^{(M)}(X_{S,j}) - \bar{\alpha}_S^{(M)}(X_{S,j}) }{\bar{r}_S(X_{S,j})}  \biggr\}^2 \biggr] \nonumber \\
    & = \bbE \biggl[ \frac{|\mathcal{D}_2|}{n^2} \mathrm{Var} \bigl( \hat{\alpha}_{S,(1)}^{(M)}(X) - \bar{\alpha}_S^{(M)}(X) \bigm| \mathcal{D}_1 \bigr) + \frac{|\mathcal{D}_{2}|^2}{n^2 n_S} \mathrm{Var} \biggl( \frac{ \hat{\alpha}_{S,(1)}^{(M)}(X_{S,1}) - \bar{\alpha}_S^{(M)}(X_{S,1})}{\bar{r}_S(X_{S,1})} \biggm| \mathcal{D}_1 \biggr)  \biggr] \nonumber  \\
    & \leq \frac{|\mathcal{D}_2|}{n^2} \bbE \biggl\{ \int 
\frac{n+n_S \bar{r}_S}{n_S \bar{r}_S} (\hat{\alpha}_{S,(1)}^{(M)} - \alpha_S^{(M)})^2 f_S \biggr\},
\end{align}
where the first equality relies on the fact that the quantity in braces has mean zero conditionally on $\mathcal{D}_1$. Combining~\eqref{Eq:CrossFitRem1},~\eqref{Eq:CrossFitRem2} and~\eqref{Eq:CrossFitRem3} we now have
\begin{align*}
    \bbE(R_{S,1}^2) \leq \frac{2|\mathcal{D}_2|}{n^2} &\bbE \biggl\{ \int 
\frac{n+n_S \bar{r}_S}{n_S \bar{r}_S} (\hat{\alpha}_{S,(1)}^{(M)} - \alpha_S^{(M)})^2 f_S \biggr\} \\
    & + 2\Bigl( \frac{|\mathcal{D}_2|}{n} \Bigr)^2 \frac{C}{|\mathcal{D}_2|} \biggl[ \bbE\biggl\{  \int \Bigl(1 + \frac{2}{n_S \bar{r}_S} \Bigr)(\hat{\alpha}_{S,(1)}^{(M)} - \alpha_S^{(M)})^2 f_S \biggr\} + \frac{2}{n_S} \int \frac{(\alpha_S^{(M)})^2}{\bar{r}_S} f_S \biggr] \\
    & \leq \frac{2|\mathcal{D}_2|(C+1)}{n^2} \bbE \biggl\{ \int 
\frac{n+n_S \bar{r}_S}{n_S \bar{r}_S} (\hat{\alpha}_{S,(1)}^{(M)} - \alpha_S^{(M)})^2 f_S \biggr\} + \frac{4|\mathcal{D}_2| C}{n^2n_S} \int \frac{(\alpha_S^{(M)})^2}{\bar{r}_S} f_S.
\end{align*}
It follows from the fact that $\hat{\theta}-\theta^{*,(M)} = \sum_{S \in \bbS} (R_{S,1} + R_{S,2})$ and the line above that
\begin{align*}
    \bbE\{(\hat{\theta} - \theta^{*,(M)})^2\} &\leq 2|\bbS| \sum_{S \in \bbS} \{ \bbE(R_{S,1}^2) + \bbE(R_{S,2}^2) \}  \\
    & \leq \frac{8C|\bbS|}{n}\max_{\ell=1,2} \sum_{S \in \bbS} \biggl[ \bbE \biggl\{ \int 
\frac{n+n_S \bar{r}_S}{n_S \bar{r}_S} (\hat{\alpha}_{S,(\ell)}^{(M)} - \alpha_S^{(M)})^2 f_S \biggr\} + \frac{1}{n_S} \int \frac{(\alpha_S^{(M)})^2}{\bar{r}_S} f_S \biggr] \\
    & \leq \frac{8C|\bbS|}{n} \biggl[ \max_{\ell=1,2} \sum_{S \in \bbS} \bbE \biggl\{ \int 
\frac{n+n_S \bar{r}_S}{n_S \bar{r}_S} (\hat{\alpha}_{S,(\ell)}^{(M)} - \alpha_S^{(M)})^2 f_S \biggr\} + \mathrm{Var}(\theta^{*,(M)}) \biggr],
\end{align*}
as required.

We finally turn to the second statement of the proposition, where we give a bound on $\mathrm{Var}(\theta^{*,(M)})$. Recall that we have $\bbE\{\alpha_S^{(M)}(X_{S,1})/\bar{r}_S(X_{S,1})\} =  \bbE\{\alpha_S^{(M)}(X)\} = 0$ for each $S \in \bbS$. It therefore follows that $\theta^{*,(M)}$ is unbiased and that
\begin{align*}
    n & \mathrm{Var}(\theta^{*,(M)}) = \mathrm{Var} \biggl( a(X) - \sum_{S \in \bbS} \alpha_S^{(M)}(X_S) \biggr) + \sum_{S \in \bbS} \frac{n}{n_S} \mathrm{Var} \biggl( \frac{\alpha_S^{(M)}(X_{S,1})}{\bar{r}_S(X_{S,1})} \biggr) \nonumber \\
    & = \mathcal{L}(\alpha_\bbS^{(M)}) + \sum_{S \in \bbS} \biggl( \frac{n}{n_S} - \frac{1}{\lambda_S} \biggr) \int \frac{f_S}{\bar{r}_S} (\alpha_S^{(M)})^2 \leq \biggl( 1 + \max_{S \in \bbS} \biggl| \frac{n \lambda_S}{n_S} - 1 \biggr| \biggr) \mathcal{L}(\alpha_\bbS^{(M)}),
\end{align*}
as claimed.

\end{proof}

The proof of Proposition~\ref{Prop:InductionProof} to come relies on the following result.
\begin{lemma}
\label{Lemma:Smoothness}
Let $\beta_1,\beta_2,\beta_3 \in (0,1]$ and $L_1,L_2,L_3 \in (0, \infty)$, and suppose that $a$, the distribution of $X$ and $(r_S : S \in \bbS)$ satisfy (A1)($\beta_1,L_1$), (A2)($\beta_2,L_2$) and (A3)($\beta_3,L_3$), respectively. Then for any $m \in \bbN_0, \bS \in \bbS^{(m)}$ and $x_{S_m},x_{S_m}' \in \bbR^{S_m}$ with $\|x_{S_m}-x_{S_m}'\|_\infty \leq 1$ we haves
\[
    | \bar{a}_\bS^{(m)}(x_{S_m}) - \bar{a}_\bS^{(m)}(x_{S_m}') | \leq \bigl\{ L_1 + 2(2^{m}-1)\bigl( L_2 + L_3 \bigr)\|a\|_\infty \bigr\} \|x_{S_m}-x_{S_m}'\|_\infty^{\beta_\wedge},
\]
where  we write $\beta_\wedge = \beta_1 \wedge \beta_2 \wedge \beta_3$.
\end{lemma}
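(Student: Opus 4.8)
The plan is to prove the bound by induction on $m$, propagating two quantities together: a supremum-norm bound and a H\"older modulus for $\bar{a}_{\bS}^{(m)}$. Write $w_S(x_S) = \lambda_S \bar{r}_S(x_S)/\{1 + \lambda_S\bar{r}_S(x_S)\}$ for the weight in the recursion, and record two facts: first, $0 \le w_S \le 1$ since it has the form $t/(1+t)$; second, because $t \mapsto t/(1+t)$ is Lipschitz and $\bar{r}_S$ is $\beta_3$-H\"older by (A3), the map $x_S \mapsto w_S(x_S)$ is $\beta_3$-H\"older with a modulus controlled by $L_3$ (the exact coefficient using the standing bounds $c \le \bar{r}_S \le C$). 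The engine of the induction is a smoothing estimate: if $\phi$ is bounded and $\beta$-H\"older with $\beta \ge \beta_\wedge$, then $x_S \mapsto \bbE\{\phi(X) \mid X_S = x_S\}$ is $\beta_\wedge$-H\"older with modulus at most (the modulus of $\phi$) $+\, 2\|\phi\|_\infty L_2$. To prove this I would write $\bbE\{\phi \mid X_S = x_S\} = \int \phi(x_S,x_{S^c})\,dQ_{x_S}(x_{S^c})$ with $Q_{x_S} = \mathcal{L}(X_{S^c} \mid X_S = x_S)$ and split the difference at $x_S,x_S'$ into a term in which only the argument of $\phi$ moves, bounded by the modulus of $\phi$ since $\|(x_S,x_{S^c}) - (x_S',x_{S^c})\|_\infty = \|x_S - x_S'\|_\infty$, and a term in which only the mixing measure moves, bounded by $2\|\phi\|_\infty\, d_\mathrm{TV}(Q_{x_S},Q_{x_S'}) \le 2\|\phi\|_\infty L_2 \|x_S - x_S'\|_\infty^{\beta_2}$ via (A2). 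Throughout, the restriction $\|x_S - x_S'\|_\infty \le 1$ lets me replace each exponent $\beta_i$ by the common $\beta_\wedge$.

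For the supremum norm I would show $\|\bar{a}_{\bS}^{(m)}\|_\infty \le 2^m\|a\|_\infty$: each centering constant is a weighted average of the previous-level function, so $|\bar{\theta}| \le \|\bar{a}_{\bS}^{(m)}\|_\infty$, and combined with $0 \le w_S \le 1$ this gives $\|\bar{a}_{(\bS,S)}^{(m+1)}\|_\infty \le 2\|\bar{a}_{\bS}^{(m)}\|_\infty$, with base case $\|\bar{a}_S^{(1)}\|_\infty \le 2\|a\|_\infty$ obtained the same way. For the modulus, write $L^{(m)}$ for the H\"older modulus of $\bar{a}_{\bS}^{(m)}$ and apply a product rule to $\bar{a}_{(\bS,S)}^{(m+1)} = w_S(g - \bar{\theta})$ with $g = \bbE\{\bar{a}_{\bS}^{(m)} \mid X_S = \cdot\}$: since $|w_S| \le 1$ one has the increment bounded by $|g(x)-g(x')| + \|g - \bar{\theta}\|_\infty |w_S(x)-w_S(x')|$, where the first term is handled by the smoothing estimate (giving $L^{(m)} + 2\|\bar{a}_{\bS}^{(m)}\|_\infty L_2$) and the second by the H\"older-ness of $w_S$ together with $\|g - \bar{\theta}\|_\infty \le 2\|\bar{a}_{\bS}^{(m)}\|_\infty$ (giving a term bounded by $2\|\bar{a}_{\bS}^{(m)}\|_\infty L_3$). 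This yields the recursion $L^{(m+1)} \le L^{(m)} + 2\|\bar{a}_{\bS}^{(m)}\|_\infty(L_2+L_3) \le L^{(m)} + 2^{m+1}(L_2+L_3)\|a\|_\infty$, with base case $L^{(1)} \le L_1 + 2(L_2+L_3)\|a\|_\infty$ (the $L_1$ coming from (A1) through the direct part of the smoothing estimate applied to $\phi = a$). Summing the geometric series $\sum_{k=1}^{m-1} 2^{k+1} = 2^{m+1} - 4$ gives exactly $L^{(m)} \le L_1 + 2(2^m - 1)(L_2+L_3)\|a\|_\infty$, as claimed.

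The main obstacle is the smoothing estimate together with its coordinate bookkeeping. The function $\bar{a}_{\bS}^{(m)}$ depends on $x_{S_m}$, whereas the $(m{+}1)$-st step conditions on $X_S$ with $S \neq S_m$; when $x_S$ is perturbed, the argument of $\bar{a}_{\bS}^{(m)}$ moves only through its $S_m \cap S$ coordinates, while the coordinates in $S_m \cap S^c$ are averaged against a conditional law whose motion is precisely what (A2) controls, so (A2) is exactly tailored to this step. A second essential point is that the supremum norm and the H\"older modulus must be propagated jointly, the modulus at level $m{+}1$ feeding on the sup norm at level $m$ through the total-variation term; it is this coupling that produces the geometric factor $2^m$ rather than a linear one. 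The remaining care is purely in bookkeeping: reconciling the three exponents via $\|x_S - x_S'\|_\infty \le 1$ and tracking the Lipschitz constant of $w_S$ so that its contribution is absorbed into the stated $L_3$ coefficient.
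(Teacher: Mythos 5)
Your proof is correct and essentially identical in structure to the paper's: an induction on $m$ that jointly propagates the uniform bound $\|\bar{a}_\bS^{(m)}\|_\infty \leq 2^m\|a\|_\infty$ and the H\"older modulus, splits each increment into an argument-change term (handled by the induction hypothesis), a conditional-law-change term (handled via total variation and (A2), contributing $2\cdot 2^m L_2\|a\|_\infty$) and a weight-change term (handled via (A3), contributing $2 \cdot 2^m L_3 \|a\|_\infty$), and then sums the resulting geometric recursion to get $L_1 + 2(2^m-1)(L_2+L_3)\|a\|_\infty$. The only point of divergence is the weight term, where you invoke the standing bounds $c \leq \bar{r}_S \leq C$ (which are not hypotheses of the lemma) to control the modulus of $x_S \mapsto \lambda_S\bar{r}_S(x_S)/\{1+\lambda_S\bar{r}_S(x_S)\}$, whereas the paper bounds $\lambda_S|\bar{r}_S(x_S)-\bar{r}_S(x_S')|/[\{1+\lambda_S\bar{r}_S(x_S)\}\{1+\lambda_S\bar{r}_S(x_S')\}]$ by $L_3\|x_S-x_S'\|_\infty^{\beta_3}$ directly, implicitly using that the prefactor multiplying $|\bar{r}_S(x_S)-\bar{r}_S(x_S')|$ is at most one; both treatments rest on an additional condition (such as $\lambda_S \leq 1$ or a pointwise lower bound on $\bar{r}_S$) that holds in the setting where the lemma is applied, so they are equally precise.
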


\begin{proof}[Proof of Lemma~\ref{Lemma:Smoothness}]
We begin be establishing simple uniform bounds on the functions $\bar{a}_\bS^{(m)}$. For any $\bS \in \bbS^{(m)}$ and $S \in \bbS$ such that $S \neq S_m$ and any $x_S\in \bbR^S$ we have that
\begin{equation}
\label{Eq:IteratedUniformBound}
    |\bar{a}_{(\bS,S)}^{(m+1)}(x_S)| = \biggl| \frac{\lambda_S \bar{r}_S(x_S)}{1+\lambda_S \bar{r}_S(x_S)} \bbE\{ \bar{a}_{\bS}^{(m)}(X) - \bar{\theta}_{(\bS,S)}^{(m+1)} | X_S = x_S\} \biggr| \leq 2 \|\bar{a}_\bS^{(m)}\|_\infty \leq \ldots \leq 2^{m+1}\|a\|_\infty.
\end{equation}
We now prove the main claim inductively, noting that the $m=0$ base case is a simple consequence of (A1)($\beta_1,L_1$). Now suppose that the claim holds at a given $m$ and let $\bS \in \bbS^{(m)}$ and $S \in \bbS$ be such that $S \neq S_m$. For any $x_S,x_S' \in \bbR^S$ with $\|x_S-x_S'\|_\infty \leq 1$ we have
\begin{align}
\label{Eq:SmoothnessDecomp}
    |\bar{a}_{(\bS,S)}^{(m+1)}(x_S) &- \bar{a}_{(\bS,S)}^{(m+1)}(x_S') | \nonumber \\
    &= \biggl| \frac{\lambda_S \bar{r}_S(x_S)}{1+\lambda_S \bar{r}_S(x_S)} \bbE\{ \bar{a}_{\bS}^{(m)}(X) - \bar{\theta}_{(\bS,S)}^{(m+1)} | X_S = x_S\} \nonumber \\
    & \hspace{150pt} - \frac{\lambda_S \bar{r}_S(x_S')}{1+\lambda_S \bar{r}_S(x_S')} \bbE\{ \bar{a}_{\bS}^{(m)}(X) - \bar{\theta}_{(\bS,S)}^{(m+1)} | X_S = x_S'\} \biggr| \nonumber \\
    & \leq  \bigl| \bbE\{ \bar{a}_{\bS}^{(m)}(X) | X_S = x_S\} - \bbE\{ \bar{a}_{\bS}^{(m)}(X) | X_S = x_S'\} \bigr| \nonumber \\
    & \hspace{150pt}+ 2 \|\bar{a}_\bS^{(m)}\|_\infty \biggl| \frac{\lambda_S \bar{r}_S(x_S)}{1+\lambda_S \bar{r}_S(x_S)} - \frac{\lambda_S \bar{r}_S(x_S')}{1+\lambda_S \bar{r}_S(x_S')} \biggr|.
\end{align}
Using (A2)($\beta_2,L_2$), our induction hypothesis and~\eqref{Eq:IteratedUniformBound} the first term in~\eqref{Eq:SmoothnessDecomp} can be controlled by writing
\begin{align}
\label{Eq:SmoothnessTerm1}
     \bigl| \bbE\{ \bar{a}_{\bS}^{(m)}&(x_S,X_{S^c}) | X_S = x_S\} - \bbE\{ \bar{a}_{\bS}^{(m)}(x_S',X_{S^c}) | X_S = x_S'\} \bigr| \nonumber \\
     & \leq \bigl| \bbE\{ \bar{a}_{\bS}^{(m)}(x_S,X_{S^c}) | X_S = x_S\} - \bbE\{ \bar{a}_{\bS}^{(m)}(x_S,X_{S^c}) | X_S = x_S'\} \bigr|  \nonumber \\
     & \hspace{100pt} + \bigl| \bbE\bigl\{ \bar{a}_{\bS}^{(m)}(x_S,X_{S^c}) - \bar{a}_{\bS}^{(m)}(x_S',X_{S^c}) | X_S = x_S'\bigr\} \bigr| \nonumber \\
     & \leq 2 L_2\|\bar{a}_\bS^{(m)}\|_\infty \|x_S-x_S'\|_\infty^{\beta_2}  + \bigl\{ L_1 + 2(2^{m}-1)\bigl( L_2 + L_3 \bigr)\|a\|_\infty \bigr\} \|x_{S}-x_{S}'\|_\infty^{\beta_\wedge} \nonumber \\
     & \leq 2^{m+1} L_2\|a\|_\infty \|x_S-x_S'\|_\infty^{\beta_2} + \bigl\{ L_1 + 2(2^{m}-1)\bigl( L_2 +  L_3 \bigr)\|a\|_\infty \bigr\} \|x_{S}-x_{S}'\|_\infty^{\beta_\wedge}.
\end{align}
Moreover, using (A3)($\beta_3,L_3$) the second term in \eqref{Eq:SmoothnessDecomp} can be written as
\begin{align}
\label{Eq:SmoothnessTerm2}
    2 \|\bar{a}_\bS^{(m)}\|_\infty \biggl| \frac{1}{1+\lambda_S \bar{r}_S(x_S)} - \frac{1}{1+\lambda_S \bar{r}_S(x_S')} \biggr| &\leq 2^{m+1}\|a\|_\infty \frac{\lambda_S |\bar{r}_S(x_S) - \bar{r}_S(x_S')|}{\{1+\lambda_S \bar{r}_S(x_S)\}\{1+\lambda_S \bar{r}_S(x_S')\}} \nonumber \\
    & \leq 2^{m+1}\|a\|_\infty L_3 \|x_S-x_S'\|_\infty^{\beta_3}.
\end{align}
Combining~\eqref{Eq:SmoothnessDecomp},~\eqref{Eq:SmoothnessTerm1} and~\eqref{Eq:SmoothnessTerm2} we see that
\begin{align*}
    |\bar{a}_{(\bS,S)}^{(m+1)}(x_S) &- \bar{a}_{(\bS,S)}^{(m+1)}(x_S') | \\
    & \leq \bigl\{ L_1 + \{2(2^{m}-1) + 2^{m+1}\}\bigl( L_2 +  L_3 \bigr)\|a\|_\infty \bigr\} \|x_{S}-x_{S}'\|_\infty^{\beta_\wedge} \\
    & = \bigl\{ L_1 + 2(2^{m+1}-1)\bigl( L_2 + L_3 \bigr)\|a\|_\infty \bigr\} \|x_{S}-x_{S}'\|_\infty^{\beta_\wedge}.
\end{align*}
as required.
\end{proof}

\begin{proof}[Proof of Proposition~\ref{Prop:InductionProof}]
We begin by expressing $\alpha_S^{(M)}$ and $\hat{\alpha}_{S,(1)}^{(M)}$ in terms of simpler pieces. To this end, given $m \in \mathbb{N}$ write $\bbS^{(m)} = \{ (S_1,\ldots,S_m) \in \bbS^m : S_{j+1} \neq S_j \text{ for } j\in [m-1]\}$ and recursively define functions $\bar{a}_\bS^{(m)}(\cdot)$ for $m \in \bbN$ and $\bS \in \bbS^{(m)}$ as follows. We initialise by setting $\bar{a}_S^{(1)}(x_S)$ to be equal to the right-hand side of~\eqref{Eq:SemiSupervised}. For $m \in \bbN$, $\bS=(S_1,\ldots,S_m) \in \bbS^{(m)}$ and $S \in \bbS \setminus \{S_m\}$ we then set
\[
    \bar{a}_{(\bS,S)}^{(m+1)}(x) = \frac{\lambda_S \bar{r}_S(x_S)}{1+\lambda_S \bar{r}_S(x_S)} \bbE\{ \bar{a}_{\bS}^{(m)}(X) - \bar{\theta}_{(\bS,S)}^{(m+1)} | X_S = x_S\}, \quad \text{where} \quad \bar{\theta}_{(\bS,S)}^{(m+1)} = \frac{\int \frac{\lambda_S \bar{r}_S}{1+\lambda_S \bar{r}_S} \bar{a}_{\bS}^{(m)} f}{\int \frac{\lambda_S \bar{r}_S}{1+\lambda_S \bar{r}_S} f_S}.
\]
For $M \in \bbN_0, m \in \mathbb{Z}$ and $\eta \in [0,1]$ let $B \sim \mathrm{Bin}(M,\eta)$ to introduce the notation $b_{M,\eta}(m)=\mathbb{P}(B \geq m)$, where we say that $B=0$ almost surely if $M=0$. We claim that, for $S \in \bbS$, $M \in \bbN_0$ and $\eta \in (0,1]$ we may write
\begin{equation}
\label{Eq:ApproxInfluence}
    \alpha_S^{(M)}(x)  = \sum_{m=1}^{M} (-1)^{m-1} \sum_{\bS \in \bbS^{(m)} : S_m = S} b_{M,\eta}(m) \bar{a}_{\bS}^{(m)}(x),
\end{equation}
where we note that $\alpha_S^{(M)}(x)$ is a function of $x_S$ only. The $M=0$ case is trivial and we proceed by induction on $M$. Assuming the induction hypothesis, for any $M \geq 0$ we have that
\begin{align}
\label{Eq:GradientDescent}
    &\alpha_S^{(M+1)} = \alpha_S^{(M)} - (\eta/2) \{\nabla \mathcal{L}(\alpha_\bbS^{(M)})\}_S \nonumber \\
    &= \alpha_S^{(M)}  - \eta \biggl[ \alpha_S^{(M)} - \frac{\lambda_S \bar{r}_S}{1+\lambda_S \bar{r}_S} (a_S - \bar{\theta}_S^{(1)}) + \frac{\lambda_S\bar{r}_S}{1+\lambda_S\bar{r}_S} \sum_{S' \neq S} \biggl\{ \mathbb{E}(\alpha_{S'}^{(M)} | X_S) -  \frac{ \int \frac{\lambda_S \bar{r}_S}{1+\lambda_S \bar{r}_S} \alpha_{S'}^{(M)} f}{\int \frac{\lambda_S \bar{r}_S}{1+\lambda_S \bar{r}_S} f} \biggr\}\biggr] \nonumber \\
    & = (1-\eta) \sum_{m=1}^{M} (-1)^{m-1} \sum_{\bS \in \bbS^{(m)}: S_m = S} b_{M,\eta}(m) \bar{a}_{\bS}^{(m)}  \nonumber \\
    & \hspace{150pt} + \eta \bar{a}_S^{(1)}  - \eta \sum_{S' \neq S} \sum_{m=1}^{M} (-1)^{m-1} \sum_{\bS \in \bbS^{(m)}: S_m = S'} b_{M,\eta}(m) \bar{a}_{(\bS,S)}^{(m+1)} \nonumber \\
    & = \sum_{m=1}^{M+1} (-1)^{m-1} \sum_{\bS \in \bbS^{(m)}: S_m = S} \bigl\{ (1-\eta)b_{M,\eta}(m) + \eta b_{M,\eta}(m-1) \bigr\}  \bar{a}_{\bS}^{(m)} \nonumber \\ 
    & = \sum_{m=0}^{M+1} (-1)^m \sum_{\bS \in \bbS^{(m)}: S_m = S} b_{M+1,\eta}(m)\bar{a}_{\bS}^{(m)},
\end{align}
as required.

Having decomposed $\alpha_\bbS^{(M)}$, we now aim for a similar decomposition of $\hat{\alpha}_{\bbS,(1)}^{(M)}$. Initialise by setting $\hat{a}^{(0)} \equiv a$ and, for $\bS \in \bbS^{(m)}$ and $(\boldsymbol{\ell},\ell)=(\ell_1,\ldots,\ell_m,\ell)$ with $1 \leq \ell_1 < \ldots < \ell_m < \ell \leq M$, iteratively define
\begin{align*}
    \hat{a}_{(\bS,S)}^{(\boldsymbol{\ell},\ell)}(x_S) &= \frac{\lambda_S r_S(x_S)}{\hat{r}_S^{(\ell)}+\lambda_S r_S(x_S)} \biggl\{ |\mathcal{D}_{1,\ell}|^{-1} \sum_{y \in \mathcal{D}_{1,\ell}} \hat{a}_{\bS}^{(\boldsymbol{\ell})}(y) \mathbbm{1}_{B_{T}}(y) \frac{K_h^S(x_S - y_S)}{\hat{f}_S^{(\ell)}(x_S)} \\
    & \hspace{190pt} - \frac{\sum_{y \in \mathcal{D}_{1,\ell}} \frac{\lambda_S r_S(y)}{\hat{r}_S^{(\ell)}+\lambda_S r_S(y)} \hat{a}_\bS^{(\boldsymbol{\ell})}(y) \mathbbm{1}_{B_T}(y)}{\sum_{y \in \mathcal{D}_{1,\ell}} \frac{\lambda_S r_S(y)}{\hat{r}_S^{(\ell)}+\lambda_S r_S(y)}} \biggr\},
\end{align*}
when $\|x_S\|_\infty \leq T$ and $\hat{a}_{(\bS,S)}^{(m+1)}(x_S) = 0$ otherwise. We claim that we may write
\[
    \hat{\alpha}_{S,(1)}^{(M_0)}(x_S) = \sum_{m=1}^{M_0} (-1)^{m-1} \sum_{\substack{ \bS \in \bbS^{(m)} \\ S_m=S}} \sum_{\substack{ \boldsymbol{\ell}=(\ell_1,\ldots,\ell_m) \in \bbN^m \\ 1 \leq \ell_1 < \ldots < \ell_m \leq M_0 }} \eta^m (1-\eta)^{M_0-m-\ell_1+1} \, \hat{a}_\bS^{(\boldsymbol{\ell})}(x_S)
\]
for any $M_0=0,1,\ldots,M$. Trivially, the base case $M_0=0$ holds as both sides of the equality are zero. Proceeding by induction, we have that
\begin{align*}
    &\hat{\alpha}_{S,(1)}^{(M_0+1)}(x_S) =  (1-\eta) \hat{\alpha}_S^{(M_0)}(x_S) \\
    &+ \eta \frac{\lambda_S r_S(x_S)}{\hat{r}_S^{(M_0+1)}+\lambda_S r_S(x_S)} \biggl[ |\mathcal{D}_{1,M_0+1}|^{-1} \sum_{y \in \mathcal{D}_{1,M_0+1}} \biggl\{a(y) - \sum_{S' \neq S} \hat{\alpha}_{S,(1)}^{(M_0)}(y) \biggr\} \mathbbm{1}_{B_{T}}(y) \frac{K_h^S(x_S - y_S)}{\hat{f}_S^{(M_0+1)}(x_S)} \\
    &  \hspace{50pt} -  \frac{\sum_{y \in \mathcal{D}_{1,M_0+1}} \frac{\lambda_S r_S(y)}{\hat{r}_S^{(M_0+1)}+\lambda_S r_S(y)} \{a(y) - \sum_{S' \neq S} \hat{\alpha}_{S,(1)}^{(M_0)}(y) \} \mathbbm{1}_{B_T}(y)}{\sum_{y \in \mathcal{D}_{1,M_0+1}} \frac{\lambda_S r_S(y)}{\hat{r}_S^{(M_0+1)}+\lambda_S r_S(y)}}  \biggr] \\
    & = (1-\eta) \sum_{m=1}^{M_0} (-1)^{m-1} \sum_{\substack{ \bS \in \bbS^{(m)} \\ S_m=S}} \sum_{\substack{ \boldsymbol{\ell}=(\ell_1,\ldots,\ell_m) \in \bbN^m \\ 1 \leq \ell_1 < \ldots < \ell_m \leq M_0 }} \eta^m (1-\eta)^{M_0-m-\ell_1+1} \, \hat{a}_\bS^{(\boldsymbol{\ell})}(x_S) + \eta \hat{a}_S^{(M_0+1)}(x_S) \\
    & \hspace{50pt} - \eta \sum_{S' \neq S} \sum_{m=1}^{M_0} (-1)^{m-1} \sum_{\substack{ \bS \in \bbS^{(m)} \\ S_m=S'}} \sum_{\substack{ \boldsymbol{\ell}=(\ell_1,\ldots,\ell_m) \in \bbN^m \\ 1 \leq \ell_1 < \ldots < \ell_m \leq M_0 }} \eta^m (1-\eta)^{M_0-m-\ell_1+1} \, \hat{a}_{(\bS,S)}^{(\boldsymbol{\ell},M_0+1)}(x_S) \\
    & = \sum_{m=1}^{M_0} (-1)^{m-1} \sum_{\substack{ \bS \in \bbS^{(m)} \\ S_m=S}} \sum_{\substack{ \boldsymbol{\ell}=(\ell_1,\ldots,\ell_m) \in \bbN^m \\ 1 \leq \ell_1 < \ldots < \ell_m \leq M_0 }} \eta^m (1-\eta)^{M_0-m-\ell_1+2} \, \hat{a}_\bS^{(\boldsymbol{\ell})}(x_S)  \\
    & \hspace{50pt} + \sum_{m=1}^{M_0+1} (-1)^{m-1} \sum_{\substack{ \bS \in \bbS^{(m)} \\ S_{m}=S}} \sum_{\substack{ \boldsymbol{\ell}=(\ell_1,\ldots,\ell_{m}) \in \bbN^{m} \\ 1 \leq \ell_1 < \ldots < \ell_{m} = M_0+1 }} \eta^{m} (1-\eta)^{M_0-m-\ell_1+2} \, \hat{a}_{\bS}^{(\boldsymbol{\ell})}(x_S) \\
    & = \sum_{m=1}^{M_0+1} (-1)^{m-1} \sum_{\substack{ \bS \in \bbS^{(m)} \\ S_{m}=S}} \sum_{\substack{ \boldsymbol{\ell}=(\ell_1,\ldots,\ell_{m}) \in \bbN^{m} \\ 1 \leq \ell_1 < \ldots < \ell_{m} \leq M_0+1 }} \eta^{m} (1-\eta)^{(M_0+1)-m-\ell_1+1} \, \hat{a}_{\bS}^{(\boldsymbol{\ell})}(x_S),
\end{align*}
as required.

We now reduce the error of $\hat{\alpha}_{S,(1)}^{(M)}$ to the error of its constituent parts in the ball of radius $T$ about the origin. This requires an understanding of the weights in the previous decompositions. First, for a fixed $m \in \bbN$ we have that
\begin{align*}
    \sum_{\substack{ \boldsymbol{\ell}=(\ell_1,\ldots,\ell_{m}) \in \bbN^{m} \\ 1 \leq \ell_1 < \ldots < \ell_{m} \leq M }} \eta^{m} (1-\eta)^{M-m-\ell_1+1} &= \sum_{\ell_1 = 1}^{M-m+1} \binom{M - \ell_1}{m-1} \eta^{m} (1-\eta)^{M-m-\ell_1+1} \\
    & = \sum_{\ell=m-1}^{M-1} \binom{\ell}{m-1} \eta^m (1-\eta)^{\ell-m+1} = b_{M,\eta}(m),
\end{align*}
where the final identity can be checked by induction on $M$. Now, recalling that we write $B \sim \mathrm{Bin}(M,\eta)$ with $\eta=|\bbS|^{-1}$, in the case that $|\bbS| >1$ we see that
\begin{align}
\label{Eq:PGF}
    \sum_{m=1}^M \sum_{\bS \in \bbS^{(m)} : S_m =S} b_{M,\eta}(m) &\leq \sum_{m=1}^M |\bbS|^{m-1} b_{M,\eta}(m) = \sum_{j=1}^M \biggl( \sum_{m=1}^j |\bbS|^{m-1} \biggr) \bbP(B = j) \nonumber \\
    & = \sum_{j=1}^M \frac{|\bbS|^{j} - 1}{|\bbS|-1} \bbP(B=j) = \frac{\{(1-\eta)+\eta |\bbS|\}^M}{|\bbS|-1} \leq 2^M,
\end{align}
and it is straightforward to see that this bound continues to hold when $|\bbS|=1$. Now we have by the Cauchy--Schwarz inequality that
\begin{align}
\label{Eq:AlphaHatDecomp}
    &\bbE\int (\hat{\alpha}_{S,(1)}^{(M)} - \alpha_S^{(M)})^2 f_S \nonumber \\
    & = \bbE \int  \biggl[ \sum_{m=1}^{M} (-1)^{m-1} \sum_{\substack{ \bS \in \bbS^{(m)} \\ S_{m}=S}} \sum_{\substack{ \boldsymbol{\ell}=(\ell_1,\ldots,\ell_{m}) \in \bbN^{m} \\ 1 \leq \ell_1 < \ldots < \ell_{m} \leq M }} \eta^{m} (1-\eta)^{(M_0+1)-m-\ell_1+1} \bigl( \hat{a}_{\bS}^{(\boldsymbol{\ell})}  - \bar{a}_{\bS}^{(m)} \bigr) \biggr]^2 f_S \nonumber \\
    & \leq 2^{2M} \max_{m=1,\ldots,M} \max_{\bS \in \bbS^{(m)}: S_m = S} \bbE \int \bigl( \hat{a}_{\bS}^{(1,\ldots,m)} - \bar{a}_{\bS}^{(m)} \bigr)^2 f_S,
\end{align}
where the final inequality we use the fact that $\hat{a}_\bS^{(\boldsymbol{\ell})}$ has the same distribution for any $\boldsymbol{\ell}$ in the range of the sum. For notational simplicity, in the remainder of the proof we write $\hat{a}_{\bS}^{(m)}$ for $\hat{a}_{\bS}^{(1,\ldots,m)}$ For $S \in \bbS$ write $B_T^S = \{x_S' \in \mathbb{R}^S : \|x_S'\|_\infty \leq T\}$. The contribution to the error from outside this ball can be bounded using
\begin{equation}
\label{Eq:BTSComplement}
    \int_{(B_T^S)^c} \bigl\{ \hat{a}_{\bS}^{(m)}(x_S) - \bar{a}_{\bS}^{(m)}(x_S) \bigr\}^2 f_S(x_S) \,dx_S \leq 2^{2m} \mathbb{P}(\|X\|_\infty \geq T),
\end{equation}
which follows from the fact that $\hat{a}_{\bS}^{(m)}(x_S)=0$ when $x_S \not\in B_T^S$ and~\eqref{Eq:IteratedUniformBound}.

It now suffices to bound the error of the individual estimators $\hat{a}_{(\bS,S)}^{(m+1)}(x_S)$ when $\|x_S\|_\infty \leq T$ and $\bS \in \bbS^{(m-1)}$ with $S_m \neq S$. 
Using the fact that $\frac{\lambda_S r_S(x_S)}{\hat{r}_S^{(m+1)}+\lambda_S r_S(x_S)} \leq 1$ and the triangle inequality we have that
\begin{align}
\label{Eq:RegressionErrorDecomp}
    & \bigl| \hat{a}_{(\bS,S)}^{(m+1)}(x_S) - \bar{a}_{(\bS,S)}^{(m+1)}(x_S) \bigr| \nonumber \\
    & \leq \bigl[ |\bbE\{\bar{a}_{\bS}^{(m)}(X) | X_S=x_S)\}| + |\bar{\theta}_{(\bS,S)}^{(m+1)}| \bigr] \biggl| \frac{\lambda_S r_S(x_S)}{\hat{r}_S^{(m+1)}+\lambda_S r_S(x_S)} - \frac{\lambda_S \bar{r}_S(x_S)}{1+\lambda_S \bar{r}_S(x_S)} \biggr| \nonumber \\
    & \hspace{50pt} + \biggl| |\mathcal{D}_{1,m+1}|^{-1} \sum_{y \in \mathcal{D}_{1,m+1}} \{\mathbbm{1}_{B_{T}}(y) \hat{a}_{\bS}^{(m)}(y) - \bar{a}_{\bS}^{(m)}(y) \}\frac{K_h^S(x_S - y_S)}{\hat{f}_S^{(m+1)}(x_S)} \biggr| \nonumber \\
    & \hspace{50pt} +  \biggl| |\mathcal{D}_{1,m+1}|^{-1} \sum_{y \in \mathcal{D}_{1,m+1}} \bar{a}_{\bS}^{(m)}(y) \frac{K_h^S(x_S - y_S)}{\hat{f}_S^{(m+1)}(x_S)} - \bbE\{\bar{a}_{\bS}^{(m)}(X) | X_S=x_S\} \biggr| \nonumber \\
    & \hspace{50pt} + \biggl| \frac{\sum_{y \in \mathcal{D}_{1,m+1}} \frac{\lambda_S r_S(y)}{\hat{r}_S^{(m+1)}+\lambda_S r_S(y)} \{ \hat{a}_\bS^{(m)}(y) - \bar{a}_\bS^{(m)}(y)\} \mathbbm{1}_{B_T}(y)}{\sum_{y \in \mathcal{D}_{1,m+1}} \frac{\lambda_S r_S(y)}{\hat{r}_S^{(m+1)}+\lambda_S r_S(y)}} \biggr| \nonumber \\
    & \hspace{50pt} +  \biggl| \frac{\sum_{y \in \mathcal{D}_{1,m+1}} \{ \frac{\lambda_S r_S(y)}{\hat{r}_S^{(m+1)}+\lambda_S r_S(y)} - \frac{\lambda_S \bar{r}_S(y)}{1+\lambda_S \bar{r}_S(y)} \} \bar{a}_\bS^{(m)}(y) \mathbbm{1}_{B_T}(y) }{\sum_{y \in \mathcal{D}_{1,m+1}} \frac{\lambda_S r_S(y)}{\hat{r}_S^{(m+1)}+\lambda_S r_S(y)}} \biggr| \nonumber \\
    & \hspace{50pt} +  \biggl|\frac{\sum_{y \in \mathcal{D}_{1,m+1}} \frac{\lambda_S \bar{r}_S(y)}{1+\lambda_S \bar{r}_S(y)} \bar{a}_\bS^{(m)}(y)\mathbbm{1}_{B_T}(y)}{\sum_{y \in \mathcal{D}_{1,m+1}} \frac{\lambda_S r_S(y)}{\hat{r}_S^{(m+1)}+\lambda_S r_S(y)}} - \frac{\sum_{y \in \mathcal{D}_{1,m+1}} \frac{\lambda_S \bar{r}_S(y)}{1+\lambda_S \bar{r}_S(y)} \bar{a}_\bS^{(m)}(y) \mathbbm{1}_{B_T}(y)}{\sum_{y \in \mathcal{D}_{1,m+1}} \frac{\lambda_S \bar{r}_S(y)}{1+\lambda_S \bar{r}_S(y)}} \biggr| \nonumber \\
    & \hspace{50pt} +  \biggl| \frac{\sum_{y \in \mathcal{D}_{1,m+1}} \frac{\lambda_S \bar{r}_S(y)}{1+\lambda_S \bar{r}_S(y)} \bar{a}_\bS^{(m)}(y)\mathbbm{1}_{B_T}(y)}{\sum_{y \in \mathcal{D}_{1,m+1}} \frac{\lambda_S \bar{r}_S(y)}{1+\lambda_S \bar{r}_S(y)}} - \bar{\theta}_{(\bS,S)}^{(m+1)} \biggr| \nonumber \\
    & =: \sum_{j=1}^7 R_{(\bS,S),j}^{(m+1)}(x_S).
\end{align}
We now bound each of these error terms separately. 

\underline{To bound $R_1$:} 
Using the fact that $|a/(1+\epsilon+a) - a/(1+a)| \leq |\epsilon|$ for any $a \geq 0$ and $\epsilon \geq -1$, and using very similar arguments to~\eqref{Eq:CrossFitRem1} and~\eqref{Eq:IteratedUniformBound} above, we have that
\begin{align}
\label{Eq:R1bound}
    \bbE  \{R_{(\bS,S),1}^{(m+1)}(x_S)^2 \} & \leq \bigl[ |\bbE\{\bar{a}_{\bS}^{(m)}(X) | X_S=x_S)\}| + |\bar{\theta}_{(\bS,S)}^{(m+1)}| \bigr]^2 \bbE\biggl\{\biggl[ \frac{\hat{r}_S^{(m+1)}}{\bbE\{r_S(X_S)\}} -1 \biggr]^2 \biggr\} \nonumber \\
    & \leq \frac{2^{2m+2}C \|a\|_\infty^2}{|\mathcal{D}_{1,m+1}|}.
\end{align}

\underline{To bound $R_2$:} We use the notation $f_{S,h}(x_S) = (K_h^S \ast f_S)(x_S)$. By the Cauchy--Schwarz inequality and, using the fact that $K$ is uniform, Lemma~4.1(i) of~\cite{gyorfi2006distribution}, we have that
\begin{align}
\label{Eq:R2bound}
    \bbE \{ &R_{(\bS,S),2}^{(m+1)}(x_S)^2 \} = \bbE \biggl[ \biggl\{ \frac{\sum_{y \in \mathcal{D}_{1,m+1}} \{ \mathbbm{1}_{B_{T}}(y) \hat{a}_\bS^{(m)}(y) - \bar{a}_\bS^{(m)}(y) \} K_h^S(x_S - y_S)} {\sum_{y \in \mathcal{D}_{1,m+1}} K_h^S(x_S - y_S)} \biggr\}^2  \biggr] \nonumber \\
    & \leq \bbE \biggl[ \sum_{y \in \mathcal{D}_{1,m+1}} \bbE \biggl\{ \frac{ \{ \mathbbm{1}_{B_{T}}(y) \hat{a}_\bS^{(m)}(y) - \bar{a}_\bS^{(m)}(y) \}^2 K_h^S(x_S - y_S)} {\sum_{y' \in \mathcal{D}_{1,m+1}} K_h^S(x_S - y_S')} \biggm| y  \biggr\} \biggr] \nonumber \\
    & \leq \frac{1}{|\mathcal{D}_{1,m+1}| f_{S,h}(x_S)} \mathbb{E} \biggl[ \sum_{y \in \mathcal{D}_{1,m+1}} \{ \mathbbm{1}_{B_{T}}(y) \hat{a}_\bS^{(m)}(y) - \bar{a}_\bS^{(m)}(y) \}^2 K_h^S(x_S - y_S)\biggr] \nonumber \\
    & = \frac{1}{f_{S,h}(x_S)} \bbE \bigl[ \bigl\{ \mathbbm{1}_{B_{T}}(X) \hat{a}_\bS^{(m)}(X) - \bar{a}_\bS^{(m)}(X) \bigr\}^2  K_h^S(x_S - X_S) \bigr] \nonumber \\
    & = \frac{1}{f_{S,h}(x_S)} \bbE \biggl[ \biggl\{ \mathbbm{1}_{B_T}(X) \bigl\{ \hat{a}_\bS^{(m)}(X) - \bar{a}_\bS^{(m)}(X) \bigr\}^2 + \mathbbm{1}_{B_T^c}(X) \bar{a}_\bS^{(m)}(X)^2  \biggr\} K_h^S(x_S - X_S) \biggr] \nonumber \\
    & \leq \frac{1}{f_{S,h}(x_S)} \bbE \bigl[\mathbbm{1}_{B_T}(X) \bigl\{ \hat{a}_\bS^{(m)}(X) - \bar{a}_\bS^{(m)}(X) \bigr\}^2 K_h^S(x_S - X_S) \bigr] \nonumber \\
    & \hspace{175pt} + \frac{2^{2m} \|a\|_\infty^2}{f_{S,h}(x_S)} \bbE \bigl\{ \mathbbm{1}_{B_T^c}(X) K_h^S(x_S - X_S) \bigr\},
\end{align}
where the final inequality uses~\eqref{Eq:IteratedUniformBound}.

\underline{To bound $R_3$:} Our bounds on this term are based on ideas from the theory of nonparametric regression with kernels, for which a good reference is Chapter~5 of~\cite{gyorfi2006distribution}. Writing $\tilde{a}_{(\bS,S)}^{(m+1)}(x_S) = \bbE\{\bar{a}_{\bS}^{(m)}(X) | X_S=x_S\}$, we may decompose this error term by writing
\begin{align}
\label{Eq:R3decomp}
    &\bbE  \{R_{(\bS,S),3}^{(m+1)}(x_S)^2 \} \nonumber \\
    &= \bbE \biggl[ \biggl\{ |\mathcal{D}_{1,m+1}|^{-1} \sum_{y \in \mathcal{D}_{1,m+1}} \bar{a}_{\bS}^{(m)}(y) \frac{K_h^S(x_S - y_S)}{\hat{f}_S^{(m+1)}(x_S)} - \tilde{a}_{(\bS,S)}^{(m+1)}(x_S) \biggr\}^2 \mathbbm{1}_{\{ \hat{f}_S^{(m+1)}(x_S) >0 \}}\biggr] \nonumber \\
    & \hspace{50pt} + \tilde{a}_{(\bS,S)}^{(m+1)}(x_S)^2 \mathbb{P}(\hat{f}_S^{(m+1)}(x_S)=0) \nonumber \\
    & = \bbE \biggl[ \biggl\{ |\mathcal{D}_{1,m+1}|^{-1} \sum_{y \in \mathcal{D}_{1,m+1}} \{ \bar{a}_{\bS}^{(m)}(y) -  \tilde{a}_{(\bS,S)}^{(m+1)}(y_S) \} \frac{K_h^S(x_S - y_S)}{\hat{f}_S^{(m+1)}(x_S)} \nonumber \\
    & \hspace{50pt} + |\mathcal{D}_{1,m+1}|^{-1} \sum_{y \in \mathcal{D}_{1,m+1}} \{\tilde{a}_{(\bS,S)}^{(m+1)}(y_S) - \tilde{a}_{(\bS,S)}^{(m+1)}(x_S) \} \frac{K_h^S(x_S - y_S)}{\hat{f}_S^{(m+1)}(x_S)} \biggr\}^2 \biggr] \nonumber\\
    & \hspace{50pt} + \tilde{a}_{(\bS,S)}^{(m+1)}(x_S)^2 \mathbb{P}(\hat{f}_S^{(m+1)}(x_S)=0) \nonumber \\
    & = |\mathcal{D}_{1,m+1}|^{-2} \bbE \biggl[ \sum_{y \in \mathcal{D}_{1,m+1}} \{ \bar{a}_{\bS}^{(m)}(y) -  \tilde{a}_{(\bS,S)}^{(m+1)}(y_S) \}^2 \frac{K_h^S(x_S - y_S)^2}{\hat{f}_S^{(m+1)}(x_S)^2} \biggr] \nonumber \\
    & \hspace{50pt} + \bbE \biggl[ \biggl\{ |\mathcal{D}_{1,m+1}|^{-1} \sum_{y \in \mathcal{D}_{1,m+1}} \{\tilde{a}_{(\bS,S)}^{(m+1)}(y_S) - \tilde{a}_{(\bS,S)}^{(m+1)}(x_S) \} \frac{K_h^S(x_S - y_S)}{\hat{f}_S^{(m+1)}(x_S)} \biggr\}^2 \biggr] \nonumber \\
    & \hspace{50pt} + \tilde{a}_{(\bS,S)}^{(m+1)}(x_S)^2 \mathbb{P}(\hat{f}_S^{(m+1)}(x_S)=0).
\end{align}
We proceed by bounding these three terms separately. Since $K$ is the uniform kernel, we may appeal to Lemma~4.1(ii) of~\cite{gyorfi2006distribution} and~\eqref{Eq:IteratedUniformBound} to see that the first term can be bounded by
\begin{align}
\label{Eq:R3term1}
    \frac{2^{2m} \|a\|_\infty^2}{|\mathcal{D}_{1,m+1}|^2}  &\bbE \biggl[ \sum_{y \in \mathcal{D}_{1,m+1}} \frac{(2h)^{-|S|} K_h^S(x_S - y_S)}{\hat{f}_S^{(m+1)}(x_S)^2} \biggr] = \frac{2^{2m-|S|}\|a\|_\infty^2}{|\mathcal{D}_{1,m+1}|} \bbE \biggl[ \frac{h^{-|S|} \mathbbm{1}_{\{\hat{f}_S^{(m+1)}(x_S) > 0 \}}}{\hat{f}_S^{(m+1)}(x_S)} \biggr]  \nonumber \\
    & \leq \frac{2^{2m}\|a\|_\infty^2}{|\mathcal{D}_{1,m+1}| h^{|S|} f_{S,h}(x_S)}.
\end{align}
Using~\eqref{Eq:IteratedUniformBound}, the third term on the right-hand side of~\eqref{Eq:R3decomp} can be bounded by
\begin{align}
\label{Eq:R3term3}
    2^{2m} \|a\|_\infty^2 \bigl\{1 - (2h)^{|S|} f_{S,h}(x_S) \bigr\}^{|\mathcal{D}_{1,m+1}|} &\leq 2^{2m} \|a\|_\infty^2 \exp \bigl( - |\mathcal{D}_{1,m+1}| (2h)^{|S|} f_{S,h}(x_S) \bigr) \nonumber \\
    &\leq \frac{2^{2m}\|a\|_\infty^2}{|\mathcal{D}_{1,m+1}| (2h)^{|S|} f_{S,h}(x_S)}.
\end{align}
We bound the second term on the right-hand side of~\eqref{Eq:R3decomp} using the smoothness properties of $\tilde{a}_{(\bS,S)}^{(m+1)}$. It follows from Lemma~\ref{Lemma:Smoothness} that this term can be bounded by
\begin{align}
\label{Eq:R3term2}
    \bigl\{ L_1 + 2(2^{m}-1)( L_2 +  L_3 )\|a\|_\infty \bigr\}^2 h^{2\beta_\wedge}.
\end{align}
It now follows from~\eqref{Eq:R3decomp},~\eqref{Eq:R3term1},~\eqref{Eq:R3term3} and~\eqref{Eq:R3term2} that
\begin{equation}
\label{Eq:R3bound}
    \bbE  \{R_{(\bS,S),3}^{(m+1)}(x_S)^2 \} \leq \frac{3 \times 2^{2m}\|a\|_\infty^2}{|\mathcal{D}_{1,m+1}| h^{|S|} f_{S,h}(x_S)} + \bigl\{ L_1 + 2(2^{m}-1)( L_2 +  L_3 )\|a\|_\infty \bigr\}^2 h^{2\beta_\wedge}.
\end{equation}

\underline{To bound $R_4$:} This term, as well as those that follow, do not vary with $x_S$. We may use Cauchy--Schwarz to write
\begin{align}
\label{Eq:R4bound}
    \bbE  \{R_{(\bS,S),4}^{(m+1)}(x_S)^2 \}&= \bbE \biggl[ \biggl\{ \frac{\sum_{y \in \mathcal{D}_{1,m+1}} \frac{\lambda_S r_S(y)}{\hat{r}_S^{(m+1)}+\lambda_S r_S(y)} \{\hat{a}_\bS^{(m)}(y) - \bar{a}_\bS^{(m)}(y)\}\mathbbm{1}_{B_T}(y)}{\sum_{y \in \mathcal{D}_{1,m+1}} \frac{\lambda_S r_S(y)}{\hat{r}_S^{(m+1)}+\lambda_S r_S(y)} } \biggr\}^2 \biggr] \nonumber \\
    & \leq \bbE \biggl[ \frac{\sum_{y \in \mathcal{D}_{1,m+1}} \frac{\lambda_S r_S(y)}{\hat{r}_S^{(m+1)}+\lambda_S r_S(y)} \{\hat{a}_\bS^{(m)}(y) - \bar{a}_\bS^{(m)}(y)\}^2\mathbbm{1}_{B_T}(y)}{\sum_{y \in \mathcal{D}_{1,m+1}} \frac{\lambda_S r_S(y)}{\hat{r}_S^{(m+1)}+\lambda_S r_S(y)} } \biggr] \nonumber \\
    & \leq \frac{C}{c} \bbE \bigl[ \{ \hat{a}_\bS^{(m)}(X) - \bar{a}_\bS^{(m)}(X) \}^2 \mathbbm{1}_{B_T}(X) \bigr],
\end{align}
where the final inequality follows from the assumption that $c \leq r_S(y) \leq C$ for all $y \in \bbR^d$.

\underline{To bound $R_5$:} Using the fact that $|a/(1+\epsilon+a) - a/(1+a)| \leq |\epsilon| a/(1+a+\epsilon)$ whenever $a \geq 0$ and $\epsilon \geq -1$, we may write
\begin{align}
\label{Eq:R5bound}
 &  \bbE  \{R_{(\bS,S),5}^{(m+1)}(x_S)^2 \} \nonumber\\
    & = \bbE \biggl[ \biggl\{ \frac{\sum_{y \in \mathcal{D}_{1,m+1}}\{\frac{\lambda_S r_S(y)}{\hat{r}_S^{(m+1)}+\lambda_S r_S(y)} - \frac{\lambda_S \bar{r}_S(y)}{1+\lambda_S \bar{r}_S(y)}\} \bar{a}_\bS^{(m)}(y) \mathbbm{1}_{B_T}(y) }{\sum_{y \in \mathcal{D}_{1,m+1}} \frac{\lambda_S r_S(y)}{\hat{r}_S^{(m+1)}+\lambda_S r_S(y)} } \biggr\}^2 \biggr] \nonumber \\
    & \leq \bbE \biggl[\biggl\{ \frac{\hat{r}_S^{(m+1)}}{\bbE\{r_S(X_S)\}} -1 \biggr\}^2 \biggl\{ \frac{\sum_{y \in \mathcal{D}_{1,m+1}}\frac{\lambda_S r_S(y)}{\hat{r}_S^{(m+1)}+\lambda_S r_S(y)} \bar{a}_\bS^{(m)}(y) \mathbbm{1}_{B_T}(y)}{\sum_{y \in \mathcal{D}_{1,m+1}} \frac{\lambda_S r_S(y)}{\hat{r}_S^{(m+1)}+\lambda_S r_S(y)} } \biggr\}^2 \biggr] \nonumber \\
    & \leq \frac{C}{c} \frac{1}{|\mathcal{D}_{1,m+1}|^4} \bbE \biggl[ \biggl\{  \sum_{y \in \mathcal{D}_{1,m+1}} \bigl| \bar{a}_\bS^{(m)}(y) \bigr| \biggr\}^2 \biggl\{  \sum_{y \in \mathcal{D}_{1,m+1}} \bar{r}_S(y) - |\mathcal{D}_{1,m+1}| \biggr\}^2  \biggr] \nonumber \\
    & = \frac{C}{c} \frac{1}{|\mathcal{D}_{1,m+1}|^4} \bbE \biggl[ \sum_{y_1,y_2,y_3,y_4 \in \mathcal{D}_{1,m+1}}|\bar{a}_\bS^{(m)}(y_1)| |\bar{a}_\bS^{(m)}(y_2)| \{ \bar{r}_S(y_3)-1\} \{ \bar{r}_S(y_4)-1\} \biggr] \nonumber \\
    & \leq \frac{C^2}{c} \frac{3}{|\mathcal{D}_{1,m+1}|} 2^{2m} \|a\|_\infty^2,
\end{align}
where the final inequality follows from~\eqref{Eq:IteratedUniformBound} and the fact that terms in the sum where $y_1,y_2,y_3,y_4$ are distinct have mean zero.

\underline{To bound $R_6$:} This term can be bounded similarly to the one above. Indeed, again using the fact that $|a/(1+\epsilon+a) - a/(1+a)| \leq |\epsilon| a/(1+a+\epsilon)$ whenever $a \geq 0$ and $\epsilon \geq -1$, we may write
\begin{align}
\label{Eq:R6bound}
    & \bbE  \{R_{(\bS,S),6}^{(m+1)}(x_S)^2 \} \nonumber\\
    & = \bbE \biggl[ \biggl\{ \frac{\sum_{y \in \mathcal{D}_{1,m+1}} \frac{\lambda_S \bar{r}_S(y)}{1+\lambda_S \bar{r}_S(y)} \bar{a}_\bS^{(m)}(y) \mathbbm{1}_{B_T}(y)}{( \sum_{y \in \mathcal{D}_{1,m+1}} \frac{\lambda_S r_S(y)}{\hat{r}_S^{(m+1)}+\lambda_S r_S(y)} )(\sum_{y \in \mathcal{D}_{1,m+1}} \frac{\lambda_S \bar{r}_S(y)}{1+\lambda_S \bar{r}_S(y)})} \biggr\}^2 \nonumber \\
    & \hspace{100pt} \times \biggl\{\sum_{y \in \mathcal{D}_{1,m+1}} \frac{\lambda_S r_S(y)}{\hat{r}_S^{(m+1)}+\lambda_S r_S(y)} - \sum_{y \in \mathcal{D}_{1,m+1}} \frac{\lambda_S \bar{r}_S(y)}{1+\lambda_S \bar{r}_S(y)}  \biggr\}^2 \biggr] \nonumber \\
    & \leq \bbE \biggl[ \biggl\{ \frac{\sum_{y \in \mathcal{D}_{1,m+1}} \frac{\lambda_S \bar{r}_S(y)}{1+\lambda_S \bar{r}_S(y)} \bar{a}_\bS^{(m)}(y) \mathbbm{1}_{B_T}(y)}{\sum_{y \in \mathcal{D}_{1,m+1}} \frac{\lambda_S \bar{r}_S(y)}{1+\lambda_S \bar{r}_S(y)}} \biggr\}^2 \biggl\{ \frac{\hat{r}_S^{(m+1)}}{\bbE\{r_S(X_S)\}} -1 \biggr\}^2 \biggr] \nonumber \\
    & \leq \frac{C}{c} \frac{1}{|\mathcal{D}_{1,m+1}|^4} \bbE \biggl[ \biggl\{  \sum_{x \in \mathcal{D}_{1,m+1}} \bigl| \bar{a}_\bS^{(m)}(x) \bigr| \biggr\}^2 \biggl\{  \sum_{x \in \mathcal{D}_{1,m+1}} \bar{r}_S(x) - |\mathcal{D}_{1,m+1}| \biggr\}^2  \biggr] \nonumber \\
    & \leq \frac{C^2}{c} \frac{3}{|\mathcal{D}_{1,m+1}|} 2^{2m} \|a\|_\infty^2,
\end{align}
where the final inequality uses from~\eqref{Eq:R5bound} above.

\underline{To bound $R_7$:} We first deal with the error incurred by truncating to $B_T$. We have
\begin{align}
\label{Eq:R7bound1}
    \biggl( \bar{\theta}_{(\bS,S)}^{(m+1)} - \frac{\int_{B_T} \frac{\lambda_S \bar{r}_S}{1+\lambda_S \bar{r}_S} \bar{a}_{\bS}^{(m)} f}{\int \frac{\lambda_S \bar{r}_S}{1+\lambda_S \bar{r}_S} f}  \biggr)^2 &= \biggl( \frac{\int_{B_T^c} \frac{\lambda_S \bar{r}_S}{1+\lambda_S \bar{r}_S} \bar{a}_{\bS}^{(m)} f_S}{\int \frac{\lambda_S \bar{r}_S}{1+\lambda_S \bar{r}_S} f_S}  \biggr)^2 \leq \frac{\int_{B_T^c} \frac{\lambda_S \bar{r}_S}{1+\lambda_S \bar{r}_S} (\bar{a}_{\bS}^{(m)})^2 f_S}{\int \frac{\lambda_S \bar{r}_S}{1+\lambda_S \bar{r}_S} f_S} \nonumber \\
    &\leq \frac{C 2^{2m}\|a\|_\infty^2}{c} \mathbb{P}(\|X\|_\infty \geq T).
\end{align}
The rest of this remainder term can be bounded by writing
\begin{align}
\label{Eq:R7bound2}
    &\bbE  \{R_{(\bS,S),7}^{(m+1)}(x_S)^2 \} - 2\biggl( \bar{\theta}_{(\bS,S)}^{(m+1)} - \frac{\int_{B_T} \frac{\lambda_S \bar{r}_S}{1+\lambda_S \bar{r}_S} \bar{a}_{\bS}^{(m)} f_S}{\int \frac{\lambda_S \bar{r}_S}{1+\lambda_S \bar{r}_S} f_S}  \biggr)^2 \nonumber \\
    &= \bbE \biggl\{ \biggl| \frac{\sum_{y \in \mathcal{D}_{1,m+1}} \frac{\lambda_S \bar{r}_S(y)}{1+\lambda_S \bar{r}_S(y)} \bar{a}_\bS^{(m)}(y)\mathbbm{1}_{B_T}(y)}{\sum_{y \in \mathcal{D}_{1,m+1}} \frac{\lambda_S \bar{r}_S(y)}{1+\lambda_S \bar{r}_S(y)}} - \bar{\theta}_{(\bS,S)}^{(m+1)} \biggr|^2 \biggr\} - 2\biggl( \bar{\theta}_{(\bS,S)}^{(m+1)} - \frac{\int_{B_T} \frac{\lambda_S \bar{r}_S}{1+\lambda_S \bar{r}_S} \bar{a}_{\bS}^{(m)} f_S}{\int \frac{\lambda_S \bar{r}_S}{1+\lambda_S \bar{r}_S} f_S}  \biggr)^2  \nonumber \\
    & \leq  2\bbE \biggl\{ \biggl| \frac{\sum_{y \in \mathcal{D}_{1,m+1}} \frac{\lambda_S \bar{r}_S(y)}{1+\lambda_S \bar{r}_S(y)} \bar{a}_\bS^{(m)}(y)\mathbbm{1}_{B_T}(y)}{\sum_{y \in \mathcal{D}_{1,m+1}} \frac{\lambda_S \bar{r}_S(y)}{1+\lambda_S \bar{r}_S(y)}} - \frac{\int_{B_T} \frac{\lambda_S \bar{r}_S}{1+\lambda_S \bar{r}_S} \bar{a}_{\bS}^{(m)} f_S}{\int \frac{\lambda_S \bar{r}_S}{1+\lambda_S \bar{r}_S} f_S} \biggr|^2 \biggr\}  \nonumber \\
    & = 2 \bbE \biggl[ \biggl\{ \frac{ \sum_{y \in \mathcal{D}_{1,m+1}} \frac{\lambda_S \bar{r}_S(y)}{1+\lambda_S \bar{r}_S(y)} \Bigl( \bar{a}_\bS^{(m)}(y)\mathbbm{1}_{B_T}(y) - \frac{\int_{B_T} \frac{\lambda_S \bar{r}_S}{1+\lambda_S \bar{r}_S} \bar{a}_{\bS}^{(m)} f_S}{\int \frac{\lambda_S \bar{r}_S}{1+\lambda_S \bar{r}_S} f_S} \Bigr)}{\sum_{x \in \mathcal{D}_{1,m+1}} \frac{\lambda_S \bar{r}_S(x)}{1+\lambda_S \bar{r}_S(x)}} \biggr\}^2 \biggr] \nonumber \\
    & \leq \frac{2}{(\frac{\lambda_S c}{1+\lambda_S c})^2} \frac{1}{|\mathcal{D}_{1,m+1}|} \mathbb{E} \biggl[ \biggl( \frac{\lambda_S \bar{r}_S(X_S)}{1+\lambda_S \bar{r}_S(X_S)} \biggl\{ \bar{a}_\bS^{(m)}(X) \mathbbm{1}_{B_T}(X) - \frac{\int_{B_T} \frac{\lambda_S \bar{r}_S}{1+\lambda_S \bar{r}_S} \bar{a}_{\bS}^{(m)} f_S}{\int \frac{\lambda_S \bar{r}_S}{1+\lambda_S \bar{r}_S} f_S} \biggr\} \biggr)^2 \biggr] \nonumber \\
    & \leq \frac{2^{2m+1} \|a\|_\infty^2}{|\mathcal{D}_{1,m+1}|} \biggl( \frac{C}{c} \biggr)^2. 
\end{align}
It is an immediate consequence of~\eqref{Eq:R7bound1} and~\eqref{Eq:R7bound2} that
\begin{equation}
\label{Eq:R7bound}
    \bbE  \{R_{(\bS,S),7}^{(m+1)}(x_S)^2 \} \leq \frac{2^{2m+1} \|a\|_\infty^2}{|\mathcal{D}_{1,m+1}|} \biggl( \frac{C}{c} \biggr)^2 +2^{2m+1} \frac{C \|a\|_\infty^2}{c} \mathbb{P}(\|X\|_\infty \geq T).
\end{equation}

Having bounded all of the remainder terms we now combine our calculations. It follows from~\eqref{Eq:RegressionErrorDecomp},~\eqref{Eq:R1bound},~\eqref{Eq:R2bound},~\eqref{Eq:R3bound},~\eqref{Eq:R4bound},~\eqref{Eq:R5bound},~\eqref{Eq:R6bound} and~\eqref{Eq:R7bound} that for any $x_S$ we have
\begin{align}
\label{Eq:CombinedR}
    &\bbE \bigl[ \bigl\{ \hat{a}_{(\bS,S)}^{(m+1)}(x_S) - \bar{a}_{(\bS,S)}^{(m+1)}(x_S) \bigr\}^2 \bigr] \nonumber \\
    & \leq 7 \biggl[ \frac{2^{2m}\|a\|_\infty^2}{|\mathcal{D}_{1,m+1}|} \biggl\{ 12 \biggl(\frac{C}{c} \biggr)^2 + \frac{3}{h^{|S|} f_{S,h}(x_S)} \biggr\} + \bigl\{ L_1 + 2(2^{m}-1)( L_2 + L_3 )\|a\|_\infty \bigr\}^2 h^{2\beta_\wedge} \nonumber \\
    & \hspace{50pt} + 2^{2m+1} \|a\|_\infty^2 \frac{C}{c} \mathbb{P}(\|X\|_\infty \geq T) +\frac{2^{2m}\|a\|_\infty^2}{f_{S,h}(x_S)} \bbE \bigl\{ \mathbbm{1}_{B_T^c}(X) K_h^S( x_S - X_S) \bigr\} \biggr] \nonumber \\
    & \hspace{50pt} + 7 \bbE \biggl[ \mathbbm{1}_{B_T}(X) \biggl\{ \frac{C}{c} + \frac{K_h^S(x_S - X_S)}{f_{S,h}(x_S)} \biggr\} \bigl\{ \hat{a}_\bS^{(m)}(X) - \bar{a}_\bS^{(m)}(X) \bigr\}^2 \biggr].
\end{align}
We now aim to integrate this bound over $x_S \in B_T^S$. This requires two preliminary calculations, both of which use similar arguments to those leading up to~(5.1) in the proof of Theorem~5.1 of~\cite{gyorfi2006distribution}. To this end, writing $B_z(r)=\{x \in \mathbb{R}^S : \|x-z\|_\infty \leq r\}$, let $z_1,\ldots,z_N \in B_T^S$ be such that $B_T^S \subseteq \cup_{j=1}^N B_{z_j}(h/2)$ and such that $N \leq (4T/h)^{|S|}$. First, recalling that $f_{S,h}(x_S) = (2h)^{-|S|} \int_{B_{x_S}(h)} f_S$, we have
\begin{align}
\label{Eq:Partition1}
    \mathbb{E}\biggl\{ \frac{\mathbbm{1}_{B_T}(X)}{f_{S,h}(X_S)} \biggr\} &\leq \sum_{j=1}^N \int_{B_{z_j}(h/2)} \frac{f_S(x_S)}{f_{S,h}(x_S)} \,dx_S \nonumber \\
    &\leq \sum_{j=1}^N \int_{B_{z_j}(h/2)} \frac{f_S(x_S)}{(2h)^{-|S|} \int_{B_{z_j}(h/2)} f_S} \,dx_S = (2h)^{|S|} N \leq (8T)^{|S|}.
\end{align}
Similarly, we can partition $B_{x_S}(h)$ into $2^{|S|}$ balls of radius $h/2$ to see that
\begin{equation}
\label{Eq:Partition2}
    \mathbb{E}\biggl\{ \frac{K_h^S(X_S - x_S)}{f_{S,h}(X_S)} \biggr\} = \int_{B_{x_S}(h)} \frac{f_S(x_S')}{\int_{B_{x_S'}(h)}f_S} \,dx_S' \leq 2^{|S|}.
\end{equation}
We recall that we write $\hat{a}^{(0)} \equiv a$ and note that the arguments leading to~\eqref{Eq:CombinedR} go through for the $m=0$ case by taking $\bar{a}^{(0)} \equiv a$. Using the bound $|\mathcal{D}_{1,m+1}| \geq n/(4M)$ it now follows from~\eqref{Eq:CombinedR},~\eqref{Eq:Partition1}~\eqref{Eq:Partition2} and inductive reasoning that
\begin{align*}
    &\bbE \bigl[ \mathbbm{1}_{B_T}(X) \bigl\{ \hat{a}_{(\bS,S)}^{(m+1)}(X) - \bar{a}_{(\bS,S)}^{(m+1)}(X) \bigr\}^2 \bigr]  \nonumber \\
    & \leq 7 \biggl[ \frac{2^{2m+2} M\|a\|_\infty^2}{n} \biggl\{ 12 \biggl(\frac{C}{c} \biggr)^2 + \frac{3 (8T)^{d}}{h^{d}} \biggr\} + \bigl\{ L_1 + 2(2^{m}-1)( L_2 +L_3)\|a\|_\infty \bigr\}^2 h^{2\beta_\wedge} \nonumber \\
    & \hspace{50pt} + 2^{2m+1} \|a\|_\infty^2 \biggl(\frac{C}{c} + 2^{d-1} \biggr) \mathbb{P}(\|X\|_\infty \geq T) \biggr] \nonumber \\
    & \hspace{50pt} + 7\biggl( \frac{C}{c} + 2^{d} \biggr) \bbE \bigl[ \mathbbm{1}_{B_T}(X)  \bigl\{ \hat{a}_\bS^{(m)}(X) - \bar{a}_\bS^{(m)}(X) \bigr\}^2 \bigr] \nonumber \\
    & \leq 7 \sum_{j=0}^m \biggl\{ 7\biggl( \frac{C}{c} + 2^{d} \biggr) \biggr\}^{m-j} \biggl[ \frac{2^{2j+2}M\|a\|_\infty^2}{n} \biggl\{ 12 \biggl(\frac{C}{c} \biggr)^2 + \frac{3 (8T)^{d}}{h^{d}} \biggr\}  \nonumber \\
    & \hspace{50pt} + \bigl\{ L_1 + 2^{j+1}( L_2 + L_3 )\|a\|_\infty \bigr\}^2 h^{2\beta_\wedge} + 2^{2j+1} \|a\|_\infty^2 \biggl(\frac{C}{c} + 2^{d-1} \biggr) \mathbb{P}(\|X\|_\infty \geq T) \biggr] \nonumber \\ 
    & \leq 14 \max\bigl( \|a\|_\infty^2 ,1 \bigr) \biggl[ \frac{2M}{n} \biggl\{ 12 \biggl(\frac{C}{c} \biggr)^2 + \frac{3 (8T)^{d}}{h^{d}} \biggr\}  + 2( L_1 +  L_2 + L_3)^2 h^{2\beta_\wedge} \nonumber \\
    & \hspace{50pt} +  \biggl(\frac{C}{c} + 2^{d-1} \biggr) \mathbb{P}(\|X\|_\infty \geq T) \biggr] \sum_{j=0}^m 2^{2j} \biggl\{ 7\biggl( \frac{C}{c} + 2^{d} \biggr) \biggr\}^{m-j} \nonumber \\
    & \leq 14 \biggl\{ 7 \biggl( \frac{C}{c} + 2^d \biggr) \biggr\}^{m+1} \max\bigl( \|a\|_\infty^2 ,1 \bigr) \max\{ (C/c)^2, (L_1+L_2+L_3)^2,8^d\} \nonumber \\
    & \hspace{50pt} \times \biggl[ \frac{M}{n} \Bigl( \frac{T}{h} \Bigr)^d  + h^{2\beta_\wedge} + \mathbb{P}(\|X\|_\infty \geq T) \biggr].
\end{align*}
Substituting this bound and~\eqref{Eq:BTSComplement} into~\eqref{Eq:AlphaHatDecomp} and leads to
\begin{align*}
    & \bbE\int (\hat{\alpha}_{S,(1)}^{(M)} - \bar{\alpha}_S^{(M)})^2 f_S \\
    &\leq 2^{2M+4} 7^{M} \biggl( \frac{C}{c} + 2^d \biggr)^{M} \max\bigl( \|a\|_\infty^2 ,1 \bigr) \max\{ (C/c)^2, (L_1+L_2+L_3)^2,8^d\} \nonumber \\
    & \hspace{50pt} \times \biggl[ \frac{M}{n} \Bigl( \frac{T}{h} \Bigr)^d  + h^{2\beta_\wedge} + \mathbb{P}(\|X\|_\infty \geq T) \biggr] 
\end{align*}
and the result follows.
\end{proof}

\begin{proof}[Proof of Theorem~\ref{Thm:UpperBoundShifted}]
We begin by writing
\begin{align*}
    &n\bbE\{ (\hat{\theta}-\theta)^2 \} - \mathcal{L}(\alpha_\bbS^*) \\
    &= n\mathrm{Var}(\theta^{*,(M)} ) - \mathcal{L}(\alpha_\bbS^*) + 2n \bbE \bigl\{ ( \hat{\theta} - \theta^{*,(M)} )(\theta^{*,(M)} - \theta) \bigr\} + n\bbE\bigl\{(\hat{\theta} - \theta^{*,(M)})^2 \bigr\}  \\
    & \leq n\mathrm{Var}(\theta^{*,(M)} ) -\mathcal{L}(\alpha_\bbS^*) + 2n \mathrm{Var}^{1/2}(\theta^{*,(M)} ) \bbE^{1/2} \bigl\{ ( \hat{\theta} - \theta^{*,(M)} )^2 \bigr\} + n\bbE\bigl\{(\hat{\theta} - \theta^{*,(M)})^2 \bigr\}  \\
    & \leq 2 \mathcal{L}^{1/2}(\alpha_\bbS^*) \bbE^{1/2} \bigl\{ n( \hat{\theta} - \theta^{*,(M)} )^2 \bigr\} + \biggl[ \bigl\{n\mathrm{Var}(\theta^{*,(M)} ) -\mathcal{L}(\alpha_\bbS^*)\bigr\}^{1/2} + \bbE^{1/2} \bigl\{ n( \hat{\theta} - \theta^{*,(M)} )^2 \bigr\}  \biggr]^2 \\
    & \leq 2 \mathrm{Var}^{1/2}\{a(X)\} \bbE^{1/2} \bigl\{ n( \hat{\theta} - \theta^{*,(M)} )^2 \bigr\} + 2\bbE \bigl\{ n( \hat{\theta} - \theta^{*,(M)} )^2 \bigr\} + 2 \bigl\{n\mathrm{Var}(\theta^{*,(M)} ) -\mathcal{L}(\alpha_\bbS^*)\bigr\}.
\end{align*}
Now the first two terms can be bounded using Propositions~\ref{Prop:OptimisationShifted},~\ref{Prop:CrossFitReduction} and~\ref{Prop:InductionProof} while the final term is bounded immediately by Propositions~\ref{Prop:OptimisationShifted} and~\ref{Prop:CrossFitReduction}. Indeed, using the shorthand $n_{\min} = \min_{S \in \bbS} n_S$, we have
\begin{align}
\label{Eq:OracleApprox}
    \bbE &\bigl\{ n( \hat{\theta} - \theta^{*,(M)} )^2 \bigr\} \leq 8C|\bbS| \biggl[ \max_{\ell=1,2} \sum_{S \in \bbS} \bbE \biggl\{ \int 
\frac{n+n_S \bar{r}_S}{n_S \bar{r}_S} (\hat{\alpha}_{S,(\ell)}^{(M)} - \alpha_S^{(M)})^2 f_S \biggr\} + \mathrm{Var}(\theta^{*,(M)}) \biggr] \nonumber \\
    & \leq \frac{8C|\bbS|^2(n+cn_{\min} )}{cn_{\min} } \max_{\substack{\ell=1,2 \\ S \in \bbS}} \bbE \biggl\{ \int  (\hat{\alpha}_{S,(\ell)}^{(M)} - \alpha_S^{(M)})^2 f_S \biggr\} + \frac{8C|\bbS|}{n} \biggl(1 + \max_{S \in \bbS} \biggl| \frac{n\lambda_S}{n_S} -1\biggr| \biggr) \mathcal{L}(\alpha_\bbS^{(M)}) \nonumber \\
    & \leq \frac{8C|\bbS|^2(n+cn_{\min} )}{cn_{\min} } A^{M} B  \biggl\{ \frac{M}{n} \Bigl( \frac{T}{h} \Bigr)^d + h^{2\beta_\wedge} + \mathbb{P}(\|X\|_\infty \geq T) \biggr\} + \frac{16C|\bbS| \mathrm{Var}\{a(X)\}}{n}
\end{align}
where the first and second inequalities follow from Proposition~\ref{Prop:CrossFitReduction} and the third inequality follows from Propositions~\ref{Prop:OptimisationShifted} and~\ref{Prop:InductionProof} and our technical assumption that $|n\lambda_S /n_S - 1| \leq 1/n$. It is straightforward from Propositions~\ref{Prop:OptimisationShifted} and~\ref{Prop:CrossFitReduction} that
\begin{align*}
    n\mathrm{Var}(\theta^{*,(M)} ) -\mathcal{L}(\alpha_\bbS^*) &\leq (1 + 1/n ) \mathcal{L}(\alpha_\bbS^{(M)}) - \mathcal{L}(\alpha_\bbS^*) \\
    &\leq (1+1/n) \bigl\{ \mathcal{L}(\alpha_\bbS^*) + \kappa(1-1/\kappa)^M \mathrm{Var} \, a(X) \bigr\} -\mathcal{L}(\alpha_\bbS^*) \\
    &\leq \bigl\{2 \kappa(1-1/\kappa)^M + 1/n \bigr\} \mathrm{Var} \, a(X).
\end{align*}
Recalling the values of $A$ and $B$ from the statement of Proposition~\ref{Prop:InductionProof}, straightforward but tedious calculations show that
\begin{align*}
    & n\bbE\{ (\hat{\theta}-\theta)^2 \} - \mathcal{L}(\alpha_\bbS^*) \\
    & \leq 200 |\bbS|^2 (C/c) (1+\lambda_\mathrm{min}^{-1}) A^M B \biggl\{ \frac{M T^d}{nh^d}+ h^{2\beta_\wedge} + \mathbb{P}(\|X\|_\infty \geq T) \biggr\}^{1/2} \!\!\!\!\! + 4\kappa \exp(-M/\kappa) \mathrm{Var} \, a(X)
\end{align*}
and the result follows.
\end{proof}

\begin{proof}[Proof of Proposition~\ref{Prop:Confidence}]
The first component of the proof is in establishing the consistency of the variance estimator $\hat{V}$. It is sufficient to prove the consistency of the individual estimators $\hat{V}^{(\ell)}$ and $\hat{V}_S^{(\ell)}$ for $S \in \bbS$ and $\ell=1,2$. It will be convenient to introduce the oracle estimators
\[
    V^{*,(\ell)} = \frac{1}{|\mathcal{D}_{3-\ell}|} \sum_{x \in \mathcal{D}_{3-\ell}} \biggl\{ a(x) - \sum_{S \in \bbS} \alpha_{S}^{(M)}(x) \biggr\}^2 - \theta^2 \quad \text{and} \quad V_S^{*,(\ell)} = \frac{n}{|\mathcal{D}_{3-\ell}|} \sum_{x \in \mathcal{D}_{3-\ell}} \frac{\alpha_{S}^{(M)}(x)^2}{n_S \bar{r}_S(x)},
\]
to which we will compare our data-driven estimators. By symmetry we may restrict attention to the $\ell=1$ estimators. First, we see that
\begin{align}
\label{Eq:VarianceCons1}
    \bbE | &\hat{V}^{(1)} - V^{*,(1)}| \leq \bbE \biggl|  \biggl\{ a(X_n) - \sum_{S \in \bbS} \hat{\alpha}_{S,(1)}^{(M)}(X_n) \biggr\}^2 -  \biggl\{ a(X_n) - \sum_{S \in \bbS} \alpha_{S}^{(M)}(X_n) \biggr\}^2 + \theta^2 - (\hat{\theta})^2 \biggr| \nonumber \\
    & = \bbE \biggl| \biggl\{ 2a(X_n) - \sum_{S \in \bbS} \bigl(\hat{\alpha}_{S,(1)}^{(M)}(X_n) + \alpha_{S}^{(M)}(X_n) \bigr) \biggr\} \sum_{S \in \bbS} \bigl\{ \hat{\alpha}_{S,(1)}^{(M)}(X_n) - \alpha_{S}^{(M)}(X_n) \bigr\}  \nonumber \\
    & \hspace{325pt} + (\hat{\theta} - \theta)(\hat{\theta}+\theta) \biggr| \nonumber \\
    & \leq \bbE \biggl[ 2\biggl|\biggl\{ a(X_n) - \sum_{S \in \bbS}\alpha_{S}^{(M)}(X_n) \biggr\} \sum_{S \in \bbS}\{ \hat{\alpha}_{S,(1)}^{(M)}(X_n) - \alpha_{S}^{(M)}(X_n) \} \biggr| + 2\theta|\hat{\theta}-\theta| \nonumber \\
    & \hspace{150pt} + \biggl\{ \sum_{S \in \bbS}\{ \hat{\alpha}_{S,(1)}^{(M)}(X_n) - \alpha_{S}^{(M)}(X_n) \} \biggr\}^2 + (\hat{\theta}-\theta)^2   \biggr] \nonumber \\
    & \leq 2 \mathcal{L}^{1/2}(\alpha_\bbS^{(M)}) \sum_{S \in \bbS} \bbE^{1/2} [ \{\hat{\alpha}_{S,(1)}^{(M)}(X_n) - \alpha_{S}^{(M)}(X_n)\}^2 ] + 2|\theta| \bbE|\hat{\theta} - \theta|\nonumber \\
    & \hspace{150pt} + |\bbS| \sum_{S \in \bbS} \bbE [ \{\hat{\alpha}_{S,(1)}^{(M)}(X_n) - \alpha_{S}^{(M)}(X_n)\}^2 ] + \bbE\{ (\hat{\theta}-\theta)^2\},
\end{align}
where each of these terms will be bounded later by our previous work. Now, for each $S \in \bbS$ we have
\begin{align}
\label{Eq:VarianceCons2}
    \frac{n_S}{n} \bbE | \hat{V}_S^{(1)} &- V_S^{*,(1)} | \leq  \bbE\biggl[ \frac{1}{\bar{r}_S(X_n)} \biggl| \frac{\hat{r}_{S,(1)}}{\bbE{\{r_S(X)\}}} \hat{\alpha}_{S,(1)}^{(M)}(X_n)^2 - \alpha_S^{(M)}(X_n)^2 \biggr| \biggr] \nonumber \\
    & =  \bbE\biggl| \frac{\hat{r}_{S,(1)}}{r_S(X_n)} \{ \hat{\alpha}_{S,(1)}^{(M)}(X_n)^2 - \alpha_S^{(M)}(X_n)^2\} + \frac{\alpha_S^{(M)}(X_n)^2}{\bar{r}_S(X_n)}\biggl\{\frac{\hat{r}_{S,(1)}}{\bbE\{r_S(X)\}} - 1 \biggr\}  \biggr| \nonumber \\
    & \leq C \bbE \biggl[ \frac{1}{\bar{r}_S(X_n)} \bigl|  \hat{\alpha}_{S,(1)}^{(M)}(X_n)^2 - \alpha_S^{(M)}(X_n)^2 \bigr| \biggr] + \bbE\biggl[ \frac{\alpha_S^{(M)}(X)^2}{\bar{r}_S(X)}\biggr] \bbE \biggl| \frac{\hat{r}_{S,(1)}}{\bbE\{r_S(X)\}} - 1 \biggr| \nonumber \\
    & \leq C \bbE \biggl[ \frac{2|\alpha_S^{(M)}(X_n)|}{\bar{r}_S(X_n)}\bigl|  \hat{\alpha}_{S,(1)}^{(M)}(X_n) - \alpha_S^{(M)}(X_n) \bigr| +\bigl\{  \hat{\alpha}_{S,(1)}^{(M)}(X_n) - \alpha_S^{(M)}(X_n) \bigr\}^2 \biggr] \nonumber \\
    & \hspace{200pt} + \bbE\biggl[ \frac{\alpha_S^{(M)}(X)^2}{\bar{r}_S(X)}\biggr] |\mathcal{D}_1|^{-1/2} \mathrm{Var}^{1/2} \{ \bar{r}_S(X) \} \nonumber \\
    & \leq \frac{2C}{c^{1/2}} \bbE^{1/2} \biggl[ \frac{\alpha_S^{(M)}(X)^2}{\bar{r}_S(X)}\biggr] \bbE^{1/2}\bigl[ \bigl\{  \hat{\alpha}_{S,(1)}^{(M)}(X_n) - \alpha_S^{(M)}(X_n) \bigr\}^2 \bigr] + \frac{C^{1/2}}{|\mathcal{D}_1|^{1/2}}\bbE\biggl[ \frac{\alpha_S^{(M)}(X)^2}{\bar{r}_S(X)}\biggr]  \nonumber \\
    & \hspace{200pt} + C \bbE\bigl[ \bigl\{  \hat{\alpha}_{S,(1)}^{(M)}(X_n) - \alpha_S^{(M)}(X_n) \bigr\}^2 \bigr],
\end{align}
where, again, these terms are controlled by our earlier work. Indeed, combining~\eqref{Eq:VarianceCons1} and~\eqref{Eq:VarianceCons2} we have
\begin{align}
\label{Eq:VarianceOracleApprox}
    \bbE &\biggl| \hat{V}^{(1)} + \sum_{S \in \bbS} \hat{V}_S^{(1)} - V^{*,(1)} - \sum_{S \in \bbS} V_S^{*,(1)} \biggr| \nonumber \\
    & \leq 2 \biggl\{ |\bbS| \mathcal{L}^{1/2}(\alpha_\bbS^{(M)}) + \frac{C}{c^{1/2}} \sum_{S \in \bbS} \frac{n}{n_S} \bbE^{1/2} \biggl[ \frac{\alpha_S^{(M)}(X)^2}{\bar{r}_S(X)}\biggr] \biggr\} \max_{S \in \bbS} \bbE^{1/2}\bigl[ \bigl\{  \hat{\alpha}_{S,(1)}^{(M)}(X_n) - \alpha_S^{(M)}(X_n) \bigr\}^2 \bigr] \nonumber \\
    & \hspace{50pt} + |\bbS|\biggl(|\bbS|+\frac{Cn}{\min_{S \in \bbS} n_S} \biggr) \max_{S \in \bbS} \bbE \bigl[ \bigl\{  \hat{\alpha}_{S,(1)}^{(M)}(X_n) - \alpha_S^{(M)}(X_n) \bigr\}^2 \bigr] + 2|\theta| \bbE^{1/2}\{(\hat{\theta}-\theta)^2\} \nonumber \nonumber \\
    & \hspace{50pt} + \bbE\{(\hat{\theta}-\theta)^2\} + \frac{C^{1/2}}{|\mathcal{D}_1|^{1/2}} \sum_{S \in \bbS} \frac{n}{n_S} \bbE\biggl[ \frac{\alpha_S^{(M)}(X)^2}{\bar{r}_S(X)}\biggr]  \nonumber \\
    & \leq 2|\bbS|\biggl(1 + \frac{C}{c^{1/2}} \max_{S \in \bbS} \frac{n\lambda_S^{1/2}}{n_S} \biggr) \mathcal{L}^{1/2}(\alpha_\bbS^{(M)})  \max_{S \in \bbS} \bbE^{1/2}\bigl[ \bigl\{  \hat{\alpha}_{S,(1)}^{(M)}(X_n) - \alpha_S^{(M)}(X_n) \bigr\}^2 \bigr] \nonumber \\
    & \hspace{50pt} + |\bbS|\biggl(|\bbS|+\frac{Cn}{\min_{S \in \bbS} n_S} \biggr) \max_{S \in \bbS} \bbE \bigl[ \bigl\{  \hat{\alpha}_{S,(1)}^{(M)}(X_n) - \alpha_S^{(M)}(X_n) \bigr\}^2 \bigr] + 2|\theta| \bbE^{1/2}\{(\hat{\theta}-\theta)^2\} \nonumber \\
    & \hspace{50pt} + \bbE\{(\hat{\theta}-\theta)^2\} + \frac{C^{1/2}}{|\mathcal{D}_1|^{1/2}} \biggl( \max_{S \in \bbS} \frac{\lambda_S n}{n_S} \biggr) \mathcal{L}(\alpha_\bbS^{(M)})  \nonumber \\
    & \leq \frac{3C|\bbS|\mathrm{Var}^{1/2} a(X)}{c^{1/2} \lambda_\mathrm{min}^{1/2} } \max_{S \in \bbS} \bbE^{1/2} \biggl\{ \int (\hat{\alpha}_{S,(1)}^{(M)} - \alpha_S^{(M)})^2 f_S \biggr\} + \|a\|_\infty^{1/2} \bbE^{1/2}\{(\hat{\theta}-\theta)^2\} \nonumber \\
    & \hspace{50pt} + \frac{4C|\bbS|^2}{\lambda_\mathrm{min}} \max_{S \in \bbS} \bbE \biggl\{ \int (\hat{\alpha}_{S,(1)}^{(M)} - \alpha_S^{(M)})^2 f_S \biggr\} + \bbE\{(\hat{\theta}-\theta)^2\} + \frac{4 C^{1/2} \mathrm{Var} \, a(X)}{n^{1/2}} \nonumber\\
    & \leq 8 A^{M/2} D \biggl\{ \frac{M T^d}{nh^d} + h^{2\beta_\wedge} + \mathbb{P}(\|X\|_\infty \geq T) \biggr\}^{1/2},
\end{align}
where the penultimate inequality follows from Proposition~\ref{Prop:OptimisationShifted} and the final inequality follows from Proposition~\ref{Prop:InductionProof} and Theorem~\ref{Thm:UpperBoundShifted}. 

We have now shown that our variance estimators can be approximated by their oracle counterparts, and the consistency of these estimators is thus established by showing that the oracle estimators $V^{*,(\ell)} + \sum_{S \in \bbS} V_S^{*,(\ell)}$ are consistent. It is clear that they are unbiased estimators of $n\mathrm{Var}(\theta^{*,(M)})$.  Now using~\eqref{Eq:IteratedUniformBound} and an analogous argument to~\eqref{Eq:PGF} we have that 
\begin{align}
\label{Eq:UniformAlphaBound}
    \|\alpha_S^{(M)}\|_\infty &\leq \sum_{m=1}^{M} \sum_{\bS \in \bbS^{(m)} : S_m = S} b_{M,\eta}(m) \|a_{\bS}^{(m)}\|_\infty \leq 2 \|a\|_\infty \sum_{m=1}^{M} (2|\bbS|)^{m-1} b_{M,\eta}(m) \leq 2\|a\|_\infty 3^M.
\end{align}
We therefore have that
\begin{align}
\label{Eq:ConsistentVariance2}
    \mathrm{Var} \biggl( V^{*,(1)} + \sum_{S \in \bbS} V_S^{*,(1)} \biggr) &= \frac{1}{|\mathcal{D}_2|}\mathrm{Var}\biggl( \biggl\{ a(X) - \sum_{S \in \bbS} \alpha_S^{(M)}(X) \biggr\}^2 + \sum_{S \in \bbS} \frac{n}{n_S} \frac{\alpha_S^{(M)}(X)^2}{\bar{r}_S(X)} \biggr) \nonumber \\
    & \leq \frac{\|a\|_\infty^4}{|\mathcal{D}_2|} \biggl\{ \bigl( 1 +  2 |\bbS| 3^{M} \bigr)^2 + \sum_{S \in \bbS} \frac{2n 3^{M}}{c n_S} \biggr\}^2 \nonumber \\
    & \leq \frac{1184 \|a\|_\infty^4 |\bbS|^4 3^{4M}}{n\lambda_\mathrm{min}^2} \leq \frac{3^{4M}D^2}{n}.
\end{align}
Combining~\eqref{Eq:VarianceOracleApprox} and~\eqref{Eq:ConsistentVariance2} with Markov's inequality we thus see that we have
\begin{align}
\label{Eq:VhatL1}
    \bbE \bigl| \hat{V} - n\mathrm{Var}(\theta^{*,(M)}) \bigr| & \leq \bbE \biggl| \hat{V}^{(1)} + \sum_{S \in \bbS} \hat{V}_S^{(1)} - n \mathrm{Var}(\theta^{*,(M)}) \biggr| \nonumber \\
    & \leq 8 A^{M/2} D \biggl\{ \frac{M T^d}{nh^d} + h^{2\beta_\wedge} + \mathbb{P}(\|X\|_\infty \geq T) \biggr\}^{1/2} + \frac{3^{2M}D}{n^{1/2}}   \nonumber \\ 
    & \leq 9 A^{M/2} D \biggl\{ \frac{M T^d}{nh^d} + h^{2\beta_\wedge} + \mathbb{P}(\|X\|_\infty \geq T) \biggr\}^{1/2}
\end{align}
so that our variance estimators are consistent.

With control over $\hat{V}$ we now proceed to prove the approximate normality of the standardised estimator $\hat{\theta}$. To do this we will need to bound the variance of $\theta^{*,(M)}$ below. Using the shorthand $v_M = n\mathrm{Var}(\theta^{*,(M)})$ we have
\begin{align}
\label{Eq:VarianceLowerBound}
    v_M &= \mathrm{Var}\biggl( a(X) - \sum_{S \in \bbS} \alpha_S^{(M)}(X) \biggr) + \sum_{S \in \bbS} \frac{n}{n_S} \bbE \biggl\{ \frac{\alpha_S^{(M)}(X)^2}{\bar{r}_S(X)} \biggr\} \nonumber \\
    & = \mathcal{L}(\alpha_\bbS^{(M)}) + \sum_{S \in \bbS} \frac{1}{\lambda_S} \biggl( \frac{n \lambda_S}{n_S} - 1 \biggr) \bbE \biggl\{ \frac{\alpha_S^{(M)}(X)^2}{\bar{r}_S(X)} \biggr\} \geq \biggl(1 - \max_{S \in \bbS} \biggl| \frac{n \lambda_S}{n_S} - 1 \biggr| \biggr) \mathcal{L}(\alpha_\bbS^{(M)}) \nonumber \\
    & \geq \frac{n-1}{n} \mathcal{L}(\alpha_\bbS^{(M)}) \geq \frac{n-1}{n} \mathcal{L}(\alpha_\bbS^*) \geq \frac{n-1}{n} V_0.
\end{align}
For $\epsilon \in (0,1/2]$ and $\tau>0$ we have by elementary calculations that
\begin{align*}
    \sup_{x \in \bbR} &\biggl\{ \mathbb{P} \biggl( \frac{n^{1/2}(\hat{\theta}-\theta)}{\hat{V}^{1/2}} \mathbbm{1}_{\{\hat{V}>0\}} \leq x \biggr) - \Phi(x) \biggr\} \nonumber \\
    & \leq \sup_{x \in \bbR} \biggl\{\mathbb{P} \biggl( \frac{n^{1/2}(\hat{\theta}-\theta)}{v_M^{1/2}} \leq x(1+\epsilon) \biggr) - \Phi(x) \biggr\} + \mathbb{P}\biggl( \frac{\hat{V}}{v_M} \geq (1+\epsilon)^2 \biggr) + \mathbb{P}(\hat{V} \leq 0) \nonumber \\
    & \leq \sup_{x \in \bbR} \biggl\{\mathbb{P} \biggl( \frac{n^{1/2}(\theta^{*,(M)}-\theta)}{v_M^{1/2}} \leq x(1+\epsilon) + \tau \biggr) - \Phi(x) \biggr\} + \mathbb{P}(n^{1/2}|\hat{\theta}-\theta^{*,(M)}| \geq \tau v_M^{1/2} ) \nonumber \\
    & \hspace{150pt}+ \mathbb{P}\biggl( \frac{\hat{V}}{v_M} \geq (1+\epsilon)^2 \biggr) + \mathbb{P}(\hat{V} \leq 0) \nonumber \\
    & \leq \sup_{x \in \bbR} \biggl\{\mathbb{P} \biggl( \frac{n^{1/2}(\theta^{*,(M)}-\theta)}{v_M^{1/2}} \leq x \biggr) - \Phi(x) \biggr\} + \frac{\tau}{(2\pi)^{1/2}} + 2\epsilon \nonumber \\
    & \hspace{150pt} + \mathbb{P}(n^{1/2}|\hat{\theta}-\theta^{*,(M)}| \geq \tau v_M^{1/2} ) + \mathbb{P}\bigl( |\hat{V}-v_M| \geq \epsilon v_M \bigr).
\end{align*}
With similar control over the upper tail probabilities, writing $Z \sim N(0,1)$ and taking $\tau=[\bbE\{n(\hat{\theta}-\theta^{*,(M)})^2\}/v_M]^{1/3}$ and $\epsilon=\min\{1/2,\bbE|\hat{V} - v_M|/ v_M\}$, we see by~\eqref{Eq:OracleApprox},~\eqref{Eq:VhatL1} and~\eqref{Eq:VarianceLowerBound} that
\begin{align}
\label{Eq:KolmTerm1}
    d_\mathrm{K}\biggl( \frac{n^{1/2}(\hat{\theta}-\theta)}{\hat{V}^{1/2}} \mathbbm{1}_{\{\hat{V}>0\}},& Z \biggr) - d_\mathrm{K}\biggl( \frac{n^{1/2}(\theta^{*,(M)}-\theta)}{v_M^{1/2}}, Z \biggr) \nonumber \\
    &\leq  2(\tau+\epsilon)+ \frac{\bbE\{n(\hat{\theta}-\theta^{*,(M)})^2\}}{\tau^2 v_M} + 2 \frac{\bbE|\hat{V} - v_M|}{\epsilon v_M} \nonumber  \\
    & \leq 3 \biggl( \frac{\bbE\{n(\hat{\theta}-\theta^{*,(M)})^2\}}{v_M} \biggr)^{1/3} + 2 \biggl( \frac{\bbE|\hat{V} - v_M|}{v_M} \biggr) + 4 \biggl( \frac{\bbE|\hat{V} - v_M|}{v_M} \biggr)^{1/2} \nonumber \\
    & \leq\frac{6 A^{M/2} D}{V_0} \biggl\{ \frac{M T^d}{nh^d} + h^{2\beta_\wedge} + \mathbb{P}(\|X\|_\infty \geq T) \biggr\}^{1/4}.
\end{align}

It now remains to prove that $\theta^{*,(M)}$ is approximately normally distributed. Since this is the sum of independent random variables we can appeal to the Berry--Esseen theorem~\citep[e.g.][]{shevtsova2010improvement}, for which we bound the third absolute moments of the summands. For $i\in[n]$ we have by~\eqref{Eq:UniformAlphaBound} that
\begin{align*}
    \beta_i &:= \bbE \biggl[ \biggl| \frac{1}{(nv_M)^{1/2}} \biggl\{ a(X_i) - \theta - \sum_{S \in \bbS} \alpha_S^{(M)}(X_i) \biggr\} \biggr|^3 \biggr] \\
    & \leq \frac{2(1+3^{M}|\bbS|)\|a\|_\infty}{(nv_M)^{3/2}} \mathrm{Var} \biggl( a(X) - \sum_{S \in \bbS} \alpha_S^{(M)}(X) \biggr).
\end{align*}
Similarly, we have for $S \in \bbS$ and $j \in [n_S]$ that
\begin{align*}
    \beta_{S,j}:= \bbE \biggl[ \biggl| \frac{n^{1/2}}{n_S v_M^{1/2}} \frac{\alpha_S^{(M)}(X_{S,j})}{\bar{r}_S(X_{S,j})} \biggr|^3 \biggr] \leq \biggl( \frac{n}{n_S^2 v_M} \biggr)^{3/2} \frac{2\|a\|_\infty 3^{M}}{c} \mathrm{Var} \biggl( \frac{\alpha_S^{(M)}(X_{S,1})}{\bar{r}_S(X_{S,1})} \biggr).
\end{align*}
Thus, we have
\begin{align*}
    d_\mathrm{K}\biggl( \frac{n^{1/2}(\theta^{*,(M)}-\theta)}{v_M^{1/2}}, Z \biggr) &\leq \sum_{i=1}^n \beta_i + \sum_{S \in \bbS} \sum_{j=1}^{n_S} \beta_{S,j} \\
    & \leq \frac{2\|a\|_\infty}{(nv_M)^{1/2}} \max \biggl\{ 1+3^{M}|\bbS|, \frac{3^{M}}{c} \max_{S \in \bbS} \frac{n}{n_S} \biggr\} \\
    & \leq \frac{4\|a\|_\infty}{(nv_M)^{1/2}} 3^{M}|\bbS| \biggl( 1 + \frac{1}{c \lambda_\mathrm{min}} \biggr).
\end{align*}
It now follows from the above display and~\eqref{Eq:KolmTerm1} that
\begin{align*}
    d_\mathrm{K}\biggl( \frac{n^{1/2}(\hat{\theta}-\theta)}{\hat{V}^{1/2}} \mathbbm{1}_{\{\hat{V}>0\}}, Z \biggr) &\leq \frac{6 A^{M/2} D}{V_0} \biggl\{ \frac{M T^d}{nh^d} + h^{2\beta_\wedge} + \mathbb{P}(\|X\|_\infty \geq T) \biggr\}^{1/4}\\
    & \hspace{150pt}  + \frac{8\|a\|_\infty}{(nV_0)^{1/2}} 3^{M}|\bbS| \biggl( 1 + \frac{1}{c \lambda_\mathrm{min}} \biggr) \\
    & \leq \frac{7 A^{M/2} D}{V_0} \biggl\{ \frac{M T^d}{nh^d} + h^{2\beta_\wedge} + \mathbb{P}(\|X\|_\infty \geq T) \biggr\}^{1/4},
\end{align*}
as claimed.

\end{proof}

\subsection{Proofs for Section~\ref{Sec:Extensions}}

\begin{proof}[Proof of Theorem~\ref{Thm:MARLowerBound}]
The existence and uniqueness of $\alpha_\bbS^*$, the minimiser of $\mathcal{L}(\cdot,M_\bbS)$, is established by using very similar bounds to those used in the proof of Proposition~\ref{Prop:UniqueMinimiser}. Given this minimiser, we may define
\[
    \alpha^*(x) = \frac{1}{M_{[d]}(x_{S_0})} \biggl\{ a(x) - a_{S_0}(x_{S_0}) - \sum_{S \in \bbS} \alpha_S^*(x_S) \biggr\}
\]
and $h^*(x) = a_{S_0}(x_{S_0}) - \theta + \alpha^*(x)$. Later we will use the fact that for $S \in \bbS$ we have
\begin{equation}
\label{Eq:ConditionalStationarity}
    \bbE\{ \alpha^*(X) | X_S\} = \frac{\alpha_S^*(X_S)}{M_S(X_{S_0})}
\end{equation}
almost surely, which is a conditional version of our stationarity condition~\eqref{Eq:Stationarity}. 

Next we establish the local asymptotic normality of the one-dimensional submodel $\{f_t : t \in \bbR\}$, where we use the shorthand $f_t$ for $f_{th^*}$, and write $P_{N,t}$ for the distribution of all of our data under $f_{N^{-1/2}t}$. Our inner product $\langle \cdot, \cdot \rangle_{M_{\bbS}}$ is defined by
\[
    \langle t_1, t_2 \rangle_{M_\bbS} = t_1 t_2 \bbE \biggl[ \{a_{S_0}(X_{S_0}) - \theta\}^2 + \frac{\alpha^*(X)^2}{M_{[d]}(X_{S_0})} + \sum_{S \in \bbS} \frac{\alpha_S^*(X_S)^2}{M_S(X_{S_0})}  \biggr].
\]
Almost exactly as in the proof of Theorem~\ref{Thm:ShiftedLowerBound}, we have control over the behaviour of $f_{t}$ for small values of $t$ and, under $P_{N,0}$, we see that
\begin{align*}
    \log \frac{dP_{N,t}}{dP_{N,0}} &= \sum_{i=1}^N \sum_{S \in \bbS_+} \mathbbm{1}_{\{\Omega_i = S\}} \log \bigl( 1 + N^{-1/2}t (h^*)_S(X_{i,S}) \bigr) + o_p(1) \\
    & = N^{-1/2} t \sum_{i=1}^N \sum_{S \in \bbS_+} \mathbbm{1}_{\{\Omega_i = S\}} (h^*)_S(X_{i,S}) - \frac{t^2}{2} \bbE\biggl\{ \sum_{S \in \bbS_+} \mathbbm{1}_{\{\Omega = S\}}(h^*)_S(X_{S})^2 \biggr\}+ o_p(1).
\end{align*}
Hence, we see that $\log \frac{dP_{N,t}}{dP_{N,0}} \overset{\mathrm{d}}{\rightarrow} \sigma Z - (1/2) \sigma^2$ where $Z \sim N(0,1)$ and, using~\eqref{Eq:ConditionalStationarity}
\begin{align}
\label{Eq:MARAdjoint}
    \sigma^2 &= t^2 \bbE\biggl\{ \sum_{S \in \bbS_+} \mathbbm{1}_{\{\Omega = S\}}(h^*)_S(X_{S})^2 \biggr\} = t^2 \sum_{S \in \bbS_+} \bbE \Bigl[ M_S(X_{S_0}) \bbE\bigl\{ (h^*)_S(X_S)^2 | X_{S_0} \bigr\} \Bigr] \nonumber \\
    &= t^2 \sum_{S \in \bbS_+} \bbE \biggl\{ M_S(X_{S_0}) \biggl[ (h^*)_{S_0}(X_{S_0})^2 + \bigl\{ (h^*)_{S}(X_S) - (h^*)_{S_0}(X_{S_0}) \bigr\}^2 \biggr] \biggr\} \nonumber \\
    & = t^2 \bbE \biggl[ \bigl\{a_{S_0}(X_{S_0})-\theta\bigr\}^2 + \sum_{S \in \bbS_+} M_S(X_{S_0})(\alpha^*)_S(X_S)^2 \biggr] \nonumber \\
    & = t^2 \bbE \biggl[\bigl\{a_{S_0}(X_{S_0})-\theta\bigr\} h^*(X) + M_{[d]}(X_{S_0}) \alpha^*(X)^2 + \sum_{S \in \bbS} \alpha_S^*(X_S) \alpha^*(X) \biggr] \nonumber \\
    & = t^2 \bbE \biggl[\bigl\{a_{S_0}(X_{S_0})-\theta\bigr\} h^*(X) + \alpha^*(X) \bigl\{ a(X) - a_{S_0}(X_{S_0}) \bigr\} \biggr] \nonumber \\
    & = t^2 \bbE \biggl[\bigl\{a_{S_0}(X_{S_0})-\theta\bigr\} h^*(X) + h^*(X) \bigl\{ a(X) - a_{S_0}(X_{S_0}) \bigr\} \biggr] = t^2 \bbE\{ a(X) h^*(X) \}.
\end{align}

The regularity of our sequence of parameters $\theta(f_{N^{-1/2}t})$ follows from almost identical arguments to those used in the proof of Theorem~\ref{Thm:ShiftedLowerBound} and, using the notation of~\cite[][Chapter~3]{van1996weak}, we may write $\dot{\kappa}(t)=t\bbE\{a(X) h^*(X)\}$ and $r_N=N^{1/2}$, where $\dot{\kappa}$ is thought of as a map from $(\bbR, \langle \cdot, \rangle_{M_\bbS})$ to $\bbR$ equipped with the usual Euclidean metric. The calculation~\eqref{Eq:MARAdjoint} then shows that the adjoint $\dot{\kappa}^*$ of $\dot{\kappa}$ is simply the identity. The result now follows on applying~\citet[][Theorem~3.11.5]{van1996weak} and noting that
\begin{align*}
    \| \dot{\kappa}^* &1 \|_{M_\bbS}^2 = \bbE\{a(X) h^*(X)\} \\
    &= \mathrm{Var} \, a_{S_0}(X_{S_0}) + \bbE \biggl[ \frac{1}{M_{[d]}(X_{S_0})} \biggl\{ a(X) - a_{S_0}(X_{S_0}) - \sum_{S \in \bbS} \alpha_S^*(X_S) \biggr\}^2  + \sum_{S \in \bbS} \frac{\alpha_S^*(X_S)^2}{M_S(X_{S_0})}  \biggr]
\end{align*}
which follows from the third line of~\eqref{Eq:MARAdjoint}.

\end{proof}

\begin{proof}[Proof of Proposition~\ref{Prop:MARDoubleRobustness}]
Using the fact that $\bbE\{\alpha_S^*(X_S) | X_{S_0}\} =0$ almost surely, we have that
\begin{align}
\label{Eq:BiasBound}
    \bbE &\bigl| \bbE(\hat{\theta} - \theta^* | \mathcal{D}) \bigr| = \bbE \bigl| \bbE \bigl(\hat{\theta} - a_{S_0}(X_{S_0}) \bigm| \mathcal{D} \bigr) \bigr| \nonumber \\
    & \leq \bbE \biggl| \bbE \biggl[ \hat{a}_{S_0}(X) - a_{S_0}(X_{S_0}) + \frac{\mathbbm{1}_{\{\Omega_i=\mathbbm{1}_{[d]}\}}}{\hat{M}_{[d]}(X_{S_0})}\biggl\{ a(X) - \hat{a}_{S_0}(X_{S_0}) - \sum_{S \in \bbS} \hat{\alpha}_S(X_S) \biggr\} \nonumber\\
    & \hspace{250pt} + \sum_{S \in \bbS} \frac{\mathbbm{1}_{\{\Omega_i=\mathbbm{1}_{S}\}}}{\hat{M}_{S}(X_{S_0})} \hat{\alpha}_S(X_S) \biggm| \mathcal{D} \biggr] \biggr| \nonumber\\
    & = \bbE \biggl| \bbE \biggl[ \bigl\{ \hat{a}_{S_0}(X) - a_{S_0}(X_{S_0}) \bigr\} \biggl\{1-\frac{M_{[d]}(X_{S_0})}{\hat{M}_{[d]}(X_{S_0})} \biggr\} \nonumber\\
    & \hspace{100pt} + \sum_{S \in \bbS}\biggl\{ \frac{M_S(X_{S_0})}{\hat{M}_{S}(X_{S_0})} - \frac{M_{[d]}(X_{S_0})}{\hat{M}_{[d]}(X_{S_0})} \biggr\}\bigl\{  \hat{\alpha}_S(X_S) - \alpha_S^*(X_S) \bigr\} \biggm| \mathcal{D} \biggr] \biggr| \nonumber\\
    & \leq (1+2|\bbS|) (a_Nb_N)^{1/2},
\end{align}
where the final inequality follows by the triangle and Cauchy--Schwarz inequalities.

Having bounded the conditional mean of $\hat{\theta}-\theta^*$ we now turn to its conditional absolute deviation. With the notation $\hat{g}=a - \hat{a}_{S_0} - \sum_{S \in \bbS} \hat{\alpha}_S$ and $g^* = a - a_{S_0} - \sum_{S \in \bbS} \alpha_S^*$, we start by defining
\begin{align*}
    L &= \frac{1}{N} \sum_{i=1}^N \biggl[\hat{a}_{S_0}(X_i) - a_{S_0}(X_i) + \frac{\mathbbm{1}_{\{\Omega_i = \mathbbm{1}_{[d]}\}}}{M_{[d]}(X_i)} \biggl\{ \hat{g}(X_i) - g^*(X_i) + g^*(X_i) \biggl( \frac{M_{[d]}(X_i)}{\hat{M}_{[d]}(X_i)} - 1 \biggr) \biggr\} \\
    & \hspace{140pt} + \sum_{S \in \bbS} \frac{\mathbbm{1}_{\{\Omega_i = \mathbbm{1}_{S}\}}}{M_{S}(X_i)} \biggl\{ \hat{\alpha}_S(X_i) - \alpha_S^*(X_i) + \alpha_S^*(X_i) \biggl( \frac{M_{S}(X_i)}{\hat{M}_{S}(X_i)} - 1 \biggr) \biggr\} \biggr]
\end{align*}
and
\begin{align*}
    Q &= \frac{1}{N} \sum_{i=1}^N \biggl[ \frac{\mathbbm{1}_{\{\Omega_i = \mathbbm{1}_{[d]}\}}}{M_{[d]}(X_i)} \bigl\{ \hat{g}(X_i) - g^*(X_i) \bigr\}\biggl\{ \frac{M_{[d]}(X_i)}{\hat{M}_{[d]}(X_i)} - 1 \biggr\} \\
    & \hspace{170pt}+ \sum_{S \in \bbS}  \frac{\mathbbm{1}_{\{\Omega_i = \mathbbm{1}_{S}\}}}{M_{S}(X_i)} \bigl\{ \hat{\alpha}_S(X_i) - \alpha_S^*(X_i)\bigr\} \biggl\{ \frac{M_{S}(X_i)}{\hat{M}_{S}(X_i)} - 1 \biggr\} \biggr]
\end{align*}
so that we have $\hat{\theta} - \theta^* = L+Q$. Similarly to in our bound on the conditional mean above, we have
\begin{equation}
\label{Eq:QBound}
    \bbE \bigl| Q - \bbE(Q | \mathcal{D}) \bigr| \leq 2 \bbE |Q| \leq (1+2|\bbS|) (a_Nb_N)^{1/2}.
\end{equation}
It remains to control
\begin{align}
\label{Eq:LBound}
    N &\bigl\{ \bbE \bigl| L - \bbE(L | \mathcal{D}) \bigr| \bigr\}^2 \leq N \bbE \bigl\{ \mathrm{Var}(L | \mathcal{D} ) \bigr\} \nonumber \\
    & = \bbE \biggl[ \mathrm{Var} \biggl(\hat{a}_{S_0}(X_{S_0}) - a_{S_0}(X_{S_0}) + \frac{\mathbbm{1}_{\{\Omega = \mathbbm{1}_{[d]}\}}}{M_{[d]}(X_{S_0})} \biggl\{ \hat{g}(X) - g^*(X) + g^*(X) \biggl( \frac{M_{[d]}(X_{S_0})}{\hat{M}_{[d]}(X_{S_0})} - 1 \biggr) \biggr\} \nonumber \\
    & \hspace{50pt} + \sum_{S \in \bbS} \frac{\mathbbm{1}_{\{\Omega = \mathbbm{1}_{S}\}}}{M_{S}(X_{S_0})} \biggl\{ \hat{\alpha}_S(X_S) - \alpha_S^*(X_S) + \alpha_S^*(X_S) \biggl( \frac{M_{S}(X_{S_0})}{\hat{M}_{S}(X_{S_0})} - 1 \biggr) \biggr\} \biggm| \mathcal{D}, \mathbf{X}_{S_0} \biggr) \biggr] \nonumber \\
    & \leq (2+5|\bbS|) \bbE \biggl[ \biggl\{1 + \frac{1}{M_{[d]}(X_{S_0})}\biggr\} \bigl\{\hat{a}_{S_0}(X_{S_0}) - a_{S_0}(X_{S_0})\bigr\}^2 + \frac{g^*(X)^2}{M_{[d]}(X_{S_0})} \biggl\{ \frac{M_{[d]}(X_{S_0})}{\hat{M}_{[d]}(X_{S_0})} - 1 \biggr\}^2  \nonumber \\
    & \hspace{5pt}  + \sum_{S \in \bbS} \biggl\{ \frac{1}{M_{[d]}(X_{S_0})} +  \frac{1}{M_{S}(X_{S_0})} \biggr\} \bigl\{ \hat{\alpha}_S(X_S) - \alpha_S^*(X_S) \bigr\}^2 + \sum_{S \in \bbS} \frac{\alpha_S^*(X)^2}{M_{S}(X_{S_0})} \biggl\{ \frac{M_{S}(X_{S_0})}{\hat{M}_{S}(X_{S_0})} - 1 \biggr\}^2 \biggr] \nonumber \\
    & \leq  \frac{2(2+5|\bbS|)(1+|\bbS|) a_N}{\inf_{x_{S_0}} \min_{S \in \bbS_+} M_S(x_{S_0})} + \bbE \biggl[ 
  \biggl\{\frac{g^*(X)^2}{M_{[d]}(X_{S_0})} + \sum_{S \in \bbS} \frac{\alpha_S^*(X)^2}{M_{S}(X_{S_0})} \biggr\} \max_{S \in \bbS_+} \biggl\{ \frac{M_{S}(X_{S_0})}{\hat{M}_{S}(X_{S_0})} - 1 \biggr\}^2 \biggr].
\end{align}
Now, since $\alpha_\bbS^*$ is the minimiser of $\mathcal{L}(\cdot, M_\bbS)$, and since it is minimised for each value of $X_{S_0}$ separately, we have that
\[
    \bbE \biggl\{\frac{g^*(X)^2}{M_{[d]}(X_{S_0})} + \sum_{S \in \bbS} \frac{\alpha_S^*(X)^2}{M_{S}(X_{S_0})} \biggm| X_{S_0} \biggr\} \leq \bbE \biggl[ \frac{\{a(X)-a_{S_0}(X_{S_0})\}^2}{M_{[d]}(X_{S_0})} \biggm| X_{S_0} \biggr],
\]
which is the value that we would get by choosing $\alpha_\bbS(\cdot,X_{S_0})=0$. It therefore follows from~\eqref{Eq:BiasBound},~\eqref{Eq:QBound} and~\eqref{Eq:LBound} that
\begin{align*}
    \bbE| \hat{\theta} - \theta^*| &\leq \bbE \bigl| \bbE(\hat{\theta} - \theta^* | \mathcal{D}, \mathbf{X}_{S_0} ) \bigr| + \bbE \bigl| Q - \bbE(Q | \mathcal{D}, \mathbf{X}_{S_0}) \bigr| + \bbE \bigl| L - \bbE(L | \mathcal{D}, \mathbf{X}_{S_0}) \bigr| \\
    & \leq 2(1+2|\bbS|) (a_Nb_N)^{1/2} + \frac{1}{\inf_{x_{S_0}} \min_{S \in \bbS_+} M_S(x_{S_0})^{1/2}} \biggl\{ \frac{(2+5|\bbS|)a_N^{1/2}}{N^{1/2}} + \frac{\|a\|_\infty b_N^{1/2}}{N^{1/2}} \biggr\},
\end{align*}
as required.

\end{proof}

\begin{proof}[Proof of Theorem~\ref{Thm:UnknownShiftedLowerBound}]
Since $H_\bbS^\perp$ is a closed subspace of $H_\bbS$, the existence and uniqueness of the minimiser $\alpha_\bbS^\perp$ over $H_\bbS^\perp$ follows from almost identical arguments to those given in the proof of Proposition~\ref{Prop:UniqueMinimiser} for minimisation over $H_\bbS$. We begin by establishing an analogue of our stationarity condition~\eqref{Eq:Stationarity} that will be useful later in the proof. For any $S \in \bbS, \epsilon_S \in H_S^\perp$ and $\eta \in \bbR$, we may consider perturbing $\alpha_\bbS^\perp$ by replacing $\alpha_S^\perp$ by $\alpha_S^\perp + \eta \epsilon_S$. By the minimality of $\alpha_\bbS^\perp$ we must have
\begin{align*}
    0 &\leq \mathrm{Var}\bigl( \alpha^\perp(X) - \eta \epsilon_S(X_S) \bigr) - \mathrm{Var}\bigl( \alpha^\perp(X) \bigr) + \bbE \biggl[ \frac{\{\alpha_S^\perp(X_S) + \eta \epsilon_S(X_S)\}^2 - \{\alpha_S^\perp(X_S)\}^2 }{\lambda_S \bar{r}_S(X_S)} \biggr] \\
    &= -2\eta \bbE\biggl[ \epsilon_S(X_S) \biggl\{ \alpha^\perp(X) - \frac{\alpha_S^\perp(X_S)}{\lambda_S \bar{r}_S(X_S)} \biggr\} \biggr] + O(\eta^2) \\
    &= -2\eta \bbE\biggl[ \epsilon_S(X_S) \biggl\{ (\alpha^\perp)_S(X_S) - \frac{\alpha_S^\perp(X_S)}{\lambda_S \bar{r}_S(X_S)} \biggr\} \biggr] + O(\eta^2),
\end{align*}
as $\eta \rightarrow 0$, where we recall our notational convention that $(\alpha^\perp)_S(X_S) = \bbE\{\alpha^\perp(X) | X_S\}$. As $\epsilon_S \in H_S^\perp$ was arbitrary, using the fact that the orthogonal complement of $H_S^\perp$ within $H_S$ is given by the span of $\Psi_S$, we see that there exist $\mu \in \bbR$ and $\mu_S \in \bbR^{k_S}$ such that
\begin{equation}
\label{Eq:StationarityUnknown}
    (\alpha^\perp)_S(X_S) - \frac{\alpha_S^\perp(X_S)}{\lambda_S \bar{r}_S(X_S)} = \mu + \mu_S^T \Psi_S(X_S)
\end{equation}
almost surely.

By almost identical arguments to those applied in the proof of Theorem~\ref{Thm:ShiftedLowerBound}, we have control over the behaviour of $f_{t \alpha^\perp}$ for small values of $t$. We therefore see that, under $P_{n,0,0}$, we have
\begin{align*}
    \log & \frac{dP_{n,t,v_\bbS}}{dP_{n,0,0}}  = \sum_{i=1}^n \log \bigl( 1 + n^{-1/2} t \alpha^\perp(X_i) \bigr) \\
    & \hspace{20pt} + \sum_{S \in \bbS} \sum_{j=1}^{n_S} \log \frac{\exp\bigl(n^{-1/2}v_S^T \Psi_S(X_{S,j})\bigr)\bigl\{1+n^{-1/2}t(\alpha^\perp)_S(X_{S,j})\bigr\}\int e^{w_S^T\Psi_S}f}{\int e^{(w_S+n^{-1/2}v_S)^T\Psi_S}f(1+tn^{-1/2}\alpha^\perp)} +o_p(1) \\
    & = n^{-1/2} \sum_{i=1}^n t \alpha^\perp(X_i) - \frac{t^2}{2} \bbE\{ \alpha^\perp(X)^2\} - \frac{1}{2} \sum_{S \in \bbS} \lambda_S \mathrm{Var} \bigl( v_S^T \Psi_S(X_{S,1}) + t(\alpha^\perp)_S(X_{S,1}) \bigr) \\
    & \hspace{5pt}+ n^{-1/2} \sum_{S \in \bbS} \sum_{j=1}^{n_S} \biggl[ v_S^T \Psi_S(X_{S,j}) + t(\alpha^\perp)_S(X_{S,j}) - \bbE\bigl\{ v_S^T \Psi_S(X_{S,j}) + t(\alpha^\perp)_S(X_{S,j}) \bigr\}\biggr] +o_p(1).
\end{align*}
We therefore see that $\log \frac{dP_{n,t,v_\bbS}}{dP_{n,0,0}}\overset{\mathrm{d}}{\rightarrow} \|(t,v_\bbS)\|_{\Psi_\bbS} Z - (1/2) \|(t,v_\bbS)\|_{\Psi_\bbS}^2$, where $Z \sim N(0,1)$ and where $\|\cdot\|_{\Psi_\bbS}$ is the norm associated to the inner product
\begin{align*}
    &\langle (t,v_\bbS), (s,u_\bbS) \rangle_{\Psi_\bbS} \\
    &= st \bbE\{ \alpha^\perp(X)^2\} + \sum_{S \in \bbS} \lambda_S \mathrm{Cov}\bigl( v_S^T \Psi_S(X_{S,1}) + t(\alpha^\perp)_S(X_{S,1}), u_S^T \Psi_S(X_{S,1}) + s(\alpha^\perp)_S(X_{S,1}) \bigr),
\end{align*}
which establishes the local asymptotic normality of this sequence of experiments.

The regularity of our sequence of parameters $\theta(f_{n^{-1/2}t \alpha^\perp})$ follows from almost identical arguments to those used in the proof of Theorem~\ref{Thm:ShiftedLowerBound} and, using the notation of~\citet[][Chapter~3]{van1996weak}, we may write $\dot{\kappa}(t,v_\bbS)=t\bbE\{a(X) \alpha^\perp(X)\}$ and $r_N=n^{1/2}$, where $\dot{\kappa}$ is thought of as a map from $(\bbR^{1+k_\bbS}, \langle \cdot, \rangle_{\Psi_\bbS})$ to $\bbR$ equipped with the usual Euclidean metric. To find the adjoint $\dot{\kappa}^*$ of $\dot{\kappa}$ we aim to find $(t^*,v_\bbS^*) \in \bbR^{1+k_\bbS}$ such that $\langle(t^*,v_\bbS^*), (t,v_\bbS) \rangle_{\Psi_\bbS} =\dot{\kappa}(t,v_\bbS)$ for all $(t,v_\bbS) \in \bbR^{1+k_\bbS}$. Since we assume that the functions $\Psi_S$ are linearly independent, for each $S \in \bbS$ we may define
\[
    v_S^* = - \mathrm{Cov}\bigl( \Psi_S(X_{S,1}) \bigr)^{-1} \mathrm{Cov} \bigl( \Psi_S(X_{S,1}), (\alpha^\perp)_S(X_{S,1}) \bigr)
\]
so that $\mathrm{Cov}((\alpha^\perp)_S(X_{S,1}) + (v_S^*)^T \Psi_S(X_{S,1}), \Psi_S(X_{S,1}))=0$. Using  our stationarity condition~\eqref{Eq:StationarityUnknown} and the fact that $\alpha_\bbS^\perp \in H_\bbS^\perp$, we have for almost all $x_S$ that
\begin{align*}
    & (\alpha^\perp)_S(x_S) + (v_S^*)^T \Psi_S(x_S) = \frac{\alpha_S^\perp(x_S)}{\lambda_S \bar{r}_S(x_S)} + \mu + \mu_S^T \Psi_S(x_S)  \\
    & \hspace{50pt}- \mathrm{Cov}\bigl( \Psi_S(X_{S,1}) \bigr)^{-1} \mathrm{Cov} \biggl( \Psi_S(X_{S,1}), \frac{\alpha_S^\perp(X_{S,1})}{\lambda_S \bar{r}_S(X_{S,1})} + \mu + \mu_S^T \Psi_S(X_{S,1}) \biggr) \Psi_S(x_S) \\
    & = \frac{\alpha_S^\perp(x_S)}{\lambda_S \bar{r}_S(x_S)} + \mu + \mu_S^T \Psi_S(x_S)  - \frac{1}{\lambda_S} \mathrm{Cov}\bigl( \Psi_S(X_{S,1}) \bigr)^{-1} \bbE\bigl\{ \Psi_S(X_S) \alpha_S^\perp(X_S) \bigr\} - \mu_S^T \Psi_S(x_S) \\
    & = \frac{\alpha_S^\perp(x_S)}{\lambda_S \bar{r}_S(x_S)} + \mu
\end{align*}
Thus, for any $(t,v_\bbS) \in \bbR^{1+k_\bbS}$ we may write that
\begin{align*}
    \langle (1,v_\bbS^*), (t,v_\bbS) \rangle_{\Psi_\bbS} &= t \bbE\{ \alpha^\perp(X)^2\} + \sum_{S \in \bbS} \lambda_S \mathrm{Cov} \bigl( (\alpha^\perp)_S(X_{S,1}) + (v_S^*)^T \Psi_S(X_{S,1}), t (\alpha^\perp)_S(X_{S,1}) \bigr) \\
    &= t \bbE\{ \alpha^\perp(X)^2\} + t \sum_{S \in \bbS} \lambda_S \mathrm{Cov} \biggl( \frac{\alpha_S^\perp(X_{S,1})}{\lambda_S \bar{r}_S(X_{S,1})}, (\alpha^\perp)_S(X_{S,1})\biggr) \\
    & = t \bbE\{ \alpha^\perp(X)^2\} + t \sum_{S \in \bbS} \bbE \bigl\{ \alpha_S^\perp(X_{S}) (\alpha^\perp)_S(X_S) \bigr\} \\
    & = t \bbE\{ \alpha^\perp(X)^2\} + t \sum_{S \in \bbS} \bbE \bigl\{ \alpha_S^\perp(X_{S}) \alpha^\perp(X) \bigr\} = t \bbE\{ \alpha^\perp(X) a(X)\} = \dot{\kappa}(t,v_\bbS),
\end{align*}
as required. This shows that the adjoint is given by $\dot{\kappa}^* : t \mapsto (t,tv_\bbS^*)$. The result now follows on using the above calculation and~\eqref{Eq:StationarityUnknown} to see that 
\begin{align*}
    \|(1,v_\bbS^*)\|_{\Psi_S}^2 &= \dot{\kappa}(1,v_\bbS^*) = \bbE\{ \alpha^\perp(X)^2\} + \sum_{S \in \bbS} \bbE \bigl\{ \alpha_S^\perp(X_{S}) (\alpha^\perp)_S(X_S) \bigr\} \\
    & =\bbE\{ \alpha^\perp(X)^2\} + \sum_{S \in \bbS} \bbE \biggl[\alpha_S^\perp(X_{S}) \biggl\{ \frac{\alpha_S^\perp(X_S)}{\lambda_S \bar{r}_S(X_S)} + \mu - (v_S^*)^T \Psi_S(X_S) \biggr\} \biggr] \\
    & = \bbE\{ \alpha^\perp(X)^2\} + \sum_{S \in \bbS} \bbE \biggl\{ \frac{\alpha_S^\perp(X_S)^2}{\lambda_S \bar{r}_S(X_S)} \biggr\}
\end{align*}
and applying~\citet[][Theorem~3.11.5]{van1996weak}.
\end{proof}

\begin{proof}[Proof of Proposition~\ref{Prop:UnknownDoubleRobustness}]
Since $\theta^*$ is unbiased and independent of $\mathcal{D}$, and since $\alpha_\bbS \in H_\bbS^\perp$, we may control the first moment by writing
\begin{align*}
    \bbE \bigl| \bbE(\hat{\theta} - \theta^* | \mathcal{D}) \bigr| &= \bbE\biggl| \sum_{S \in \bbS}  \bbE \biggl[ - \frac{1}{n} \sum_{i=1}^n \hat{\alpha}_S(X_i) + \frac{1}{n_S} \sum_{j=1}^{n_S} \frac{\hat{\alpha}_S(X_{S,j})}{\hat{r}_S(X_{S,j})}  \biggm| \mathcal{D} \biggr] \biggr| \\
    & = \bbE \biggl| \sum_{S \in \bbS} \bbE\biggl[ \biggl\{ \frac{\bar{r}_S(X_S)}{\hat{r}_S(X_S)} - 1 \biggr\} \hat{\alpha}_S(X_S) \biggr] \biggr| \\
    &=  \bbE \biggl| \sum_{S \in \bbS} \bbE\biggl[ \biggl\{ \frac{\bar{r}_S(X_S)}{\hat{r}_S(X_S)} - 1 \biggr\} \bigl\{ \hat{\alpha}_S(X_S) - \alpha_S^\perp(X_S) \bigr\} \\
    & \hspace{75pt} + \biggl\{ \frac{\bar{r}_S(X_S)}{\hat{r}_S(X_S)} - 1 - w_S^0 + \hat{w}_S^0 - (w_S-\hat{w}_S)^T \Psi_S(X_S) \biggr\} \alpha_S^\perp(X_S) \biggr] \biggr| \\
    & \leq |\bbS|\{ (a_nb_n)^{1/2} + b_n \},
\end{align*}
which is negligible under our hypotheses. It now remains to control the fluctuations of $\hat{\theta}-\theta^*$ about its conditional mean. To this end, we define
\begin{align*}
    L = \sum_{S \in \bbS} \biggl[ \frac{1}{n_S}  \sum_{j=1}^{n_S} \biggl\{ \frac{\hat{\alpha}_S(X_{S,j})-\alpha_S^\perp(X_{S,j})}{\bar{r}_S(X_{S,j})} + \alpha_S^\perp(X_{S,j}) \biggl( \frac{1}{\hat{r}_S(X_{S,j})} &- \frac{1}{\bar{r}_S(X_{S,j})} \biggr) \biggr\} \\
    & - \frac{1}{n} \sum_{i=1}^n \bigl\{ \hat{\alpha}_S(X_i) - \alpha_S^\perp(X_i) \bigr\}  \biggr]
\end{align*}
and
\[
    Q = \sum_{S \in \bbS} \frac{1}{n_S}  \sum_{j=1}^{n_S} \bigl\{ \hat{\alpha}_S(X_{S,j})-\alpha_S^\perp(X_{S,j}) \bigr\}  \biggl( \frac{1}{\hat{r}_S(X_{S,j})} - \frac{1}{\bar{r}_S(X_{S,j})} \biggr)
\]
so that $\hat{\theta}-\theta^* = L+Q$. With similar arguments to those employed for the conditional mean, we see that
\[
    \bbE| Q - \bbE(Q|\mathcal{D})| \leq 2 \bbE |Q| \leq 2 \sum_{S \in \bbS} \bbE \biggl| \bigl\{ \hat{\alpha}_S(X_{S})-\alpha_S^\perp(X_{S}) \bigr\}  \biggl( \frac{\bar{r}_S(X_{S})}{\hat{r}_S(X_{S})} - 1 \biggr) \biggr| \leq 2|\bbS| (a_nb_n)^{1/2}
\]
is negligible. On the other hand, we have that
\begin{align*}
    \bigl( \bbE| &L - \bbE(L|\mathcal{D})| \bigr)^2 \leq \bbE \{ \mathrm{Var}(L | \mathcal{D}) \} \\
    & \leq |\bbS| \sum_{S \in \bbS} \bbE \biggl[ \frac{1}{n_S} \mathrm{Var} \biggl\{ \frac{\hat{\alpha}_S(X_{S,1})-\alpha_S^\perp(X_{S,1})}{\bar{r}_S(X_{S,1})} + \alpha_S^\perp(X_{S,1}) \biggl( \frac{1}{\hat{r}_S(X_{S,1})} - \frac{1}{\bar{r}_S(X_{S,1})} \biggr) \biggm| \mathcal{D} \biggr\} \\
    & \hspace{150pt} + \frac{1}{n} \mathrm{Var} \bigl( \hat{\alpha}_S(X) - \alpha_S^\perp(X) \bigm| \mathcal{D} \bigr) \biggr] \\
    & \leq |\bbS| \sum_{S \in \bbS}  \bbE \biggl[ \biggl\{ \frac{2}{n_S \bar{r}_S(X_S)} + \frac{1}{n} \biggr\} \{\hat{\alpha}_S(X_{S})-\alpha_S^\perp(X_{S})\}^2  + \frac{\alpha_S^\perp(X_S)^2}{n_S \bar{r}_S(X_S)} \biggl\{ \frac{\bar{r}_S(X_S)}{\hat{r}_S(X_S)} - 1 \biggr\}^2 \biggr] \\
    & \leq |\bbS|^2 \biggl\{ \biggl( \frac{2}{n_S} + \frac{1}{n} \biggr) a_n + \frac{b_n}{n_S} \biggr\},
\end{align*}
as required. This completes the proof.
\end{proof}

\subsection{Proofs for Section~\ref{Sec:DirectEstimator}}

\begin{proof}[Proof of Proposition~\ref{Prop:SmoothedUpperBound}]
Write
\begin{align*}
    \check{\theta}^{\bS} &= \frac{1}{(n)_{m+1}} \sum_{\bi \in \mathcal{I}_{m+1}} \hat{k}_{h}^{\bS}(X_{i_1},\ldots,X_{i_{m+1}}) \\
    & =\frac{1}{(n)_{m+1}} \sum_{\bi \in \mathcal{I}_{m+1}} a(X_{i_1})\biggl\{ \frac{K_h^{S_1}(X_{i_2}-X_{i_1})}{\hat{f}_{S_1,h}(X_{i_1})} - 1 \biggr\} \ldots \biggl\{ \frac{K_h^{S_m}(X_{i_{m+1}}-X_{i_m})}{\hat{f}_{S_m,h}(X_{i_m})} - 1 \biggr\}
\end{align*}
so that $\check{\theta}_{h}^{M} = \hat{\theta}^\mathrm{CC} + \sum_{m=1}^M (-1)^m \sum_{\bS \in \bbS^{(m)}} v_{(S_1,\ldots,S_{m-1})}b_{M,\eta}(m)\check{\theta}^\bS$. In this proof we will first consider $\check{\theta}^\bS$ for each $m \in [M]$ and $\bS \in \bbS^{(m)}$ separately, providing progressively simpler approximations, before combining these approximations to prove the result.

The main technical difficulty is in accounting for the random fluctuations of the density estimators $\hat{f}_{S,h}$ about their means. To ease notation we will write $K_{i,j}^S=\frac{K_h^S(X_i-X_j)}{f_{S,h}(X_i)}-1$ for any $i,j \in [n]$ and $S \in \bbS$ and $L_{i,j}^S=\frac{K_h^S(X_i-X_{S,j})}{f_{S,h}(X_i)}-1$ for any $i \in [n]$, $S \in \bbS$ and $j \in [n_S]$. For $i \in [n]$ and $S \in \bbS$, we now write
\begin{align*}
    \epsilon_i^{S} = \frac{\hat{f}_{S,h}(X_{i})}{f_{S,h}(X_{i})} - 1 &= \frac{1}{n+n_S} \biggl[ \sum_{j=1}^n \biggl\{ \frac{K_h^S(X_i-X_j)}{f_{S,h}(X_i)} -1 \biggr\} + \sum_{j=1}^{n_S} \biggl\{ \frac{K_h^S(X_i-X_{S,j})}{f_{S,h}(X_i)} -1 \biggr\} \biggr]\\
    &  = \frac{1}{n+n_S} \biggl( \sum_{j=1}^n K_{i,j}^S + \sum_{j=1}^{n_S} L_{i,j}^S \biggr).
\end{align*}
Our first step in the direction of giving simple approximations to $\check{\theta}^\bS$ is to argue that the denominators $\hat{f}_{S_j}(X_{i_j})=(1+\epsilon_{i_j}^{S_j})f_{S_j,h}(X_{i_j})$ in $\hat{k}_{h}^{\bS}$ can be replaced by $(1-\epsilon_{i_j}^{S_j})^{-1}f_{S_j,h}(X_{i_j})$ up to a negligible error. To this end, for $\ell \in [m+1]$, write
\begin{align*}
    \check{\theta}^{\bS, (\ell)}&=  \frac{1}{(n)_{m+1}} \sum_{\bi \in \mathcal{I}_{m+1}} a(X_{i_1}) \biggl\{ \frac{K_h^{S_1}(X_{i_2}-X_{i_1})}{(1-\epsilon_{i_1}^{S_1})^{-1} f_h^{S_1}(X_{i_1})} -1 \biggr\} \ldots \biggl\{ \frac{K_h^{S_{\ell-1}}(X_{i_{\ell}}-X_{i_{\ell-1}})}{(1-\epsilon_{i_{\ell-1}}^{S_{\ell-1}})^{-1} f_h^{S_{\ell-1}}(X_{i_{\ell-1}})} -1 \biggr\} \\
    & \hspace{140pt} \times \biggl\{ \frac{K_h^{S_\ell}(X_{i_{\ell+1}}-X_{i_\ell})}{(1+\epsilon_{i_\ell}^{S_\ell})f_h^{S_\ell}(X_{i_{\ell}})} - 1 \biggr\} \ldots \biggl\{ \frac{K_h^{S_m}(X_{i_{m+1}}-X_{i_m})}{(1+\epsilon_{i_m}^{S_m})f_h^{S_m}(X_{i_{m}})} - 1 \biggr\} \\
    & = \frac{1}{(n)_{m+1}} \sum_{\bi \in \mathcal{I}_{m+1}} a(X_{i_1}) \bigl\{ K_{i_1,i_2}^{S_1} - (1+K_{i_1,i_2}^{S_1})\epsilon_{i_1}^{S_1} \bigr\} \ldots \bigl\{ K_{i_{\ell-1},i_\ell}^{S_{\ell-1}} - (1+K_{i_{\ell-1},i_\ell}^{S_{\ell-1}})\epsilon_{i_{\ell-1}}^{S_{\ell-1}} \bigr\} \\
    & \hspace{140pt} \times \biggl( \frac{K_{i_\ell,i_{\ell+1}}^{S_\ell} - \epsilon_{i_\ell}^{S_\ell} }{1+\epsilon_{i_\ell}^{S_\ell}} \biggr) \ldots \biggl( \frac{K_{i_m,i_{m+1}}^{S_m} - \epsilon_{i_m}^{S_m} }{1+\epsilon_{i_m}^{S_m}} \biggr) 
\end{align*}
so that $\check{\theta}^{\bS, (1)} = \check{\theta}^\bS$ and each successive $\check{\theta}^{\bS, (\ell+1)}$ is the result of starting with $\check{\theta}^{\bS, (\ell)}$ and making the replacement described above in the next factor in the summands. Now for $\ell \in [m]$ we use the fact that, since $K$ is supported on $[-1/2,1/2]^d$ and symmetric, we have $f_{S,h}(x) \geq c_0/2^d$ to see that
\begin{align}
\label{Eq:ltol+1}
    &\mathbb{E}\{(\check{\theta}^{\bS, (\ell+1)}-\check{\theta}^{\bS, (\ell)})^2\} \nonumber \\
    &=\mathbb{E}\biggl\{ \biggl[ \frac{1}{(n)_{m+1}} \sum_{\bi \in \mathcal{I}_{m+1}} a(X_{i_1}) \bigl\{ K_{i_1,i_2}^{S_1} - (1+K_{i_1,i_2}^{S_1})\epsilon_{i_1}^{S_1} \bigr\} \ldots \bigl\{ K_{i_{\ell-1},i_\ell}^{S_{\ell-1}} - (1+K_{i_{\ell-1},i_\ell}^{S_{\ell-1}})\epsilon_{i_{\ell-1}}^{S_{\ell-1}} \bigr\} \nonumber \\
    &\hspace{70pt} \times \frac{(\epsilon_{i_\ell}^{S_\ell})^2}{1+\epsilon_{i_\ell}^{S_\ell}} \bigl(K_{i_\ell,i_{\ell+1}}^{S_\ell} + 1 \bigr) \biggl( \frac{K_{i_{\ell+1},i_{\ell+2}}^{S_{\ell+1}} - \epsilon_{i_{\ell+1}}^{S_{\ell+1}} }{1+\epsilon_{i_{\ell+1}}^{S_{\ell+1}}} \biggr) \ldots \biggl( \frac{K_{i_m,i_{m+1}}^{S_m} - \epsilon_{i_m}^{S_m} }{1+\epsilon_{i_m}^{S_m}} \biggr)  \biggr]^2 \biggr\} \nonumber \\
    & \leq \mathbb{E}\biggl[ a(X_{1})^2 \bigl\{ K_{i_1,i_2}^{S_1} - (1+K_{i_1,i_2}^{S_1})\epsilon_{i_1}^{S_1} \bigr\}^2 \ldots \bigl\{ K_{i_{\ell-1},i_\ell}^{S_{\ell-1}} - (1+K_{i_{\ell-1},i_\ell}^{S_{\ell-1}})\epsilon_{i_{\ell-1}}^{S_{\ell-1}} \bigr\}^2 \nonumber \\
    &\hspace{70pt} \times \frac{(\epsilon_{\ell}^{S_\ell})^4}{(1+\epsilon_{\ell}^{S_\ell})^2} \bigl(K_{\ell,{\ell+1}}^{S_\ell} + 1 \bigr)^2 \biggl( \frac{K_{{\ell+1},{\ell+2}}^{S_{\ell+1}} - \epsilon_{{\ell+1}}^{S_{\ell+1}} }{1+\epsilon_{{\ell+1}}^{S_{\ell+1}}} \biggr)^2 \ldots \biggl( \frac{K_{m,{m+1}}^{S_m} - \epsilon_{m}^{S_m} }{1+\epsilon_{m}^{S_m}} \biggr)^2  \biggr] \nonumber \\
    & \leq \|a\|_\infty^2 \biggl( \frac{2^d \|K\|_\infty}{c_0 h^d} \biggr)^{2(\ell-1)} \bbE \biggl[ \biggl\{ \prod_{j=1}^{\ell-1} (1+|\epsilon_j^{S_j}|)^2 \biggr\} (\epsilon_{\ell}^{S_\ell})^4  \prod_{j=\ell}^m \max \biggl\{1, \frac{K_h^{S_j}(X_{j+1}-X_j)^2}{\hat{f}_{S_j}(X_j)^2} \biggr\}  \biggr] \nonumber \\
    & \leq \|a\|_\infty^2 \biggl( \frac{2^d \|K\|_\infty}{c_0 h^d} \biggr)^{2(\ell-1)} \max_{j \in [\ell-1]} \bbE^\frac{\ell-1}{m+1}  \bigl\{ (1+|\epsilon_j^{S_j}|)^{2(m+1)} \bigr\}  \times \bbE^{1/(m+1)} \bigl\{ (\epsilon_\ell^{S_\ell})^{4(m+1)} \bigr\} \nonumber \\
    & \hspace{130pt} \times \max_{j=\ell,\ldots,m} \bbE^{(m-\ell+1)/(m+1)} \biggl[ \max \biggl\{1, \frac{K_h^{S_j}(X_{j+1}-X_j)^{2(m+1)}}{\hat{f}_{S_j}(X_j)^{2(m+1)}} \biggr\} \biggr] \nonumber \\
    & \leq \|a\|_\infty^2 \biggl( \frac{2^{d+1} \|K\|_\infty}{c_0 h^d} \biggr)^{2(\ell-1)} \biggl[ 1+\max_{j \in [\ell-1]} \bbE^\frac{\ell-1}{m+1}  \bigl\{|\epsilon_j^{S_j}|^{2(m+1)} \bigr\}\biggr]  \times \bbE^{1/(m+1)} \bigl\{ (\epsilon_\ell^{S_\ell})^{4(m+1)} \bigr\} \nonumber \\
    & \hspace{130pt} \times \max_{j=\ell,\ldots,m} \bbE^{(m-\ell+1)/(m+1)} \biggl[ \max \biggl\{1, \frac{K_h^{S_j}(X_{j+1}-X_j)^{2(m+1)}}{\hat{f}_{S_j}(X_j)^{2(m+1)}} \biggr\} \biggr]
\end{align}
We now bound each expectation appearing in the final line separately. By Rosenthal's inequality \citep[e.g.][Theorem~15.11]{boucheron2013concentration}, if we write $\kappa=e^{1/2}/(2(e^{1/2}-1))$, we have for any integer $q \geq 2$ that
\begin{align}
\label{Eq:Rosenthal}
    \mathbb{E}^{1/q}(|\epsilon_j^{S_j}|^q)  &\leq 2^{1+1/q}  \biggl\{ (6 \kappa q)^{1/2} \biggl( \frac{2^d \|K\|_\infty}{c_0(n+n_{S_j})h^{|S_j|}} \biggr)^{1/2} + 2q \kappa \biggl( \frac{2^d\|K\|_\infty}{c_0(n+n_{S_j})h^{|S_j|}} \biggr)^{1-1/q} \biggr\} \nonumber \\
    & \hspace{250pt} + \frac{2^{d+1} \|K\|_\infty}{c_0(n+n_{S_j})h^{|S_j|}}\nonumber \\
    & \leq 24q\biggl( \frac{2^d \|K\|_\infty}{c_0(n+n_{S_j})h^{|S_j|}} \biggr)^{1/2}.
\end{align}
Now since $(n+n_{S_j})\hat{f}_{S_j}(X_j) \geq K_h^{S_j}(X_{j+1}-X_j)$ we have by Hoeffding's inequality that
\begin{align}
\label{Eq:Hoeffding}
    \mathbb{E}& \biggl[1 \vee \biggl\{ \frac{K_h^{S_j}(X_{j+1}-X_j)}{\hat{f}_{S_j}(X_j)} \biggr\}^{2(m+1)}  \biggr] \nonumber \\
    & \leq (n + n_{S_j})^{2(m+1)} \mathbb{P}( \hat{f}_{S_j}(X_j) \leq c_0/2^{d+1}) + \Bigl( \frac{2^{d+1}\|K\|_\infty}{c_0 h^{|S_j|}} \Bigr)^{2(m+1)} \nonumber \\
    & \leq (n + n_{S_j})^{2(m+1)} \bbP \biggl( \frac{\hat{f}_{S_j}(X_j)}{f_{S_j,h}(X_j)} \leq \frac{1}{2} \biggr) + \Bigl( \frac{2^{d+1}\|K\|_\infty}{c_0 h^{|S_j|}} \Bigr)^{2(m+1)} \nonumber \\
    & \leq (n + n_{S_j})^{2(m+1)} \bbP \biggl( \sum_{i \neq j} K_{j,i}^{S_j} + \sum_{i=1}^{n_{S_j}} L_{j,i}^{S_j} \leq - \frac{n+n_{S_j}}{2}(1-2^{-5}) \biggr) + \Bigl( \frac{2^{d+1}\|K\|_\infty}{c_0 h^{|S_j|}} \Bigr)^{2(m+1)} \nonumber \\
    & \leq (n + n_{S_j})^{2(m+1)} \exp\biggl( - \frac{c_0^2 h^{2|S_j|} (n+n_{S_j})}{2^{2d+2}\|K\|_\infty^2 } \biggr) + \Bigl( \frac{2^{d+1}\|K\|_\infty}{c_0 h^{|S_j|}} \Bigr)^{2(m+1)} \nonumber \\
    & \leq 2\Bigl( \frac{2^{d+1}\|K\|_\infty}{c_0 h^{|S_j|}} \Bigr)^{2(m+1)},
\end{align}
where the third and final inequalities follow from our assumed lower bound~\eqref{Eq:nlowerbound} on $n$.
It now follows from~\eqref{Eq:ltol+1},~\eqref{Eq:Rosenthal} and~\eqref{Eq:Hoeffding} that
\begin{align}
\label{Eq:ThetalToThetal+1}
    \mathbb{E}\{(\check{\theta}^{\bS, (\ell+1)}&-\check{\theta}^{\bS, (\ell)})^2\} \nonumber \\
    & \leq \frac{\{96(m+1)\}^4}{n^2} \|a\|_\infty^2 \biggl( \frac{2^{d+1} \|K\|_\infty}{c_0 h^d} \biggr)^{2m+2} \biggl[ 1 + \biggl\{ 48(m+1) \biggl( \frac{2^{d} \|K\|_\infty}{c_0 n h^d} \biggr)^{1/2} \biggr\}^{2(\ell-1)} \biggr] \nonumber \\
    & \leq \frac{2\{96(m+1)\}^4}{n^2} \|a\|_\infty^2 \biggl( \frac{2^{d+1} \|K\|_\infty}{c_0 h^d} \biggr)^{2m+2}
\end{align}
where the final line again follows from~\eqref{Eq:nlowerbound}. By writing $\check{\theta}^{\bS,(m+1)}- \check{\theta}^\bS = \sum_{\ell=1}^m (\check{\theta}^{\bS, (\ell+1)}-\check{\theta}^{\bS, (\ell)})$ and using the Cauchy--Schwarz inequality we can use~\eqref{Eq:ThetalToThetal+1} to show that we can approximate $\check{\theta}^\bS$ by $\check{\theta}^{\bS,(m+1)}$ with negligible mean-squared error. 

We may write
\[
    \check{\theta}^{\bS,(m+1)} = \frac{1}{(n)_{m+1}} \sum_{\bi \in \mathcal{I}_{m+1}} a(X_{i_1}) \bigl\{K_{i_1,i_2}^{S_1} -(1+K_{i_1,i_2}^{S_1})\epsilon_{i_1}^{S_1} \bigr\} \ldots \bigl\{ K_{i_m,i_{m+1}}^{S_m} - (1+K_{i_m,i_{m+1}}^{S_m})\epsilon_{i_m}^{S_m} \bigr\}.
\]
We now expand these brackets and argue that any term containing two or more $\epsilon_{i_j}^{S_j}$ factors can be neglected. To proceed, we define
\[
    \tilde{\theta}^{\bS} = \frac{1}{(n)_{m+1}} \sum_{\bi \in \mathcal{I}_{m+1}} k_h^{\bS}(X_{i_1},\ldots,X_{i_{m+1}}) = \frac{1}{(n)_{m+1}} \sum_{\bi \in \mathcal{I}_{m+1}} a(X_{i_1}) K_{i_1,i_2}^{S_1} \ldots K_{i_m,i_{m+1}}^{S_m}
\]
and for $\ell \in [m]$ we define
\begin{align*}
 \tilde{\theta}^{\bS,(\ell)} &=  \frac{1}{(n)_{m+1}} \sum_{\bi \in \mathcal{I}_{m+1}} a(X_{i_1}) K_{i_1,i_2}^{S_1} \ldots K_{i_{\ell-1},i_{\ell}}^{S_{\ell-1}} (1+K_{i_\ell,i_{\ell+1}}^{S_\ell})\epsilon_{i_\ell}^{S_\ell}K_{i_{\ell+1},i_{\ell+2}}^{S_{\ell+1}} \ldots K_{i_m,i_{m+1}}^{S_m}.
\end{align*}
Our aim now is to show that $\hat{\theta}^{\bS,(m+1)}$ can be approximated by $\tilde{\theta}^\bS - \sum_{\ell=1}^m \tilde{\theta}^{\bS,(\ell)}$, where there is at most one $\epsilon_{i_j}^{S_j}$ factor per term. Indeed, using~\eqref{Eq:Rosenthal} we have that
\begin{align}
\label{Eq:ThetahattoThetatilde}
    &\mathbb{E} \biggl\{ \biggl( \check{\theta}^{\bS,(m+1)} - \tilde{\theta}^\bS + \sum_{\ell=1}^m \tilde{\theta}^{\bS,(\ell)} \biggr)^2 \biggr\} \nonumber \\
    & \leq 2^{2m} \|a\|_\infty^2 \max_{\alpha \in \{0,1\}^m : \|\alpha\|_1 \geq 2} \mathbb{E} \biggl[ \prod_{\ell=1}^m (K_{\ell,{\ell+1}}^{S_\ell})^{2(1-\alpha_\ell)} \{(1+K_{\ell,{\ell+1}}^{S_\ell})\epsilon_{\ell}^{S_\ell}\}^{2\alpha_\ell} \biggr] \nonumber \\
    & \leq 2^{2m} \|a\|_\infty^2 \max_{\alpha \in \{0,1\}^m : \|\alpha\|_1 \geq 2} \mathbb{E}^{1/2} \biggl\{ \prod_{\ell=1}^m (K_{\ell,{\ell+1}}^{S_\ell})^{4(1-\alpha_\ell)} (1+K_{\ell,{\ell+1}}^{S_\ell})^{4\alpha_\ell} \biggr\} \mathbb{E}^{1/2} \biggl\{ \prod_{\ell=1}^m (\epsilon_{\ell}^{S_\ell})^{4 \alpha_\ell} \biggr\} \nonumber \\
    & \leq 2^{2m} \|a\|_\infty^2 \mathbb{E}^{1/2} \biggl[ \prod_{\ell=1}^m \biggl\{ 1 \vee \frac{K_h^{S_\ell}(X_{{\ell+1}}-X_{\ell})}{f_h^{S_\ell}(X_{{\ell}})} \biggr\}^{4} \biggr]\max_{\alpha \in \{0,1\}^m : \|\alpha\|_1 \geq 2} \prod_{\ell=1}^m \mathbb{E}^{1/(2\|\alpha\|_1)} \biggl[  (\epsilon_{\ell}^{S_\ell})^{4 \|\alpha\|_1 \alpha_\ell} \biggr] \nonumber \\
    & \leq 2^{2m}\|a\|_\infty^2  \biggl( \frac{2^d \|K\|_\infty}{c_0 h^d} \biggr)^{2m} \max_{A=2,\ldots,m} \biggl\{ 96 A \biggl( \frac{2^d \|K\|_\infty}{nc_0h^d} \biggr)^{1/2} \biggr\}^{2 A} \nonumber  \\
    & \leq \frac{2^{2m+20} 3^4 m^4 \|a\|_\infty^2 }{n^2} \biggl( \frac{2^d \|K\|_\infty}{c_0 h^d} \biggr)^{2m+2},
\end{align}
where we use~\eqref{Eq:nlowerbound} for the final inequality.

We now consider the different parts of $\epsilon_{i_\ell}^{S_\ell}$ separately in order to approximate each $\tilde{\theta}^{\bS,(\ell)}$ by one- and two-sample $U$-statistics that can be more easily understood. For a given $\bi \in \mathcal{I}_{m+1}$ we write
\begin{align*}
    \epsilon_{i_\ell}^{S_\ell} &= \frac{1}{n+n_{S_\ell}} \biggl[ \sum_{i \in [n] \setminus \{i_1,\ldots,i_{m+1} \}} K_{i_\ell,i}  +  \sum_{i=1}^{n_{S_\ell}} L_{i_\ell,i} + \sum_{j=1}^{m+1} K_{i_\ell,i_j} \biggr] \\
    & = \epsilon_{i_\ell}^{S_\ell,(1)} + \epsilon_{i_\ell}^{S_\ell,(2)} + \frac{1}{n+n_{S_\ell}} \sum_{j=1}^{m+1} K_{i_\ell,i_j}.
\end{align*}
Using the above definitions we also write
\begin{align*}
    &\tilde{\theta}^{\bS,(\ell),(j)} = \frac{1}{(n)_{m+1}} \sum_{\bi \in \mathcal{I}_{m+1}} a(X_{i_1}) K_{i_1,i_2}^{S_1} \ldots K_{i_{\ell-1},i_{\ell}}^{S_{\ell-1}} (1+K_{i_\ell,i_{\ell+1}}^{S_\ell})\epsilon_{i_\ell}^{S_\ell,(j)}K_{i_{\ell+1},i_{\ell+2}}^{S_{\ell+1}} \ldots K_{i_m,i_{m+1}}^{S_m}
\end{align*}
for $j=1,2$. Then
\begin{align}
\label{Eq:RefineEpsilon}
    &\mathbb{E}\{ (\tilde{\theta}^{\bS,(\ell)}-\tilde{\theta}^{\bS,(\ell),(1)}-\tilde{\theta}^{\bS,(\ell),(2)})^2 \}  \nonumber\\
    & \leq \frac{(m+1) \|a\|_\infty^2}{(n+n_{S_\ell})^2} \sum_{j=1}^{m+1} \bbE \bigl[ (K_{1,2}^{S_1})^2 \ldots (K_{\ell-1,\ell}^{S_{\ell-1}})^2 (1 + K_{\ell,\ell+1}^{S_\ell})^2 (K_{\ell+1,\ell+2}^{S_{\ell+1}})^2 \ldots (K_{m,m+1}^{S_{m}})^2 (K_{\ell,j}^{S_{\ell}})^2 \bigr] \nonumber \\ 
    & \leq \frac{(m+1)^2 \|a\|_\infty^2}{(n+n_{S_\ell})^2} \max_{S \in \bbS} \bbE^{\frac{m}{m+1}}\{(K_{1,2}^S)^{2(m+1)} \} \times \max_{S \in \bbS} \bbE^{\frac{1}{m+1}}\{(1+K_{1,2}^S)^{2(m+1)} \} \nonumber \\
    & \leq \frac{(m+1)^2 \|a\|_\infty^2}{(n+n_{S_\ell})^2} \max_{S \in \bbS} \bbE [\{1 \vee (1+(K_{1,2}^S)\}^{2(m+1)} ] \leq \frac{(m+1)^2 \|a\|_\infty^2}{n^2} \biggl( \frac{2^{d} \|K\|_\infty}{c_0 h^d} \biggr)^{2m+2}.
\end{align}
Now for $\ell \in [m-1]$ we notice that $\tilde{\theta}^{\bS,(\ell),(1)}$ is a degenerate $U$-statistic. Thus, letting $H \sim \mathrm{Hypergeometric}(n,m+2,m+2)$, we have that
\begin{align}
\label{Eq:NeglectThetal1}
    &\mathbb{E}\{ ( \tilde{\theta}^{\bS,(\ell),(1)})^2 \} \nonumber \\
    & \leq \bbP(H \geq 2) \mathbb{E} \bigl[ a(X_{1})^2 (K_{1,2}^{S_1})^2 \ldots (K_{\ell-1,\ell}^{S_{\ell-1}})^2 (1 + K_{\ell,\ell+1}^{S_\ell})^2  (K_{\ell+1,\ell+2}^{S_{\ell+1}})^2 \ldots (K_{m,m+1}^{S_{m}})^2 (K_{\ell,m+2}^{S_{\ell}})^2 \bigr] \nonumber \\
    & \leq \bbP(H \geq 2)\|a\|_\infty^2 \biggl( \frac{2^d \|K\|_\infty}{c_0 h^d} \biggr)^{m+1} = \bbP(H(H-1) \geq 2)\|a\|_\infty^2 \biggl( \frac{2^d \|K\|_\infty}{c_0 h^d} \biggr)^{m+1} \nonumber \\
    & \leq \frac{1}{2} \bbE\{H(H-1)\} \|a\|_\infty^2 \biggl( \frac{2^d \|K\|_\infty}{c_0 h^d} \biggr)^{m+1} = \frac{(m+2)^2(m+1)^2}{2n(n-1)}  \|a\|_\infty^2 \biggl( \frac{2^d \|K\|_\infty}{c_0 h^d} \biggr)^{m+1}.
\end{align}
On the other hand, when $\ell=m$, we see that
\begin{align*}
    & \tilde{\theta}^{\bS} - \tilde{\theta}^{\bS,(m),(1)} =  \frac{1}{(n)_{m+2}} \sum_{\bi \in \mathcal{I}_{m+2}} k_h^{\bS}(X_{i_1},\ldots,X_{i_{m+1}}) \biggl\{ 1 - \frac{n-m-1}{n+n_{S_m}} (1+K_{i_m,i_{m+2}}^{S_m}) \biggr\} \\
    & = \frac{n_{S_m}+m+1}{n+n_{S_m}} \frac{1}{(n)_{m+1}} \sum_{\bi \in \mathcal{I}_{m+1}} k_h^{\bS}(X_{i_1},\ldots,X_{i_{m+1}}) \\
    & \hspace{50pt} - \frac{n-m-1}{n+n_{S_m}} \frac{1}{(n)_{m+2}} \sum_{\bi \in \mathcal{I}_{m+2}} k_h^{\bS}(X_{i_1},\ldots,X_{i_{m+1}}) K_{i_m,i_{m+2}}^{S_m},
\end{align*}
where the second term in the above line is a degenerate $U$-statistic. Replacing the first term by it linearisation and arguing as in~\eqref{Eq:NeglectThetal1} we therefore have that
\begin{align}
\label{Eq:UStatToIID1}
    &\mathbb{E} \biggl[ \biggl\{ \tilde{\theta}^{\bS} - \tilde{\theta}^{\bS,(m),(1)} - \frac{n_{S_m}+m+1}{n+n_{S_m}} \frac{1}{n} \sum_{i=1}^n \bar{k}_h^{\bS}(X_i) \biggr\}^2 \biggr] \nonumber \\
    & \leq \frac{m^2(m+1)^2}{2n(n-1)} \|a\|_\infty^2 \biggl( \frac{2^d \|K\|_\infty}{c_0 h^d} \biggr)^{m} + \frac{(m+1)^2(m+2)^2}{2n(n-1)} \|a\|_\infty^2 \biggl( \frac{2^d \|K\|_\infty}{c_0 h^d} \biggr)^{m+1}.
\end{align}
We have now seen that $\tilde{\theta}^{\bS} - \sum_{\ell=1}^m \tilde{\theta}^{\bS,(\ell),(1)}$ may be approximated by an average of independent and identically distributed random variables. We turn our attention to the $\tilde{\theta}^{\bS,(\ell),(2)}$ terms. These are now two-sample $U$-statistics rather than standard $U$-statistics, but similar arguments to those above still apply. For $\tilde{\theta}^{\bS,(m),(2)}$ we can use Lemma~A(iii) of \cite{serfling1980approximation} to see that
\begin{align}
\label{Eq:UStatToIID2}
    & \mathbb{E}\biggl[ \biggl\{\tilde{\theta}^{\bS,(m),(2)} - \frac{1}{n+n_{S_m}} \sum_{i=1}^{n_{S_m}} \bar{k}_h^{\bS}(X_{S_m,i}) \biggr\}^2 \biggr] \nonumber \\
    &= \frac{n_{S_m}}{(n+n_{S_m})^2} \mathbb{E}\biggl[\biggl\{\frac{1}{ (n)_{m+1}} \sum_{\bi \in \mathcal{I}_{m+1}} k_h^{\bS}(X_{i_1},\ldots,X_{i_m},X_{S_m,1}) (1+K_{i_m,i_{m+1}}^{S_m}) - \bar{k}_h^{\bS}(X_{S_m,1})  \biggr\}^2 \biggr]\nonumber \\
    & \leq \frac{m+1}{(n+n_{S_m})n}  \mathbb{E} \biggl[ k_h^{\bS}(X_{1},\ldots,X_{m},X_{S_m,1})^2 (1+K_{m,{m+1}}^{S_m})^2 \biggr]\nonumber \\
    & \leq \frac{m+1}{(n+n_{S_m})n} \|a\|_\infty^2 \biggl( \frac{2^d \|K\|_\infty}{c_0 h^d} \biggr)^{m+1}.
\end{align}
On the other hand, when $\ell \in [m-1]$ we can see that $\tilde{\theta}^{\bS,(\ell),(2)}$ is degenerate, so that
\begin{equation}
\label{Eq:NeglectThetal2}
    \mathbb{E} \{ (\tilde{\theta}^{\bS,(\ell),(2)})^2\} \leq \frac{m+1}{(n+n_{S_\ell})n} \|a\|_\infty^2 \biggl( \frac{2^d \|K\|_\infty}{c_0 h^d} \biggr)^{m+1}.
\end{equation}
From~\eqref{Eq:ThetalToThetal+1},~\eqref{Eq:ThetahattoThetatilde},~\eqref{Eq:RefineEpsilon},~\eqref{Eq:NeglectThetal1},~\eqref{Eq:UStatToIID1},~\eqref{Eq:UStatToIID2} and~\eqref{Eq:NeglectThetal2} we have now established that there exists a universal constant $C_1>0$ such that
\begin{align*}
    \mathbb{E} \biggl[ \biggl\{ \check{\theta}^{\bS} - \frac{n_{S_m}+m+1}{n+n_{S_m}} \frac{1}{n} &\sum_{i=1}^n \bar{k}_h^{\bS}(X_i) + \frac{n_{S_m}}{n+n_{S_m}} \frac{1}{n_{S_m}} \sum_{i=1}^{n_{S_m}} \bar{k}_h^{\bS}(X_{S_m,i}) \biggr\}^2 \biggr] \\
    &\leq \frac{\|a\|_\infty^2}{\min(n,\min_{S \in \mathbb{S}} n_S)^2} \biggl( \frac{C_1 2^d \|K\|_\infty}{c_0h^d} \biggr)^{2m+2}.
\end{align*}
It follows from~\eqref{Eq:PGF} that there exists a universal constant $C>0$ such that
\[
    \mathbb{E}\{ (\check{\theta}_{h}^{M} - \theta_{h}^{*,M})^2\} \leq \frac{\|a\|_\infty^2}{\min(n,\min_{S \in \mathbb{S}} n_S)^2} \biggl( \frac{C 2^d \|K\|_\infty}{c_0h^d} \biggr)^{2M+2},
\]
as required.
\end{proof}

\begin{proof}[Proof of Proposition~\ref{Prop:FiniteMUpperBound}]
We first deal with the minor technical point of approximating $n_S/n$ by $\lambda_S$ for $S \in \bbS$. For $\bS \in \bbS^{(m)}$ write $w_\bS=\prod_{j=1}^m \lambda_j/(1+\lambda_j)$. It is straightforward to see that
\[
    |w_\bS - v_\bS| \leq \sum_{j=1}^m \frac{|n\lambda_{S_j}-n_{S_j}|}{(1+\lambda_{S_j})(n+n_{S_j})} \leq \frac{1}{n} \sum_{j=1}^m \frac{n_{S_j}}{(1+\lambda_{S_j})(n+n_{S_j})} \leq \frac{m}{n}.
\]
Using the fact that both $\theta_{h}^{*,M}$ and $\theta^{*,M}$ are averages of independent and identically distributed random variables and using~\eqref{Eq:PGF} we can see therefore that
\begin{align}
\label{Eq:FiniteMStart}
    \mathbb{E}\{( \theta_{h}^{*,M} - \theta^{*,(M)})^2\} &\leq \frac{2^{M+2}}{\min_{S \in \bbS^+} n_S} \max_{m \in [M]} \max_{\bS \in \bbS^{(m)}} \mathbb{E}\bigl[ \{ v_\bS \bar{k}_h^{\bS}(X) - \bar{a}_{\bS}^{(m)}(X) \}^2 \bigr] \nonumber \\
    & = \frac{2^{M+2}}{\min_{S \in \bbS^+} n_S} \max_{m \in [M]} \max_{\bS \in \bbS^{(m)}} \Bigl\{  \mathbb{E}\bigl[ \{ w_\bS \bar{k}_h^{\bS}(X) - \bar{a}_{\bS}^{(m)}(X) \}^2 \bigr] + (v_\bS - w_\bS)^2 \Bigr\} \nonumber \\
    & \leq \frac{2^{M+2}}{\min_{S \in \bbS^+} n_S} \max_{m \in [M]} \max_{\bS \in \bbS^{(m)}} \Bigl\{  \mathbb{E}\bigl[ \{ w_\bS \bar{k}_h^{\bS}(X) - \bar{a}_{\bS}^{(m)}(X) \}^2 \bigr] + \frac{m^2}{n^2} \Bigr\},
\end{align}
and we turn our attention to the error in using $w_\bS \bar{k}_h^\bS(x)$ to approximate $\bar{a}_\bS^{(m)}(x)$. We will write $a_\bS^{(m)}=w_\bS^{-1} \bar{a}_\bS^{(m)}$ so that we have
\[
    a_{(\bS,S)}^{(m+1)}(x_S) = \bbE\{a_\bS^{(m)}(X) | X_S = x_S\}.
\]
First, we will control the smoothness of the functions $a_\bS^{(m)}$. Using the fact that $\|a_\bS^{(m)}\|_\infty \leq 2 \|a\|_\infty$, for any $m \in \mathbb{N}_0$, $\bS \in \bbS^{(m)}$, $S \in \bbS\setminus\{S_m\}$ and $x,x' \in [0,1]^d$ we have
\begin{align}
\label{Eq:SmoothnessToTV}
    &|a_{(\bS,S)}^{(m+1)}(x)-a_{(\bS,S)}^{(m)}(x')| = \bigl| \mathbb{E}\bigl\{ a_{\bS}^{(m)}(X) | X_S = x_S\bigr\} - \mathbb{E}\bigl\{ a_{\bS}^{(m)}(X') | X_S' = x_S'\bigr\} \bigr| \nonumber \\
    &= \biggl| \int \biggl[ \{ a_\bS^{(m)}(x_S,x_{S^c})-a_\bS^{(m)}(x_S',x_{S^c}) \} \frac{f(x_S,x_{S^c})}{f_S(x_S)} + a_\bS^{(m)}(x_S',x_{S^c}) \frac{f(x_S,x_{S^c})-f(x_S',x_{S^c})}{f_S(x_S)} \nonumber \\
    & \hspace{175pt} + a_\bS^{(m)}(x_S',x_{S^c})f(x_S',x_{S^c}) \biggl\{ \frac{1}{f_S(x_S)} - \frac{1}{f_S(x_S')} \biggr\} \biggr] \,dx_{S^c} \biggr| \nonumber \\
    & \leq \sup_{x_{S^c} \in [0,1]^{S^c}} |a_\bS^{(m)}(x_S,x_{S^c})-a_\bS^{(m)}(x_S',x_{S^c})| + 4(L_2/c_0) \|a\|_\infty \|x-x'\|_\infty^\beta.
\end{align}
It follows inductively, using~\eqref{Eq:aSmoothness} for the $m=0$ case, that
\begin{align}
\label{Eq:aSSmoothness}
    | a_\bS^{(m)}(x) - a_\bS^{(m)}(x') | &\leq L_1 \|x-x'\|_\infty^{\beta_1} +  4m (L_2/c_0) \|a\|_\infty \|x-x'\|_\infty^\beta.
\end{align}

For $\delta \in [0,1/2]$ write $\partial_\delta = [0,1]^d \setminus [\delta,1-\delta]^d$ for all those points within distance $\delta$ of the boundary of our sample space. In the following we make repeated use of the fact that for any $m \in \mathbb{N}$, $\bS \in \bbS^{(m)}$, $S \in \bbS\setminus\{S_m\}$ and $x \in [0,1]^d$ we have
\[
    \bar{k}_h^{(\bS,S)}(x) =  \mathbb{E} \biggl[ \bar{k}_h^\bS(X) \biggl\{ \frac{K_h^{S}(x-X)}{f_{S,h}(X)} - 1 \biggr\} \biggr] = \mathbb{E} \biggl[ \bar{k}_h^\bS(X) \frac{K_h^{S}(x-X)}{f_{S,h}(X)} \biggr\} \biggr] .
\]
First, we see inductively that for any $\bS \in \bbS^{(m)}$ we have $\|\bar{k}_h^\bS \|_\infty \leq (2^{d}C_0/c_0)^m \|a\|_\infty$. We will use this simple bound when $x \in \partial_{mh}$, where it is more difficult to control the difference between $\bar{k}_h^\bS$ and $a_\bS^{(m)}$. For $x \in [0,1]^d$ and $S \in \bbS$ define
\[
    \epsilon_S(x)= \mathbb{E} \biggl\{  \frac{K_h^{S}(x-X)}{f_{S,h}(X)} - 1 \biggr\} = \mathbb{E} \biggl\{K_h^{S}(x-X) \biggl( \frac{1}{f_{S,h}(X)} - \frac{1}{f_S(X)} \biggr) \biggr\}.
\]
Using the lower bound $f_{S,h}(x) \geq 2^{-d}c_0$ and the standard bound $|f_{S,h}(x)-f_S(x)| \leq L_2h^{\beta_2}$ when $x \in [h/2,1-h/2]^d$, we have
\begin{equation}
\label{Eq:epsilonSmoothness}
    |\epsilon_S(x)| \leq \frac{2^d}{c_0}(L_2h^{\beta_2} \mathbbm{1}_{\{x \not\in \partial_h \}} + C_0 \mathbbm{1}_{\{x \in \partial_h\}}).
\end{equation}
Our claim, which we prove inductively, is that whenever $m \in \mathbb{N}$, $\bS \in \bbS^{(m)}$ and $x \in [mh,1-mh]^d$ we have
\begin{equation}
\label{Eq:kSSmoothness}
    |\bar{k}_h^\bS(x)-a_\bS^{(m)}(x)| \leq \binom{m+2}{2} \biggl(\frac{2^d C_0}{c_0} \biggr)^{m} (C_0 L_1 h^{\beta_1} + \|a\|_\infty L_2 h^{\beta_2}).
\end{equation}
The $m=1$ case follows from~\eqref{Eq:aSSmoothness} and~\eqref{Eq:epsilonSmoothness} on writing
\begin{align*}
    &|\bar{k}_h^S(x)-a_S^{(1)}(x) | = \biggl| \mathbb{E} \biggl\{ a_S(X) \frac{K_h^S(x-X)}{f_{S,h}(X)} \biggr\} - a_S(x)] \biggr| \\
    & = \biggl| \mathbb{E} \biggl[ \bigl\{ a_S(X) - a_S(x) \bigr\} \frac{K_h^S(x-X)}{f_{S,h}(X)} \biggr] + \epsilon_S(x)a_S(x)] \biggr| \leq \frac{2^d}{c_0}( C_0 L_1h^{\beta_1} + \|a\|_\infty L_2 h^{\beta_2}).
\end{align*}
Now, for the induction step, for any $m \in \mathbb{N}$, $\bS \in \bbS^{(m)}$, $S \in \bbS\setminus\{S_m\}$ and $x \in [(m+1)h,1-(m+1)h]^d$ we have
\begin{align*}
    &\bigl| \bar{k}_h^{(\bS,S)}(x) - a_{(\bS,S)}^{(m+1)}(x) \bigr| = \biggl| \mathbb{E} \biggl[ \bar{k}_h^\bS(X) \biggl\{ \frac{K_h^{S}(x-X)}{f_{S,h}(X)} - 1 \biggr\} \biggr] - a_{(\bS,S)}^{(m+1)}(x)  \biggr| \\
    & = \biggl| \mathbb{E} \biggl[ \bigl\{\bar{k}_h^\bS(X) - a_\bS^{(m)}(X)  \bigr\} \frac{K_h^{S}(x-X)}{f_{S,h}(X)} + \bigl\{a_\bS^{(m)}(X) - a_{(\bS,S)}^{(m+1)}(x) \bigr\} \frac{K_h^{S}(x-X)}{f_{S,h}(X)} \\
    & \hspace{250pt} + a_{(\bS,S)}^{(m+1)}(x) \biggl\{ \frac{K_h^{S}(x-X)}{f_{S,h}(X)} - 1 \biggr\} \biggr] \\
    & = \biggl| \mathbb{E} \biggl[ \bigl\{\bar{k}_h^\bS(X) - a_\bS^{(m)}(X)  \bigr\} \frac{K_h^{S}(x-X)}{f_{S,h}(X)} + \bigl\{a_{(\bS,S)}^{(m+1)}(X) - a_{(\bS,S)}^{(m+1)}(x) \bigr\} \frac{K_h^{S}(x-X)}{f_{S,h}(X)} \biggr] \\
    & \hspace{250pt} + a_{(\bS,S)}^{(m+1)}(x) \epsilon_S(x) \biggr| \\
    & \leq \binom{m+2}{2} \biggl(\frac{2^d C_0}{c_0} \biggr)^{m} (C_0 L_1 h^{\beta_1} + \|a\|_\infty L_2 h^{\beta_2}) \mathbb{E} \biggl\{ \frac{K_h^S(x-X)}{f_{S,h}(X)} \biggr\} \\
    & \hspace{50pt} + \{ L_1 h^{\beta_1} + 4(m+1) \|a\|_\infty (L_2/c_0) h^{\beta_2}\} \mathbb{E} \biggl\{ \frac{K_h^S(x-X)}{f_{S,h}(X)} \biggr\} + 2 \|a\|_\infty \frac{2^d}{c_0} L_2 h^{\beta_2} \\
    & \leq \binom{m+2}{2} \biggl(\frac{2^d C_0}{c_0} \biggr)^{m+1} (C_0 L_1 h^{\beta_1} + \|a\|_\infty L_2 h^{\beta_2}) + \biggl(\frac{2^d C_0}{c_0} \biggr)^{2} \{ L_1 h^{\beta_1} + (m+2)  \|a\|_\infty L_2 h^{\beta_2}\} \\
    & \leq \binom{m+3}{2} \biggl(\frac{2^d C_0}{c_0} \biggr)^{m+1} (C_0 L_1 h^{\beta_1} + \|a\|_\infty L_2 h^{\beta_2}),
\end{align*}
proving the claim~\eqref{Eq:kSSmoothness}. It now follows from~\eqref{Eq:kSSmoothness} and our uniform bound on $\|\bar{k}_h^\bS\|_\infty$ that we have
\begin{align*}
    &\mathbb{E}\bigl[ \{\bar{k}_h^{\bS}(X) - a_{\bS}^{(m)}(X) \}^2 \bigr] \\
    &\leq  \biggl\{ \binom{m+2}{2} \biggl(\frac{2^d C_0}{c_0} \biggr)^{m} (C_0 L_1 h^{\beta_1} + \|a\|_\infty L_2 h^{\beta_2}) \biggr\}^2 + \mathbb{P}(X \in \delta_{mh})\biggl\{\biggl(\frac{2^{d}C_0}{c_0} \biggr)^{m} + 2\biggr\}^2 \|a\|_\infty^2 \\
    & \leq \binom{m+2}{2}^2 \biggl(\frac{2^d C_0}{c_0} \biggr)^{2m} (C_0 L_1 h^{\beta_1} + \|a\|_\infty L_2 h^{\beta_2})^2  + 2dmh C_0 \biggl\{\biggl(\frac{2^{d}C_0}{c_0} \biggr)^{m} + 2\biggr\}^2 \|a\|_\infty^2 \\
    & \leq 2 \binom{m+2}{2}^2 \biggl(\frac{2^{d}C_0}{c_0} \biggr)^{2m} \{C_0^2 L_1^2 h^{2{\beta_1}}+(L_2^2 h^{2\beta_2} + C_0 d h)\|a\|_\infty^2\}.
\end{align*}
The result now follows from~\eqref{Eq:FiniteMStart}.

\end{proof}

\begin{proof}[Proof of Corollary~\ref{Cor:MCAR}]
Using the notation of Section~\ref{Sec:UpperBound}, let $h,T,M$ be chosen such that
\begin{equation}
\label{Eq:Tuning}
    A^{2M}\{M T^d/(Nh^d) + h^{2\beta_\wedge} + \mathbb{P}(\|X\|_\infty \geq T)\} \rightarrow 0
\end{equation}
and $T,M \rightarrow \infty$. Writing $N_{[d]}=N-\sum_{S \in \bbS} N_S$, let $E=\{N_{[d]} \geq 2M, N_S \geq 1 \, \forall S \in \bbS\}$ be the event that our random sample sizes are sufficiently large that our fixed-sample-size estimator can be applied. To emphasise the dependence of this estimator on the sample sizes we write $\hat{\theta} \equiv \hat{\theta}(n_\bbS)$. We take our estimator in the current setting to be
\[
    \hat{\Theta} = \mathbbm{1}_E \hat{\theta}(N_\bbS).
\]
Arguing as in the proof of Theorem~\ref{Thm:UpperBoundShifted}, on the event $E$, we have almost surely that
\begin{align*}
    N \bbE &\bigl[ \bigl\{ \hat{\theta}(N_\bbS) - \theta \bigr\}^2 \bigm| N_\bbS \bigr] - \frac{N}{N_{[d]}} \mathcal{L}(\alpha_\bbS^*) \\
    &\leq \frac{N}{N_{[d]}} \biggl(1 +  \max_{S \in \bbS} \biggl| \frac{N_{[d]} \lambda_S}{N_S} -1 \biggr| \biggr) \biggl[ 4 \kappa \exp(-M/\kappa) \mathrm{Var}\, a(X)  \\
    & \hspace{50pt} + 200|\bbS|^2 (C/c)\biggl(1 + \frac{N}{\min_S N_S} \biggr) A^M B \biggl\{ \frac{MT^d}{N_{[d]}h^d} + h^{2 \beta_\wedge} + \bbP(\|X\|_\infty \geq T) \biggr\}^{1/2} \biggr] \\
    & \leq \max\biggl\{ \biggl(\frac{N}{N_{[d]}}\biggr)^{3/2},1\biggr\} \biggl(1 +  \max_{S \in \bbS} \biggl| \frac{N_{[d]} \lambda_S}{N_S} -1 \biggr| \biggr) \biggl(1 + \frac{N}{\min_S N_S} \biggr) \\
    & \hspace{8pt} \times \biggl[ 4 \kappa \exp(-M/\kappa) \mathrm{Var}\, a(X)+ 200|\bbS|^2 (C/c) A^M B \biggl\{ \frac{MT^d}{Nh^d} + h^{2 \beta_\wedge} + \bbP(\|X\|_\infty \geq T) \biggr\}^{1/2} \biggr],
\end{align*}
where we see that the final factor inside square brackets is deterministic and converges to zero as $N \rightarrow \infty$. Denoting this by $\delta_N$ we may now write
\begin{align*}
    N \bbE \bigl[ &\bigl\{ \hat{\theta}(N_\bbS) - \theta \bigr\}^2 \bigr] - \frac{1}{p_{[d]}} \mathcal{L}(\alpha_\bbS^*) = N \bbE \bigl[ \bigl\{ \hat{\theta}(N_\bbS) - \theta \bigr\}^2 \mathbbm{1}_E \bigr] - \frac{1}{p_{[d]}} \mathcal{L}(\alpha_\bbS^*) + N \theta^2 \bbP(E^c) \\
    & \leq \mathcal{L}(\alpha_\bbS^*) \bbE \biggl\{ \mathbbm{1}_E \biggl( \frac{N}{N_{[d]}} - \frac{1}{p_{[d]}} \biggr) \biggr\} + \|a\|_\infty^2 N \bbP(E^c) \\
    & \hspace{50pt} + \delta_N \bbE \biggl[ \mathbbm{1}_E \max\biggl\{ \biggl(\frac{N}{N_{[d]}}\biggr)^{3/2},1\biggr\} \biggl(1 +  \max_{S \in \bbS} \biggl| \frac{N_{[d]} \lambda_S}{N_S} -1 \biggr| \biggr) \biggl(1 + \frac{N}{\min_S N_S} \biggr) \biggr].
\end{align*}
It now suffices to show that each of these three terms converges to zero. For the first term, we see that $\mathbbm{1}_E N/N_{[d]}$ is uniformly integrable by writing
\[
    \bbE \biggl\{ \mathbbm{1}_E \frac{N}{N_{[d]}} \mathbbm{1}_{\{N \geq K N_{[d]}\}}  \biggr\} \leq N \bbP(N_{[d]}/N \leq 1/K),
\]
and using Hoeffding's inequality to see that this can be made arbitrarily small, uniformly in $N$ by choosing $K$ large enough. Thus, since $N/N_{[d}$ converges in probability to $1/p_{[d]}$ we may conclude that
\[
    \bbE \biggl\{ \mathbbm{1}_E \biggl( \frac{N}{N_{[d]}} - \frac{1}{p_{[d]}} \biggr) \biggr\} \rightarrow 0.
\]
Very similar arguments can be used to show that the third term also converges to zero. Finally, since~\eqref{Eq:Tuning} implies that $M \leq Np_{[d]}/4$ for $N$ sufficiently large, we have that
\begin{align*}
    N \bbP(E^c) &\leq N \sum_{S \in \bbS} \bbP(N_S =0) + N \bbP(N_{[d]} < Np_{[d]}/2) \\
    & \leq N|\bbS| \exp(- N \min_{S \in \bbS} p_S) + N \exp( - Np_{[d]}^2/2) \rightarrow 0,
\end{align*}
where we use Hoeffding's inequality for the final bound.
\end{proof}

\end{document}